\documentclass[11pt,a4paper,reqno]{amsart}
\usepackage[english]{babel}
\usepackage[T1]{fontenc}
\usepackage{verbatim}
\usepackage[shortlabels]{enumitem}
\usepackage{palatino}
\usepackage{amsmath}
\usepackage{mathabx}
\usepackage{amssymb}
\usepackage{mathrsfs}
\usepackage{amsthm}
\usepackage{amsfonts}
\usepackage{graphicx}
\usepackage{esint}
\usepackage{color}
\usepackage{mathtools}
\usepackage{overpic}
\usepackage{tikz}
\usepackage{mathtools}

\usepackage[colorlinks = true, citecolor = red]{hyperref}
\title[Curvilinear two-ends Furstenberg inequality and Fourier decay]{Two-ends Furstenberg inequality for transversal families and applications to Fourier decay}

\author{William O'Regan}
\address{Department of Mathematics \\ University of British Columbia\\
	1984 Mathematics Rd \\Vancouver \\BC V6T 1Z2 \\Canada}
\email{woregan@math.ubc.ca}

\author{Shukun Wu}
\address{Department of Mathematics, Indiana University Bloomington}
\email{shukwu@iu.edu}

\author{Guangzeng Yi}
\address{Department of Mathematics and Statistics\\ University of Jyv\"askyl\"a,
	P.O. Box 35 (MaD)\\
	FI-40014 University of Jyv\"askyl\"a\\
	Finland}
\email{guangzeng.m.yi@jyu.fi}

\date{\today}
\subjclass[2010]{28A80 (primary) 28A78 (secondary)}

\thanks{WOR is supported in part by an NSERC Alliance grant administered by Pablo Shmerkin and Joshua Zahl.}
\thanks{S. Wu is partially supported by NSF2453583.}
\thanks{GY is supported by the Research Council of Finland via Tuomas Orponen’s project Approximate incidence geometry, grant no. 355453.}

\mathtoolsset{showonlyrefs}

\newcounter{stepctr}
\newif\ifinsteppage

\newcommand{\Ga}{\Gamma}

\newcommand{\les}{\lessapprox}

\newcommand{\cP}{\mathcal{P}}

\newcommand{\cD}{\mathcal{D}}

\newcommand{\cF}{\mathcal{F}}

\newcommand{\PP}{\mathbb{P}}

\newcommand{\cI}{\mathcal{I}}

\newcommand{\ga}{\gamma}

\newcommand{\De}{\Delta}

\newcommand{\bF}{\mathbf{F}}

\newcommand{\de}{\delta}

\newcommand{\dy}[2]{\cD_{{#1}}({#2})}

\newtheorem*{ack*}{Acknowledgment}

\newcommand{\R}{\mathbb{R}}

\newcommand{\N}{\mathbb{N}}

\newcommand{\Z}{\mathbb{Z}}

\newcommand{\diam}{\operatorname{diam}}

\newcommand{\dist}{\operatorname{dist}}

\def\Barint_#1{\mathchoice
	{\mathop{\vrule width 6pt height 3 pt depth -2.5pt
			\kern -8pt \intop}\nolimits_{#1}}%
	{\mathop{\vrule width 5pt height 3 pt depth -2.6pt
			\kern -6pt \intop}\nolimits_{#1}}%
	{\mathop{\vrule width 5pt height 3 pt depth -2.6pt
			\kern -6pt \intop}\nolimits_{#1}}%
	{\mathop{\vrule width 5pt height 3 pt depth -2.6pt
			\kern -6pt \intop}\nolimits_{#1}}}

\numberwithin{equation}{section}

\theoremstyle{plain}

\newtheorem{thm}[equation]{Theorem}
\newtheorem{"thm"}[equation]{"Theorem"}

\newtheorem{lemma}[equation]{Lemma}
\newtheorem{"lemma"}[equation]{"Lemma"}

\newtheorem{ex}[equation]{Example}
\newtheorem{cor}[equation]{Corollary}
\newtheorem{"proposition"}[equation]{"Proposition"}

\newtheorem{claim}[equation]{Claim}

\theoremstyle{definition}

\newtheorem{definition}[equation]{Definition}

\newtheorem{notation}[equation]{Notation}

\theoremstyle{remark}
\newtheorem{remark}[equation]{Remark}

\newcommand{\nref}[1]{(\hyperref[#1]{#1})}

\DeclareMathSymbol{\intop} {\mathop}{mathx}{"B3}

\begin{document}

	\begin{abstract}  
		We generalise the recent two-ends Furstenberg inequality due to Wang and the second author from lines to a family of transversal curves, and give a much simplified proof. 
		As an application, we present a result pertaining to the Fourier decay of fractal measures on convex curves.
	\end{abstract}
	
	\maketitle
	
	\tableofcontents

	\section{Introduction}
	A transversal curve family is a family of curves which prohibits tangencies--of which lines satisfy. 
	
	\begin{definition}[Transversal family]\label{def:transversality} Let $I \subset \R$ be a compact interval, and let $\mathcal{F} \subset C^{2}(I)$. We say that $\mathcal{F}$ is a \emph{$\mathfrak{T}$-transversal family} on $I$ if
		\begin{equation}\label{eq-trans1}
			\inf_{\theta\in I} (|f(\theta)- g(\theta)| + |f'(\theta) - g'(\theta)|) \geq \mathfrak{T}^{-1}\|f - g\|_{C^{2}(I)}, \quad f,g\in \mathcal{F}.
		\end{equation}
		Here and throughout this paper $\|f\|_{C^2(I)}:=\max_{x\in I}\sum_{k=0}^2|f^{(k)}(x)|$ and $\mathfrak{T}\geq1$.
	\end{definition}
	
	Besides the set of lines, a typical example of a transversal family is a set of translations of a single convex function, see \cite[Example 1.5]{OPY2025}. In the following, we give another important example arising from planar Fourier integral operators of H\"ormander type. A key motivating case is given by spectral projectors, which, after standard microlocalization, can be analyzed using oscillatory integrals (see \cite[(1.7)]{GWX}). 
	In this setting, functions are decomposed into small wave packets, each concentrated near a curve along which the main oscillation propagates.
	For spectral projectors, these curves are pieces of geodesics on the underlying surface. 
	When one separates into different frequency directions, the wave packets travel along curves whose tangents depend on the direction, so wave packets from different directions propagate along non-parallel curves. 
	As a result, after this decomposition, the core curves form a transverse family at the microlocal level.

	\begin{ex}\label{ex-1}
		For $x=(x_1,x_2)\in\R^2$ and $\xi\in\R$, consider the planar H\"ormander operator 
		\begin{equation}
			\label{eq:00}
			\mathcal H f(x):=\int e^{2\pi i\,\lambda\phi(x,\xi)}\,a(x,\xi)\,f(\xi)\,d\xi
		\end{equation}
		obeying the curvature condition
		\[\det(\phi_{x\xi}, \phi_{x\xi\xi})\not=0.\]
		Here $\phi:\R^3\to\R$, and $a$ is a bump function such that $\text{supp}_\xi a\subset[-1,1]$.
		For each $(\xi,v)\in \R^2$, we define
		\[\Gamma_{\xi,v}=\{x: \phi_\xi(x,\xi)=v\}.\]
		We show that, on sufficiently small neighborhoods, the curve family $\{\Gamma_{\xi,v}\}$ can be represented as the graphs of a transversal family.

		Due to the non-zero curvature condition, we may assume $\phi_{\xi x_2}\neq0$ (otherwise $\phi_{\xi x_1}\neq0$) in a small compact area $U\subset \R^3$. By implicit function theorem, $\phi_\xi(x,\xi)=v$ determines a smooth function $y=F(t,\xi,v)$ and a small cube $I_1\times I_2\times I_3\subset \R^3$ such that\[\phi_\xi(t,F(t,\xi,v),\xi)=v, \quad (t,\xi,v)\in I_1\times I_2\times I_3.\] For fixed $(\xi,v)\in I_2\times I_3$, we write $f_{\xi,v}(t):=F(t,\xi,v)$. Let $K=I_2\times I_3$, then 
		we claim that $\{f_{\xi,v}\}_{(\xi,v)\in K}$ is a $C_{K,I_1}$-transversal family on $I_1$. To see this, it follows from some basic calculations that
		\begin{equation}
			\max_{(\xi,v)\in K} \|\partial_\xi f_{\xi,v}\|_{C^2(I_1)}+\|\partial_v f_{\xi,v}\|_{C^2(I_1)} \leq C_{K,I_1}.
		\end{equation}
		Moreover, by using $\det(\phi_{x\xi}, \phi_{x\xi\xi})\not=0$, the Jacobian of the map $(\xi,v)\mapsto (f_{\xi,v}(t), f_{\xi,v}'(t))$ is non-zero, thus it is Bi-Lipschitz on $K$ for each $t\in I_1$ with constant depending on $K, I_1$. Consequently, the transversality condition \eqref{eq-trans1} follows immediately.
	\end{ex}

	Here is the main result of the paper, a transversal generalisation of the two-ends Furstenberg inequality which was established in \cite{WW1} for $t=1$ and in \cite{WW2} for all $t\in (0,2)$. The reader may check Section \ref{sec2} for some related definitions and notations.
	
	\begin{thm}\label{main-twoends}
		Let $t\in (0,2)$, $\mathfrak{T}\geq1$ and $\lambda\in [0,1]$. For any $\epsilon\in (0,1)$, there exists a small constant $\delta_0=\delta_0(t,\epsilon,\mathfrak{T})>0$ such that the following holds for any $\delta\in (0,\delta_0]$.
		
		Let $(\mathcal{F},\mathcal{P})_\delta$ be a given configuration. Here $\mathcal{F}\subset B_{C^2}(1)$ is $\mathfrak{T}$-transversal on $[-2,2]$. Assume that
		\begin{itemize}
			\item $\mathcal{F}$ is $\delta$-separated and a $(\delta,t)$-KT set;
			\item for each $f\in \mathcal{F}$, $\mathcal{P}(f)$ is $(\epsilon_1, \epsilon_2)$-two-ends and $\lambda$-dense.
		\end{itemize}
		Write $t^\ast=\{t,2-t\}$ and $\mathbf{E}_{\mathcal{F},\mathcal{P}}:=\cup_{f\in \mathcal{F}} \mathcal{P}(f)$. Then,
		\begin{equation}\label{equ-twoendsss}
			|\mathbf{E}_{\mathcal{F},\mathcal{P}}|\geq \delta^{\epsilon+t\epsilon_1/2}  \delta^{(t-1)/2} \ga_{\cP,t^\ast}^{-1/2}\cdot \lambda^{1/2}\sum_{f\in\mathcal{F}} |\mathcal{P}(f)|.
		\end{equation}
	\end{thm}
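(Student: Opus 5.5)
We argue by an $L^2$/Cauchy--Schwarz incidence count combined with an induction on scales, following \cite{WW1,WW2} but with the line geometry replaced by the transversality estimate \eqref{eq-trans1}. Write $\mathbf{E}=\mathbf{E}_{\mathcal F,\mathcal P}$ and $\sum_f|\mathcal P(f)|=\int_{\mathbf E}\sum_f 1_{\mathcal P(f)}$. Cauchy--Schwarz gives
\begin{equation}
\Big(\sum_f|\mathcal P(f)|\Big)^2\le |\mathbf E|\sum_{f,g\in\mathcal F}|\mathcal P(f)\cap\mathcal P(g)|,
\end{equation}
so it suffices to bound the pair-incidence sum by $\delta^{-2\epsilon-t\epsilon_1}\delta^{1-t}\gamma_{\mathcal P,t^\ast}\lambda^{-1}\sum_f|\mathcal P(f)|$. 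The diagonal $f=g$ is trivially fine. For $f\ne g$ we dyadically decompose according to $\rho=\|f-g\|_{C^2}\in[\delta,1]$. Transversality gives $|f(\theta)-g(\theta)|+|f'(\theta)-g'(\theta)|\ge\mathfrak T^{-1}\rho$ for every $\theta$, so the $\delta$-neighbourhoods of the graphs of $f$ and $g$ intersect in $O_{\mathfrak T}(1)$ pieces. Each piece lies in a $\delta\times(\delta/\rho)$ or $\delta\times\sqrt{\delta/\rho}$ rectangle, and in the non-tangential regime it is the $\delta\times\delta/\rho$ one. This is the only place the curvilinear structure enters; it should reproduce the line estimate up to constants depending on $\mathfrak T$.

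Next we count, for fixed $f$ and $\rho$, how many $g$ meet $\mathcal P(f)$ at distance $\sim\rho$. The $(\delta,t)$-KT condition on $\mathcal F$ bounds the number of $g$ within $C^2$-distance $\rho$ of $f$ that pass through a fixed $\delta/\rho$-interval of $f$ by roughly $(\rho/\delta)^{t}\cdot(\delta/\rho)$. The parameter space is two-dimensional via $(f(\theta),f'(\theta))$ and is bi-Lipschitz by transversality, so the KT ball condition translates exactly as for lines. The $\lambda$-density and the two-ends $(\epsilon_1,\epsilon_2)$ hypotheses on $\mathcal P(f)$ control how much of $\mathcal P(f)$ can sit in the intersection region. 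Two-ends guarantees that no $\delta^{\epsilon_1}$-arc carries more than a $\delta^{\epsilon_2}$ fraction, which prevents the concentration that would ruin the bound at $\rho\sim1$. The quantity $\gamma_{\mathcal P,t^\ast}$, the maximal density of $\mathcal P$ in $(t^\ast)$-scaled balls, absorbs the remaining multiplicity. Summing dyadically in $\rho$ loses only $\log(1/\delta)$, and the two regimes $t\le1$ and $t\ge1$ explain the appearance of $t^\ast$.

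This crude count is not sharp: it yields the Furstenberg exponent only when the intersections are "generic". The main obstacle is the high-multiplicity case, where many curves concentrate in a $\rho$-neighbourhood, i.e.\ a fat curve of width $\rho$. There we would rescale: the curves through a $\rho$-tube around $f$ become, after the anisotropic rescaling $(x,y)\mapsto(x,(y-f(x))/\rho)$, a new configuration at scale $\delta/\rho$. One must check two things. First, the rescaled family is still $O(\mathfrak T)$-transversal; this holds because the normalisation $f-g$ divided by $\rho$ stays in $B_{C^2}(O(1))$ and \eqref{eq-trans1} is scale-invariant under this map. Second, the rescaled family is again a KT set with comparable two-ends and density parameters. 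We then apply the theorem inductively at scale $\delta/\rho$ and multiply it with a coarse-scale estimate at scale $\rho$, obtained by the same argument on $\rho$-tubes. The $\delta^{\epsilon}$ losses are controlled by the usual choice of $\delta_0$ and by pigeonholing a single dominant scale $\rho$.

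The hardest step is verifying that two-endedness and $\lambda$-density survive this rescaling, since the rescaled shading may become one-ended. If that fails, the plan is to run a two-ends reduction inside each fat tube, paying $\delta^{t\epsilon_1/2}$. That cost is precisely the extra factor appearing in \eqref{equ-twoendsss}.
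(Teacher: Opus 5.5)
There is a genuine gap: your plan has no nontrivial incidence input, and the statement needs one. To see this, take each $\mathcal P(f)$ to be a $(\delta,s)$-set of size $\approx\delta^{-s}$, so $\lambda\approx\delta^{1-s}$ and the shadings are $(\epsilon_1,s\epsilon_1/2)$-two-ends, and take $|\mathcal F|\approx\delta^{-t}$ with $s\le t\le 2-s$. Then $\gamma_{\mathcal P,t^\ast}\lesssim1$, and \eqref{equ-twoendsss} gives $|\mathbf E_{\mathcal F,\mathcal P}|\gtrsim\delta^{-(3s+t)/2+O(\epsilon+\epsilon_1)}$. That is the sharp Furstenberg set bound (Ren--Wang for lines), and nothing in your sketch can produce it.

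\textbf{The $L^2$ count.} From $(\sum_f|\mathcal P(f)|)^2\le|\mathbf E_{\mathcal F,\mathcal P}|\sum_{f,g}|\mathcal P(f)\cap\mathcal P(g)|$, you need the pair sum to be at most $\delta^{-\epsilon-t\epsilon_1/2}\delta^{(1-t)/2}\gamma_{\mathcal P,t^\ast}^{1/2}\lambda^{-1/2}\sum_f|\mathcal P(f)|$. This is the square root of the factor $\delta^{-2\epsilon-t\epsilon_1}\delta^{1-t}\gamma_{\mathcal P,t^\ast}\lambda^{-1}$ you wrote. Your factor already follows from the crude count, without using two-ends:
\begin{itemize}
\item at most $(\rho/\delta)^t$ curves satisfy $\|f-g\|_{C^2}\sim\rho$;
\item each such curve meets $\mathcal P(f)$ in $O_{\mathfrak T}(1)$ balls of radius $\sim\delta/\rho$, hence in $\lesssim\gamma_{\mathcal P,t^\ast}\rho^{-t^\ast}$ cubes;
\item summing over dyadic $\rho$ gives $\lesssim\log(1/\delta)\,\gamma_{\mathcal P,t^\ast}\delta^{-t}\le\log(1/\delta)\,\delta^{1-t}\gamma_{\mathcal P,t^\ast}\lambda^{-1}|\mathcal P(f)|$ per $f$.
\end{itemize}
So your target only yields the C\'ordoba-type bound $|\mathbf E_{\mathcal F,\mathcal P}|\gtrsim\delta^{t-1}\gamma_{\mathcal P,t^\ast}^{-1}\lambda\sum_f|\mathcal P(f)|$. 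For $t=1$ and $|\mathcal F|\approx\delta^{-1}$ this is $\lambda^2\delta^{-2}$ rather than $\lambda^{3/2}\delta^{-2}$.

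\textbf{The proposed improvements.} Your bound of $(\rho/\delta)^t(\delta/\rho)$ curves through a fixed $\delta/\rho$-arc of $f$ is not implied by the KT hypothesis. Under $A_{x_0}$ these curves lie in a $\delta\times\rho$ rectangle, which can contain all $(\rho/\delta)^t$ of them when $t\le1$. The induction on scales cannot manufacture the missing $\lambda^{1/2}$ either. Rescaling only transports an estimate between scales, so some genuine incidence estimate must supply the gain at a single scale.

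\textbf{Where the paper gets the gain.} It comes from the scale-selection Lemma \ref{lem-main}, built on the curvilinear Furstenberg set estimate (Theorem \ref{thm.furstenberg}) and the Katz--Tao incidence estimate (Theorem \ref{thm-incidence-KT}). The latter also settles the single-scale case $r\ge\delta^{\eta}$ directly via Corollary \ref{cor2}. The rest of the architecture differs from your sketch as well:
\begin{itemize}
\item Two-ends is used only once, to find $\rho\ge\delta^{\epsilon_1}$ below which every shading is a $(\delta/\rho,\epsilon^2)$-set.
\item Inside each $\rho$-cube a Katz--Tao subfamily is extracted by random sampling, at a cost of $\rho^{t}$ in incidences. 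After rescaling this leaves the factor $\rho^{t/2}\ge\delta^{t\epsilon_1/2}$.
\item The induction (Theorem \ref{thm-reduction2}) replaces two-ends by the $(\delta,\epsilon^2,C;\rho^\ast)$-set condition. That hypothesis survives the rescalings at scales $r$ and $\Delta$, while two-ends does not; this is exactly the difficulty you flag as the hardest step.
\end{itemize}
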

	
	The two-ends Furstenberg inequality has already found many applications; see, for example, \cite{doprod,OpHrit,WW1,Wu-weighted-L2}. We hope that the curvilinear version will lead to further applications in the future. In view of Example \ref{ex-1}, Theorem \ref{main-twoends} provides a potential tool for studying eigenfunctions of the Laplace–Beltrami operator on manifolds. For instance, one may seek to establish an analogue of the main result in \cite{Wu-weighted-L2}, with the Fourier extension operator replaced by the Hörmander-type operator \eqref{eq:00}. Such a result could be a useful ingredient in the study of eigenfunction distribution on manifolds. We will also present an application of Theorem \ref{main-twoends} in Subsection \ref{sub2}.
	
	Our proof of Theorem \ref{main-twoends} is much simpler than the one in \cite{WW2}. For the reader’s convenience, we include a brief sketch of the argument below.

	\subsection{Sketch proof of Theorem \ref{main-twoends} and structure of the paper}
	We will first reduce Theorem \ref{main-twoends} to the case when the two-ends condition for $\mathcal{P}(f)$ is replaced by $(\delta, \epsilon^2)$-set condition, see Definition \ref{def:deltarho}. Up to dyadic pigeonholing, we assume that $\cP(f)$ is uniform for all $f\in \mathcal{F}$ and they have the same branching function (view Definition \ref{def:branching} and Definition \ref{def:uniformset}). We apply an induction on scales strategy. 
	
	There are two main ingredients in our simplified proof. First, by applying the by now well-known multi-scale decomposition to each $\mathcal{P}(f)$, we obtain a scale $r\in (\delta,1)$ such that $[\cP(f)]_{r \rightarrow \de}$ is a $(\de/r,s)$-set with cardinality $\approx (r/\de)^s.$ The notation $[\cP(f)]_{r \rightarrow \de}$ denotes $\cP(f)$ when viewed between scales $r$ and $\de$. The second key tool is an intermediate scale selection lemma, see Lemma \ref{lem-main}. It replaces the use of \cite[Theorem 5.4]{DW2024} in \cite{WW2}, and is one of the main reasons why our proof is simpler. Namely, under the same assumptions as in Theorem \ref{main-twoends}, we can find a scale $\Delta\in [\delta,1]$ such that either the number of curves intersecting each $\delta$-cube has a small upper bound, or we can find a $\Delta$-cube family $\mathcal{Q}_\Delta$ so that the cardinality of $\mathbf{E}_{\mathcal{F},\mathcal{P}}\cap Q$ for each $Q\in\mathcal{Q}_\Delta$ has large lower bound, where \[\mathbf{E}_{\mathcal{F},\mathcal{P}}=\bigcup_{f\in \mathcal{F}} \mathcal{P}(f).\] We note that Lemma \ref{lem-main} is essentially an application of the curvilinear Furstenberg set estimates established in \cite[Theorem 1.11]{OPY2025} and an incidence estimate under Katz-Tao condition established in \cite[Theorem 1.4]{OS2026}.
	
	In light of the previous paragraph, we now work inside each $r$-cube which intersects $\mathbf{E}_{\mathcal{F},\mathcal{P}}$, and call it $\mathbf{Q}$. We consider those functions in $\mathcal{F}$ whose shadings $\mathcal{P}(f)$ intersect $\mathbf{Q}$ and let $\mathcal{F}_\mathbf{Q}$ be an incomparable subset of those functions. Now for each $f\in \mathcal{F}_{\mathbf{Q}}$, 
	\[\mathcal{P}_\mathbf{Q}(f):=\mathcal{P}(f)\cap \mathbf{Q}\]
	is contained in a curved $\sim \delta\times r$-rectangle which we call tube segments. This is where the proof now diverges from the one in \cite{WW2}. We apply our scale selection lemma (Lemma \ref{lem-main}) to each $(\mathcal{F}_\mathbf{Q}, \mathcal{P}_\mathbf{Q})$ (precisely to its refinement) after scaling by $r^{-1}$. This tells us that 
	\begin{enumerate}
		\item either the number of $\de \times r$ tube segments which intersect each $p \in \mathbf{E}_{\mathcal{F},\mathcal{P}} \cap \mathbf{Q}$ is sufficiently bounded to ensure that we obtain a suitable lower bound for the cardinality $|\mathbf{E}_{\mathcal{F},\mathcal{P}} \cap \mathbf{Q}|$, thus \eqref{equ-twoendsss} follows roughly from the fact \[|\mathbf{E}_{\mathcal{F},\mathcal{P}}|\gtrapprox |\mathbf{E}_{\mathcal{F},\mathcal{P}} \cap \mathbf{Q}|\cdot |\mathcal{D}_r(\mathbf{E}_{\mathcal{F},\mathcal{P}})|;\]
		
		\item or we obtain another scale $\De\in [\delta,r]$ so that for a large collection of $\De$-cubes $\mathcal{G}_\mathbf{Q} \subset \mathbf{Q}$ the cardinality $|\mathbf{E}_{\mathcal{F},\mathcal{P}} \cap Q|$ is sufficiently large. In this case, we define for each $f$ in a $(\Delta,t)$-KT subset $\widetilde{\mathcal{F}}\subset\mathcal{F}$ that \[\widetilde{\mathcal{P}}(f):=\bigcup_{\mathbf{Q}\in \mathcal{D}_r(\mathbf{E}_{\mathcal{F},\mathcal{P}})} \mathcal{G}_\mathbf{Q}(f),\]
		where $\mathcal{G}_\mathbf{Q}(f)$ is roughly those $\Delta$-cubes in $\mathcal{G}_\mathbf{Q}$ intersecting $\mathcal{P}(f)$. Finally, applying induction hypothesis at scale $\Delta$ to $(\widetilde{\mathcal{F}}, \widetilde{\mathcal{P}})$, \eqref{equ-twoendsss} follows roughly from
		\[|\mathbf{E}_{\mathcal{F},\mathcal{P}}|\gtrsim |\mathbf{E}_{\widetilde{\mathcal{F}},\widetilde{\mathcal{P}}}|\cdot \min_{Q\in \cup_\mathbf{Q}\mathcal{G}_\mathbf{Q}}|\mathbf{E}_{\mathcal{F},\mathcal{P}} \cap Q|, \]
		which fulfils our aims.
	\end{enumerate}
	
	The outline above explains what happens in Section \ref{sec3}. Additionally, Section \ref{sec2} contains preliminaries, and Section \ref{sec4} contain the details of one application
	of our curvilinear Furstenberg two-ends theorem, introduced in Section \ref{sub2}.

	\begin{subsection}{A Fourier-analytic application}\label{sub2}
		One motivation for this paper is to extend \cite[Theorem 1.8]{doprod} to general convex curves besides the parabolas. Here it is:
		
		\begin{thm}\label{thm-L6decay}
			Let $s\in (0, 2/3]$ and $\mathbf{c}\geq \mathbf{d}>0$. Fix $f\in C^3([-6,6])$ such that 
			\[\|f\|_{C^3([-6,6])}\leq \mathbf{c}\quad \text{and} \quad \min_{x\in [-6,6]}|f''(x)|\geq \mathbf{d}.\]
			Write \[\Gamma:=\{(x,f(x)):x\in [-1,1]\}.\]  Let $\mu$ be an $s$-Frostman measure supported on $\Gamma$. Then for any $\epsilon \in (0,1)$ and $R\geq1$ we have
			\begin{equation}\label{equ-decay}
				\|\hat{\mu}\|_{L^6(B_R)}^6 \leq C(\mathbf{c},\mathbf{d},s,\epsilon) R^{2-5s/2+\epsilon}.
			\end{equation}
		\end{thm}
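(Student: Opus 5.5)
We would follow the strategy of \cite[Theorem 1.8]{doprod}, with the parabola replaced by $\Gamma$ and the two-ends Furstenberg inequality for lines replaced by Theorem \ref{main-twoends}.

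\textbf{Step 1: reduce to an incidence count.} Expanding, $\|\hat\mu\|_{L^6(B_R)}^6$ is essentially the $\mu^{\otimes 6}$-measure of the configurations $(x_1,\dots,x_6)\in\Gamma^6$ with
\[|x_1+x_2+x_3-x_4-x_5-x_6|\lesssim R^{-1}.\]
Set $\delta=R^{-1}$, replace $\mu$ by its $\delta$-discretisation, and write $\mu\sim\delta^{s}\sum_{p\in P}\mathbf 1_p$ for a $(\delta,s)$-set $P$ of $\delta$-arcs on $\Gamma$. After dyadic pigeonholing, the sixth energy is then $\lesssim R^2\,\delta^{6s}\cdot\#\{\text{additive sextuples}\}$.

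Fix the pair $(x_4,x_5)$, or equivalently pass to the sumset $\Gamma+\Gamma$. The condition says that $x_1+x_2+x_3$ lies in $\delta$-neighbourhoods of translated copies $\Gamma+y$. Each such copy is the graph of $\theta\mapsto f(\theta-a)+b$. By \cite[Example 1.5]{OPY2025}, and using $f''\ge\mathbf d$ on $[-6,6]$, translates of a single convex $C^2$ function form a $\mathfrak T$-transversal family with $\mathfrak T=\mathfrak T(\mathbf c,\mathbf d)$. Rescaling to $B_{C^2}(1)$ is harmless. The point sets lie on $\Gamma+x_i$, so the problem becomes: bound incidences between the curves $\mathcal F=\{\Gamma+x: x\in P\}$, which form a $(\delta,s)$-set of translation parameters and hence a $(\delta,s)$-KT family (Katz--Tao after normalising), and the point set $\mathbf E=P+P$. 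The shading is $\mathcal P(\Gamma+x)=(\Gamma+x)\cap(P+P)\supseteq x+P$.

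\textbf{Step 2: the two-ends condition.} Since $P$ is a $(\delta,s)$-set of cardinality $\approx\delta^{-s}$, every shading $x+P$ is $(\epsilon_1,\epsilon_2)$-two-ends with $\epsilon_1\approx\epsilon_2$ small. The exception is concentration in a small arc; we would remove it by the standard Frostman-refinement step, losing $\delta^{-O(\epsilon)}$. It is also $\lambda$-dense with $\lambda\approx\delta^{s}$ relative to full tubes, and $\gamma_{\mathcal P,t^*}\lesssim 1$ by the Frostman property.

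Applying Theorem \ref{main-twoends} with $t=s$ then gives
\[|P+P|_\delta\gtrapprox \delta^{(s-1)/2}\lambda^{1/2}\,|\mathcal F|\,|P|\approx\delta^{-s-s/2},\]
that is $|P+P|\gtrapprox\delta^{-3s/2}$. More usefully, for weighted point sets it gives an $L^2$ incidence bound: the number of triples controlling the energy is at most $\delta^{-O(\epsilon)}|P|^{3}\cdot\delta^{\,3s/2}\cdot(\text{normalisation})$.

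\textbf{Step 3: Cauchy--Schwarz.} We then convert to the energy. By Cauchy--Schwarz in the sum variable $z=x_1+x_2+x_3$,
\[\#\{\text{sextuples}\}\le \|r_3\|_2^2,\qquad r_3(z)=\#\{(x_1,x_2,x_3): x_1+x_2+x_3\approx z\},\]
where $r_3(z)=\sum_{x_3}\mathbf 1_{P+P}$-multiplicity at $z-x_3$. This is the incidence function between the points $z$ and the curves $\Gamma+x_3$ weighted by $r_2$. Bounding $\|r_3\|_2^2\le\|r_3\|_\infty\|r_3\|_1$ is too lossy. Instead, as in \cite{doprod}, we would dyadically decompose $r_2$ into level sets $\{r_2\sim M\}$ and apply Theorem \ref{main-twoends} to the curves $\Gamma+x_3$ with shadings given by these level sets. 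The target is
\[\|r_3\|_2^2\lessapprox \delta^{-5s+ s\cdot 5/2}\cdot\delta^{-?}\quad\text{matching}\quad R^{2}\delta^{6s}\|r_3\|_2^2\lesssim R^{2-5s/2+\epsilon}.\]
The exponent bookkeeping would follow \cite{doprod}, using $s\le 2/3$ to ensure the relevant Katz--Tao dimension $t$ lies in $(0,2)$ and the estimate is the binding one.

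\textbf{Main obstacle.} We expect the hardest part to be verifying the hypotheses of Theorem \ref{main-twoends} for the level-set shadings. Two-endedness of $\{r_2\sim M\}\cap(\Gamma+x)$ is not automatic. We would first try the two-ends reduction of \cite{WW1}: if a shading concentrates in an arc of length $\rho$, rescale that arc and induct on scales. The curvature $f''\ge\mathbf d$ guarantees that the rescaled pieces are again transversal families of the same type, with constants depending only on $\mathbf c,\mathbf d$ because $f\in C^3$.
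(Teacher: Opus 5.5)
\textbf{Genuine gap: the incidence configuration is the wrong one, and the exponent is never derived.} Steps 2--3 never produce the estimate that yields $2-5s/2$.

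\emph{Step 2 is vacuous.} With $|\mathcal P(f)|\approx\delta^{-s}$ you have $\lambda\approx\delta^{1-s}$, not $\delta^{s}$. Theorem \ref{main-twoends} with $t=s$ then gives only $|P+P|\gtrapprox\delta^{-2s}$, which is trivial for arcs over separated intervals because $(z_1,z_2)\mapsto s_1+s_2$ is bi-Lipschitz there.

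\emph{Step 3 targets the wrong energy.} Incidences between the $s$-dimensional family $\{\Gamma+x_3\}$ and points of $P+P$ record $x_1+x_2=x_3+x_4$, which is the $L^4$ energy, not the sextuple relation. The exponent is also left as $\delta^{-?}$.

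\emph{The missing idea.} For $S_1,S_2,S_3$ over $\gtrsim1$-separated intervals, $r_3(z)$ is the multiplicity of $z$ in a different configuration:
\begin{itemize}
\item the curves are $y+\Gamma$ with $y\in S_1+S_2$;
\item the shading on $y+\Gamma$ is $\approx y+S_3$.
\end{itemize}
This curve family is $2s$-dimensional. The crucial structural fact is that $S_1+S_2$ is \emph{rectangular} $(\delta,2s)$-KT along translates of $\Gamma$ (Lemma \ref{lem-p3}).

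The paper proves $\mathcal I\lessapprox\delta^{-2s/3}|\mathbb P|^{2/3}|\mathbb F|^{1/3}$ (Theorem \ref{thm-recincidence}, Corollary \ref{cor-recincidence}) in three moves:
\begin{enumerate}
\item extract two-ends subshadings at some scale $L$ (Lemma \ref{lem-fixtwoends});
\item localize to $L$-squares, where the rectangular condition upgrades to an honest $(\delta/L,2s,C_1/(ML^s))$-KT family (Claim \ref{clll2});
\item apply Theorem \ref{main-twoends} with $t=2s$ and $t^\ast=\sigma=\min\{2s,2-2s\}$.
\end{enumerate}
Applied to $\{z: r_3(z)\sim M\}$, this gives $M^3|\mathcal F_1^\ast|\lessapprox\delta^{-2s}|S_1||S_2|$. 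Combining with the trivial bound $M|\mathcal F_1^\ast|\le|S_1||S_2||S_3|$ and symmetrizing gives $\mathcal E_{3,\delta}\lessapprox\delta^{-s}(|S_1||S_2||S_3|)^{5/6}$. For $|S_i|\approx\delta^{-s}$ this is $\delta^{-7s/2}$, exactly what produces $R^{2-5s/2}$.

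This is also where $s\le 2/3$ enters. It guarantees $s\le\sigma$, so the $s$-dimensional shadings satisfy $\gamma_{\mathcal P,\sigma}\lesssim1$. It has nothing to do with keeping $t\in(0,2)$.

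\textbf{Second missing component: the reduction to transverse triples.} Everything above requires $\gtrsim1$-separated arcs, since $(z_1,z_2)\mapsto(z_1+z_2,f(z_1)+f(z_2))$ must be bi-Lipschitz for both the reduction and Lemma \ref{lem-p3}. Bounding the full $\|\hat\mu\|_{L^6(B_R)}^6$ needs three further ingredients:
\begin{enumerate}
\item the decomposition \eqref{equ-110} into $1/K$-arcs;
\item the trilinear energy bound for non-adjacent triples;
\item an induction on $R$, in which each $\mu_I$ is rescaled to an $s$-Frostman measure on the graph of $f_{a,K}$.
\end{enumerate}
In step 3, $f_{a,K}$ stays in the class $\mathbb H$; this is where $f\in C^3$ and uniformity in $\mathbf c,\mathbf d$ are really used. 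The induction gives $C_R\le AC_{R/K^2}K^{4-5s}+BR^{2-5s/2+\epsilon/2}$. Your rescaling remark sits in the two-ends discussion and does not supply this step.

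\textbf{Minor point.} Discretizing a Frostman measure gives weighted KT sets after pigeonholing on $\mu(p)$, not $(\delta,s)$-sets of size $\approx\delta^{-s}$. Two-endedness of the shadings is therefore not automatic and needs Lemma \ref{lem-fixtwoends}.
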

		
		The problem was first studied by Orponen in \cite{O1} where he conjectured that the sharp decay should be $\min\{1+s,3s\}$ for all $s\in (0,1]$, see the sharp example \cite[Example 1.8]{O1}. 
		
		Up to now, most results for this problem were obtained via incidence estimates. For $s \in [2/3,1]$, the sharp result was obtained in \cite{O2} for parabolas and in \cite{Yi} for general convex curves by using an upper bound incidence estimate. When $s\in (0,2/3]$, the problem is more difficult and remains open. Demeter-Wang \cite{DW2024} proved the decay $2-9s/4$ when $s\in (0,1/2]$ by using their Szemer\'{e}di-Trotter type incidence estimate. Very recently, after Wang and the second author \cite{WW1} established the two-ends Furstenberg inequality, Demeter and the first author \cite{doprod} improved the decay to $2-5s/2$ for parabolas and they also obtained some partial results with decoupling method. Moreover, we note that when considering $L^p$-decay, the sharp result has been established for general convex curves in \cite[Theorem 1.16]{OPY2025}.
	\end{subsection}

	\subsection*{Acknowledgements}
	We thank Tuomas Orponen for carefully reading the draft and providing numerous comments that helped improve the exposition of this paper. GY would like to thank Tuomas Orponen for his constant support. WOR also thanks Tuomas Orponen and the University of Jyv\"askyl\"a for their hospitality during his visit in the fall of 2025, where this project began; thanks are extended to Pablo Shmerkin and Joshua Zahl for their support.

	\section{Preliminaries}\label{sec2}
	\subsection{Notations and definitions} We adopt the standard notations $\lesssim$, $\gtrsim$, $\sim$. For example, $A\lesssim B$ means $A\leq CB$ for some constant $C>0$, while $A\lesssim_rB$ stands for $A\leq C(r)B$ for a positive function $C(r)$. We will denote $A \lessapprox_\delta B$, $A \gtrapprox_\delta B$, $A \approx_\delta B$ or $A\approx B$ to hide slowly growing functions of $\delta$ such as $\log(1/\delta)$ and $\delta^{-\epsilon}$. The precise meaning of the $\lessapprox$ notation will always be explained separately.
	
	When $A$ is a finite set, $|A|$ always denotes the cardinality of $A$.
	
	For $\delta \in 2^{-\N}$, dyadic $\delta$-cubes in $\R^2$ are denoted $\mathcal{D}_{\delta}(\R^2)$. Elements of $\mathcal{D}_{\delta}(\R^2)$ are typically denoted with letters $p,q$. For $P \subset \R^2$, we write $\mathcal{D}_{\delta}(P) := \{p \in \mathcal{D}_{\delta}(\R^2) : P \cap p \neq \emptyset\}$. In particular, we abbreviate $\mathcal{D}_{\delta}:=\mathcal{D}_{\delta}([-1,1]^2)$.

	The following are some basic definitions for our problem.
	
	\begin{definition}[Covering numbers] 
		Let $(X,d)$ be a metric space. For $P \subset X$ and $r>0$, we write $|P|_r$ as the minimum number of $r$-balls in $X$ needed to cover $P$.
	\end{definition}
	
	The following two spacing conditions are now well-known in this area.
	
	\begin{definition}[$(\delta,s,C)$-set]\label{def:spacingcondition}
		Let $(X,d)$ be a metric space. Let $\delta\in (0,1]$ and $s\geq 0$. We say a bounded subset $P\subset X$ is a $(\delta,s,C)$-set if
		\begin{equation}\label{def 1}
			|P\cap B(x,r)|_\delta\leq Cr^s |P|_\delta, \qquad x \in X, \, \delta \leq r \leq 1. 
		\end{equation}
		Here $B(x,r)$ refers to a ball in $(X,d)$. A $(\delta,s,C)$-set is called a $(\delta,s)$-set if the value of the constant $C > 0$ is irrelevant. 
	\end{definition}
	
	\begin{definition}[Katz-Tao $(\delta,s,C)$-set]\label{def:katzTaoSet}
		Let $(X,d)$ be a metric space. Let $\delta\in (0,1]$ and $s\geq 0$. We say a bounded subset $P\subset X$ is a Katz-Tao $(\delta,s,C)$-set if
		\begin{equation}\label{def 1}
			|P\cap B(x,r)|_\delta\leq C\left(\tfrac{r}{\delta}\right)^s, \qquad x \in X, \, \delta \leq r \leq 1. 
		\end{equation}
		A Katz-Tao $(\delta,s,C)$-set is called a $(\delta,s)$-KT set if the value of $C > 0$ is irrelevant. 
	\end{definition}

	\begin{definition}[Upper $(s,C)$-regular set]\label{def:Ahlfors}
		Let $(X,d)$ be a metric space and let $C, s>0$. We say $P\subset X$ is upper $(s,C)$-regular, if for any $x\in X$ and $0<r\leq R<\infty$,
		\[|P\cap B(x,R)|_r\leq C\left(\tfrac{R}{r}\right)^s.\]
		We also say that $P$ is \emph{upper $(\delta,s,C)$-regular} if the estimate above holds for $\delta \leq r \leq R < \infty$.
	\end{definition}

	\subsection{Transversal families} 
	
	This part was taken from \cite[Section 2]{OPY2025} and one can check the related proofs in that paper.
	
	\begin{definition}[Transversal family]\label{def:transversality} Let $I \subset \R$ be a compact interval, and let $\mathcal{F} \subset C^{2}(I)$. We say that $\mathcal{F}$ is a \emph{$\mathfrak{T}$-transversal family} on $I$ if
		\begin{equation}\label{eq-trans}
			\inf_{\theta\in I} (|f(\theta)- g(\theta)| + |f'(\theta) - g'(\theta)|) \geq \mathfrak{T}^{-1}\|f - g\|_{C^{2}(I)}, \quad f,g\in \mathcal{F}.
		\end{equation}
		Here and throughout this paper $\|f\|_{C^2(I)}:=\max_{x\in I}\sum_{k=0}^2|f^{(k)}(x)|$ and $\mathfrak{T}\geq1$.
	\end{definition}
	
	We record that transversal families are upper $2$-regular subsets of $C^{2}(I)$:
	\begin{lemma}\label{lem-Ahlforsregularity}\label{lem.indentify}
		Every $\mathfrak{T}$-transversal family $\mathcal{F} \subset C^{2}(I)$ admits a $\sqrt{2}\mathfrak{T}$-bi-Lipschitz embedding into $\R^{2}$ via the map $A_{x_0}(f) = (f(x_0),f'(x_0)),$ where $x_0 \in I$ is arbitrary. In particular, $\mathcal{F}$ is upper $(2,20\mathfrak{T}^2)$-regular. 
	\end{lemma}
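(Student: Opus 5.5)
The plan is to show directly that $A_{x_0}$ is bi-Lipschitz from $(\mathcal{F},\|\cdot\|_{C^2(I)})$ into $(\R^2,|\cdot|)$, and then to transfer upper $2$-regularity of $\R^2$ back to $\mathcal{F}$.

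\emph{Bi-Lipschitz bounds.} Fix $x_0\in I$ and $f,g\in\mathcal{F}$.
\begin{itemize}
\item \emph{Upper bound.} By definition of the norm,
\[
|A_{x_0}(f)-A_{x_0}(g)| \le |f(x_0)-g(x_0)|+|f'(x_0)-g'(x_0)| \le \|f-g\|_{C^2(I)} .
\]
\item \emph{Lower bound.} Evaluating the infimum in \eqref{eq-trans} at $\theta=x_0$ gives
\[
\|f-g\|_{C^2(I)} \le \mathfrak{T}\bigl(|f(x_0)-g(x_0)|+|f'(x_0)-g'(x_0)|\bigr) \le \sqrt2\,\mathfrak{T}\,|A_{x_0}(f)-A_{x_0}(g)| ,
\]
using $a+b\le\sqrt2\sqrt{a^2+b^2}$.
\end{itemize}
Together these show that $A_{x_0}$ is $\sqrt2\mathfrak{T}$-bi-Lipschitz. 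In particular it is injective, so $A_{x_0}:\mathcal{F}\to A_{x_0}(\mathcal{F})$ is a bijection.

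\emph{Upper regularity.} Fix $f_0\in C^2(I)$ and $0<r\le R$. If $\mathcal{F}\cap B(f_0,R)$ is empty there is nothing to prove. Otherwise pick $f_1$ in it, so that $\mathcal{F}\cap B(f_0,R)\subset B(f_1,2R)$. The Lipschitz bound then gives
\[
A_{x_0}(\mathcal{F}\cap B(f_0,R))\subset B\bigl(A_{x_0}(f_1),2R\bigr)\subset\R^2 .
\]
Cover this Euclidean disc by $N\lesssim (2R/\rho)^2$ discs of radius $\rho=r/(\sqrt2\mathfrak{T})$. For each such disc $D$ meeting the image, choose $h\in\mathcal{F}$ with $A_{x_0}(h)\in D$. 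If also $A_{x_0}(f)\in D$, then $|A_{x_0}(f)-A_{x_0}(h)|\le 2\rho$, and the lower bi-Lipschitz bound gives
\[
\|f-h\|_{C^2(I)}\le\sqrt2\,\mathfrak{T}\cdot 2\rho=2r .
\]
So each disc contributes a single $2r$-ball of $C^2(I)$, and
\[
|\mathcal{F}\cap B(f_0,R)|_{2r}\lesssim \mathfrak{T}^2\,(R/r)^2 .
\]
Replacing $r$ by $r/2$ costs only a constant factor, which gives the upper $(2,C\mathfrak{T}^2)$-regularity.

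\emph{Main obstacle.} The only delicate step is the bookkeeping of the constant, to reach exactly $20\mathfrak{T}^2$. This requires two things:
\begin{itemize}
\item a careful count of planar discs, for instance a $\rho/\sqrt2$-grid of squares: a disc of radius $2R$ meets at most about $(4\sqrt2R/\rho+2)^2$ of them;
\item a direct choice of the radius of the covering balls $r$, rather than rescaling $2r\mapsto r$ afterwards.
\end{itemize}
I would first track everything crudely to get $C\mathfrak{T}^2$. If the stated numeral does not come out from the crude count, I would optimise the grid, or use the covering convention where balls may be centred in $\mathcal{F}$ itself, to land on $20$.
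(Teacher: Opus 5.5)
\textbf{Assessment.} Your bi-Lipschitz estimates are correct and are the natural argument. The upper bound comes straight from the definition of $\|\cdot\|_{C^2(I)}$, and the lower bound is transversality at $\theta=x_0$ followed by $a+b\le\sqrt2\sqrt{a^2+b^2}$. Transferring planar $2$-regularity through $A_{x_0}$ is also the right route for the second assertion. The paper itself does not reprove this lemma but cites \cite{OPY2025}; note that $20\mathfrak T^2=10\cdot(\sqrt2\mathfrak T)^2$ is exactly what such a transfer gives from a planar constant $10$.

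\textbf{The gap.} You do not reach the stated constant $20\mathfrak T^2$, and your proposed remedy of sharpening the grid count cannot reach it. Your scheme loses a factor $2$ in radius twice:
\begin{itemize}
\item recentring at $f_1$ places the image in a disc of radius $2R$;
\item pulling back discs that are not centred at image points only gives $\|f-h\|\le 2\sqrt2\,\mathfrak T\rho$, which forces $\rho=r/(2\sqrt2\mathfrak T)$ (this is your $2r\mapsto r$ step).
\end{itemize}
By area alone, any covering of $B(A_{x_0}(f_1),2R)$ by discs of that radius uses at least $32\mathfrak T^2(R/r)^2$ discs. So no refinement of the count lands on $20\mathfrak T^2$: the two losses must be removed, not optimised.

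\textbf{The fix.} Take the ball centred in $\mathcal F$, i.e.\ read the definition of upper regularity with ambient space $X=\mathcal F$, and count by packing rather than covering. Fix $f\in\mathcal F$ and $0<r\le R$, and let $\{h_i\}$ be a maximal $r$-separated subset of $\mathcal F\cap B(f,R)$, so that the balls $B(h_i,r)$ cover $\mathcal F\cap B(f,R)$. Your lower bound makes the points $A_{x_0}(h_i)$ $r/(\sqrt2\mathfrak T)$-separated, and your upper bound puts them in $B(A_{x_0}(f),R)$. Hence the discs of radius $r/(2\sqrt2\mathfrak T)$ around them are disjoint and lie in a disc of radius $R+r/(2\sqrt2\mathfrak T)$. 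Comparing areas, and using $\mathfrak T R/r\ge 1$,
\[
|\mathcal F\cap B(f,R)|_r\le\Big(\frac{2\sqrt2\,\mathfrak T R}{r}+1\Big)^2\le 18\,\mathfrak T^2\Big(\frac{R}{r}\Big)^2 .
\]

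\textbf{Arbitrary centres.} If you insist on centres $f_0\in C^2(I)$ not in $\mathcal F$, the recentring genuinely costs something: the same packing count then gives about $44\,\mathfrak T^2(R/r)^2$. You need one more observation. The image $A_{x_0}(\mathcal F\cap B(f_0,R))$ has diameter at most $2R$, so by Jung's theorem it lies in a disc of radius $2R/\sqrt3$. This brings the bound down to about $18.2\,\mathfrak T^2(R/r)^2$, which is again below $20\mathfrak T^2(R/r)^2$.
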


	\begin{definition}[Rescaling map $T_B$] \label{def:rescalingmap} Let $B := B(f_{0},r_{0}) \subset C^{2}(I)$. We define 
		\begin{displaymath} 
			T_{B}(f) :=T_{f_{0},r_{0}}:= r_{0}^{-1}(f - f_{0}), \qquad f \in C^{2}(I). 
		\end{displaymath} 
	\end{definition}
	
	Transversal families behave well under rescaling maps in $\mathcal{F}$.
	\begin{lemma}\label{lem1} Let $\mathcal{F} \subset C^{2}(I)$ be a $\mathfrak{T}$-transversal family, let $f_{0} \in \mathcal{F}$, and let $B := B(f_{0},r) \subset C^{2}(I)$. Then
		\begin{displaymath} \mathcal{F}_{B} := T_B(\mathcal{F}):=\{T_B(f): f \in \mathcal{F}\} \subset C^{2}(I) \end{displaymath}
		is a $\mathfrak{T}$-transversal family.
	\end{lemma}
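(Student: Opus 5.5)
The plan is to verify \eqref{eq-trans} directly for the rescaled family, using only that $T_B$ is an affine map of $C^2(I)$ whose linear part is multiplication by $r^{-1}$. Fix $f,g\in\mathcal{F}$ and set $\tilde f := T_B(f) = r^{-1}(f-f_0)$ and $\tilde g := T_B(g) = r^{-1}(g-f_0)$. The translation by $f_0$ cancels in differences, so
\[
\tilde f-\tilde g = r^{-1}(f-g), \qquad \tilde f'-\tilde g' = r^{-1}(f'-g'),
\]
and, by definition of the norm,
\[
\|\tilde f-\tilde g\|_{C^2(I)} = r^{-1}\|f-g\|_{C^2(I)}.
\]

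Both sides of \eqref{eq-trans} are therefore homogeneous of degree one in the difference. For each $\theta\in I$ we have
\[
|\tilde f(\theta)-\tilde g(\theta)|+|\tilde f'(\theta)-\tilde g'(\theta)| = r^{-1}\bigl(|f(\theta)-g(\theta)|+|f'(\theta)-g'(\theta)|\bigr).
\]
Taking the infimum over $\theta\in I$ and applying the $\mathfrak{T}$-transversality of $\mathcal{F}$ gives
\[
\inf_{\theta\in I}\bigl(|\tilde f(\theta)-\tilde g(\theta)|+|\tilde f'(\theta)-\tilde g'(\theta)|\bigr) \geq r^{-1}\mathfrak{T}^{-1}\|f-g\|_{C^2(I)} = \mathfrak{T}^{-1}\|\tilde f-\tilde g\|_{C^2(I)}.
\]
Every pair of elements of $T_B(\mathcal{F})$ arises this way, and $T_B(\mathcal{F})\subset C^2(I)$ because $C^2(I)$ is a vector space. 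Hence $\mathcal{F}_B$ is $\mathfrak{T}$-transversal.

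There is no real obstacle here. The only points to note are that the hypothesis $f_0\in\mathcal{F}$ is not needed for the inequality itself, since the same argument works for any center $f_0\in C^2(I)$, and that the transversality constant is preserved exactly because the condition is scale invariant.
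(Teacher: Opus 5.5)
Your proof is correct and is the same direct computation the paper relies on (it defers the proof to \cite{OPY2025}): the translation by $f_0$ cancels in differences, and both sides of the transversality inequality scale by exactly $r^{-1}$, so the constant $\mathfrak{T}$ is preserved. Your remark that the hypothesis $f_0\in\mathcal{F}$ is not needed is also right.
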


	Besides the rescaling operation $f \mapsto (f - f_{0})/r$, we will also need to consider another one, where $f \mapsto (f(rx +x_{0}) - y_{0})/r$. Transversality is also preserved by this operation, but the intervals and constants of transversality may change, as follows:
	\begin{lemma}\label{lem2}
		Let $\mathcal{F}\subset C^{2}(I)$ be a $\mathfrak{T}$-transversal family. Let $x_{0},y_{0} \in \R$ and $r \in (0,1]$. Then,
		\[T_{(x_{0},y_{0}),r}(\mathcal{F}):=\{f_{(x_{0},y_{0}),r}:  f\in \mathcal{F}\} \subset C^{2}(J)\]
		is a $(|J|\mathfrak{T}+1)$-transversal family on any compact interval $J \subset (I - x_{0})/r$, where $f_{(x_{0},y_{0}),r}(x)=(f(rx+x_0)-y_0)/r$ and $|J|$ is the length of $J$.
	\end{lemma}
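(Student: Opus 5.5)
The plan is to compare the two sides of \eqref{eq-trans} directly, for the rescaled pair, by a change of variables and then a mean-value argument in the $x$-variable. The only inputs are the transversality of $\mathcal{F}$ on $I$ and the assumption $r\leq 1$. I have not closed the argument with the sharp constant: one term, $r|D''|$, is not yet absorbed, and the best the plan currently yields is $(|J|+1)\mathfrak{T}+1$ rather than the stated $|J|\mathfrak{T}+1$.

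\emph{Setup.} Fix $f,F\in\mathcal{F}$ and write $g=f_{(x_0,y_0),r}$, $h=F_{(x_0,y_0),r}$ and $D:=f-F$. For $x\in J$ put $y=rx+x_0$, so that $y\in I$. The chain rule gives
\[g-h=\tfrac{1}{r}D(y),\qquad g'-h'=D'(y),\qquad g''-h''=rD''(y).\]
Set $m:=\inf_{I}(|D|+|D'|)$ and $M:=\|D\|_{C^2(I)}$, so transversality of $\mathcal{F}$ says $M\leq \mathfrak{T}m$.

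\emph{Lower side.} Let $x^\ast\in J$ (nearly) attain the infimum of $|g-h|+|g'-h'|$ over $J$, with $y^\ast=rx^\ast+x_0$. Call this infimum $L:=|D(y^\ast)|/r+|D'(y^\ast)|$. Since $r\leq1$,
\[L\geq |D(y^\ast)|+|D'(y^\ast)|\geq m\geq \mathfrak{T}^{-1}M.\]
This is the only place where the hypothesis $r\leq 1$ enters on this side.

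\emph{Upper side.} For any $x\in J$, apply the fundamental theorem of calculus in the rescaled variable $z\mapsto rz+x_0$ to both $D/r$ and $D'$:
\[\tfrac{1}{r}|D(y)|+|D'(y)|\leq L+\Big|\int_{x^\ast}^{x}\big(|D'(rz+x_0)|+r|D''(rz+x_0)|\big)\,dz\Big|\leq L+|J|\,M.\]
The last step uses $r\leq1$ and the definition of $\|\cdot\|_{C^2(I)}$ as the maximum of the pointwise sum. Combined with $M\leq\mathfrak{T}L$, this gives $|g-h|+|g'-h'|\leq(1+|J|\mathfrak{T})L$ at every point of $J$. This is exactly the stated constant for the zeroth- and first-order part of the norm.

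\emph{Main obstacle.} The second-derivative term $r|D''(y)|$ in $\|g-h\|_{C^2(J)}$ has not been accounted for. The only pointwise bound available for it is $r|D''(y)|\leq M\leq\mathfrak{T}L$, which makes the final constant $(|J|+1)\mathfrak{T}+1$. The obstacle is therefore to absorb $r|D''(y)|$ into the slack of the estimate above.

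The first thing I would try is to group it with the $|D'|$ contribution, keeping the pointwise sum $|D'(y)|+r|D''(y)|\leq M$ together rather than splitting it. This requires rewriting the integral estimate so that the $|J|M$ term only consumes $\int|D'|$, freeing a pointwise budget for $r|D''(y)|$, for instance by estimating $|D'(y)|$ pointwise instead of through the integral.

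If that bookkeeping does not close, the fallback is to check in \cite[Section 2]{OPY2025} whether the norm convention there produces $|J|\mathfrak{T}+1$ directly. Failing that, one can use $(|J|+1)\mathfrak{T}+1$, which is harmless for every later application, where only $\lesssim_{|J|,\mathfrak{T}}$ dependence matters.
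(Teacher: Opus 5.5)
Your estimates are correct. The bound $L\ge|D(y^\ast)|+|D'(y^\ast)|\ge m\ge\mathfrak{T}^{-1}\|D\|_{C^2(I)}$ uses exactly $r\le1$ and $rJ+x_0\subset I$. The integrated bound $|g-h|+|g'-h'|\le L+|J|\,\|D\|_{C^2(I)}$ holds on all of $J$. Adding $r|D''|\le\|D\|_{C^2(I)}$ then shows that the rescaled family is $((|J|+1)\mathfrak{T}+1)$-transversal on $J$. The paper itself gives no proof of this lemma; it imports it from \cite[Section 2]{OPY2025}.

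The step you could not close is not a bookkeeping issue. With the constant $|J|\mathfrak{T}+1$ the lemma is false for short $J$, so no regrouping can absorb $r|D''(y)|$. Here is a counterexample:
\begin{itemize}
\item Take $I=[-2,2]$ and $\mathcal{F}=\{0,\,x^{2}+1\}$. Then $D=x^2+1$, so $|D|+|D'|=(1+|x|)^2\ge1$ and $\|D\|_{C^2(I)}=11$; hence $\mathcal{F}$ is $11$-transversal on $I$.
\item With $x_0=y_0=0$ and $r=1$, the rescaled family is $\mathcal{F}$ itself.
\item On $J=[0,1/10]$ one has $\inf_J(|D|+|D'|)=1$, while $\|D\|_{C^2(J)}=(1.1)^2+2=3.21>2.1=|J|\mathfrak{T}+1$.
\end{itemize}
Conceptually, letting $|J|\to0$ with $r=1$ sends the claimed constant to $1$, which would force $D''=0$.

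Your suggested fix, estimating $|D'(y)|$ pointwise, only yields $|D(y^\ast)|/r+|J|M+M$, which is the same factor $|J|+1$. Switching norm conventions (sum or maximum of the three sup norms) does not rescue the statement either, since the same example still fails.

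So the correct conclusion of your plan is the lemma with constant $(|J|+1)\mathfrak{T}+1$, and you have proved that. It costs nothing downstream. The paper only applies the lemma with $J=[-2,2]$, where you get $5\mathfrak{T}+1$ instead of $4\mathfrak{T}+1$, and afterwards only the dependence on $\mathfrak{T}$ matters.

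For completeness: when $r|J|\ge1$, the pointwise bound $|D|/r+|D'|+r|D''|\le r^{-1}\|D\|_{C^2(I)}$ already gives the constant $|J|\mathfrak{T}$. Your integration argument is therefore needed only in the regime $r|J|<1$, which is exactly where the counterexample lives.
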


	Now we introduce the dyadic cubes associated with a transversal family.
	
	\begin{definition}[Dyadic system]\label{def-pullbackdyadic}
		Let $\mathcal{F}\subset C^2(I)$ be a $\mathfrak{T}$-transversal family, and let $x_{0}$ be the midpoint of $I$. Let $A:\mathcal{F}\to \R^2$, $A(f)=(f(x_0),f'(x_0))$ be the bi-Lipschitz map in the proof of Lemma \ref{lem-Ahlforsregularity}. The \emph{dyadic system associated with $\mathcal{F}$}, denoted $\mathcal{D}(\mathcal{F})$, is defined as $\mathcal{D}(\mathcal{F}):=\bigcup_{r\in 2^{\Z}} \mathcal{D}_r(\mathcal{F})$, where
		\[\mathcal{D}_r(\mathcal{F}):= \{A^{-1}(p) : p \in \mathcal{D}_{r}(A(\mathcal{F}))\}.\]
		Thus, cubes associated with $\mathcal{F}$ are pull-backs of dyadic squares in $\R^{2}$. We will use boldface symbols $\mathbf{F},\mathbf{F}'$ etc. to denote dyadic cubes in $\mathcal{F}$. For any $r\in 2^{\Z}$ and $p\in \mathcal{D}_r(A(\mathcal{F}))$, we say $\mathbf{F}=A^{-1}(p)\in  \mathcal{D}_r(\mathcal{F})$ is a dyadic cube with side length $r$.
		
		If $\mathcal{F}' \subset \mathcal{F}$ and $r \in 2^{\Z}$, we also define $\mathcal{D}_{r}(\mathcal{F}') := \{\mathbf{F} \in \mathcal{D}_{r}(\mathcal{F}) : \mathbf{F} \cap P \neq \emptyset\}$. 
	\end{definition}
	
	\begin{remark} If $\mathcal{F}$ is a transversal family, and $\mathcal{F}' \subset \mathcal{F}$ is an arbitrary subset, then $\mathcal{F}'$ is a transversal family on its own, and Definition \ref{def-pullbackdyadic} yields an associated dyadic system $\mathcal{D}(\mathcal{F}') = \bigcup_{r \in 2^{\Z}} \mathcal{D}_{r}(\mathcal{F}')$. Fortunately, as is clear from the construction, the families $\mathcal{D}_{r}(\mathcal{F}')$ coincide with the subsets of $\mathcal{D}_{r}(\mathcal{F})$ defined on the last line of Definition \ref{def-pullbackdyadic}. \end{remark}

	The following result says that the dyadic covering number and ordinary ball covering number in a transversal family are comparable.
	\begin{cor}\label{cor-covernumber}
		Let $ P\subset \mathcal{F}$ be a bounded and non-empty subset. Then for any dyadic number $r\leq \diam(P)$, we have
		\[\frac{|P|_r}{|\mathcal{D}_r(P)|}\in [\tfrac{1}{16},40\mathfrak{T}^4].\]
	\end{cor}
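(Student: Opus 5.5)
The plan is to transfer everything to $\R^2$ through the bi-Lipschitz map of Lemma \ref{lem-Ahlforsregularity}, and to compare both dyadic cubes and balls with cubes of $\R^2$ at the level of $A(P)$. Let $A(f)=(f(x_0),f'(x_0))$, where $x_0$ is the midpoint of $I$; write $L=\sqrt2\mathfrak{T}$, so that
\[
L^{-1}\|f-g\|_{C^2(I)}\le|A(f)-A(g)|\le L\|f-g\|_{C^2(I)}.
\]
In fact the upper bound holds with constant $\sqrt2$, since $|f(x_0)|+|f'(x_0)|\le\|f\|_{C^2}$. By Definition \ref{def-pullbackdyadic}, $|\mathcal{D}_r(P)|=|\mathcal{D}_r(A(P))|$, the number of dyadic squares of side $r$ in $\R^2$ meeting $A(P)$. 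The task therefore reduces to comparing $|P|_r$, a covering number in $C^2(I)$, with $N:=|\mathcal{D}_r(A(P))|$.

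\emph{Lower bound $|P|_r\ge N/16$.} Take an optimal cover of $P$ by $|P|_r$ balls $B(g_i,r)$ in $C^2(I)$. Each ball may be assumed to meet $P$. If $f,f'\in P\cap B(g_i,r)$, then $\|f-f'\|\le 2r$, so $|A(f)-A(f')|\le 2\sqrt2 r<3r$. Hence $A(P\cap B(g_i,r))$ lies in a planar set of diameter less than $3r$, which can meet at most $4\cdot4=16$ dyadic $r$-squares. Summing over $i$ gives $N\le 16|P|_r$.

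\emph{Upper bound $|P|_r\le 40\mathfrak{T}^4 N$.} Fix a dyadic square $p$ meeting $A(P)$. Its preimage $A^{-1}(p)\cap P$ has $C^2$-diameter at most $L\sqrt2 r=2\mathfrak{T}r$, since $\diam p=\sqrt2 r$. We need to cover this preimage by $O(\mathfrak{T}^4)$ balls of radius $r$ centred anywhere in $C^2(I)$. For this, use the upper $(2,20\mathfrak{T}^2)$-regularity from Lemma \ref{lem-Ahlforsregularity} with $R=2\mathfrak{T}r$, applied to the transversal family $P$ (every subset of a transversal family is itself transversal). This gives
\[
|P\cap B(f,2\mathfrak{T}r)|_r\le 20\mathfrak{T}^2\,(2\mathfrak{T})^2=80\mathfrak{T}^4.
\]
Summing over $p$ yields $|P|_r\le 80\mathfrak{T}^4N$.

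The main obstacle is the constant: this argument gives $80\mathfrak{T}^4$ rather than $40\mathfrak{T}^4$. To improve it I would not invoke the regularity lemma as a black box. Instead, cover $A^{-1}(p)$ directly: subdivide $p$ into $k^2$ subsquares of side $r/k$, with $k=\lceil \mathfrak{T}\rceil$ (or a comparable choice), so that each subsquare has preimage of $C^2$-diameter at most $L\sqrt2 r/k\le 2r$. A set of diameter at most $2r$ can then be covered by balls of radius $r$ centred at one of its points, after adjusting the subdivision by a factor $2$. This gives roughly $(2\mathfrak{T}+1)^2\lesssim 9\mathfrak{T}^2$ balls per square, which is comfortably below $40\mathfrak{T}^4$ for $\mathfrak{T}\ge1$. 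The only care needed is the ceiling in $k$, and $\mathfrak{T}\ge1$ absorbs it.

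The hypothesis $r\le\diam(P)$ is used only to make sure the comparison is non-degenerate, i.e. that $r$-balls and $r$-squares are counted at a scale where $P$ is non-trivial; both inequalities above hold without it.
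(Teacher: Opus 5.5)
Your proof is correct, provided the upper bound is taken from your subdivision argument rather than from the regularity lemma; that part needs rewriting.

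The paper gives no proof of this corollary and defers to \cite{OPY2025}. For comparison, the constant $40\mathfrak{T}^4=20\mathfrak{T}^2(\sqrt{2}\mathfrak{T})^2$ is what your first route gives if the upper $(2,20\mathfrak{T}^2)$-regularity of Lemma \ref{lem-Ahlforsregularity} is applied at radius $\sqrt{2}\mathfrak{T}r$. The preimage of a dyadic $r$-square has diameter up to $2\mathfrak{T}r$, so enclosing it in a ball centred at one of its own points needs radius $2\mathfrak{T}r$; that is exactly where your $80\mathfrak{T}^4$ comes from.

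Your subdivision bypasses the regularity lemma, is more elementary, and gives the stronger bound $|P|_r\le\lceil 2\mathfrak{T}\rceil^2|\mathcal{D}_r(P)|\le 9\mathfrak{T}^2|\mathcal{D}_r(P)|$. State it directly with $k=\lceil 2\mathfrak{T}\rceil$. If $A(f),A(g)$ lie in the same half-open subsquare of side $r/k$, then $\|f-g\|_{C^2(I)}\le\sqrt{2}\mathfrak{T}|A(f)-A(g)|<2\mathfrak{T}r/k\le r$. Hence each subsquare meeting $A(P)$ is covered by one ball $B(f,r)$ centred at any such $f\in P$, for open or closed balls alike. Delete the intermediate sentence about covering a set of diameter $2r$ by radius-$r$ balls centred at its points: as stated it is false for a single ball, and the count does not use it.

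Two minor points. The upper Lipschitz constant of $A$ is actually $1$, since the Euclidean norm is at most the $\ell^1$ norm, which is at most $\|\cdot\|_{C^2(I)}$. This would improve your lower bound to $1/9$, though your $1/16$ argument is fine as written. You are also right that the hypothesis $r\le\diam(P)$ is never used.
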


	\subsection{Uniform sets and branching functions.}
	
	This part is mainly stated for transversal families, see \cite[Section 2]{OPY2025} for the proofs. We should also mention that the corresponding definitions and results for sets of dyadic cubes in $\mathbb R^2$ have been established in \cite{PO2023}. In what follows, we consider a $\mathfrak{T}$-transversal family $\mathcal{F} \subset B_{C^2}(1)$ over $[-2,2]$. 
	
	\begin{definition}[Uniform sets]\label{def:uniformset}
		Let $n \ge 1$, and let $0<\delta = \Delta_n < \Delta_{n-1} < \cdots < \Delta_1 \le \Delta_0 = 1$ be a sequence of scales in $2^{-\N}$. We say $\mathcal{F}$ is $\{\Delta_j\}_{j=1}^n$-\emph{uniform} if there is a sequence $\{N_j\}_{j=1}^n$ with $N_j \in 2^{\mathbb{N}}$ such that \[|\mathcal{D}_{\Delta_j}(\mathcal{F} \cap \mathbf{F})| = N_j\quad\text{for all}\; j \in \{1,\dots,n\} \;\text{and for all}\; \mathbf{F} \in \mathcal{D}_{\Delta_{j-1}}(\mathcal{F}).\]
		When $\Delta_j=|\log \delta|^{-j}$ and $n=\log_{|\log\delta|}\delta^{-1}$, we simply say that $\mathcal{F}$ is \emph{uniform}.
	\end{definition}
	
	It turns out that we can always find “dense uniform subsets” in a transversal family.
	
	\begin{lemma}\label{lem-uniformsubset}
		For each $\mathfrak{T}\geq1$, there exists $\delta_0=\delta_0(\mathfrak{T})>0$ such that the following holds for all $\delta\in (0,\delta_0]$.
		
		Let $\mathcal{F} \subset B_{C^2}(1)$ be a $\mathfrak{T}$-transversal family. Then there exists a uniform sub-family $\mathcal{F}'\subset\mathcal{F}$ such that 
		\[|\mathcal{F}'|_\delta \gtrsim (\log |\log \delta |)^{\tfrac{-|\log\delta|}{\log |\log \delta |}} |\mathcal{F}|_\delta.\]
		
	\end{lemma}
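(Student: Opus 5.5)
The plan is to transfer the problem to $\R^2$ through the bi-Lipschitz identification of Lemma \ref{lem-Ahlforsregularity} and run the standard greedy uniformisation from the dyadic-cube setting (as in \cite{PO2023}). Fix $x_0$ the midpoint of $[-2,2]$ and let $A(f)=(f(x_0),f'(x_0))$. Since $\mathcal{F}\subset B_{C^2}(1)$ is $\mathfrak{T}$-transversal, $A(\mathcal{F})$ lies in a bounded square and $A$ is $\sqrt2\mathfrak{T}$-bi-Lipschitz. Cubes in $\mathcal{D}_r(\mathcal{F})$ are exactly pullbacks of dyadic squares in $\mathcal{D}_r(A(\mathcal{F}))$. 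So it suffices to find a subset $P'\subset \mathcal{D}_\delta(A(\mathcal{F}))$ that is uniform with respect to $\Delta_j=|\log\delta|^{-j}$ and has a large proportion of the cubes, and then set $\mathcal{F}'$ to contain one function from each $\delta$-cube of $P'$. By Corollary \ref{cor-covernumber}, $|\mathcal{F}'|_\delta\sim_{\mathfrak{T}}|P'|$ and $|\mathcal{F}|_\delta\sim_\mathfrak{T}|\mathcal{D}_\delta(\mathcal{F})|$, so the constant losses are harmless as long as $\delta_0(\mathfrak{T})$ absorbs them. (If $\Delta_j$ is not dyadic, round to the nearest dyadic scale; this changes nothing essential.)

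The core step is a bottom-up pigeonholing over $n=\log_{|\log\delta|}\delta^{-1}$ levels. Put $P_n:=\mathcal{D}_\delta(A(\mathcal{F}))$. Given $P_j$ that is already uniform between scales $\Delta_j$ and $\delta$, look at the cubes $Q\in\mathcal{D}_{\Delta_{j-1}}(P_j)$. Each such $Q$ has some number $|\mathcal{D}_{\Delta_j}(P_j\cap Q)|\in[1,(\Delta_{j-1}/\Delta_j)^2]=[1,|\log\delta|^2]$ of children. Pigeonhole this number into one of $\lesssim\log|\log\delta|$ dyadic values $N_j$, choosing the value class that carries the most points of $P_j$. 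In each retained $Q$, discard children until exactly $N_j$ remain; the count is only determined up to a factor $2$, so this loses at most a factor $2$. By the uniformity already established below scale $\Delta_j$, every child carries the same number of $\delta$-cubes, so the proportional mass is preserved. Call the result $P_{j-1}$. We get $|P_{j-1}|\gtrsim (\log|\log\delta|)^{-1}|P_j|$, and uniformity at the finer levels is untouched, because we only removed whole $\Delta_j$-cubes (and hence whole subtrees).

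Iterating $n$ times gives $|P_0|\ge (C\log|\log\delta|)^{-n}|P_n|$. Since $n=|\log\delta|/\log|\log\delta|$, the factor $C^{-n}$ can be absorbed: at the cost of the implicit constant, or by replacing $\log|\log\delta|$ by a slightly larger multiple, it is dominated by the stated $(\log|\log\delta|)^{-n}$ bound up to constants once $\delta\le\delta_0$. I expect the main care to be needed here, in the bookkeeping that this constant-per-level loss fits the claimed exponent, together with the verification that uniformity in $\R^2$ transfers exactly to uniformity of $\mathcal{F}'$ via the pullback dyadic system of Definition \ref{def-pullbackdyadic}. The transfer is immediate, because $\mathcal{D}_r(\mathcal{F}')$ is by definition the set of pullbacks of $\mathcal{D}_r(A(\mathcal{F}'))$.
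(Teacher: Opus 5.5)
Your reduction through $A$ and the bottom-up pigeonholing are fine, and this is the standard route the paper defers to: transfer through $A$, then the Euclidean uniformisation lemma. Two points in particular hold up. Deleting whole $\Delta_j$-cubes preserves uniformity at finer levels, and inductive uniformity makes child counts proportional to mass.

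\textbf{The gap is in the final bookkeeping.} Your loss per level is $C\log|\log\delta|$ for an absolute constant $C>1$. A $\Delta_{j-1}$-cube in $\R^2$ has up to $|\log\delta|^2$ children, so there are about $2\log_2|\log\delta|$ dyadic classes, and trimming costs another factor $2$. After $n=|\log\delta|/\log|\log\delta|$ levels the surplus factor is
\[C^{-n}=\exp\Big(-\ln C\cdot\frac{|\log\delta|}{\log|\log\delta|}\Big)\longrightarrow 0\quad(\delta\to0).\]
This is not bounded below. It therefore cannot be absorbed into the implicit constant in $\gtrsim$, nor by shrinking $\delta_0$. Replacing $\log|\log\delta|$ by $C\log|\log\delta|$ changes the inequality rather than proving it. What your argument actually establishes is
$|\mathcal{F}'|_\delta\gtrsim_{\mathfrak{T}}(C\log|\log\delta|)^{-|\log\delta|/\log|\log\delta|}|\mathcal{F}|_\delta$, not the displayed bound.

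\textbf{One-level pigeonholing cannot reach the displayed base.} Getting base exactly $\log|\log\delta|$ would need a loss of at most $(1+O(1/n))\log|\log\delta|$ at every level. Even an optimal choice of the threshold falls short. Write $c_Q\le B\sim|\log\delta|^2$ for the child counts and $H_B=\sum_{k\le B}k^{-1}$, and use $\sum_Q c_Q\le H_B\max_N N\,|\{Q:c_Q\ge N\}|$. This still loses $H_B\approx\ln B\approx 2\ln|\log\delta|$, about $1.39\log_2|\log\delta|$, per level. That is a constant multiple greater than one of $\log|\log\delta|$ in the usual bases.

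\textbf{Suggested fix.} State and prove the lemma with $C\log|\log\delta|$ in the base, as in the standard $(2T)^{-n}$ uniformisation lemma. This costs nothing downstream. For $\delta\le\delta_0(\epsilon)$ one has $(C\log|\log\delta|)^{-n}\geq\delta^{\epsilon}$, and that is all Corollary \ref{cor-uniformsets} and the later $\gtrapprox$-pigeonholing arguments use.
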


	Lemma \ref{lem-uniformsubset} has the following useful corollary.
	
	\begin{cor}\label{cor-uniformsets}
		For every $s\in(0,2]$ and $\epsilon\in(0,1)$, there exists $\delta_0=\delta_0(\mathfrak{T},\epsilon)>0$ such that the following holds for all $\delta\in(0,\delta_0]$. Assume $\mathcal{F}$ is a $(\delta,s,\delta^{-\epsilon})$-set. Then, there exists a uniform subset $\mathcal{F}'\subset\mathcal{F}$ so that $|\mathcal{F}'|_\delta \geq\delta^\epsilon|\mathcal{F}|_\delta$. In particular, $\mathcal{F}'$ is a $(\delta,s,\delta^{-2\epsilon})$-set.
	\end{cor}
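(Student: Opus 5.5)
Apply Lemma \ref{lem-uniformsubset} to $\mathcal{F}$, obtaining a uniform subfamily $\mathcal{F}'\subset\mathcal{F}$ with
\[|\mathcal{F}'|_\delta\gtrsim (\log|\log\delta|)^{-|\log\delta|/\log|\log\delta|}|\mathcal{F}|_\delta .\]
The first task is to show that this loss factor is at most $\delta^{-\epsilon}$ once $\delta\le\delta_0(\mathfrak{T},\epsilon)$. Writing $L=|\log\delta|$, the loss equals $\exp\bigl(L\log\log L/\log L\bigr)$. Since $\log\log L/\log L\to0$ as $L\to\infty$, we get $\log\log L/\log L\le\epsilon$ for $\delta$ small, and then the loss is at most $\delta^{-\epsilon}$. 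This gives $|\mathcal{F}'|_\delta\ge\delta^{\epsilon}|\mathcal{F}|_\delta$ after absorbing the implicit constant, which requires shrinking $\delta_0$ slightly or, if needed, running the argument with $\epsilon/2$.

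For the "in particular" part we verify the $(\delta,s)$-set condition for $\mathcal{F}'$ directly. For any ball $B(f,r)$ with $\delta\le r\le1$, monotonicity of covering numbers gives
\[|\mathcal{F}'\cap B(f,r)|_\delta\le|\mathcal{F}\cap B(f,r)|_\delta\le\delta^{-\epsilon}r^s|\mathcal{F}|_\delta\le\delta^{-2\epsilon}r^s|\mathcal{F}'|_\delta .\]
The last step uses the density bound from the first paragraph. So $\mathcal{F}'$ is a $(\delta,s,\delta^{-2\epsilon})$-set.

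The only delicate point is the bookkeeping of constants. The implicit constant in Lemma \ref{lem-uniformsubset} may depend on $\mathfrak{T}$, and covering numbers in $C^2$ versus dyadic counts differ by $\mathfrak{T}$-dependent factors via Corollary \ref{cor-covernumber}. All of these are absorbed into $\delta^{-\epsilon}$ by choosing $\delta_0$ small depending on $\mathfrak{T}$ and $\epsilon$. The statement also holds for every $s\in(0,2]$ uniformly, since $s$ plays no role in the density estimate.
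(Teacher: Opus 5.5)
Your proof is correct and is the intended argument. The paper gives no separate proof and defers to \cite{OPY2025}, but the corollary is exactly Lemma \ref{lem-uniformsubset} combined with the observation that $(\log|\log\delta|)^{|\log\delta|/\log|\log\delta|}=\delta^{-o(1)}$, followed by the monotonicity check you give for the $(\delta,s,\delta^{-2\epsilon})$-property.
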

	
	\begin{definition}[Branching function]\label{def:branching}
		Let $\mathcal{F}\subset B_{C^2}(1)$ be a uniform $\mathfrak{T}$-transversal family. We define the \emph{branching function} $\beta: [0,m] \to [0,\infty)$ by setting $\beta(0)=0$ and
		\[\beta(j) := \frac{\log(|\mathcal{D}_{\Delta_j}(\mathcal{F})|}{\log|\log \delta|}, \qquad j \in \{1, \dots, m\},\]
		and then interpolating linearly. Here $\Delta_j=|\log \delta |^{-j}$ and $m=\log_{|\log\delta|}\delta^{-1}$. \end{definition}

	We end this subsection by stating the following lemma for a collection of cube families in $\mathbb R^2$, see \cite[Lemma 2.13]{WW2} for its proof.
	
	\begin{lemma}\label{lem:uniform branching}
		Let $\delta\in(0,1)$, $M=|\log\de|$, and  $n=\lceil \log_M \delta^{-1} \rceil$. Suppose $\mathcal{G}=\{\mathcal{P}_k\}_{k\in I}$, where $\mathcal{P}_k\subset \mathcal{D}_\delta$ and $I$ is a finite index set. Let $\beta_k$ be the branching function of $\mathcal{P}_k$.
		
		Then there is subset $\mathcal{G}'\subset\mathcal{G}$ with $|\mathcal{G}'|\gtrsim (n \log M)^{-n}|\mathcal{G}|$ and a uniform branching function $\beta'$ such that for any $\mathcal{P}_k\in\mathcal{G}'$, $|\beta_k(j)-\beta'(j)|\leq  |\log \delta|^{-1}$ for all $0\leq j\leq n$. In particular, $|\mathcal{G}'|\gtrapprox|\mathcal{G}|$.
	\end{lemma}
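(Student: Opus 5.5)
The statement is a pigeonholing lemma, so I would prove it by a counting argument over the finitely many possible branching profiles.

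\emph{Setup.} Since $M=|\log\delta|$ and $\Delta_j=M^{-j}$, the value $|\mathcal{D}_{\Delta_j}(\mathcal{P}_k)|$ lies in $[1,C\Delta_j^{-2}]$. Hence
\[
\beta_k(j)=\frac{\log|\mathcal{D}_{\Delta_j}(\mathcal{P}_k)|}{\log M}\in[0,2j+O(1/\log M)].
\]
Each $\beta_k$ is determined by its values at the integers $j=0,\dots,n$, because it is linearly interpolated in between.

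\emph{Discretization.} I round each value $\beta_k(j)$ down to the grid $|\log\delta|^{-1}\mathbb{Z}$. The rounded vector $(\lfloor M\beta_k(j)\rfloor/M)_{j\le n}$ then takes at most $(2jM+O(M))$ values in coordinate $j$. However, this naive count gives about $(CnM)^n$ classes. That is worse than the target $(n\log M)^{-n}$, since $M\gg\log M$.

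\emph{Better parametrization.} The count should therefore use increments and exploit their size. The increments $\beta_k(j)-\beta_k(j-1)$ lie in $[0,2]$ up to tiny error, since each $\Delta_{j-1}$-cube contains at most $\sim M^2$ cubes of side $\Delta_j$. Each increment equals
\[
\frac{\log\bigl(|\mathcal{D}_{\Delta_j}(\mathcal{P}_k)|/|\mathcal{D}_{\Delta_{j-1}}(\mathcal{P}_k)|\bigr)}{\log M},
\]
and after dyadic rounding the ratio inside the logarithm takes one of $O(\log M)$ dyadic values $2^i$, $0\le i\le 2\log_2 M+O(1)$. Rounding $|\mathcal{D}_{\Delta_j}(\mathcal{P}_k)|$ to the nearest power of $2$ changes $\beta_k(j)$ by at most $1/\log M$.

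\emph{Main obstacle.} The stated tolerance is $|\log\delta|^{-1}=M^{-1}$, which is smaller than $1/\log M$. So dyadic rounding alone does not suffice, and this is the step I expect to be hardest. My first attempt would be to pigeonhole $|\mathcal{D}_{\Delta_j}(\mathcal{P}_k)|$ into multiplicative windows $[2^{i/L},2^{(i+1)/L})$ with $L\sim M/\log M$. The issue is that this gives $O(M)$ classes per step, so I would need to check against \cite[Lemma 2.13]{WW2} whether the intended count uses $n\log M$ classes per step. If it does, the tolerance should be read as $\lesssim 1/\log M$ per coordinate, or the accumulated errors are controlled differently.

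\emph{Conclusion.} Once the per-step class count $K$ is fixed, there are at most $K^n$ classes of rounded profiles. By pigeonhole, some class $\mathcal{G}'$ satisfies $|\mathcal{G}'|\ge K^{-n}|\mathcal{G}|$. I take $\beta'$ to be the common rounded profile of that class, interpolated linearly. Every $\mathcal{P}_k\in\mathcal{G}'$ then has $\beta_k$ within the rounding error of $\beta'$ at each integer $j$.

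Finally, $n\approx\log(1/\delta)/\log\log(1/\delta)$. Therefore
\[
(n\log M)^n=\exp\bigl(O(\log(1/\delta))\bigr)\cdot\bigl(\text{lower order}\bigr),
\]
and more precisely it is $\delta^{-o(1)}$, since $n\log(n\log M)\lesssim \log(1/\delta)\cdot\frac{\log\log(1/\delta)}{\log\log(1/\delta)}$. This is where the $\lessapprox$ convention absorbs the loss, so $|\mathcal{G}'|\gtrapprox|\mathcal{G}|$ follows.
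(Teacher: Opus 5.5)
Your overall strategy (pigeonholing over finitely many discretised branching profiles) is the right one. However, the proposal does not reach the statement, and the ``main obstacle'' you leave open is resolved by a hypothesis you did not use.

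A branching function is only defined for a uniform set. Uniformity here means that every cube of $\mathcal{D}_{\Delta_{j-1}}(\mathcal{P}_k)$ contains \emph{exactly} $N_j^{(k)}\in 2^{\mathbb{N}}$ cubes of $\mathcal{D}_{\Delta_j}(\mathcal{P}_k)$. Hence $|\mathcal{D}_{\Delta_j}(\mathcal{P}_k)|=\prod_{i\le j}N_i^{(k)}$ is exactly a power of two, and no rounding occurs. So $\beta_k$ is determined by the integer vector $(\log_2 N_j^{(k)})_{j=1}^n\in\{0,1,\dots,2\log_2 M+O(1)\}^n$. Pigeonholing this vector gives $\mathcal{G}'$ with $|\mathcal{G}'|\ge (C\log M)^{-n}|\mathcal{G}|$ on which all $\beta_k$ coincide. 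Taking $\beta'$ to be the common function, the tolerance $|\log\delta|^{-1}$ holds with zero error.

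Without uniformity your worry would be justified. For arbitrary sets one can realise about $\delta^{-1+o(1)}$ profiles that are pairwise more than $2|\log\delta|^{-1}$ apart at some integer $j$. So no selection losing only $\delta^{-o(1)}$ could achieve that tolerance, and this is exactly where the uniformity hypothesis must enter.

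Your closing asymptotics are also wrong. Since $n\log M\approx\log(1/\delta)=M$, one has $(n\log M)^n\approx M^n\approx\delta^{-1}$. Your own bound $n\log(n\log M)\lesssim\log(1/\delta)$ says exactly this, not $\delta^{-o(1)}$. So a count of order $(n\log M)^n$ classes, which is what counting the values $\beta_k(j)$ coordinate by coordinate gives, cannot yield $|\mathcal{G}'|\gtrapprox|\mathcal{G}|$.

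You need the increment count instead:
\[
(C\log M)^n=\exp\bigl(n\log(C\log M)\bigr)=\delta^{-O(\log\log\log(1/\delta)/\log\log(1/\delta))}=\delta^{-o(1)}.
\]
Since $C\log M\le n\log M$ once $\delta$ is small, this gives both the displayed bound and the final $\gtrapprox$. Note that the $\gtrapprox$ conclusion does not follow from the bound $(n\log M)^{-n}$ by itself.
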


	\subsection{Some results from incidence geometry}
	\begin{definition}[$\delta$-configuration]\label{def:shading}
		Let $\delta\in(0,1]\cap 2^{\N}$. We say $(\mathcal{F},\mathcal{P})$ is a {\bf{$\delta$-configuration}} (or simply a configuration), if $\mathcal{F}\subset B_{C^2}(1)\subset C^2([-2,2])$ is a finite $\mathfrak{T}$-transversal family, and for each $f\in \mathcal{F}$, there is a set of dyadic $\delta$-cubes or $\delta$-balls $\mathcal{P}(f)$ such that 
		\begin{itemize}
			\item [(i)] $\cup\mathcal{P}(f)\subset [-1,1]^2$,
			\item [(ii)] $p\cap \Gamma_f\neq\emptyset$ for all $p\in \mathcal{P}(f)$.
		\end{itemize}
		Write $\mathbf{E}_{\mathcal{F},\mathcal{P}}:=\cup_{f\in\mathcal{F}} \mathcal{P}(f)$. We call the mapping $\mathcal{P}:\mathcal{F}\to \mathbf{E}_{\mathcal{F},\mathcal{P}}$ a {\bf{shading}} in the terminology of \cite{WW1}.
		
		We say a $\delta$-configuration $(\mathcal{F},\mathcal{P})$ is {\bf $\lambda$-dense} if $|\mathcal{P}(f)|\geq \lambda \delta^{-1}$ for any $f\in \mathcal{F}$. We also writes $(\mathcal{F},\mathcal{P})_\delta$ to emphasize the dependence on $\delta$. 
	\end{definition}

	\begin{definition}
		Let $(\mathcal{F},\mathcal{P})_\delta$ be a given configuration. We say a $\delta$-configuration $(\mathcal{F}',\mathcal{P}')_\delta$ is a {\bf refinement} of $(\mathcal{F},\mathcal{P})_\delta$, if $\mathcal{F}'\subset \mathcal{F}$, $\mathcal{P}'(f)\subset \mathcal{P}(f)$ for all $f\in \mathcal{F}'$, and moreover
		\begin{equation}
			\sum_{f'\in \mathcal{F}'}|\mathcal{P}'(f)|\gtrapprox\sum_{f\in \mathcal{F}}|\mathcal{P}(f)|.
		\end{equation}
	\end{definition}
	
	For natational convenience, we define the cover of a shading by curved tubes.
	
	\begin{definition}\label{tube-segment}
		Let $(\mathcal{F},\mathcal{P})_\delta$ be a given configuration. Fix $f\in\mathcal{F}$ and $r\in (0,1]\cap 2^{-\N}$. For each $Q\in \mathcal{D}_r(\mathcal{P}(f))$, let $u_Q:=\Gamma_{f|_{I_Q}}$ be the graph of $f$ above $I_Q=\pi_x(Q)$, so we have $\mathcal{P}(f)\cap Q\subset u_Q(6\delta)$. Here $u_Q(6\delta):=\{(x,y)\in I_Q\times \R: |y-f(x)|\leq 6\delta\}$ is the vertical $6\delta$-neighborhood of $u_Q$. We define
		\begin{equation}\label{equ-curvedrectangle}
			[\mathcal{P}(f)]_r:=\{u_Q(6\delta): Q\in \mathcal{D}_r(\mathcal{P}(f))\}.
		\end{equation}
		In this paper, we will use $J$ to denote the element in the family $[\mathcal{P}(f)]_r$. 
	\end{definition}
	
	\begin{remark}
		Note that each element in $[\mathcal{P}(f)]_r$ is a curved rectangle of dimensions $\sim (\delta\times r)$ and each cube in $\mathcal{P}(f)$ is contained in a unique element of $[\mathcal{P}(f)]_r$. Moreover, since $f\in \mathcal{F}\subset B_{C^2}(1)$, each $u_Q$ can intersect at most two dyadic $r$-cubes. If $\mathcal{P}(f)$ is uniform, we have
		\[|\mathcal{P}(f)\cap J|\sim |\mathcal{P}(f)\cap Q|, \quad J\in [\mathcal{P}(f)]_r.\]
	\end{remark}
	
	\begin{definition}\label{twoendsreduction}
		Let $v>0$, $C>0$, and let $\delta\in(0,1)\cap 2^{\N}$. Let $(\mathcal{F},\mathcal{P})_\delta$ be a given configuration. For each $f\in\mathcal{F}$, $\mathcal{P}(f)\subset \mathcal{D}_\delta$ is uniform. Then we define the scale 
		\begin{equation}
			\label{twoendsreduction1}
			\rho:=\min\{r\in[\delta,1]\cap 2^{-\N}:|\mathcal{P}(f)|_r\leq C^{-1}r^{-v}\}.
		\end{equation}
		Consequently, since $\mathcal{P}(f)$ is uniform, for all $r\in[\delta,\rho]\cap 2^{-\N}$ and all $J\in [\mathcal{P}(f)]_\rho$, 
		\begin{equation}\label{twoendsreduction2}
			|\mathcal{P}(f)\cap J|_r\approx \frac{|\mathcal{P}(f)|_r}{|\mathcal{P}(f)|_\rho}\gtrsim (\rho/r)^v.
		\end{equation}
	\end{definition}
	\begin{remark}\label{remk-1}
		For $J=u_Q(6\delta)\in [\mathcal{P}(f)]_\rho$, let $S_J$ be the rescaling map taking $Q$ to $[-1,1)^2$. By the definition of $\rho=\rho(f;v,C)$, we see that $S_J(\mathcal{P}(f)\cap J)$ is a $(\delta/\rho,v, C')$-set for all $J\in [\mathcal{P}(f)]_\rho$, for some $C'\lessapprox1$.
	\end{remark}

	\begin{definition}[Two-ends condition]\label{def:twoends}
		Let $(\mathcal{F},\mathcal{P})_\delta$ be a given configuration. Let $0<\epsilon_2<\epsilon_1<1$.
		For $f\in \mathcal{F}$, we say $\mathcal{P}(f)$ is \emph{ $(\epsilon_1 ,\epsilon_2, C)$-two-ends} if
		\begin{equation}
			|\mathcal{P}(f)\cap B(x, \delta^{\epsilon_1})|_\delta\leq C\delta^{\epsilon_2} |\mathcal{P}(f)|, \quad x\in \R^2.
		\end{equation}
		We say $\mathcal{P}$ is $(\epsilon_1 ,\epsilon_2, C)$-two-ends if $\mathcal{P}(f)$ is $(\epsilon_1 ,\epsilon_2, C)$-two-ends for all $f\in \mathcal{F}$. When $C$ is not important in the context, we say $\mathcal{P}$ is \emph{$(\epsilon_1 ,\epsilon_2)$-two-ends}, or simply \emph{two-ends}.
	\end{definition}
	
	By the two definitions above, we can get the following result, see \cite[Lemma 1.22]{WW1}.
	
	\begin{lemma}\label{lem-rho}
		Let $\delta\in (0,1)$. Let $(\mathcal{F},\mathcal{P})_\delta$ be a given configuration. Let $0<\epsilon_2<\epsilon_1<1$, let $v<\epsilon_2$ and $C>0$.
		Suppose $\mathcal{P}(f)$ is $(\epsilon_1,\epsilon_2, C)$-two-ends, and let $\rho=\rho(f;v,C)$ be the scale given by Definition \ref{twoendsreduction}.
		Then $\rho\geq \delta^{\epsilon_1}$.
	\end{lemma}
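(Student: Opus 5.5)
Argue by contradiction: assume $\rho<\delta^{\epsilon_1}$ and contradict the two-ends condition through the definition of $\rho$. Since $\rho$ is the least dyadic scale in $[\delta,1]$ with $|\mathcal{P}(f)|_r\le C^{-1}r^{-v}$, the dyadic scale $r_1:=2\rho$ does not satisfy the defining inequality (when $\rho\geq\delta$ is strictly below $\delta^{\epsilon_1}$, also $2\rho\le 2\delta^{\epsilon_1}$). The two-ends condition instead controls $|\mathcal{P}(f)|$ at scales $\geq\delta^{\epsilon_1}$ from below. So I will compare the number of $\delta^{\epsilon_1}$-balls needed to cover $\mathcal{P}(f)$ with the bound coming from the minimality of $\rho$.

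\emph{Step 1.} Two-ends gives a lower bound on coverings at scale $\delta^{\epsilon_1}$. Cover $\mathcal{P}(f)$ by $N:=|\mathcal{P}(f)|_{\delta^{\epsilon_1}}$ balls $B(x_i,\delta^{\epsilon_1})$. Each contains at most $C\delta^{\epsilon_2}|\mathcal{P}(f)|$ of the $\delta$-cubes (up to the constant in comparing $|\cdot|_\delta$ with cardinality). Hence $N\gtrsim C^{-1}\delta^{-\epsilon_2}$.

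\emph{Step 2.} Monotonicity and the defining property of $\rho$ give an upper bound. Covering numbers are non-increasing in the scale, so if $\rho\le\delta^{\epsilon_1}$ then
\[|\mathcal{P}(f)|_{\delta^{\epsilon_1}}\lesssim|\mathcal{P}(f)|_\rho\le C^{-1}\rho^{-v}\le C^{-1}\delta^{-v}.\]
The last inequality uses $\rho\ge\delta$ and $v>0$. Combining with Step 1 gives $\delta^{-\epsilon_2}\lesssim\delta^{-v}$, which is false for small $\delta$ because $v<\epsilon_2$. This forces $\rho>\delta^{\epsilon_1}$, up to a factor of $2$ from dyadic rounding. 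That rounding is absorbed by reading $\rho\ge\delta^{\epsilon_1}$ at the dyadic level, or by rerunning the argument at the dyadic scale just below $\delta^{\epsilon_1}$.

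\emph{Main obstacle.} The only delicate point is the constants. Step 1 gives $N\ge (C')^{-1}C^{-1}\delta^{-\epsilon_2}$ with $C'$ absolute, while Step 2 gives $N\le C'' C^{-1}\delta^{-v}$. The factors of $C$ cancel, leaving $\delta^{v-\epsilon_2}\lesssim 1$. This is contradicted once $\delta$ is below a threshold depending only on $\epsilon_2-v$, which is the implicit smallness used in \cite[Lemma 1.22]{WW1}. I would state that threshold explicitly in the proof. One should also check that $\rho$ is well defined: $r=1$ satisfies the defining inequality whenever $C\le 1$, and otherwise one adopts the convention $\rho=1$, in which case the claim is trivial.
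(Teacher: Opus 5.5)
Your proof is correct and is the direct comparison the paper has in mind when it says the lemma follows from the two definitions (it defers to \cite[Lemma 1.22]{WW1}). Two-ends forces $|\mathcal{P}(f)|_{\delta^{\epsilon_1}}\gtrsim C^{-1}\delta^{-\epsilon_2}$, while $\rho\le\delta^{\epsilon_1}$ together with monotonicity and $\rho\ge\delta$ forces $|\mathcal{P}(f)|_{\delta^{\epsilon_1}}\le C^{-1}\delta^{-v}$; you rightly note this only gives a contradiction once $\delta$ is below a threshold depending on $\epsilon_2-v$. One slip, which you never use: minimality of $\rho$ says that $\rho/2$ fails the defining inequality, not $2\rho$. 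Also, no dyadic rounding is needed, since the argument already rules out every $\rho\le\delta^{\epsilon_1}$ directly.
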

	
	Next, we state a Katz-Tao subset selection lemma taken from \cite[Section 2]{orpabckt}.
	
	\begin{lemma}\label{lemma-KT} Let $s \geq 0$, $\delta\in (0,1)$, and $C \geq 1$. Let $P \subset [-1,1]^{d}$ be a Katz-Tao $(\delta,s,C)$-set, and let $\delta \leq \rho \leq 1$. Then $P$ contains a Katz-Tao $(\rho,s,C_{d})$-subset $P'$ of cardinality $|P'| \gtrsim_{d} (\delta/\rho)^{s}|P|/C$, where $C_{d} \geq 1$ is a constant depending only on $d$.  \end{lemma}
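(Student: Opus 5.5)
The plan is to run a bottom-up pruning on the dyadic tree between scales $\rho$ and $1$, choosing at most one point of $P$ per surviving dyadic $\rho$-cube. Dyadic cubes are comparable to balls, so the Katz--Tao condition only needs to be checked on dyadic cubes, and every constant lost that way depends only on $d$. We may assume $\rho\in 2^{-\N}$ after losing a factor $2^{s}$ (absorbed into the implicit constant, as $s$ is fixed). For a dyadic cube $Q$ of side $r\in[\rho,1]$ write $\mathcal{D}_\rho(Q)$ for its dyadic $\rho$-subcubes. Since $Q\subset B(x_Q,\sqrt d\, r)$, the Katz--Tao hypothesis gives
\[
|P\cap Q|_\delta\lesssim_d C\,(r/\delta)^s .
\]

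\textbf{Step 1: weights.} Define the weight
\[
w(Q):=c_d\,C^{-1}(\delta/\rho)^s\,|P\cap Q|_\delta ,
\]
with $c_d$ small enough that $w(Q)\le (r/\rho)^s$ for every dyadic $Q$ of side $r\in[\rho,1]$. In particular $w(q)\le 1$ for every $\rho$-cube $q$. Up to constants depending on $d$, $w$ is additive over dyadic children. To make this exact, replace $|P\cap Q|_\delta$ by the number of dyadic $\delta$-cubes meeting $P\cap Q$, which is comparable by Corollary-type covering comparisons in $\R^d$.

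\textbf{Step 2: capacity.} For each dyadic cube $Q$ of side $r$, define recursively
\[
\nu(q)=\mathbf{1}_{\{P\cap q\neq\emptyset\}}\quad\text{for } q\in\mathcal{D}_\rho,\qquad
\nu(Q)=\min\Big\{(r/\rho)^s,\ \sum_{Q'\text{ child of }Q}\nu(Q')\Big\}.
\]
By induction from scale $\rho$ upward, $\nu(Q)\ge w(Q)$. The base case is $w(q)\le 1$, and $w(q)=0$ when $P\cap q=\emptyset$. The inductive step is
\[
\nu(Q)\ge \min\Big\{(r/\rho)^s,\sum w(Q')\Big\}=\min\{(r/\rho)^s,w(Q)\}=w(Q).
\]
Applying this to the $\lesssim_d 1$ top-level unit cubes covering $[-1,1]^d$ gives
\[
\sum \nu\gtrsim_d C^{-1}(\delta/\rho)^s|P|_\delta .
\]

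\textbf{Step 3: realising the capacity by a selection.} The goal is to construct, bottom-up, sets $S(Q)\subset\mathcal{D}_\rho(Q)$ of occupied $\rho$-cubes with $|S(Q)|=\lfloor\nu(Q)\rfloor$ (or $\nu(Q)$ up to a factor $2$). To build $S(Q)$, take the union of the children's sets $S(Q')$ and discard arbitrary elements until the cap $(r/\rho)^s$ is met. Then every dyadic $Q$ of side $r\ge\rho$ satisfies
\[
|S\cap\mathcal{D}_\rho(Q)|\le (r/\rho)^s ,
\]
because later pruning only removes cubes. Let $P'$ consist of one point of $P$ chosen from each selected $\rho$-cube. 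Then
\[
|P'|\gtrsim_d (\delta/\rho)^s|P|/C .
\]
For any ball $B(x,r)$ with $\rho\le r\le 1$, cover it by $\lesssim_d 1$ dyadic cubes of side comparable to $r$. This yields
\[
|P'\cap B(x,r)|_\rho\le C_d (r/\rho)^s ,
\]
so $P'$ is a Katz--Tao $(\rho,s,C_d)$-set.

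\textbf{Main obstacle.} The delicate point is Step 2: showing that the pruning loses nothing beyond the density lower bound. The recursion with the $\min$ is designed for exactly this. The inequality $w\le(r/\rho)^s$, which is precisely the Katz--Tao hypothesis at every intermediate scale $r$, guarantees that truncating at the cap never cuts below the true mass. The rest is bookkeeping of dimensional constants and of the integer rounding in $\nu$. The rounding is harmless because every nonzero $\nu(Q)$ is at least $1$.
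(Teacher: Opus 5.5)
The paper does not prove this lemma; it is quoted from \cite[Section 2]{orpabckt}, so there is no in-text argument to compare yours against. Your proposal is a correct, self-contained proof by the standard discrete Frostman mechanism. The Katz--Tao hypothesis at every scale $r\in[\rho,1]$ says exactly that the normalised mass $w(Q)$ never exceeds the cap $(r/\rho)^s$. Truncating bottom-up at the caps therefore keeps a constant fraction of $\sum w\gtrsim_d C^{-1}(\delta/\rho)^s|P|_\delta$, and the caps themselves are the Katz--Tao condition at scale $\rho$. Compared with a bare citation, this gives an explicit, constructive selection.

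Three points should be tightened.

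First, the identity $|S(Q)|=\lfloor\nu(Q)\rfloor$ is false, because floors do not commute with summing over children. Take $d=1$, $2^s=1.9$, and a dyadic $4\rho$-interval all of whose $\rho$-subintervals are occupied. Then $\nu=\min\{3.61,3.8\}=3.61$, but $|S|=\min\{3,1+1\}=2$. The factor-$2$ version is the right statement. The reason it holds is not just that nonzero $\nu(Q)\ge 1$, but that the loss does not accumulate over the $\sim\log(1/\rho)$ levels. Inductively, $|S(Q)|=\min\{\lfloor (r/\rho)^s\rfloor,\sum_{Q'}|S(Q')|\}\ge\min\{\tfrac12(r/\rho)^s,\tfrac12\sum_{Q'}\nu(Q')\}=\tfrac12\nu(Q)$, where $\lfloor x\rfloor\ge x/2$ is applied only to the cap $x=(r/\rho)^s\ge 1$. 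You can also drop $\nu$ and prove $|S(Q)|\ge w(Q)/2$ directly in the same way.

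Second, your count gives $|P'|\gtrsim_d(\delta/\rho)^s|P|_\delta/C$. Replacing $|P|_\delta$ by the cardinality $|P|$ requires $P$ to be $\delta$-separated. This is implicit in the statement and holds in every application in the paper, but you should say it.

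Third, your $\lesssim_d$ hides factors $2^s$ (from dyadic rounding) and $d^{s/2}$ (from placing cubes inside balls of radius $\sqrt d\,r$). These depend on $s$, whereas the lemma asks for constants depending only on $d$. Reduce to $s\le d$ first, which is harmless: for $s\ge d$, any maximal $\rho$-separated subset of $P$ already works, since it is Katz--Tao with exponent $d$ and has cardinality at least $|P|_\rho\ge C^{-1}(\delta/\rho)^s|P|_\delta$.
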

	
	We will also need the following pigeonholing lemma for a $\delta$-configuration.
	\begin{lemma}\label{refine1}
		Let $\delta\in(0,1)$ and let $(\mathcal{F},\mathcal{P})_\delta$ be a given configuration. There exist $M\geq1$, a sub-family $\mathbf{E}_M\subset \mathbf{E}_{\mathcal{F},\mathcal{P}}$, and a refinement $(\mathcal{F}',\mathcal{P}')_\delta$ of $(\mathcal{F},\mathcal{P})_\delta$ so that 
		\begin{enumerate}
			\item [(i)] $\mathcal{P}'(f)$ is a refinement of $\mathcal{P}(f)$ for all $f\in \mathcal{F}'$, where $\mathcal{P}'(f)= \mathbf{E}_M\cap \mathcal{P}(f)$.
			\item [(ii)] $|\{f\in\mathcal{F}: p\in \mathcal{P}'(f)\}|\sim M$ for all $p\in \mathbf{E}_M$.
			\item [(iii)] $M\sim \sum_{f\in \mathcal{F}}|\mathcal{P}'(f)|/|\mathbf{E}_M|\approx \sum_{f\in \mathcal{F}'}|\mathcal{P}'(f)|/|\mathbf{E}_{\mathcal{F}',\mathcal{P}'}|$.
		\end{enumerate}
	\end{lemma}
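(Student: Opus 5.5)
This is the standard dyadic pigeonholing argument; I would run it in three steps.

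\emph{Step 1: a uniform incidence level.} For each $p\in\mathbf{E}_{\mathcal{F},\mathcal{P}}$ set $m(p):=|\{f\in\mathcal{F}: p\in\mathcal{P}(f)\}|$, so that
\[\sum_{f\in\mathcal{F}}|\mathcal{P}(f)|=\sum_{p\in\mathbf{E}_{\mathcal{F},\mathcal{P}}} m(p).\]
Since $1\le m(p)\le|\mathcal{F}|\lesssim\delta^{-O(1)}$ (by Lemma \ref{lem-Ahlforsregularity} and $\delta$-separation), there are only $O(\log(1/\delta))$ dyadic values of $m(p)$. Hence there is a dyadic $M\ge1$ such that $\mathbf{E}_M:=\{p: M\le m(p)<2M\}$ satisfies
\[\sum_{p\in\mathbf{E}_M}m(p)\gtrsim|\log\delta|^{-1}\sum_f|\mathcal{P}(f)|.\]

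\emph{Step 2: refine the fibres.} Define $\mathcal{P}''(f):=\mathcal{P}(f)\cap\mathbf{E}_M$; then $\sum_f|\mathcal{P}''(f)|=\sum_{p\in\mathbf{E}_M}m(p)$. With $\mathcal{P}(f)$ fixed, $\mathcal{P}''(f)$ may be much smaller for some $f$, so a second pigeonholing is needed. Discard every $f$ with $|\mathcal{P}''(f)|\le c|\log\delta|^{-1}|\mathcal{P}(f)|$. Since
\[\sum_f|\mathcal{P}''(f)|\gtrsim|\log\delta|^{-1}\sum_f|\mathcal{P}(f)|,\]
the discarded $f$ contribute at most half the sum when $c$ is chosen small. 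Call the survivors $\mathcal{F}'$ and set $\mathcal{P}'(f):=\mathcal{P}''(f)$. This proves (i), with $\gtrapprox$ meaning up to $|\log\delta|^{-1}$ factors, and the total mass is preserved up to the same loss.

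\emph{Step 3: keep the multiplicity.} The weak point is (ii): after discarding functions, $m'(p):=|\{f\in\mathcal{F}': p\in\mathcal{P}'(f)\}|$ can drop below $M$. Reading (ii) literally with $f\in\mathcal{F}$ avoids this, since $|\{f\in\mathcal{F}: p\in\mathcal{P}'(f)\}|=m(p)\sim M$ for $p\in\mathbf{E}_M$, where $\mathcal{P}'(f)$ is extended to all of $\mathcal{F}$ as $\mathcal{P}(f)\cap\mathbf{E}_M$. Then
\[|\mathbf{E}_M|\,M\sim\sum_{p\in\mathbf{E}_M}m(p)=\sum_{f\in\mathcal{F}}|\mathcal{P}(f)\cap\mathbf{E}_M|,\]
which is the first relation in (iii). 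For the second relation, note that $\sum_{f\in\mathcal{F}'}|\mathcal{P}'(f)|\approx\sum_{f\in\mathcal{F}}|\mathcal{P}(f)\cap\mathbf{E}_M|$ by Step 2, and $\mathbf{E}_{\mathcal{F}',\mathcal{P}'}\subset\mathbf{E}_M$. The only remaining task is the lower bound $|\mathbf{E}_{\mathcal{F}',\mathcal{P}'}|\gtrapprox|\mathbf{E}_M|$. It follows from each $p\in\mathbf{E}_{\mathcal{F}',\mathcal{P}'}$ having multiplicity less than $2M$:
\[M|\mathbf{E}_{\mathcal{F}',\mathcal{P}'}|\gtrsim\sum_{f\in\mathcal{F}'}|\mathcal{P}'(f)|\approx M|\mathbf{E}_M|.\]

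If one needs (ii) with $\mathcal{F}'$ in place of $\mathcal{F}$, iterate Steps 1 and 2 instead. Each round either terminates or loses a definite fraction of the incidences, and an $O(1)$-round argument at the cost of a $|\log\delta|^{-O(1)}$ loss, with the thresholds chosen carefully, should suffice. This iteration is the step I expect to require the most care.
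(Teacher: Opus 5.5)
Your proof is correct and is essentially the paper's argument: pigeonhole a dyadic multiplicity level $M$, set $\mathcal{P}'(f)=\mathcal{P}(f)\cap\mathbf{E}_M$, keep the $f$ whose shading retains a $\gtrapprox 1$ fraction, and recover $|\mathbf{E}_{\mathcal{F}',\mathcal{P}'}|\approx|\mathbf{E}_M|$ from the multiplicity bound $<2M$; since (ii) is stated with $\mathcal{F}$ rather than $\mathcal{F}'$, the iteration in your last paragraph is unnecessary. One small fix in Step 1: configurations are not assumed $\delta$-separated, so instead of bounding $|\mathcal{F}|$, first discard the $p$ with $m(p)<\sum_f|\mathcal{P}(f)|/(2|\mathbf{E}_{\mathcal{F},\mathcal{P}}|)$, which loses at most half the incidences; since also $m(p)\le\sum_f|\mathcal{P}(f)|$, the remaining $m(p)$ lie in a window of ratio $\lesssim\delta^{-2}$, so $O(\log(1/\delta))$ dyadic levels suffice.
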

	\begin{proof}
		First, we pigeonhole an integer $M\in 2^{\mathbb N}$ and a sub-family $\mathbf{E}_M\subset \mathbf{E}_{\mathcal{F},\mathcal{P}}$ such that
		\begin{enumerate}
			\item $|\mathcal{F}(p)|:=|\{f\in\mathcal{F}: p\in\mathcal{P}(f)\}|\sim M$ for any $p\in\mathbf{E}_M$,
			\item $\sum_{p\in\mathbf{E}_M}|\mathcal{F}(p)|\gtrapprox \sum_{p\in\mathbf{E}_{\mathcal{F},\mathcal{P}}}|\mathcal{F}(p)|=\sum_{f\in\mathcal{F}}|\mathcal{P}(f)|$.
		\end{enumerate}
		Then we define $\mathcal{P}'(f):=\mathbf{E}_M\cap\mathcal{P}(f)$ for each $f\in\mathcal{F}$, thus $|\{f\in\mathcal{F}: p\in \mathcal{P}'(f)\}|\sim M$ for all $p\in \mathbf{E}_M$. Since $\sum_{f\in\mathcal{F}}|\mathcal{P}'(f)|=\sum_{p\in\mathbf{E}_M}|\mathcal{F}(p)|\gtrapprox \sum_{f\in\mathcal{F}}|\mathcal{P}(f)|$, we can pigeonhole a sub-family $\mathcal{F}'\subset \mathcal{F}$ such that $\mathcal{P}'(f)$ is a refinement of $\mathcal{P}(f)$ for all $f\in\mathcal{F}'$ and
		\[M|\mathbf{E}_{\mathcal{F}',\mathcal{P}'}|\gtrsim\sum_{f\in\mathcal{F}'}|\mathcal{P}'(f)|\gtrapprox \sum_{f\in\mathcal{F}}|\mathcal{P}(f)|\geq M|\mathbf{E}_M|\sim\sum_{f\in\mathcal{F}}|\mathcal{P}'(f)|,\]
		which implies that $|\mathbf{E}_M|\approx |\mathbf{E}_{\mathcal{F}',\mathcal{P}'}|$ and $(\mathcal{F}',\mathcal{P}')_\delta$ is a refinement of $(\mathcal{F},\mathcal{P})_\delta$. Then property (iii) follows easily from the inequality above.
	\end{proof}

	\begin{definition}[Katz-Tao constant]\label{gamma-Y-def}
		Let $\de\in(0,1]$ and $t\in [0,1]$. Let $(\cF,\cP)_\de$ be a $\de$-configuration. For each $f \in \cF$, define 
		\begin{equation}
			\label{gamma-y}
			\ga_{\cP,t}(f):=\sup_{\substack{ r\in[\de,1]\\ x\in \cup\cP(f)}}\Big(\frac{\de}{r}\Big)^{t}\cdot|\cP(f)\cap B(x,r)|_\delta,
		\end{equation}
		and define $\ga_{\cP,t}:=\sup_{f\in \cF}\ga_{\cP,t}(f)$. We note that $\ga_{\cP,t}$ depends on the scale $\delta$ which is always clear from the definition of the shading $\mathcal{P}$.
	\end{definition}
	
	The lemma below follows from the definition of $\gamma_{\mathcal{P},t}$, see \cite[Lemma 2.22]{WW2}.
	
	\begin{lemma}\label{lem-rdelta}
		Let $0<\de\leq r\leq1$ and let $f$ be a function in a transversal family. Let $\mathcal{P}(f)\subset \mathcal{D}_\delta$ and $\widetilde {\mathcal{P}}(f)=\mathcal{D}_r(\mathcal{P}(f))$. Suppose that each $r$-cube in $\widetilde {\mathcal{P}}(f)$ contains $\gtrsim d$ many $\de$-cubes in $\mathcal{P}(f)$ for some $d\in[1,r/\de]$.
		Then $\ga_{\widetilde {\mathcal{P}},t}(f)\lesssim(\de/r)^{-t}d^{-1}\ga_{\mathcal{P},t}(f)$.
	\end{lemma}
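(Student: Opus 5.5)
The plan is to unwind Definition \ref{gamma-Y-def} at scale $r$ for $\widetilde{\mathcal{P}}(f)$ and compare each $r$-ball count with a $\delta$-ball count for $\mathcal{P}(f)$ over a comparable ball. Since $\widetilde{\mathcal{P}}(f)$ consists of $r$-cubes, its Katz--Tao constant is
\[
\gamma_{\widetilde{\mathcal{P}},t}(f)=\sup_{\substack{\rho\in[r,1]\\ x\in\cup\widetilde{\mathcal{P}}(f)}}\Big(\frac{r}{\rho}\Big)^{t}\,|\widetilde{\mathcal{P}}(f)\cap B(x,\rho)|_r .
\]
So I will fix $\rho\in[r,1]$ and $x\in\cup\widetilde{\mathcal{P}}(f)$, and show
\[
|\widetilde{\mathcal{P}}(f)\cap B(x,\rho)|_r\lesssim d^{-1}(\rho/\delta)^t\gamma_{\mathcal{P},t}(f).
\]
Multiplying by $(r/\rho)^t$ then gives $(r/\delta)^t d^{-1}\gamma_{\mathcal{P},t}(f)=(\delta/r)^{-t}d^{-1}\gamma_{\mathcal{P},t}(f)$, which is the claim.

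First I handle the center. The definition of $\gamma_{\mathcal{P},t}(f)$ only allows centers in $\cup\mathcal{P}(f)$, so I must move $x$. The point $x$ lies in some $r$-cube $Q\in\widetilde{\mathcal{P}}(f)$, and $Q$ contains a $\delta$-cube of $\mathcal{P}(f)$. Pick $x'\in\cup\mathcal{P}(f)\cap Q$. Then $|x-x'|\le 2r\le 2\rho$, and every $r$-cube meeting $B(x,\rho)$ is contained in $B(x',C_0\rho)$ for an absolute constant $C_0$.

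Second, I use double counting with the hypothesis on $d$. Covering numbers and counts of dyadic cubes agree up to absolute constants, so it suffices to bound the number of cubes $Q\in\widetilde{\mathcal{P}}(f)$ meeting $B(x,\rho)$. Each such $Q$ contains $\gtrsim d$ cubes of $\mathcal{P}(f)$, these cubes are disjoint for distinct $Q$, and all lie in $B(x',C_0\rho)$. Hence
\[
d\cdot|\widetilde{\mathcal{P}}(f)\cap B(x,\rho)|_r\lesssim |\mathcal{P}(f)\cap B(x',C_0\rho)|_\delta .
\]

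Third, I bound the right-hand side by $\gamma_{\mathcal{P},t}(f)$. The only technical nuisance is that $C_0\rho$ may exceed $1$, and the balls must be centered in $\cup\mathcal{P}(f)$. I cover $B(x',C_0\rho)$ by $O(1)$ balls of radius $\rho/2$ and discard those not meeting $\cup\mathcal{P}(f)$. Each remaining ball is contained in a ball of radius $\rho\le1$ centered at some point of $\cup\mathcal{P}(f)$. Each such ball contributes at most $(\rho/\delta)^t\gamma_{\mathcal{P},t}(f)$, so
\[
|\mathcal{P}(f)\cap B(x',C_0\rho)|_\delta\lesssim(\rho/\delta)^t\gamma_{\mathcal{P},t}(f).
\]
Combining the three steps and taking the supremum over $\rho$ and $x$ finishes the proof.

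I do not expect a genuine obstacle. The only care needed is this bookkeeping: recentering at a point of $\cup\mathcal{P}(f)$, keeping radii at most $1$, and switching between covering numbers and dyadic cube counts. All of it costs only absolute constants.
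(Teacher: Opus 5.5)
Your proof is correct and matches the paper's approach. The paper gives no separate proof: it only says the lemma follows from the definition of $\gamma_{\mathcal{P},t}$, citing Wang--Wu, and your unwinding of Definition~\ref{gamma-Y-def} is that argument. You double count the $\gtrsim d$ disjoint $\delta$-cubes inside each $r$-cube, then recenter $O(1)$ balls of radius $\rho\le 1$ at points of $\cup\mathcal{P}(f)$; the first recentering at $x'$ is redundant given your third step, but harmless.
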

	
	Finally, we state the following result whose proof is the same as \cite[Lemma 2.23]{WW2}.
	\begin{lemma}\label{multi-scale-lem}
		For $\eta>0$, there exists $\delta_0=\delta_0(\eta)>0$ such that the following holds for all $\delta\in (0,\delta_0]$.
		
		Let $(\cF,\cP)_\de$ be a $\de$-configuration such that $\{\cP(f): f \in \cF\}$ is a set of uniform shadings and have a uniform branching function as in Lemma \ref{lem:uniform branching}.
		Let $\eta_0=\eta_0(\eta)=\eta^{2\eta^{-1}}$ and $t\in[0,1]$. Then there exist an $r\in[\delta^{1-\eta_0\eta^{-1}},1]$ and an $s\in[0,1]$ such that the following is true: 
		
		For each $f\in \cF$, let $\tilde{\cP}(f) := [\cP(f)]_r$, then we have
		\begin{enumerate}
			\item [\textup{(i)}] $\ga_{\tilde \cP, t}(f)\lesssim \ga_{\cP,t}(f)$; 
			\item [\textup{(ii)}] for each $J=u_Q(6\delta)\in [\cP(f)]_r$, let $S_J$ be the rescaling map taking $Q$ to $[-1,1)^2$, then $S_J(\cP(f)\cap J)$ is a $(\delta/r, s, (\delta/r)^{-9\eta})$-set, and
			$$\log_{1/\de}\Big(\frac{|\cP(f)|_{\de}}{|\cP(f)|_{r}}\Big)\leq (s+9\eta) \log_{1/\de}\big(\frac{r}{\delta}\big).$$
		\end{enumerate}
	\end{lemma}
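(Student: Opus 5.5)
This is the multi-scale selection lemma, and I would prove it the way \cite[Lemma 2.23]{WW2} is proved for lines: by analysing the common branching function on the discrete scales $\Delta_j=|\log\delta|^{-j}$. Write $\beta$ for the uniform branching function of the shadings from Lemma \ref{lem:uniform branching}. Since each $\cP(f)$ consists of $\delta$-cubes meeting the graph $\Gamma_f$, and $\Gamma_f$ is a $C^2$ curve with bounded norm, we have $|\cP(f)|_r\lesssim r^{-1}$. Hence $\beta$ is $1$-Lipschitz up to $O(1/\log|\log\delta|)$ errors. After rescaling the domain to $[0,1]$, it becomes a $1$-Lipschitz function $\tilde\beta:[0,1]\to[0,1]$ with $\tilde\beta(0)=0$.

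The first step is to choose the scale. Apply the standard Lipschitz-function lemma used in \cite{WW2} (a variant of \cite[Lemma 2.20]{PO2023}) to $\tilde\beta$ with parameter $\eta$. It produces an interval $[a,1]$ with $1-a\geq \eta_0\eta^{-1}$ and a slope $s\in[0,1]$ such that $\tilde\beta(x)-\tilde\beta(a)\geq s(x-a)-\eta(1-a)$ for all $x\in[a,1]$, and $\tilde\beta(1)-\tilde\beta(a)\leq (s+\eta)(1-a)$.

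One way to obtain this is iterative. Start with $[0,1]$ and the average slope. If the lower-chord inequality fails at some point, pass to the later subinterval, where the average slope is larger by at least $\eta$. Since slopes stay in $[0,1]$, this can happen at most $\eta^{-1}$ times. Each step may shrink the length by a factor $\eta$ or so, which gives the lower bound $\eta^{2\eta^{-1}}=\eta_0$.

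Set $r=\delta^{a}$, rounded to a scale $\Delta_j$. Then $r\in[\delta^{1-\eta_0\eta^{-1}},1]$ after adjusting constants.

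The second step is to verify (ii). By uniformity, for $J\in[\cP(f)]_r$ and $\rho\in[\delta,r]$, $|\cP(f)\cap J\cap B(x,\rho)|_\delta\approx |\cP(f)|_\delta/|\cP(f)|_\rho$, and $|\cP(f)\cap J|_\delta\approx|\cP(f)|_\delta/|\cP(f)|_r$. The ratio is therefore $|\cP(f)|_r/|\cP(f)|_\rho=\delta^{-(\tilde\beta(a)-\tilde\beta(x_\rho))}$, where $\rho=\delta^{x_\rho}$. The lower-chord property bounds this by $(\rho/r)^{s}(\delta/r)^{-\eta}$. The extra losses come from the $O(1/\log|\log\delta|)$ discretisation, from uniform-set approximations of the form $\approx$, and from non-dyadic $\rho$. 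All of these are absorbed into $(\delta/r)^{-9\eta}$ once $\delta\le\delta_0(\eta)$; this uses $\log(r/\delta)\geq\eta_0\eta^{-1}\log(1/\delta)$. The rescaling $S_J$ maps $J$ essentially to a set of $\delta/r$-cubes, possibly meeting two dyadic squares, which costs a constant. The upper-chord bound gives the cardinality estimate $\log_{1/\delta}(|\cP(f)|_\delta/|\cP(f)|_r)\leq(s+9\eta)\log_{1/\delta}(r/\delta)$.

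The third step is (i), and here a little care is needed. For $x$ and $\rho\ge r$, the $r$-tube segments of $f$ inside $B(x,\rho)$ each carry $\approx d:=|\cP(f)|_\delta/|\cP(f)|_r$ cubes. Lemma \ref{lem-rdelta} gives $\gamma_{\tilde\cP,t}(f)\lesssim (r/\delta)^{t}d^{-1}\gamma_{\cP,t}(f)$ in the cube normalisation. Here, however, $\tilde\cP(f)$ is taken at scale $\delta$ via tube segments, so the supremum in Definition \ref{gamma-Y-def} still counts $\delta$-cubes. For $\rho\ge r$ we have $|\tilde\cP(f)\cap B(x,\rho)|_\delta\lesssim (\rho/\delta)\cdot|\cP(f)\cap B(x,2\rho)|_r$. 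We also have $|\cP(f)\cap B(x,2\rho)|_\delta\gtrsim d\,|\cP(f)\cap B(x,2\rho)|_r$, and (ii) gives $d\gtrsim (r/\delta)^{s-9\eta}$.

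The main obstacle I expect is that a full segment has $r/\delta$ cubes, while $\cP(f)$ may have only $d\ll r/\delta$ cubes there. So (i) cannot hold with an implicit constant in general unless $\gamma$ is interpreted as in \cite{WW2}, where $\tilde\cP$ is viewed at scale $r$ with $\delta$ replaced by $r$ in the normalisation. I would adopt that interpretation, with $\gamma_{\tilde\cP,t}$ computed at scale $r$. Then for $\rho\ge r$, $(r/\rho)^t|\cP(f)\cap B(x,\rho)|_r\le (r/\rho)^t d^{-1}|\cP(f)\cap B(x,2\rho)|_\delta\lesssim (r/\delta)^{t}d^{-1}\gamma_{\cP,t}(f)$. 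To close this to $\lesssim\gamma_{\cP,t}(f)$ I need $d\gtrsim (r/\delta)^t$ up to constants; otherwise I use the trivial bound from taking $\rho=r$ in the definition of $\gamma_{\cP,t}$. Reconciling these two is exactly where I would follow \cite[Lemma 2.23]{WW2} line by line, checking that only the Lipschitz bound $|\cP(f)|_r\lesssim r^{-1}$ (valid for curves) and uniformity are used.
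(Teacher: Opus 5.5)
\textbf{Gap: part (i) is not proved.} Your scale selection and your derivation of (ii) are the standard route that the paper cites (\cite[Lemma 2.23]{WW2}), and you are right that $\gamma_{\tilde{\cP},t}$ must be computed at scale $r$. But you defer (i) to \cite{WW2} without proving it, and it does not follow from the properties you extracted.

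Your selection lemma only constrains $\tilde\beta$ on $[a,1]$: the lower chord and the average slope. Statement (i), however, concerns the coarse scales $\rho\in[r,1]$, that is, $\tilde\beta$ on $[0,a]$. Write $N(\rho)=|\cP(f)|_\rho$ and $d=N(\delta)/N(r)$. By uniformity, $\gamma_{\tilde{\cP},t}(f)\approx\sup_{\rho\ge r}(r/\rho)^t N(r)/N(\rho)$. Lemma \ref{lem-rdelta} closes the argument only when $d\gtrapprox (r/\delta)^t$, roughly $s\ge t$. Your fallback of testing $\gamma_{\cP,t}$ at $\rho=r$ gives only $\gamma_{\cP,t}\gtrsim(\delta/r)^t d$, which does not help.

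Here is a concrete obstruction. Let $\tilde\beta$ have slope $1$ on $[0,a]$ and slope $s<t<1$ on $[a,1]$. The pair $(a,s)$ satisfies everything your lemma asserts. Yet one computes $\gamma_{\tilde{\cP},t}(f)/\gamma_{\cP,t}(f)\gtrsim (r/\delta)^{t-s}$, a positive power of $\delta^{-1}$. Your iteration, if it jumps to an arbitrary failure point, can land on such a kink. An example is $\tilde\beta$ that is flat, then steep for a short stretch, then of slope $s<t$, with the jump made to the end of the steep stretch.

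\textbf{The missing ingredient} is control to the left of $a$: you need $\tilde\beta(a)-\tilde\beta(x)\le s(a-x)$ for all $x\le a$, equivalently $N(r)/N(\rho)\lessapprox(\rho/r)^s$ for $\rho\in[r,1]$.

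You get this by always moving to a minimiser $a_{k+1}$ of $\tilde\beta(x)-s_kx$ on $[a_k,1]$, where $s_k$ is the exact average slope on $[a_k,1]$. This minimiser is the point of maximal deficit, hence still a failure point. So your bounds $s_{k+1}\ge s_k+\eta$ and $1-a_{k+1}\ge\eta(1-a_k)$ are unchanged. Minimality gives $\tilde\beta(a_{k+1})-\tilde\beta(x)\le s_k(a_{k+1}-x)$ for $x\in[a_k,a_{k+1}]$. Since $s_0\le s_1\le\dots\le s$, telescoping yields the left-side bound.

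Part (i) then follows by cases.
\begin{itemize}
\item If $s\ge t$: $d\approx(r/\delta)^s\ge(r/\delta)^t$, so Lemma \ref{lem-rdelta} gives $\gamma_{\tilde{\cP},t}\lessapprox\gamma_{\cP,t}$.
\item If $s<t$: $\gamma_{\tilde{\cP},t}(f)\lessapprox\sup_{\rho\ge r}(r/\rho)^{t-s}\le 1\le\gamma_{\cP,t}(f)$.
\end{itemize}
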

	
	\section{Two-ends Furstenberg inequality}\label{sec3}
	
	In this section, we extend the two-ends Furstenberg inequality from \cite{WW1,WW2}, stated below, to transversal families, and the proof is considerably simplified. We refer the reader to Definition \ref{def:twoends} for the two-ends condition.
	
	\begin{thm}\label{thm-twoends}
		Let $t\in (0,2)$, $\mathfrak{T}\geq1$ and $\lambda\in [0,1]$. For any $\epsilon\in (0,1)$, there exists a small constant $\delta_0=\delta_0(t,\epsilon,\mathfrak{T})>0$ such that the following holds for any $\delta\in (0,\delta_0]$.
		
		Let $(\mathcal{F},\mathcal{P})_\delta$ be a given configuration. Here $\mathcal{F}\subset B_{C^2}(1)$ is $\mathfrak{T}$-transversal over $[-2,2]$. Assume that
		\begin{itemize}
			\item $\mathcal{F}$ is $\delta$-separated and a $(\delta,t)$-KT set;
			\item for each $f\in \mathcal{F}$, $\mathcal{P}(f)$ is $(\epsilon_1, \epsilon_2)$-two-ends and $\lambda$-dense.
		\end{itemize}
		Write $t^\ast=\{t,2-t\}$. Then,
		\begin{equation}\label{main--twoends}
			|\mathbf{E}_{\mathcal{F},\mathcal{P}}|\geq \delta^{\epsilon+t\epsilon_1/2}  \delta^{(t-1)/2} \ga_{\cP,t^\ast}^{-1/2}\cdot \lambda^{1/2}\sum_{f\in\mathcal{F}} |\mathcal{P}(f)|.
		\end{equation}
	\end{thm}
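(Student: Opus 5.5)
We follow the outline in the Introduction and argue by induction on scales. \textbf{Step 1 (reductions).} Fix $v<\epsilon_2$ small compared with $\epsilon$. By Lemma \ref{refine1}, Lemma \ref{lem:uniform branching} and Corollary \ref{cor-uniformsets}, we pass to a refinement in which every $\mathcal{P}(f)$ is uniform with a common branching function and every $p\in\mathbf{E}$ lies in $\sim M$ shadings. Here $M\approx\sum_f|\mathcal{P}(f)|/|\mathbf{E}|$. Proving \eqref{main--twoends} then amounts to an upper bound for $M$. Losses of $\lessapprox 1$ are absorbed into $\delta^{\epsilon}$, and $\gamma_{\mathcal{P},t^\ast}$ can only decrease under refinement.

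Next we remove the two-ends hypothesis. Let $\rho=\rho(f;v,C)$ be as in Definition \ref{twoendsreduction}; after pigeonholing it is common to all $f$. Lemma \ref{lem-rho} gives $\rho\ge\delta^{\epsilon_1}$, and by Remark \ref{remk-1} each rescaled piece $S_J(\mathcal{P}(f)\cap J)$ is a $(\delta/\rho,v)$-set. Partition $[-1,1]^2$ into $\rho$-cubes $\mathbf{Q}$. Inside each $\mathbf{Q}$ we rescale with Lemma \ref{lem2} (and Lemma \ref{lem1}), which preserves transversality. The curves through a fixed $\mathbf{Q}$ form, after rescaling, a family which by Lemma \ref{lemma-KT} contains a $(\delta/\rho,t)$-KT subfamily, at a cost of a factor $\lesssim\rho^{-t}\le\delta^{-t\epsilon_1}$ in multiplicity. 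After taking a square root this becomes the $\delta^{t\epsilon_1/2}$ loss in the statement. It therefore suffices to prove a version of the theorem in which ``two-ends'' is replaced by ``$\mathcal{P}(f)$ is a $(\delta,v)$-set'', and to sum the rescaled bounds over $\mathbf{Q}$.

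\textbf{Step 2 (multiscale structure and scale selection).} For the $(\delta,v)$-set version we induct on $\delta$, assuming the estimate at every scale $\Delta\ge\delta^{1-\eta_0/\eta}$. Lemma \ref{multi-scale-lem} gives a scale $r$ and an exponent $s$ such that each rescaled piece of $\mathcal{P}(f)\cap J$, for $J\in[\mathcal{P}(f)]_r$, is a $(\delta/r,s)$-set of size $\approx(r/\delta)^s$, while $\gamma_{[\mathcal{P}]_r,t}\lesssim\gamma_{\mathcal{P},t}$. In each $r$-cube $\mathbf{Q}$ we take an incomparable family $\mathcal{F}_\mathbf{Q}$ of $\delta\times r$ tube segments and rescale by $r^{-1}$. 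We then apply the scale selection Lemma \ref{lem-main}, which I expect to derive from the curvilinear Furstenberg estimate \cite[Theorem 1.11]{OPY2025} together with the Katz--Tao incidence bound \cite[Theorem 1.4]{OS2026}. It yields one of two outcomes.
\begin{enumerate}
\item[(a)] The multiplicity of the tube segments at each point of $\mathbf{E}\cap\mathbf{Q}$ is small. Then $|\mathbf{E}\cap\mathbf{Q}|$ is large, and we conclude from $|\mathbf{E}|\gtrapprox|\mathbf{E}\cap\mathbf{Q}|\cdot|\mathcal{D}_r(\mathbf{E})|$, bounding $|\mathcal{D}_r(\mathbf{E})|$ by the inequality at scale $r$ applied to $[\mathcal{P}]_r$ and controlled with Lemma \ref{lem-rdelta}.
\item[(b)] There is a scale $\Delta\in[\delta,r]$ and a family $\mathcal{G}_\mathbf{Q}$ of $\Delta$-cubes with $|\mathbf{E}\cap Q|$ large for each $Q\in\mathcal{G}_\mathbf{Q}$. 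Then we pass to a $(\Delta,t)$-KT subfamily $\widetilde{\mathcal{F}}$ via Lemma \ref{lemma-KT}, form the shadings $\widetilde{\mathcal{P}}(f)=\bigcup_\mathbf{Q}\mathcal{G}_\mathbf{Q}(f)$, apply the induction hypothesis at scale $\Delta$, and multiply by $\min_Q|\mathbf{E}\cap Q|$.
\end{enumerate}

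\textbf{Step 3 (bookkeeping and the main obstacle).} The crux is checking that the exponents close in both cases, i.e.\ that they reproduce $\delta^{(t-1)/2}\gamma^{-1/2}\lambda^{1/2}$. Three things must be verified. First, $\lambda$-density has to transfer to $\widetilde{\mathcal{P}}$ at scale $\Delta$ with the right $\lambda'$. Second, $\gamma_{\widetilde{\mathcal{P}},t^\ast}$ is controlled by Lemma \ref{lem-rdelta}. Third, the loss from the Katz--Tao selection, a factor $(\delta/\Delta)^t$, must be compensated exactly by the gain in $\min_Q|\mathbf{E}\cap Q|$ that Lemma \ref{lem-main} guarantees; this is precisely why the $t^\ast=\{t,2-t\}$ form of $\gamma$ appears. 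The induction must also make $\epsilon$-progress at each step: case (b) must occur with $\Delta\ge\delta^{1-c(\epsilon)}$, or else case (a) must directly give an $\epsilon$-gain. I would attempt this by selecting $\eta$, and hence $\eta_0$, depending on $\epsilon$ and using the lower bound $r\ge\delta^{1-\eta_0/\eta}$. This exponent matching is where I expect most of the difficulty.
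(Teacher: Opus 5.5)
Your outline follows the paper's route closely. The first part reduces from two-ends to a non-concentrated shading at the scale $\rho\geq\delta^{\epsilon_1}$ and makes a $(\delta/\rho,t)$-KT selection inside each $\rho$-cube; the second is an induction on scales driven by Lemma~\ref{multi-scale-lem} and the dichotomy of Lemma~\ref{lem-main}. In the first part, the paper pigeonholes the number $m_Q$ of functions per incomparable segment, shows the KT constant $\sigma$ satisfies $\sigma m_Q\lessapprox\rho^{-t}$, and samples with probability $\sigma^{-1}$; this is the quantitative content of your ``cost $\rho^{-t}$''. However, the auxiliary statement you propose to prove by induction (``$\mathcal{P}(f)$ is a $(\delta,v)$-set'') is not closed under your inductive step, and this is a genuine gap. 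In case (b) the new shading $\widetilde{\mathcal{P}}(f)$ consists of $\sim d$ cubes of side $\Delta$ inside each $r$-cube of $\mathcal{D}_r(\mathcal{P}(f))$, so it inherits non-concentration only for balls of radius $\geq r$. For $w\in[\Delta,r)$, a $w$-ball may contain all $d$ cubes of one $r$-cube, while $|\widetilde{\mathcal{P}}(f)|\approx d\,|\mathcal{D}_r(\mathcal{P}(f))|$ can be as small as $\approx d\,r^{-v}$. The $(\Delta,v)$-set condition then fails by a factor $(r/w)^{v}$, a genuine power of $\delta$. Using that $\mathcal{Q}(f)$ carries a $\gtrapprox1$ portion of an $s$-dimensional piece only gives exponent $s$, which need not be $\ge v$, and a $(\delta/r)^{-O(\eta_1)}$ loss in the constant that cannot be iterated over many steps. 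So the induction hypothesis cannot be applied to $(\widetilde{\mathcal{F}},\widetilde{\mathcal{P}})$ at scale $\Delta$.

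The missing idea is the extra generality in Theorem~\ref{thm-reduction2}. There the shadings are only assumed to be $(\delta,\epsilon^2,C;\rho^\ast)$-sets, i.e.\ non-concentrated for radii in $[\rho^\ast,1]$, with $\rho^\ast\in[\delta,\delta^{\eta}]$. The lower bound carries a factor $C^{-\eta^{-2}}$, with the case $C\geq\delta^{-2\eta^2}$ trivial, so the $\lessapprox1$ growth of the non-concentration constant at each refinement is absorbed. To keep the new threshold $\max\{\rho^\ast,r\}$ below $\Delta^{\eta}$, the paper first disposes of the case $r\geq\delta^{\eta}$ directly. There $\mathcal{P}(f)$ is a $(\delta,s,\delta^{-2\eta})$-set, and Corollary~\ref{cor2} (from Theorem~\ref{thm-incidence-KT}) gives the bound. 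Only when $r\leq\delta^{\eta}$ does it run the dichotomy, and then the two inductive applications are legitimate:
\begin{itemize}
\item at scale $r$, within each $\delta/r$-cube $\mathbf{F}$ of $\mathcal{F}$ rescaled by $(f-f_{\mathbf{F}})/(\delta/r)$, so that the $\delta\times r$ tubes become $r$-cubes;
\item at scale $\Delta$.
\end{itemize}
In both, the new shadings are $(r,\epsilon^2,C\bar C;\max\{\rho^\ast,r\})$- and $(\Delta,\epsilon^2,CC_6;\max\{\rho^\ast,r\})$-sets with $\bar C,C_6\lessapprox1$. Note also that your case (a) cannot apply the scale-$r$ inequality to $[\mathcal{P}]_r$ literally; the per-$\mathbf{F}$ rescaling just described is needed. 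The exponent bookkeeping you left open is carried out in Steps 4--5 of the paper's proof. It splits into $t^\ast>s$ and $t^\ast\leq s$, controls $\gamma_{\widetilde{\mathcal{P}}_2,t^\ast}$ through Lemma~\ref{lem-rdelta} and the uniformity of the local pieces, and obtains the $\epsilon$-gain from $r\geq\delta^{1-\eta_0\eta_1^{-1}}$ together with $\Delta/r>(\delta/r)^{1-\sqrt{\eta}}$ or its negation.
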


	As shown in \cite{WW2}, the Katz-Tao constant factor $\ga_{\cP,t^\ast}^{-1/2}$ is necessary, see the sharp examples given in \cite[Section 3]{WW2}. We next introduce two additional definitions for convenience.
	
	\begin{definition}[Vertical $r$-neighbourhood]\label{def:vertical} Let $I \subset \R$ be an interval, $f\in C^2(I)$, $r>0$. The \emph{vertical $r$-neighborhood of $f$} is the set
		\[\Gamma_f(r):=\{(x,y)\in I \times \R :|y-f(x)|< r\}.\]
	\end{definition}
	
	\begin{definition}\label{def:deltarho}
		Let $\delta\in (0,1]$, $s\geq 0$ and $\rho^\ast\in [\delta,1]$. We say a bounded subset $P\subset \R^2$ is a $(\delta,s,C;\rho^\ast)$-set if
		\begin{equation}
			|P\cap B(x,r)|_\delta\leq Cr^s |P|_\delta, \qquad x \in \R^2, \, \rho^\ast\leq r \leq 1. 
		\end{equation}
	\end{definition}

	The two-ends Furstenberg inequality can be reduced to the following Theorem.
	
	\begin{thm}\label{thm-reduction2}
		Let $t\in (0,2)$, $\mathfrak{T}\geq1$, $C\geq1$ and $\lambda\in [0,1]$. For any $\epsilon\in (0,1)$, there exist $\eta=\eta(\epsilon)>0$ and $C_{\mathfrak{T},t,\epsilon}>0$ such that the following holds for all $\delta\in (0,1)$.
		
		Let $(\mathcal{F},\mathcal{P})_\delta$ be a given configuration. Here $\mathcal{F}$ is $\mathfrak{T}$-transversal on $[-2,2]$. Assume that
		\begin{itemize}
			\item $\mathcal{F}$ is $\delta$-separated and a $(\delta,t)$-KT set;
			\item for each $f\in \mathcal{F}$, $\mathcal{P}(f)$ is uniform, $\lambda$-dense and a $(\delta,\epsilon^2,C;\rho^\ast)$-set for some $\rho\ast\in [\delta,\delta^\eta]$.
		\end{itemize}
		Write $t^\ast=\min\{t, 2-t\}$. Then
		\begin{equation}\label{equu-15}
			|\mathbf{E}_{\mathcal{F},\mathcal{P}}|\geq  C_{\mathfrak{T},t,\epsilon}\delta^{\epsilon}  C^{-\eta^{-2}} \delta^{(t-1)/2}\gamma_{\mathcal{P},t^\ast}^{-1/2} \lambda^{1/2}\sum_{f\in\mathcal{F}}|\mathcal{P}(f)|.
		\end{equation}
	\end{thm}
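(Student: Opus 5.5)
We argue by induction on scales, following the sketch in the introduction. The induction hypothesis is that \eqref{equu-15} holds at every scale $\delta'\geq \delta^{1-\eta_0}$ (for a suitable small $\eta_0=\eta_0(\epsilon)$). The base case $\delta\gtrsim_\epsilon 1$ is trivial once $C_{\mathfrak{T},t,\epsilon}$ is chosen small.

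\emph{Preliminary reductions.} First apply Lemma \ref{lem:uniform branching} so that all $\cP(f)$ share a common branching function, and apply Lemma \ref{refine1} so that every $p\in\mathbf{E}_{\mathcal{F},\mathcal{P}}$ lies in $\sim M$ shadings. Here $M\approx\sum_f|\cP(f)|/|\mathbf{E}_{\mathcal{F},\mathcal{P}}|$. These refinements cost only $\lessapprox 1$ factors. Lemma \ref{lem-rdelta} ensures that the Katz--Tao constants and $\lambda$-density are preserved up to such factors. Next apply Lemma \ref{multi-scale-lem} with parameter $\eta$. This gives a scale $r\in[\delta^{1-\eta_0\eta^{-1}},1]$ and an exponent $s$ such that each $S_J(\cP(f)\cap J)$, for $J\in[\cP(f)]_r$, is a $(\delta/r,s)$-set of size $\approx (r/\delta)^s$. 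The hypothesis that $\cP(f)$ is a $(\delta,\epsilon^2,C;\rho^\ast)$-set with $\rho^\ast\le\delta^\eta$ guarantees that $r$ is not too close to $1$ unless $s$ is essentially controlled. It also guarantees that $\mathcal{D}_r(\cP(f))$ is a nontrivial $(r,\epsilon^2)$-set, so that the coarse configuration $(\mathcal{F}_r,[\cP]_r)$ is again admissible at scale $r$.

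\emph{Local analysis in each $r$-cube.} Fix $\mathbf{Q}\in\mathcal{D}_r(\mathbf{E}_{\mathcal{F},\mathcal{P}})$ and let $\mathcal{F}_{\mathbf{Q}}$ be a maximal incomparable family of curves meeting $\mathbf{Q}$. Rescale by Lemma \ref{lem2}, which keeps transversality with constant $\lesssim\mathfrak{T}$, so that $\mathbf{Q}$ becomes the unit square. The shadings $S(\cP(f)\cap\mathbf{Q})$ are then $(\delta/r,s)$-sets. By Lemma \ref{lemma-KT}, we may pass to a Katz--Tao $(\delta/r,t)$ subfamily of the rescaled curves. We then apply the scale selection Lemma \ref{lem-main}, which rests on \cite[Theorem 1.11]{OPY2025} and \cite[Theorem 1.4]{OS2026}. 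This yields one of two alternatives.

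In the first alternative, the multiplicity of $\delta\times r$ tube segments through each $p\in\mathbf{E}\cap\mathbf{Q}$ is small. Double counting then gives a good lower bound for $|\mathbf{E}\cap\mathbf{Q}|$, and summing over $\mathbf{Q}$ gives $|\mathbf{E}|\gtrapprox|\mathbf{E}\cap\mathbf{Q}|\cdot|\mathcal{D}_r(\mathbf{E})|$. We bound $|\mathcal{D}_r(\mathbf{E})|$ from below by the induction hypothesis at scale $r$, applied to an $(r,t)$-KT subfamily of $\mathcal{F}$ with shadings $[\cP(f)]_r$. Lemma \ref{lem-rdelta} converts $\gamma$ at scale $r$ into $\gamma$ at scale $\delta$ with a gain $(\delta/r)^{-t}d^{-1}$, where $d\approx(r/\delta)^s$.

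In the second alternative, we obtain a scale $\Delta\in[\delta,r]$ and families $\mathcal{G}_{\mathbf{Q}}$ of $\Delta$-cubes, each with $|\mathbf{E}\cap Q|$ large. We build $\widetilde{\cP}(f)=\bigcup_{\mathbf{Q}}\mathcal{G}_{\mathbf{Q}}(f)$ on a $(\Delta,t)$-KT subfamily $\widetilde{\mathcal{F}}$ and apply the induction hypothesis at scale $\Delta$. This gives $|\mathbf{E}|\gtrsim|\mathbf{E}_{\widetilde{\mathcal{F}},\widetilde{\cP}}|\min_Q|\mathbf{E}\cap Q|$.

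\emph{Bookkeeping and the main obstacle.} In both alternatives the exponents must combine exactly to $\delta^{(t-1)/2}\gamma^{-1/2}\lambda^{1/2}\sum|\cP(f)|$. The losses, which are powers of $(\delta/r)^{-\eta}$, $C^{O(\eta^{-1})}$ and $\log$ factors, must be absorbed by the gain $\delta^{\epsilon}$ versus $\Delta^{\epsilon}$ in the induction. This absorption is possible because $\Delta\le\delta^{\eta_0}$-type separation is guaranteed, and it explains the form $C^{-\eta^{-2}}$ of the constant. I expect the main obstacle to be verifying that, after rescaling and passing to KT subfamilies, the quantities $\lambda$, $\gamma_{\cP,t^\ast}$ and $\sum|\cP(f)|$ at the coarse scale multiply back correctly to the fine-scale quantities. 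This requires tracking $|\widetilde{\mathcal{F}}|$ versus $|\mathcal{F}|$ via Lemma \ref{lemma-KT}, and checking that the rescaled shadings remain $(\cdot,\epsilon^2,C';\rho^\ast)$-sets with $C'\lessapprox C$. The most delicate point is arranging that the numerology of Lemma \ref{lem-main} is tuned to the $t^\ast=\min\{t,2-t\}$ exponent.
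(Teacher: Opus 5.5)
Your second alternative is essentially the paper's Case II. The first alternative, however, does not close, and this is a genuine gap.

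Bounding $|\mathcal{D}_r(\mathbf{E}_{\mathcal{F},\mathcal{P}})|$ from below by the induction hypothesis for a coarse $(r,t)$-KT subfamily $\mathcal{F}_r$ (with $|\mathcal{F}_r|\gtrsim(\delta/r)^t|\mathcal{F}|$ and $r$-squares as shadings) says nothing about how many functions share the same $\delta\times r$ curved segment inside an $r$-cube. That multiplicity is exactly what the count needs. Concretely, let $\mathbb{S}''_{\mathbf{Q}}$ be your local family of segments. Lemma \ref{lem-main}(i) gives the double-counting bound $|\mathbf{E}_{\mathcal{F},\mathcal{P}}\cap\mathbf{Q}|\gtrapprox|\mathbb{S}''_{\mathbf{Q}}|(r/\delta)^{s}(\delta/r)^{-f(s,t)}$. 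Multiply this by your coarse bound, and use $\tilde\lambda=\lambda(r/\delta)^{1-s}$ together with Lemma \ref{multi-scale-lem}(i) or Lemma \ref{lem-rdelta}. You get the right-hand side of \eqref{equu-15} multiplied by $|\mathbb{S}''_{\mathbf{Q}}|(\delta/r)^{t}$ (for $t\le 2-s$; otherwise replace $t$ by $2-s$). Since $|\mathbb{S}''_{\mathbf{Q}}|\lesssim(r/\delta)^t$, this factor is at most $1$. It is harmless only when the rescaled local family is a saturated $(\delta/r,t)$-KT set, and that case is really the paper's Case II with $\Delta=r$ and $d\sim1$.

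There is a second loss. Extracting a $(\delta/r,t)$-KT subfamily inside $\mathbf{Q}$ via Lemma \ref{lemma-KT} costs a factor $\sim r^{t}m_{\mathbf{Q}}$ in the number of segments (compare \eqref{eqqqq}). That is affordable in the proof of Theorem \ref{thm-twoends} only because $\rho\ge\delta^{\epsilon_1}$ there. Here $r$ can be as small as $\delta^{1-\eta_0\eta_1^{-1}}$.

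The missing idea is the paper's Step 2 (Claim \ref{claimrefine1}):
\begin{enumerate}
\item For each $\mathbf{F}\in\mathcal{D}_{\delta/r}(\mathcal{F})$, rescale by $T_{\mathbf{F}}(f)=(f-f_{\mathbf{F}})/(\delta/r)$ as in Lemma \ref{lem1}. This turns the $\delta\times r$ curved rectangles into $\sim r$-squares. It also makes $T_{\mathbf{F}}(\mathcal{F}[\mathbf{F}])$ an $(r,t)$-KT family with no loss of cardinality.
\item The induction hypothesis at scale $r$ then bounds the number $M$ of functions sharing a curved rectangle; this is property (D3).
\item The functions sharing a segment are split into $\sim M$ separate local configurations $\mathcal{P}_{Q,k}$. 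Lemma \ref{lem-main} is applied to each of them. It needs no Katz--Tao hypothesis, so no local extraction is made.
\item Case I then closes via $|\mathcal{F}_2(p)|\lessapprox M\cdot N_{Q,k}$. The bound for $M$ times $(\delta/r)^{f(s,t)}$ equals $\lambda^{-1/2}\delta^{(1-t)/2}\gamma_{\mathcal{P},t^\ast}^{1/2}$, up to $C_{\mathfrak{T},t,\epsilon}^{-1}C^{\eta^{-2}}$ and a factor $r^{-\epsilon}(\delta/r)^{-\epsilon^2-O(\sqrt{\eta})}$. The gap between $r^{-\epsilon}$ and $\delta^{-\epsilon}$ absorbs these losses because $\delta/r\le\delta^{\eta_0\eta_1^{-1}}$.
\end{enumerate}

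You also need to treat $r\ge\delta^{\eta}$ separately; the paper does this directly via Corollary \ref{cor2}. The reason is that Case II applies the induction hypothesis at scale $\Delta$ to shadings that are only $(\Delta,\epsilon^2;\max\{\rho^\ast,r\})$-sets. This requires $\max\{\rho^\ast,r\}\le\Delta^{\eta}$, which is exactly where $r\le\delta^{\eta}$ is used.
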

	
	\begin{remark}
		In the proof of Theorem \ref{thm-twoends}, we only need to apply Theorem \ref{thm-reduction2} when $\rho^\ast=\delta$.
	\end{remark}
	
	\begin{notation}
		In this section, we write $A \lessapprox B$, $A \gtrapprox B$, or $A\approx B$ to hide slowly growing functions of $\delta$ such as $\log(1/\delta)$ or $\bar{h}(|\log\delta|)$ for some function $\bar{h}$, arising from dyadic pigeonholing. We write $A \lesssim B$, $A \gtrsim B$, or $A\sim B$ to hide absolute constants and also the transversality constant $\mathfrak{T}$.
	\end{notation}

	\subsection{Proof of Theorem \ref{thm-twoends}}
	\begin{proof}[Proof of Theorem \ref{thm-twoends} using Theorem \ref{thm-reduction2}]
		It suffices to prove Theorem \ref{thm-twoends} when $\epsilon<\sqrt{\epsilon_2}$. By a dyadic pigeonholing argument, we may assume that $|\mathcal{P}(f)|\sim \lambda \delta^{-1}$ for all $f\in \mathcal{F}$. Applying Lemma \ref{lem:uniform branching}, we find a refinement $(\mathcal{F}_1,\mathcal{P}_1)_\delta$ of $(\mathcal{F},\mathcal{P})_\delta$ such that $\mathcal{P}_1(f)$ is a uniform refinement of $\mathcal{P}(f)$ for all $f\in\mathcal{F}_1$ and there is a uniform branching function for $\{\mathcal{P}_1(f)\}_{f\in \mathcal{F}_1}$.
		
		Since $\mathcal{P}(f)$ is $(\epsilon_1,\epsilon_2)$-two-ends, $\mathcal{P}_1(f)$ is $(\epsilon_1,\epsilon_2, K)$-two-ends for some $K\lessapprox 1$. Recalling Definition \ref{twoendsreduction} and Remark \ref{remk-1}, there exists $\rho(f):=\rho(f,\epsilon^2,K)\in[\delta,1]\cap 2^{-\N}$ for each $f\in\mathcal{F}_1$ such that
		\begin{itemize}
			\item [(i)] $|\mathcal{P}_1(f)|_{\rho(f)}\leq K^{-1} \rho(f)^{-\epsilon^2}$;
			\item [(ii)] for all $J\in [\mathcal{P}_1(f)]_{\rho(f)}$, $S_J(\mathcal{P}_1(f)\cap J)$ is a $(\delta/\rho(f),\epsilon^2,C_1)$-set for some $C_1\lessapprox1$.
		\end{itemize}
		Since $\epsilon^2<\epsilon_2$, by Lemma \ref{lem-rho} $\rho(f)\geq \delta^{\epsilon_1}$ for all $f\in\mathcal{F}_1$. By dyadic pigeonholing, we can further find a uniform $\rho\geq \delta^{\epsilon_1}$ and a refinement $\mathcal{F}_2$ of $\mathcal{F}_1$ such that $\rho(f)\sim \rho$ for all $f\in\mathcal{F}_2$. Let $\mathcal{P}_2=\mathcal{P}_1$ (i.e., $\mathcal{P}_2(f)=\mathcal{P}_1(f)$ for all $f\in\mathcal{F}_1$), then $(\mathcal{F}_2,\mathcal{P}_2)_\delta$ is a refinement of $(\mathcal{F},\mathcal{P})_\delta$ and for each $f\in\mathcal{F}_2$ we have
		\begin{itemize}
			\item [(i)] $|\mathcal{P}_2(f)|_{\rho}\lesssim K^{-1} \rho^{-\epsilon^2}\leq \rho^{-\epsilon^2}$;
			\item [(ii)] for all $J\in [\mathcal{P}_2(f)]_{\rho}$, $S_J(\mathcal{P}_2(f)\cap J)$ is a $(\delta/\rho,\epsilon^2,C_2)$-set for some $C_2\lessapprox1$.
		\end{itemize}
		In the sequel, we may also assume that $\rho<1/2$ since otherwise Theorem \ref{thm-twoends} follows from Theorem \ref{thm-reduction2} when $\rho^\ast=\delta$.

		Next, recall a basic construction from \cite[Appendix A]{OPY2025}. For each $Q\in\mathcal{D}_\rho(\mathbf{E}_{\mathcal{F}_2,\mathcal{P}_2})$, let \[\Gamma(Q)=\{\Gamma_{f|_{I_Q}}: \Gamma\cap Q\neq \emptyset, f\in\mathcal{F}_2\}.\] 
		Here $I_Q=\pi_x(Q)$ and $\Gamma_{f|_{I_Q}}$ means the graph of $f$ above $I_Q$. Two curve segments $\Gamma_{f|_{I_{Q}}},\Gamma_{g|_{I_{Q}}}$ are \emph{comparable} if $|f(x) - g(x)| \leq 3\delta$ for all $x \in I_{Q}$. Let $\mathbb{S}_{\rho,Q}$ be a maximal set of \emph{incomparable} curve segments in $\Gamma(Q)$ and write $\mathbb{S}_\rho:=\cup_Q \mathbb{S}_{\rho,Q}$. We recall the following two properties for incomparable segments whose proof can be found in \cite[Appendix A]{OPY2025}. 
		\begin{itemize}
			\item [(i)] Each $u\in \mathbb{S}_\rho$ belongs to at most two different $\mathbb{S}_{\rho,Q}$ since the functions $f \in \mathcal{F}$ are $1$-Lipschitz, and therefore any graph segment $\Gamma_{f|_{I_{Q}}}$ can intersect at most two (vertically aligned) $\rho$-cubes.
			
			\item [(ii)] For each $Q\in \mathcal{D}_\rho(\mathbf{E}_{\mathcal{F}_2,\mathcal{P}_2})$, each element of $\Gamma(Q)$ is contained in the vertical $3\delta$-neighborhood of one and at most $O(\mathfrak{T}^{4})$ segments in $\mathbb{S}_{\rho,Q}$.
		\end{itemize}

		For each $u\in\mathbb{S}_{\rho,Q}$, let $\mathcal{P}_{Q}(u):=\mathcal{P}_2(f_u)\cap Q$, where $f_u\in\mathcal{F}_2$ such that $\Gamma_{f|_{I_Q}}=u$. Since $\mathcal{P}_2(f_u)$ is uniform, $\lambda$-dense and $|\mathcal{P}_2(f)|_{\rho}\lesssim \rho^{-\epsilon^2}$,
		\begin{equation}\label{equa-16}
			|\mathcal{P}_{Q}(u)|\sim \frac{|\mathcal{P}_2(f_u)|}{|\mathcal{P}_2(f_u)|_\rho}\gtrapprox \lambda \delta^{-1}\rho^{\epsilon^2}=\lambda\rho^{-1+\epsilon^2} \cdot (\delta /\rho)^{-1}.
		\end{equation}
		Let $(x_{0},y_{0})$ be the lower left corner of $Q$ and recall the notation  \[f_{(x_{0},y_{0}),\rho}(x)=(f(\rho x+x_0)-y_0)/\rho.\] Lemma \ref{lem2} stated that the family 
		\begin{displaymath} \mathcal{F}_{Q} := \mathcal{F}_{(x_0,y_0),\rho}:=\{(f_u)_{(x_0,y_0),\rho}: u\in \mathbb{S}_{\rho,Q}\} \end{displaymath}
		is transversal with constant $(4\mathfrak{T} + 1)$ on the interval $[-2,2]$ (note that $[-2,2]\subset ([-2,2]-x_0)/\rho$ since $x_0\in [0,1]$ and $\rho<1/2$). It has been verified in \cite[Appendix A]{OPY2025} that $\mathcal{F}_{Q}$ is $\sim_{\mathfrak{T}} \delta/\rho$-separated. We aim to apply Theorem \ref{thm-reduction2}, but $\mathcal{F}_{Q}$ may not be a $(\delta/\rho,t)$-KT set. In the following, we use random selection to find a KT-subset $\mathcal{F}_Q''\subset \mathcal{F}_Q$. Before that, we need the following claim.
		
		\begin{claim}\label{claimm1}
			A subset $\mathcal{F}_Q''\subset \mathcal{F}_Q$ is a $(\delta/\rho,t)$-KT set if 
			\begin{equation}\label{equ-17}
				|\{u_f\in\mathbb{S}_{\rho,Q}'': u_f\subset u(2\tau)\}|\lessapprox (\tau/\delta)^t, \quad \forall u\in \mathbb{S}_{\rho,Q}'', \quad \forall \tau\in [\delta,\rho],
			\end{equation}
			where $u(2\tau)$ is the vertical $2\tau$-neighborhood of $u$ (recall Definition \ref{def:vertical}) and
			\[\mathbb{S}_{\rho,Q}'':=\{u\in \mathbb{S}_{\rho,Q}: f_{(x_{0},y_{0}),\rho} \in \mathcal{F}_Q''\}.\]
		\end{claim}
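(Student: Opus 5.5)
The plan is to translate the Katz--Tao condition for the rescaled family $\mathcal{F}_Q''$ in $C^2([-2,2])$ into a statement about vertical neighbourhoods of the original segments over $I_Q$, and then to invoke \eqref{equ-17}.

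First, fix $g_0=(f_{u_0})_{(x_0,y_0),\rho}\in\mathcal{F}_Q''$ and a radius $r\in[\delta/\rho,1]$. Put $\tau:=r\rho\in[\delta,\rho]$. By Lemma \ref{lem-Ahlforsregularity} and Corollary \ref{cor-covernumber}, it suffices to bound the number of $g\in\mathcal{F}_Q''$ with $\|g-g_0\|_{C^2([-2,2])}\le r$ by $\lessapprox (r\rho/\delta)^t$. Two elements of $\mathcal{F}_Q''$ at $C^2$-distance $\lesssim\delta/\rho$ are then counted with multiplicity $O_{\mathfrak{T}}(1)$, because $\mathcal{F}_Q$ is $\sim_{\mathfrak{T}}\delta/\rho$-separated. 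Moreover, balls centred at arbitrary points of $C^2$ can be replaced by balls of double radius centred at elements of $\mathcal{F}_Q''$. Note that the target bound is $(\tau/\delta)^t=(r/(\delta/\rho))^t$, which is exactly the Katz--Tao bound at scale $\delta/\rho$.

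Second, I would show the inclusion. Suppose $\|g-g_0\|_{C^2([-2,2])}\le r$ with $g=(f_u)_{(x_0,y_0),\rho}$. Then for $x\in I_Q=x_0+\rho[0,1]$ we have
\[
|f_u(x)-f_{u_0}(x)|=\rho\,\bigl|g\bigl((x-x_0)/\rho\bigr)-g_0\bigl((x-x_0)/\rho\bigr)\bigr|\le \rho r=\tau .
\]
Hence $u$ lies in the vertical $\tau$-neighbourhood of $u_0$, and in particular in $u_0(2\tau)$. So the count is at most $|\{u_f\in\mathbb{S}_{\rho,Q}'':u_f\subset u_0(2\tau)\}|$. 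The map $u\mapsto (f_u)_{(x_0,y_0),\rho}$ is injective on $\mathbb{S}_{\rho,Q}$ because the segments are distinct, so this bookkeeping is faithful. By \eqref{equ-17} the count is $\lessapprox(\tau/\delta)^t$, as required. For radii $r>1$ there is nothing more to check, since the family lies in a bounded ball; one may also take $r$ up to $O(\mathfrak{T})$ with the same argument by capping $\tau$ at $\rho$.

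The only genuinely delicate point is the normalization. The Katz--Tao definition uses covering numbers $|\cdot|_{\delta/\rho}$ of metric balls in $C^2$, whereas \eqref{equ-17} is phrased through containment in vertical neighbourhoods over the short interval $I_Q$. I expect the main obstacle to be checking that the sup-norm closeness on $[-2,2]$ implied by the $C^2$ ball controls closeness on $I_Q$, which is the rescaled $[0,1]\subset[-2,2]$. This direction is trivial, so the containment goes the right way and transversality is needed only to compare covering numbers with cardinalities via separation.
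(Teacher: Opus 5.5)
Your proposal is correct and is essentially the paper's proof: both note that two members of $\mathcal{F}_Q''$ in a common $r_0$-ball are within $2r_0$ in sup norm on the rescaled interval. Hence the original segments satisfy $u_2\subset u_1(2r_0\rho)$ over $I_Q$, and \eqref{equ-17} applied with $\tau=r_0\rho\in[\delta,\rho]$ gives the Katz--Tao bound. Your appeal to the $\delta/\rho$-separation to compare covering numbers with cardinalities is not needed, since $|A|_{\delta/\rho}\le |A|$ holds trivially.
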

		\begin{proof}
			Let $r_0\in [\delta/\rho,1]$, then if $(f_{u_1})_{(x_0,y_0),\rho}$ and $(f_{u_2})_{(x_0,y_0),\rho}$ (here $u_1,u_2\in\mathbb{S}_{\rho,Q}''$) belong to the same $r_0$-ball, we have
			\[|(f_{u_1})_{(x_0,y_0),\rho}-(f_{u_2})_{(x_0,y_0),\rho}|_{L^\infty([-1,1])}\leq 2r_0 \Longrightarrow |f_{u_1}-f_{u_2}|_{L^\infty(I_Q)}\leq 2r_0\rho,\]
			which implies $u_2\subset u_1(2r_0\rho)$. Hence from \eqref{equ-17} we deduce that
			\[|\mathcal{F}''_Q\cap B_{r_0}|_{\delta/\rho}\leq \max_{u_1\in \mathbb{S}_{\rho,Q}''}|\{u_2\in \mathbb{S}_{\rho,Q}'':u_2\subset u_1(2r_0\rho)|\lessapprox \Big(\frac{r_0}{\delta/\rho}\Big)^t,\]
			which proves the claim.
			
		\end{proof}

		Now we start selecting the sub-family $\mathcal{F}_Q''\subset \mathcal{F}_Q$. Write 
		\[\mathcal{F}_2(Q)=\{f\in\mathcal{F}_2: \Gamma_f\cap Q\neq \emptyset\}\]
		and 
		\[\mathcal{F}_2(u)=\{f\in\mathcal{F}_2(Q): u_f\subset u(3\delta)\}, \quad u\in\mathbb{S}_{\rho,Q},\]
		then $\mathcal{F}_2(Q)=\cup_{u\in \mathbb{S}_{\rho,Q}}\mathcal{F}_2(u)$. Write $m(u)=|\mathcal{F}_2(u)|$. By dyadic pigeonholing, we can find $m_Q$ and $\mathbb{S}_{\rho,Q}'\subset \mathbb{S}_{\rho,Q}$ such that
		\begin{itemize}
			\item [(1)] $m(u)\sim m_Q$ for all $u\in \mathbb{S}_{\rho,Q}'$,
			\item [(2)] $m_Q\cdot |\mathbb{S}_{\rho,Q}'|\gtrapprox \sum_{u\in\mathbb{S}_{\rho,Q}} m(u)=\sum_{f\in \mathcal{F}_2(Q)}|\{u\in \mathbb S_{\rho,Q}: u_f\subset u(3\delta)\}|\sim |\mathcal{F}_2(Q)|$. This follows from the property (ii) above.
		\end{itemize}
		To pick a subset $\mathcal{F}_Q''$ of $\mathcal{F}_Q$, define
		\begin{equation}\label{equ-18}
			\sigma:=\sup_{\tau\in[\delta,\rho]}\Big(\frac{\delta}{\tau}\Big)^t\cdot \Big(\sup_{u\in \mathbb{S}'_{\rho,Q}}|\{u_1\in \mathbb{S}'_{\rho,Q}: u_1\subset u(2\tau)\}|\Big),
		\end{equation}
		then $\sigma\geq1$ (take $\tau=\delta$). For $\tau\in [\delta,\rho]$ and $u\in \mathbb{S}_{\rho,Q}'$, $\mathcal{F}_2(u;\tau):=\{f\in\mathcal{F}_2: u_f\subset u(2\tau)\}$ is contained in a ball with radius $\sim_\mathfrak{T} \tau/\rho$, see \cite[Claim A.15]{OPY2025} for the proof. Since $\mathcal{F}_2$ is a $(\delta,t)$-KT set, we have 
		\[|\{u_1\in \mathbb{S}'_{\rho,Q}: u_1\subset u(2\tau)\}|\cdot m_Q\lesssim_\mathfrak{T} |\mathcal{F}(u;\tau)|\lessapprox \Big(\frac{\tau}{\delta \rho}\Big)^t,\]
		where the first inequality follows from the property (ii) for incomparable segments above. This implies 
		\begin{equation}\label{eqqqq}
			\sigma \cdot m_Q\lessapprox \rho^{-t}.
		\end{equation}
		Let $\mathbb{S}_{\rho,Q}''\subset \mathbb{S}_{\rho,Q}'$ be a uniform random sample with probability $\sigma^{-1}$. Then, with high probability $|\mathbb{S}_{\rho,Q}''|\gtrapprox \sigma^{-1}|\mathbb{S}_{\rho,Q}'|\gtrsim_\mathfrak{T} m_Q\rho^t |\mathbb{S}_{\rho,Q}'|$ and
		\[|\{u_1\in \mathbb{S}_{\rho,Q}'': u_1\subset u(2\tau)\}|\lessapprox  \Big(\frac{\tau}{\delta}\Big)^t, \quad \forall ~u\in\mathbb{S}_{\rho,Q}''.\]
		Let $\mathcal{F}_Q''=\{(f_{u})_{(x_0,y_0),\rho}: u\in\mathbb{S}_{\rho,Q}''\}$, then by Claim \ref{claimm1} the inequality above shows that $\mathcal{F}_Q''$ is a $(\delta/\rho,t,C_\mathfrak{T})$-KT set for some $C_\mathfrak{T}\lessapprox_\mathfrak{T}1$.

		Recall $\mathcal{P}_{Q}(u):=\mathcal{P}_2(f_u)\cap Q$ for $u\in \mathbb{S}_{\rho,Q}$. For notational convenience, we define a new shading:
		\begin{equation}\label{equ-f}
			\mathcal{P}_Q''(u)=\left\{
			\begin{array}{ll}
				\mathcal{P}_Q(u), &~\text{if}~u\in \mathbb{S}_{\rho,Q}'',\\
				\emptyset, &~\text{otherwise}.
			\end{array}\right.
		\end{equation}
		Let $T_Q$ be the rescaling map taking $Q$ to $[-1,1]^2$. For $\bar{f}=(f_{u})_{(x_0,y_0),\rho}\in \mathcal{F}_Q''$, define
		\begin{displaymath} \overline{\mathcal{P}}_{Q} (\bar{f}) := \{T_Q(p):p\in \mathcal{P}_Q''(u)\} \subset \mathcal{D}_{\delta/\rho}. \end{displaymath}
		Recalling \eqref{equa-16}, we infer that $\overline{\mathcal{P}}_{Q}(\bar{f})$ is a uniform $(\delta/\rho,\epsilon^2, C_3)$-set ($C_3\lessapprox1$) and is $\lambda\rho^{-1+\epsilon^2}$-dense for all $\bar{f}\in\mathcal{F}_Q''$. Therefore, we are ready to apply Theorem \ref{thm-reduction2} to $(\mathcal{F}_Q'',\overline{\mathcal{P}}_{Q})_{\delta/\rho}$ with $\epsilon$ replaced by $\epsilon/2$. This gives $\eta=\eta(\epsilon)$ and $C_{\mathfrak{T},t,\epsilon}>0$ such that
		\begin{equation}\label{equ-19}
			|\mathbf{E}_{\mathcal{F}_Q'',\overline{\mathcal{P}}_{Q}}|\gtrapprox  C_{\mathfrak{T},t,\epsilon}(\delta/\rho)^{\epsilon/2}\cdot C_3^{-\eta^{-2}}(\delta/\rho)^{(t-1)/2} \cdot \gamma_{\overline{\mathcal{P}}_{Q},t^\ast}^{-1/2} \cdot (\lambda\rho^{-1+\epsilon^2})^{1/2}\sum_{\bar{f}\in\mathcal{F}_Q''}|\overline{\mathcal{P}}_{Q}(\bar{f})|.
		\end{equation}
		We may choose $\delta>0$ small enough so that $C_{\mathfrak{T},t,\epsilon}C_3^{-\eta^{-2}}>\delta^{\epsilon^2}$. After rescaling back, we get
		\begin{equation}\label{equu-19}
			\begin{split}
				|\mathbf{E}_{\mathcal{F},\mathcal{P}}\cap Q|\geq |\mathbf{E}_{\mathbb{S}_Q'',\mathcal{P}''_{Q}}|\gtrapprox & (\delta/\rho)^{\epsilon/2} \delta^{\epsilon^2}\cdot   (\delta/\rho)^{(t-1)/2} \gamma_{\mathcal{P}''_{Q},t^\ast}^{-1/2} \\& \cdot  (\lambda\rho^{-1+\epsilon^2})^{1/2}\sum_{u\in\mathbb{S}_Q''}|\mathcal{P}''_{Q}(u)|.
			\end{split}
		\end{equation}
		Recall $|\mathcal{F}_Q''|=|\mathbb{S}_{\rho,Q}''|\gtrapprox \sigma^{-1} |\mathbb{S}_{\rho,Q}'|$ and $m_Q\cdot |\mathbb{S}_{\rho,Q}'|\gtrapprox |\mathcal{F}_2(Q)|$ for each $u\in\mathbb{S}_{\rho,Q}'$. Since $|\mathcal{P}_2(f)\cap Q|$ is approximately constant for all $f\in \mathcal{F}_1$ and $Q$,
		\begin{equation}\label{equ-cubeinQ}
			\begin{split}
				\sum_{u\in\mathbb{S}_Q''}|\mathcal{P}''_{Q}(u)|&\gtrapprox \sigma^{-1} \sum_{u\in\mathbb{S}_Q'}|\mathcal{P}_{Q}(u)|\gtrapprox (\sigma m_Q)^{-1} \sum_{f\in\mathcal{F}_2(Q)}|\mathcal{P}_2(f)\cap Q|\\
				&\gtrapprox \rho^t \sum_{p\in \mathbf{E}_{\mathcal{F}_2,\mathcal{P}_2}\cap Q} |\mathcal{F}_2(p)|,
			\end{split}
		\end{equation}
		where we use \eqref{eqqqq} and $\mathcal{F}_2(p)=\{f\in \mathcal{F}_2:p\in \mathcal{P}_2(f)\}$. Substituting \eqref{equ-cubeinQ} into \eqref{equu-19} gives
		\begin{equation}\label{equu-22}
			\begin{split}
				|\mathbf{E}_{\mathcal{F},\mathcal{P}}\cap Q|&\gtrapprox (\delta/\rho)^{\epsilon/2} \delta^{\epsilon^2}\cdot   (\delta/\rho)^{(t-1)/2} \gamma_{\mathcal{P}''_{Q},t^\ast}^{-1/2}  \cdot  (\lambda\rho^{-1+\epsilon^2})^{1/2} \rho^t \sum_{p\in \mathbf{E}_{\mathcal{F}_2,\mathcal{P}_2}\cap Q} |\mathcal{F}_2(p)|\\
				&\geq \delta^{\epsilon/2+\epsilon^2}\cdot \rho^{t/2} \delta^{(t-1)/2}  \gamma_{\mathcal{P}''_{Q},t^\ast}^{-1/2} \lambda^{1/2}\sum_{p\in \mathbf{E}_{\mathcal{F}_2,\mathcal{P}_2}\cap Q} |\mathcal{F}_2(p)|.
			\end{split}
		\end{equation}
		By using $\rho\geq \delta^{\epsilon_1}$ and $\gamma_{\mathcal{P}''_{Q},t^\ast}\leq \gamma_{\mathcal{P},t^\ast}$, we deduce by summing up all $Q\in\mathcal{D}_\rho(\mathbf{E}_{\mathcal{F}_2,\mathcal{P}_2})$ that
		\begin{equation}\label{equ-20}
			\begin{split}
				|\mathbf{E}_{\mathcal{F},\mathcal{P}}|&\gtrapprox  \delta^{\epsilon/2+\epsilon^2} \delta^{t\epsilon_1/2}\cdot \delta^{(t-1)/2}  \gamma_{\mathcal{P}''_{Q},t^\ast}^{-1/2} \lambda^{1/2}\sum_{p\in \mathbf{E}_{\mathcal{F}_2,\mathcal{P}_2}} |\mathcal{F}_2(p)|\\
				&\gtrapprox \delta^{\epsilon/2+\epsilon^2} \delta^{t\epsilon_1/2} \cdot\delta^{(t-1)/2} \gamma_{\mathcal{P},t^\ast}^{-1/2}  \lambda^{1/2}\sum_{f\in\mathcal{F}}|\mathcal{P}(f)|.
			\end{split}
		\end{equation}
		Finally, by choosing $\delta>0$ sufficiently small, we easily conclude \eqref{main--twoends}. 
	\end{proof}

	\subsection{A scale selection lemma}
	
	The remaining task of this section is to prove Theorem \ref{thm-reduction2}. Our simplified proof for Theorem \ref{thm-reduction2} is mainly due to the following scale selection lemma. The lemma itself is an application of the Furstenberg set estimates in Theorem \ref{thm.furstenberg} and \ref{thm-incidence-KT} stated below.
	
	\begin{lemma}\label{lem-main}
		Let $s\in (0,1]$, $t\in(0,2]$ and $\mathfrak{T}\geq1$. Then, for every $\nu > 0$, there exist $\eta=\eta(\nu,s)>0$ and $\delta_0=\delta_0(s,t,\nu,\mathfrak{T})>0$ such that the following holds for all $\delta \in (0,\delta_{0}]$. 
		
		Let $(\mathcal{F},\mathcal{P})_\delta$ be a given configuration. Here $\mathcal{F}\subset B_{C^2}(1)$ is $\mathfrak{T}$-transversal over $[-2,2]$. Assume that:
		\begin{itemize}
			\item $\mathcal{F}$ is uniform and $\delta$-separated;
			\item for all $f\in\mathcal{F}$, $\mathcal{P}(f)$ is a $(\delta,s,\delta^{-\eta})$-set with $|\mathcal{P}(f)|\lesssim\delta^{-s-\eta}$;
			\item recall $\mathcal{F}(p)=\{f\in\mathcal{F}:p\in \mathcal{P}(f)\}$, then we have \begin{equation}\label{eq.average}|\mathcal{F}(p)|\lessapprox \sum_{f\in \mathcal{F}}|\mathcal{P}(f)|/|\mathbf{E}_{\mathcal{F},\mathcal{P}}|.
			\end{equation}
		\end{itemize}
		Write
		\begin{equation}\label{equ-f}
			f(s,t)=\left\{
			\begin{array}{lll}
				0, \quad &\text{if}~~t\in(0,s],\\
				(s-t)/2, &~\text{if}~t\in[s,2-s],\\
				s-1, &~\text{if}~t\in [2-s,2].
			\end{array}\right.
		\end{equation}
		Then there exists a scale $\Delta\in [\delta,1]$ such that the following is true:
		\begin{itemize}
			\item [(i)] We have
			\begin{equation}\label{equ-multiplicity}
				|\mathcal{F}(p)|\leq \delta^{-\nu} \Big(\frac{\delta}{\Delta}\Big)^{-O(1)}\delta^{f(s,t)}.
			\end{equation}
			
			\item [(ii)] If $\Delta \geq \delta^{1-\sqrt{\eta}}$, then there is a refinement $\mathcal{F}^\ast\subset \mathcal{F}$ and an integer $d\geq1$ such that for all $f\in\mathcal{F}^\ast$ there is a sub-family $\mathcal{Q}(f)\subset \mathcal{D}_\Delta\cap \Gamma_f(O(\Delta))$ with $|\mathcal{Q}(f)|\sim d$ satisfying
			\begin{equation}\label{equ-largecardi}
				|\mathbf{E}_{\mathcal{F},\mathcal{P}}\cap Q|\geq \Big(\frac{\delta}{\Delta}\Big)^\nu \delta^{-2s+f(s,t)}\cdot \Delta^{s-f(s,t)}\cdot d^{-1}, \quad\forall Q\in\mathcal{Q}(f).
			\end{equation}
			
		\end{itemize}
	\end{lemma}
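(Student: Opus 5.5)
The plan is to prove Lemma \ref{lem-main} by tracking the covering numbers $N(r):=|\mathbf{E}_{\mathcal{F},\mathcal{P}}|_r$ over dyadic scales $r\in[\delta,1]$. At each scale I would compare $N(r)$ with the lower bound given by the curvilinear Furstenberg estimates (\cite[Theorem 1.11]{OPY2025}, together with the Katz–Tao incidence bound \cite[Theorem 1.4]{OS2026} to handle the regime $t\geq 2-s$). The exponent $-2s+f(s,t)$ is designed to be exactly the Furstenberg exponent: it is $2s$ for $t\le s$, $(3s+t)/2$ for $s\le t\le 2-s$, and $s+1$ for $t\ge 2-s$. So the target inequality at scale $r$ reads $N(r)\gtrapprox r^{-2s+f(s,t)}$, up to $r^{\nu}$-losses. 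Because $\mathcal{F}$ is uniform and each $\mathcal{P}(f)$ is a $(\delta,s,\delta^{-\eta})$-set, the coarsened configuration $(\mathcal{D}_r(\mathcal{F}),\mathcal{D}_r(\mathcal{P}(f)))$ is again an $(r,s,\delta^{-O(\eta)})$-Furstenberg configuration, by Corollary \ref{cor-covernumber} and the rescaling lemmas. Hence the Furstenberg bound applies at every scale $r$ whose loss $\delta^{-O(\eta)}$ is acceptable, which is where the restriction $r\ge\delta^{1-\sqrt\eta}$ enters.

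First I would fix $\eta\ll\nu$ so that $\delta^{-O(\eta)}\le (\delta/r)^{-\nu/10}$ whenever $r\geq \delta^{1-\sqrt\eta}$. Then I would define $\Delta$ as the largest dyadic scale in $[\delta,1]$ at which the single-scale Furstenberg bound is essentially sharp, namely
\[N(\Delta)\le (\delta/\Delta)^{-\nu/2}\,\Delta^{-2s+f(s,t)},\]
and set $\Delta=\delta$ if no such scale exists (this degenerate case lands directly in (i)). For (i) I would use \eqref{eq.average}, which gives
\[|\mathcal{F}(p)|\lessapprox \frac{\sum_f|\mathcal{P}(f)|}{|\mathbf{E}_{\mathcal{F},\mathcal{P}}|}\lesssim \frac{|\mathcal{F}|\,\delta^{-s-\eta}}{N(\delta)}.\]
The remaining task is to bound $N(\delta)$ from below by the Furstenberg bound at scale $\delta$, up to the factor $(\delta/\Delta)^{O(1)}$. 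Since $N(\delta)\geq N(\Delta)$ and each $\Delta$-cube contains at most $(\Delta/\delta)^2$ cubes of scale $\delta$, the discrepancy between the scales $\delta$ and $\Delta$ costs at most $(\delta/\Delta)^{-O(1)}$. The size of $|\mathcal{F}|$ would be controlled through the uniform branching of $\mathcal{F}$; this is where the parameter $t$, read as the effective dimension of $\mathcal{F}$, enters.

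For (ii), assume $\Delta\ge\delta^{1-\sqrt\eta}$. Maximality of $\Delta$ means that every scale $r\in(\Delta,1]$ violates the sharpness inequality. Combined with the Furstenberg lower bound at scale $\delta$, this should yield $N(\delta)/N(\Delta)\gtrsim (\delta/\Delta)^{\nu}\delta^{-2s+f}\Delta^{2s-f}$ on average over $\Delta$-cubes. By dyadic pigeonholing I would then pass to a subfamily $\mathcal{Q}$ of $\Delta$-cubes on which $|\mathbf{E}\cap Q|$ is essentially constant and which carries most incidences. For each $f$ I would let $\mathcal{Q}(f)$ be the cubes of $\mathcal{Q}$ meeting $\mathcal{P}(f)$; these lie in $\Gamma_f(O(\Delta))$. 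A further pigeonholing makes $|\mathcal{Q}(f)|\sim d$ on a refinement $\mathcal{F}^\ast$. The factor $d^{-1}$ and the extra $\Delta^{s}$ in \eqref{equ-largecardi} should come from double counting: each $f$ has $|\mathcal{P}(f)|\approx\delta^{-s}$ cubes spread over $d$ cubes of scale $\Delta$, and the $(\delta,s)$-set property bounds how many fall into a single $\Delta$-cube by $\Delta^s\delta^{-s}$.

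I expect the main obstacle to be making the two alternatives complementary with consistent exponents. The choice of $\Delta$ has to give, simultaneously, the multiplicity bound in (i) with only a $(\delta/\Delta)^{-O(1)}$ loss and the density bound in (ii) with the exact exponents $-2s+f$ and $s-f$. This means the applications of \cite[Theorem 1.11]{OPY2025} at scales $\Delta$ and $\delta$ must be combined without losing more than $\delta^{-\nu}$. My first attempt would define $\Delta$ through the ratio $N(\delta)/N(\Delta)$ rather than through $N(\Delta)$ alone, and choose $\eta(\nu,s)$ small enough to absorb the Frostman-type losses $\delta^{-O(\eta)}$.
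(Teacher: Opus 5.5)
Your plan has a genuine gap. It rests on Furstenberg-type lower bounds for $N(r)=|\mathbf{E}_{\mathcal{F},\mathcal{P}}|_r$, but Lemma \ref{lem-main} puts no dimensional hypothesis on $\mathcal{F}$: $t$ is only a parameter entering $f(s,t)$. The lemma is later applied (Step 4 of the proof of Theorem \ref{thm-reduction3}) to families $\mathcal{F}_Q^\ast$ for which no such property is known. Without $\mathcal{F}$ being a $(\delta,t)$-set, Theorem \ref{thm.furstenberg} gives nothing. Even with it, for $t<s$ it only yields the exponent $s+t$, not the $2s$ you claim.

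Your concrete choice of $\Delta$ also degenerates. Since $N(1)=O(1)\leq \delta^{-\nu/2}$, the defining inequality always holds at $\Delta=1$, so your $\Delta$ is always $1$. Then (ii) would assert $|\mathbf{E}_{\mathcal{F},\mathcal{P}}|\gtrsim \delta^{\nu-2s+f(s,t)}$. This already fails when $\mathcal{F}$ is a single curve, where $|\mathbf{E}_{\mathcal{F},\mathcal{P}}|\lesssim\delta^{-s-\eta}$.

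Your route to (i) also runs the wrong way. The facts $N(\delta)\geq N(\Delta)$ and $N(\delta)\le (\Delta/\delta)^2N(\Delta)$ give no lower bound of the form $N(\delta)\gtrsim |\mathcal{F}|\,\delta^{-s-f(s,t)}$, which is what \eqref{eq.average} needs. A multiplicity bound requires an incidence estimate proportional to the number of curves, i.e.\ some Katz--Tao control on $\mathcal{F}$. No statement about the union set $\mathbf{E}_{\mathcal{F},\mathcal{P}}$ alone can supply that.

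The missing idea is to choose $\Delta$ from the branching function $\beta$ of the uniform family $\mathcal{F}$, not from covering numbers of $\mathbf{E}_{\mathcal{F},\mathcal{P}}$. Let $u$ be $t$ clipped to $[s,2-s]$, so that $f(s,t)=(s-u)/2$. Take $\Delta$ at a global minimum of $\beta(j)-uj$. Then $\mathcal{D}_\Delta(\mathcal{F})$ is a $(\Delta,u)$-KT set, and each rescaled $\Delta$-ball $T_{\mathbf F}(\mathcal{F}\cap\mathbf{F})$ is a $(\delta/\Delta,u)$-set.

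For (i), extract a $(\delta,u)$-KT subfamily of size $\gtrapprox(\delta/\Delta)^{2-u}|\mathcal{F}|$ and apply Theorem \ref{thm-incidence-KT}, then \eqref{eq.average}. The factor $(\delta/\Delta)^{-(2-u)}$ is exactly the $O(1)$ loss.

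For (ii), apply Theorem \ref{thm.furstenberg} inside each ball $\mathbf{F}$, after the vertical rescaling $(x,y)\mapsto (x,(y-f_{\mathbf F}(x))/\Delta)$, at scale $\delta/\Delta$. This is why $\Delta\geq\delta^{1-\sqrt\eta}$ is needed. It gives $\gtrapprox(\delta/\Delta)^{\nu/2}\delta^{-s}(\Delta/\delta)^{(s+u)/2}$ cubes of $\mathbf{E}_{\mathcal{F},\mathcal{P}}$ in the $O(\Delta)$-neighbourhood of $\Gamma_{f_{\mathbf F}}$. Pigeonholing $d$ cubes of $\mathcal{D}_\Delta$ along that neighbourhood with equal counts produces the factor $d^{-1}$. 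It does not come from the $(\delta,s)$-set bound $\Delta^s\delta^{-s}$: that is an upper bound on $|\mathcal{P}(f)\cap Q|$ and cannot give a lower bound on $|\mathbf{E}_{\mathcal{F},\mathcal{P}}\cap Q|$.
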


	Before proving this lemma, we first recall the following Furstenberg set estimate for transversal families which was established in \cite[Theorem 1.11]{OPY2025}, see also \cite{orpshabc,renwang} for the solution of Furstenberg set conjecture in $\R^2$.
	
	\begin{thm}\label{thm.furstenberg} Let $s \in (0,1]$ and $t \in [0,2]$. Then, for every $\mathfrak{T},\nu > 0$, there exist $\eta,\delta_{0} > 0$ such that the following holds for all $\delta \in 2^{-\N} \cap (0,\delta_{0}]$.
		
		Let $\mathcal{F} \subset B_{C^{2}}(1)$ be a non-empty $\mathfrak{T}$-transversal family over $[-2,2]$. Assume that $\mathcal{F}$ is a $(\delta,t,\delta^{-\eta})$-set. For each $f \in \mathcal{F}$, assume that $\mathcal{P}(f)$ is a non-empty $(\delta,s,\delta^{-\eta})$-set of dyadic $\delta$-squares which intersect the graph $\Gamma_{f}$, are contained in $[-1,1]^{2}$, and satisfy $|\mathcal{P}(f)| \geq M$ for some $M \in \N$ independent of $f$. Then
		\begin{equation}\label{form35} |\mathbf{E}_{\mathcal{F},\mathcal{P}}| \geq \delta^{\nu} \cdot \min\{\delta^{-t},\delta^{-(s + t)/2},\delta^{-1}\} \cdot M.\end{equation} \end{thm}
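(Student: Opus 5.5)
The plan is to reduce the curvilinear statement to the same three-regime structure as the planar Furstenberg problem. The exponent $\min\{\delta^{-t},\delta^{-(s+t)/2},\delta^{-1}\}$ corresponds to the regimes $t\le s$, $s\le t\le 2-s$ and $t\ge 2-s$. In each regime I would run the known line argument, replacing every place where linear geometry is used by the transversality axiom \eqref{eq-trans}.

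The basic geometric input is the curvilinear analogue of "two $\delta$-tubes meet in a $\delta\times\delta/\theta$ rectangle". For $f,g\in\mathcal{F}$ with $\|f-g\|_{C^2}\sim \theta\geq\delta$, transversality forces $|f-g|+|f'-g'|\gtrsim_{\mathfrak{T}}\theta$ everywhere on $[-2,2]$. Hence $\Gamma_f(\delta)\cap\Gamma_g(\delta)$ is contained in $O(1)$ boxes of length $\lesssim\delta/\theta$. Combined with Lemma \ref{lem-Ahlforsregularity}, which identifies $\mathcal{F}$ bi-Lipschitz with a subset of $\R^2$, this yields the standard incidence bounds.

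\textbf{Regime $t\le s$.} I would use Cauchy--Schwarz on $\sum_p|\mathcal{F}(p)|$. The quantity $\sum_{f\neq g}|\mathcal{P}(f)\cap\mathcal{P}(g)|$ is controlled by splitting pairs dyadically by $\theta$. For each $f$, at most $\delta^{-\eta}\theta^t|\mathcal{F}|$ functions $g$ lie at distance $\theta$ (the $(\delta,t)$-set property). Each such $g$ shares at most $\delta^{-\eta}(\delta/\theta)^s|\mathcal{P}(f)|$ squares with $f$ (the $(\delta,s)$-set property of $\mathcal{P}(f)$ applied to a ball of radius $\delta/\theta$). Since $t\le s$, the sum over $\theta$ is dominated by $\theta\sim 1$ up to $\delta^{-t}$ losses, which gives $|\mathbf{E}|\gtrsim\delta^{O(\eta)}\min\{|\mathcal{F}|,\delta^{-t}\}M$.

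\textbf{Regime $t\ge 2-s$.} By the same computation the bound saturates at $\delta^{-1}M$. Alternatively, I would pass to a $(\delta,2-s)$-subset via Lemma \ref{lemma-KT}/Corollary \ref{cor-uniformsets} and reuse the estimate at the critical exponent.

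\textbf{Middle regime.} This is the main obstacle, and here I would follow the Ren--Wang / Orponen--Shmerkin scheme. First pass to uniform subfamilies (Lemma \ref{lem-uniformsubset}, Lemma \ref{lem:uniform branching}). Then apply a multiscale decomposition of the branching functions of $\mathcal{F}$ and of the $\mathcal{P}(f)$ into blocks $[\Delta_{j+1},\Delta_j]$. On each block the configuration is either close to "Katz--Tao $\le$ Frostman" regular or "well-spaced".

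After rescaling, both by $T_B$ in parameter space (Lemma \ref{lem1}) and by $T_{(x_0,y_0),r}$ in physical space (Lemma \ref{lem2}), each block is again a transversal configuration with controlled constants. Incomparable curve segments inside an $r$-square behave like $\delta/r$-separated curves, as in the construction of $\mathbb{S}_{\rho,Q}$ above. The per-block estimates are then multiplied together, and the exponents combine by a Lipschitz/concavity argument on branching functions to give $(s+t)/2$.

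For the well-spaced blocks I would prove a curvilinear high--low (Guth--Solomon--Wang type) $L^2$ incidence estimate. The needed ingredients are the geometry of vertical $\delta$-neighbourhoods and the intersection bound above.

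For the remaining blocks I would need a substitute for the Orponen--Shmerkin ABC/projection input. My first attempt would be to use the map $f\mapsto(f(x),f'(x))$ to linearize locally at each scale. At scale $\Delta$, the curves through a $\Delta$-square are within $O(\Delta^2)$ of their tangent lines, so after rescaling they are essentially line segments. I would then invoke the planar Furstenberg theorem on each such piece and check that the $\mathfrak{T}$-dependent errors only cost $\delta^{-\nu}$.

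The delicate point is verifying that this linearization is compatible with the multiscale induction. In particular, the $(\delta,t)$-set hypothesis on $\mathcal{F}$ must transfer to a $(\cdot,t)$-set of tangent lines at every intermediate scale, which needs the bi-Lipschitz constant of Lemma \ref{lem-Ahlforsregularity} to be uniform under the rescalings.
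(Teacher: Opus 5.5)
The paper does not prove this statement. It is quoted from \cite[Theorem 1.11]{OPY2025}, so your outline has to stand on its own, and as written it does not.

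\textbf{The easy regimes.} Your curved-tube intersection bound is correct, and Cauchy--Schwarz does settle $t\le s$. One correction: for $t<s$ the dyadic sum $\delta^{s}\sum_\theta\theta^{t-s}$ is dominated by $\theta\sim\delta$, not $\theta\sim1$. It is $\lesssim\delta^{t}\log(1/\delta)$, and together with $|\mathcal{F}|\gtrsim\delta^{\eta-t}$ (the Frostman condition at $r=\delta$) this gives $\delta^{-t+O(\eta)}M$.

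For $t\ge 2-s$, ``the same computation'' does not give $\delta^{-1}M$. Once $t>s$, the sum is dominated by $\theta\sim1$, where your overlap bound is $\delta^{s-\eta}|\mathcal{P}(f)|$. Cauchy--Schwarz then only yields $|\mathbf{E}_{\mathcal{F},\mathcal{P}}|\gtrsim\delta^{O(\eta)}\min\{|\mathcal{F}|,\delta^{-s}\}M$. Your fallback is right, and simpler than you state it. Since $r^{t}\le r^{2-s}$ for $r\le 1$, $\mathcal{F}$ is already a $(\delta,2-s,\delta^{-\eta})$-set, and the target bound is unchanged. No subset selection is needed, and Lemma \ref{lemma-KT} concerns Katz--Tao sets anyway. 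So all the content of the theorem sits in the middle regime.

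\textbf{The gap: the middle regime.} The substitute you propose for the regular-block input --- linearize locally, then apply the planar theorem at a cost of $\delta^{-\nu}$ --- cannot work, for three reasons.

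\emph{Zooming into squares only flattens at fine scales.} After zooming into a $\Delta$-square and rescaling, the curves deviate from their tangent lines by up to $\sim\Delta$. In a block $[\Delta',\Delta]$ the rescaled tube width is $\Delta'/\Delta$. So the curved tubes are line-like only if $\Delta^{2}\lesssim\Delta'$. This fails on the top block $[\Delta_1,1]$. For a configuration that is regular across all scales it fails on the whole range, which leaves exactly the unit-curvature problem you set out to solve. The discrepancy is a power of $\Delta^{2}/\Delta'$, not a $\delta^{-\nu}$ factor.

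\emph{Localizing in parameter space flattens nothing.} By Lemma \ref{lem1}, $\{(f-f_0)/\Delta: f\in\mathcal{F}\cap B(f_0,\Delta)\}$ is again a transversal family whose second derivatives are in general of order one. For instance, if $\mathcal{F}\cap B(f_0,\Delta)=\{f_0+\Delta h:h\in\mathcal{H}\}$ for a transversal $\mathcal{H}$, the rescaled family is $\mathcal{H}$ itself. Such fat-tube rescalings are precisely where multiscale arguments apply block estimates; compare the use of $T_{\mathbf{F}}$ in the proof of Lemma \ref{lem-main}.

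\emph{Even where flattening is legitimate, the planar hypotheses fail.} At fine scales, the segments through a small square carry multiplicities and need not form a Frostman $(\cdot,t)$-family. Compare the random sampling around Claim \ref{claimm1}, which is needed just to transfer a Katz--Tao condition to such local families. So the ``delicate point'' you flag is false in general, not a bookkeeping issue.

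What is missing is a genuinely curvilinear estimate for the regular blocks. You also need the curvilinear high--low estimate, which you only announce. Both must be proved directly for transversal families, using that the rescalings of Lemmas \ref{lem1} and \ref{lem2} stay inside that class. This intrinsic machinery is what the cited work develops; the rescaling lemmas and the multiscale decomposition used in this paper are imported from there.
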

	
	\begin{remark}
		When $t\in [0,s]$ or $t\in [2-s,2]$, we have $\eta=\eta(\nu)$ and $\delta_0=\delta_0(\mathfrak{T},\nu)$. When $t\in (s,2-s)$, we have $\eta=\eta(\nu,s)$ and $\delta_0=\delta_0(\mathfrak{T},s,t,\nu)$.
	\end{remark}

	Below is the analogue of \cite[Theorem 1.4]{OS2026} for transversal families; more precisely, it is a dual formulation of that theorem in our setting. The proof uses multi-scale decomposition which has also been established for transversal families in \cite[Section 5]{OPY2025}, so here we shall not repeat the proof. We note estimate \eqref{equ-incd-KT} under the Katz-Tao condition of $\mathcal{F}$ is sharp, see examples given in \cite[Section 5]{OS2026}. 
	
	\begin{thm}\label{thm-incidence-KT}
		Let $s\in (0,1]$, $t\in [s,2-s]$ and $\mathfrak{T}\geq1$. For each $\nu \in (0,1)$, there exist small constants $\eta=\eta(\nu,s)>0$ and $\delta_0=\delta_0(s,t,\nu,\mathfrak{T})>0$ such that the following holds for all $\delta \in (0, \delta_0]$.
		
		Let $(\mathcal{F},\mathcal{P})_\delta$ be a given configuration, where $\mathcal{F}\subset B_{C^2}(1)$ is $\mathfrak{T}$-transversal. Assume that
		\begin{itemize}
			\item $\mathcal{F}$ is $\delta$-separated and a $(\delta,t,\delta^{-\eta})$-KT set;
			\item for each $f\in \mathcal{F}$, there is a $(\delta,s,\delta^{-\eta})$-set $\mathcal{P}(f)\subset \mathcal{D}_\delta$ with $|\mathcal{P}(f)|\geq M \geq1$.
		\end{itemize}
		Then 
		\begin{equation}\label{equ-incd-KT}
			|\mathbf{E}_{\mathcal{F},\mathcal{P}}|\geq \delta^\nu M |\mathcal{F}|^{(s+t)/2t}.
		\end{equation}
	\end{thm}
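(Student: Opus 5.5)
The plan is to derive the Katz--Tao incidence bound from the Frostman-type estimate of Theorem \ref{thm.furstenberg}, using a multi-scale decomposition of $\mathcal{F}$. This follows the scheme of \cite{OS2026}, transplanted to transversal families with the tools of \cite[Section 5]{OPY2025}.

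\emph{Reductions.} Write $|\mathcal{F}|=\delta^{-u}$; the Katz--Tao hypothesis gives $u\le t+\eta$. By Lemma \ref{lem-uniformsubset} we pass to a uniform subfamily at a $\lessapprox 1$ loss. The target \eqref{equ-incd-KT} only depends on $|\mathcal{F}|$ raised to a power at most $1$, so this loss is harmless. We also pigeonhole so that each $\mathcal{P}(f)$ is uniform with $|\mathcal{P}(f)|\sim M$, with a common branching function (Lemma \ref{lem:uniform branching}).

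\emph{Step 1: decompose $\mathcal{F}$ across scales.} Let $\beta$ be the branching function of $\mathcal{F}$ (Definition \ref{def:branching}). The Katz--Tao condition says that, counted from the fine end, $\beta$ has slope at most $t$ in the aggregate: $\beta(m)-\beta(j)\le t(m-j)+O(\eta m)$. Apply the standard Lipschitz-function decomposition of $\beta$ into finitely many pieces $[\Delta_{j+1},\Delta_j]$. On each piece $\beta$ is $\epsilon$-close to linear with slope $t_j\in[0,2]$. Then for each $\mathbf{F}\in\mathcal{D}_{\Delta_j}(\mathcal{F})$, the rescaled family $T_{\mathbf{F}}(\mathcal{F}\cap\mathbf{F})$ is a $(\Delta_{j+1}/\Delta_j,t_j,\cdot)$-set. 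It remains transversal by Lemma \ref{lem1}.

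\emph{Step 2: apply Theorem \ref{thm.furstenberg} scale by scale and multiply.} At scale $j$, consider a $\Delta_j$-cube of functions together with a $\Delta_j$-tube piece $Q$ of the union. Rescale the points by the anisotropic map underlying Lemma \ref{lem2}, combined with $T_{\mathbf{F}}$. Then $\mathcal{P}(f)\cap Q$ becomes a $(\Delta_{j+1}/\Delta_j,s)$-set, up to $\delta^{-O(\eta)}$ losses, thanks to uniformity. Theorem \ref{thm.furstenberg} then gains a factor $(\Delta_j/\Delta_{j+1})^{\varphi(t_j)}$, where $\varphi(x)=\min\{x,(s+x)/2,1\}$. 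The usual induction over the scale chain (as in \cite[Section 5]{OPY2025}) multiplies these gains. This yields
\[
|\mathbf{E}_{\mathcal{F},\mathcal{P}}|\gtrsim \delta^{\nu}M\,\delta^{-\sum_j \ell_j\varphi(t_j)},
\]
where $\ell_j=\log_{1/\delta}(\Delta_j/\Delta_{j+1})$ and $\sum_j\ell_j t_j=u$.

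\emph{Step 3: the optimisation, which I expect to be the main obstacle.} We must show
\[
\sum_j\ell_j\varphi(t_j)\ge \frac{s+t}{2t}\sum_j\ell_jt_j,
\]
using only the Katz--Tao constraint and $t\in[s,2-s]$.
\begin{itemize}
\item Pointwise it is true whenever $t_j\le t$: if $t_j\le s$ this uses $t\ge s$, and on $[s,t]$ it uses $(s+t_j)/2\ge t_j(s+t)/(2t)$.
\item It can fail on pieces with $t_j>t$, where $\varphi$ saturates at $1$.
\end{itemize}
The Katz--Tao constraint prevents too much mass at high slope near the fine end. I would try to handle this by replacing $\beta$ with its least concave majorant from the fine end, or equivalently by choosing the decomposition so that it does not increase $\sum\ell_jt_j$. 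Concavity of $\varphi$ then shows that the worst case is $t_j\equiv$ const, at most $t$, possibly with a zero-dimensional piece. If the majorant trick does not fit the scale-chain argument cleanly, I would instead follow \cite{OS2026} directly and merge high-slope pieces with adjacent low-slope pieces before applying Theorem \ref{thm.furstenberg}. The condition $t\le 2-s$ enters precisely here: it guarantees $(s+t)/2\le 1$, so the saturated regime does not cost more than the claimed exponent.
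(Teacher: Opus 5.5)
Your scheme (multi-scale decomposition of $\mathcal{F}$, Theorem~\ref{thm.furstenberg} on each piece after rescaling, multiplying the gains, then optimising exponents) is the kind of argument the paper has in mind. The paper gives no proof here and defers to \cite[Theorem 1.4]{OS2026} and the multi-scale machinery of \cite[Section 5]{OPY2025}. As written, however, your argument does not close. The failure you anticipate in Step~3 is real, and it cannot be repaired by optimising over the pieces built in Step~1; the decomposition itself has to change. Here is a concrete case. Take $t=1$ and let $A(\mathcal{F})$ be a $\delta^{1/2}$-separated grid of $\sim\delta^{-1}$ points in a fixed small square (e.g.\ lines). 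This is a $(\delta,1)$-KT set, and your near-linear pieces are $(\ell_j,t_j)=(\tfrac12,2),(\tfrac12,0)$. So you obtain only $M\delta^{-1/2}$, against the target $M\delta^{-(1+s)/2}$. Yet this $\mathcal{F}$ is a $(\delta,1)$-Frostman set, so Theorem~\ref{thm.furstenberg} applied with no decomposition at all already gives the target.

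The decomposition must be read off from the convex minorant $\underline{\beta}$ of $\beta$. Normalise $\beta$ to $[0,1]$ with $\beta(1)=u$; equivalently, use the least concave majorant $\bar g$ of the fine-end function $g(y)=\beta(1)-\beta(1-y)$. This is your first suggestion, but only in this orientation, and it is not a formal replacement of $\beta$; it needs the following justification.
\begin{itemize}
\item On each linear piece $[a_j,a_{j+1}]$ of $\underline{\beta}$, with slope $\tau_j$, you have $\beta(x)-\beta(a_j)\ge\tau_j(x-a_j)$, with equality at both endpoints. This says exactly that the rescaled families $T_{\mathbf{F}}(\mathcal{D}_{\Delta_{j+1}}(\mathcal{F}\cap\mathbf{F}))$ are $(\Delta_{j+1}/\Delta_j,\tau_j)$-sets.
\item Theorem~\ref{thm.furstenberg} needs nothing more: it is irrelevant that a piece may contain far more than $(\Delta_j/\Delta_{j+1})^{\tau_j}$ functions. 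Since the endpoints agree, $\sum_j\ell_j\tau_j=u$.
\item The Katz--Tao hypothesis gives $g(y)\le ty+\eta$, and this bound passes to $\bar g$. A concave function vanishing at $0$ and lying below $ty+\eta$ has all slopes at most $t+\sqrt{\eta}$, except on an initial (finest-scale) stretch of length $\le\sqrt{\eta}$, which costs only $\delta^{-O(\sqrt{\eta})}$.
\item Hence $\tau_j\le t$ up to negligible errors, and your pointwise inequality $\varphi(\tau)\ge\frac{s+t}{2t}\tau$ on $[0,t]$ finishes the proof.
\end{itemize}
With this decomposition the saturated regime never arises and the merging alternative is unnecessary. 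The extremal case is a KT set of $(\rho/\delta)^t$ functions in one $\rho$-ball, where one application of Theorem~\ref{thm.furstenberg} after $T_{\mathbf{F}}$ gives exactly $M|\mathcal{F}|^{(s+t)/2t}$. You still need the routine discretisation of $\underline{\beta}$ into $O_\epsilon(1)$ pieces of length $\gtrsim_\epsilon 1$, rounding each $\tau_j$ down to a finite net so that the constants in Theorem~\ref{thm.furstenberg} are uniform.

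Finally, in Step~2 the scale-$j$ subproblem must be the fat-tube one. Apply $T_{\mathbf{F}}(x,y)=(x,(y-f_{\mathbf{F}}(x))/\Delta_j)$ over the whole interval, as in the proof of Lemma~\ref{lem-main}(ii), and read each shading at horizontal resolution $\Delta_{j+1}/\Delta_j$. By uniformity that shading is a $(\Delta_{j+1}/\Delta_j,s)$-set. Zooming into a $\Delta_j$-ball with the isotropic map of Lemma~\ref{lem2} destroys this property. For instance, a $\delta^{s}$-separated set of $\delta^{-s}$ points on a curve is a $(\delta,s)$-set, yet each $\delta^{s}$-ball contains only $O(1)$ of its points.
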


	The following is a direct corollary of Theorem \ref{thm-incidence-KT} and will be used in the proof of Theorem \ref{thm-reduction2}.
	
	\begin{cor}\label{cor2}
		Let $s \in (0,1]$, $t \in [0,2]$ and $\mathfrak{T}\geq1$. Then, for every $\nu > 0$, there exist small constants $\eta=\eta(\nu,s)>0$ and $\delta_0=\delta_0(s,t,\nu,\mathfrak{T})>0$ such that the following holds for all $\delta \in (0,\delta_{0}]$.
		
		Let $\mathcal{F} \subset B_{C^{2}}(1)$ be a $\delta$-separated $\mathfrak{T}$-transversal family. Assume that $\mathcal{F}$ is a $(\delta,t,\delta^{-\eta})$-KT set. For each $f \in \mathcal{F}$, assume that $\mathcal{P}(f)$ is a non-empty $(\delta,s,\delta^{-\eta})$-set of dyadic $\delta$-squares which intersect the graph $\Gamma_{f}$, are contained in $[-1,1]^{2}$, and satisfy $\lambda \delta^{-1}\sim |\mathcal{P}(f)| \leq \delta^{-s-\eta}$. Then,
		\begin{equation}\label{uq35} |\mathbf{E}_{\mathcal{F},\mathcal{P}}| \geq \delta^{\nu} \cdot  \delta^{(t-1)/2}\gamma_{\mathcal{P},t^\ast}^{-1/2}\lambda^{1/2}\sum_{f\in \mathcal{F}} |\mathcal{P}(f)|.\end{equation}
	\end{cor}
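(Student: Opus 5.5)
The plan is to deduce the corollary from Theorem \ref{thm-incidence-KT} together with lower bounds for $\gamma_{\mathcal{P},t^\ast}$ that follow directly from its definition. Write $M\sim\lambda\delta^{-1}$ for the common size of the shadings. The assumption $|\mathcal{P}(f)|\leq \delta^{-s-\eta}$ gives
\[\lambda\lesssim \delta^{1-s-\eta}.\]
Taking $r=1$ in Definition \ref{gamma-Y-def} gives
\[\gamma_{\mathcal{P},t^\ast}\gtrsim \delta^{t^\ast}\lambda\delta^{-1},\]
and taking $r=\delta$ gives $\gamma_{\mathcal{P},t^\ast}\geq 1$. We split into three ranges of $t$ and choose $\eta$ small in terms of $\nu$, so that all $\delta^{-O(\eta)}$ losses are absorbed into $\delta^{\nu}$.

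\emph{Case $t\in[s,2-s]$.} Apply Theorem \ref{thm-incidence-KT} directly to get $|\mathbf{E}_{\mathcal{F},\mathcal{P}}|\geq \delta^{\nu/2}M|\mathcal{F}|^{(s+t)/2t}$. It then suffices to show
\[|\mathcal{F}|^{(s-t)/2t}\gtrsim \delta^{(t-1)/2}\gamma_{\mathcal{P},t^\ast}^{-1/2}\lambda^{1/2}.\]
Since $\mathcal{F}$ is a $(\delta,t,\delta^{-\eta})$-KT set, $|\mathcal{F}|\leq\delta^{-t-\eta}$. Because $s\leq t$, this gives $|\mathcal{F}|^{(s-t)/2t}\geq \delta^{(t-s)/2+O(\eta)}$. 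So it is enough that $\gamma_{\mathcal{P},t^\ast}\geq \lambda\delta^{s-1}\delta^{O(\eta)}$, which holds because $\lambda\delta^{s-1}\lesssim\delta^{-\eta}$ and $\gamma_{\mathcal{P},t^\ast}\geq 1$.

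\emph{Case $t<s$.} A $(\delta,t)$-KT set is also a $(\delta,s)$-KT set, since $(r/\delta)^t\leq (r/\delta)^s$. Applying Theorem \ref{thm-incidence-KT} with exponent $s$ gives $|\mathbf{E}_{\mathcal{F},\mathcal{P}}|\geq\delta^{\nu/2}M|\mathcal{F}|$. It then suffices that
\[\gamma_{\mathcal{P},t^\ast}\gtrsim\lambda\delta^{t-1}.\]
Here $t^\ast=t$, so this is exactly the $r=1$ lower bound above.

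\emph{Case $t>2-s$.} Here $t^\ast=2-t$, and the $r=1$ bound reads $\gamma_{\mathcal{P},t^\ast}\gtrsim\lambda\delta^{1-t}$. The target therefore reduces to
\[|\mathbf{E}_{\mathcal{F},\mathcal{P}}|\geq \delta^{\nu}\delta^{t-1}|\mathcal{F}|M.\]
We pass to a $(\delta,2-s)$-KT subset $\mathcal{F}'\subset\mathcal{F}$ by random sampling with probability $\delta^{t-(2-s)}$. The expected number of sampled elements in an $r$-ball is at most $(r/\delta)^t\delta^{t-2+s}=(r/\delta)^{2-s}r^{t-2+s}\leq (r/\delta)^{2-s}$. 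A Chernoff bound plus a union bound over $\lessapprox\delta^{-O(1)}$ dyadic balls then shows that, with positive probability, $\mathcal{F}'$ is $(\delta,2-s,\delta^{-\eta})$-KT and $|\mathcal{F}'|\approx\delta^{t-2+s}|\mathcal{F}|$. Theorem \ref{thm-incidence-KT} with exponent $2-s$ then gives $|\mathbf{E}_{\mathcal{F},\mathcal{P}}|\geq\delta^{\nu/2}M|\mathcal{F}'|^{1/(2-s)}$. It remains to check
\[(\delta^{t-2+s}|\mathcal{F}|)^{1/(2-s)}\gtrsim\delta^{t-1}|\mathcal{F}|,\quad\text{equivalently}\quad |\mathcal{F}|^{(s-1)/(2-s)}\geq \delta^{t-1-(t-2+s)/(2-s)}.\]
Using $|\mathcal{F}|\leq\delta^{-t-\eta}$, the left side is at least $\delta^{t(1-s)/(2-s)+O(\eta)}$. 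A short computation shows the two exponents coincide exactly: multiplying by $2-s$, both equal $t-ts$.

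The main obstacle is the random sampling in the last case when $\delta^{t-2+s}|\mathcal{F}|$ is small (below $\delta^{-\eta}$), where concentration fails. In that regime we would instead keep a single element, or a trivially KT subset of size $\max\{1,\delta^{t-2+s}|\mathcal{F}|\}$. We then check the inequality directly using $|\mathbf{E}_{\mathcal{F},\mathcal{P}}|\geq M$ and $|\mathcal{F}|\lesssim\delta^{2-s-t-\eta}$, which makes $\delta^{t-1}|\mathcal{F}|\lesssim\delta^{1-s-\eta}\leq\delta^{-\eta}$.
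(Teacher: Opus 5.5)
Your proposal is correct and follows essentially the same route as the paper. Both split $t$ into the same three ranges, apply Theorem \ref{thm-incidence-KT} with exponent $s$, $t$, or (after random sampling with probability $\delta^{t+s-2}$) $2-s$, and bound $\gamma_{\mathcal{P},t^\ast}$ from below using the scales $r=\delta$ and $r=1$ in its definition; your explicit handling of the regime where $\delta^{t+s-2}|\mathcal{F}|$ is too small for concentration fills in a detail the paper covers only with ``with high probability''.
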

	
	\begin{proof}
		When $t\in (0,s]$, $\mathcal{F}$ is also a $(\delta,s,\delta^{-\eta})$-KT set. Applying Theorem \ref{thm-incidence-KT} with $t=s$ and $\nu$ replaced by $\nu/2$, we get
		\begin{equation}\label{equ-es1}
			|\mathbf{E}_{\mathcal{F},\mathcal{P}}|\gtrsim \delta^{\nu/2} \lambda \delta^{-1} |\mathcal{F}| \geq \delta^{\nu/2+3\eta/2} \delta^{(s-t)/2} \delta^{(t-1)/2}\lambda^{1/2}\sum_{f\in\mathcal{F}}|\mathcal{P}(f)|,
		\end{equation}
		where we also use $\lambda \delta^{-1}\gtrsim \delta^{\eta-s}$ by the $(\delta,s,\delta^{-\eta})$-condition of $\mathcal{P}(f)$.
		
		When $t\in [s,2-s]$, Theorem \ref{thm-incidence-KT} (with $\nu$ replaced by $\nu/2$) gives
		\begin{equation}\label{equ-es2}
			\begin{split}
				|\mathbf{E}_{\mathcal{F},\mathcal{P}}| &  \gtrsim \delta^{\nu/2}  |\mathcal{F}|^{s/2t-1/2}\cdot \lambda \delta^{-1} |\mathcal{F}| \geq \delta^{\nu/2}  \delta^{(t-s)/2+s-1+(1-s)/2+3\eta/2}\cdot \lambda^{1/2}\sum_{f\in\mathcal{F}}|\mathcal{P}(f)|\\
				&=\delta^{\nu/2+3\eta/2}  \delta^{(t-1)/2}\cdot \lambda^{1/2}\sum_{f\in\mathcal{F}}|\mathcal{P}(f)|,
			\end{split}
		\end{equation}
		where we use $|\mathcal{F}|\leq \delta^{-\eta-t}$ and $\lambda \delta^{-1}\gtrsim \delta^{\eta-s}$.
		
		When $t \in (2-s,2]$, we randomly choose a subset $\mathcal{F}'$ of $\mathcal{F}$ with probability $\delta^{s+t-2}$. Then with high probability, $\mathcal{F}'$ is a $(\delta,2-s,\delta^{-2\eta})$-KT set and $|\mathcal{F}'|\gtrsim \delta^{s+t-2}|\mathcal{F}|$. By applying Theorem \ref{thm-incidence-KT} to $(\mathcal{F}',\mathcal{P)}$ with $\nu$ replaced by $\nu/2$, we get
		\begin{equation}
			\begin{split}
				|\mathbf{E}_{\mathcal{F},\mathcal{P}}| &  \gtrsim \delta^{\nu/2} \lambda \delta^{-1} |\mathcal{F}'|^{1/(2-s)}\gtrsim \delta^{\nu/2} \lambda \delta^{t/(2-s)-2}|\mathcal{F}|^{1/(2-s)}.
			\end{split}
		\end{equation}
		By similar calculations as the former two cases using $|\mathcal{F}|\leq \delta^{-\eta-t}$ and $\lambda \delta^{-1}\gtrsim \delta^{\eta-s}$, we get
		\begin{equation}\label{equ-es3}
			|\mathbf{E}_{\mathcal{F},\mathcal{P}}|\geq \delta^{\nu/2+5\eta} \delta^{1-(s+t)/2} \cdot \delta^{(t-1)} \lambda^{1/2}\sum_{f\in\mathcal{F}}|\mathcal{P}(f)|.
		\end{equation}
		
		Recall Definition \ref{gamma-Y-def}. Since $|\mathcal{P}(f)|\in [\delta^{-s+\eta},\delta^{-s-\eta}]$, we have 
		\begin{equation}\label{equ-lbg}
			\gamma_{\mathcal{P},t^\ast}\geq \delta^{10\eta} \cdot \delta^{\min\{t-s,0,2-t-s\}}.
		\end{equation}
		Therefore, by combining \eqref{equ-es1}, \eqref{equ-es2}, \eqref{equ-es3} and using \eqref{equ-lbg}, we infer
		\[|\mathbf{E}_{\mathcal{F},\mathcal{P}}|\geq \delta^{\nu+O(\eta)} \cdot \lambda^{1/2} \delta^{(t-1)/2}\gamma_{\mathcal{P},t^\ast}^{-1/2}\sum_{f\in \mathcal{F}} |\mathcal{P}(f)|.\]
		Finally, we conclude \eqref{uq35} by choosing $O(\eta)<\nu/2$.
		
	\end{proof}

	\begin{proof}[Proof of Lemma \ref{lem-main}]
		Write
		\[u:= \begin{cases}
			s & \text{if } s \geq t\\
			t & \text{if } s \leq t \leq 2-s\\
			2-s & \text{otherwise.}
		\end{cases}\]
		Hence $u\in [s,2-s]$. Given $\nu > 0$, the small parameters $\delta, \eta$ will be determined later.

		First, choose $\Delta\in [\delta,1]$ so that the branching function of $\cF$ lies above the line with slope $u$ at the point corresponding to scale $\Delta$. In other words, for each $\bF \in \dy \Delta \cF$, write $\cF[\bF]:=\mathcal{F}\cap \mathbf{F}$, then 
		\begin{itemize}
			\item [(G1)] \phantomsection \label{G1} the rescaled family $T_\mathbf{F}(\cF[\bF])$ is a $(\de/\Delta, u, C_1)$-set for some $C_1\lessapprox1$, where $T_\mathbf{F}(f)=(f-f_\mathbf{F})/\Delta$ for a fixed $f_\mathbf{F}\in \cF[\bF]$.
			\item [(G2)] \phantomsection \label{G1} $\cF_\Delta:=\mathcal{D}_\Delta(\mathcal{F})$ is a $(\Delta,u,C_2)$-KT-set for some $C_2\lessapprox1$. 
		\end{itemize}
		By \cite[Lemma 2.8]{DW2024}, we are able to find a subset of each $\cF[\bF]$ of size $\approx (\Delta/\de)^u$. After replacing each $\cF[\bF]$ by this subset and using also KT-condition of $\mathcal{F}_\Delta$, we have found a subset $\cF^\ast \subset \cF$ which is a $(\de,u,C_3)$-KT set for some $C_3\lessapprox 1$. Since $|\mathcal{F}[\mathbf{F}]|\lesssim(\Delta/\delta)^2$ by Lemma \ref{lem.indentify}, we infer
		\[|\mathcal{F}^\ast|\approx \frac{(\Delta/\delta)^u}{|\mathcal{F}[\mathbf{F}]|}\cdot |\mathcal{F}|\gtrsim \Big(\frac{\delta}{\Delta}\Big)^{2-u}|\mathcal{F}|.\]
		Choose $\delta,\eta$ small enough so that Theorem \ref{thm-incidence-KT} is applicable to $(\mathcal{F}^\ast,\mathcal{P})$ with $\nu$ replaced by $\nu/2$, then we get
		\begin{equation}
			\begin{split}
				|\mathbf{E}_{\mathcal{F},\mathcal{P}}|&\geq \delta^{\nu/2} \delta^{\eta-s} |\mathcal{F}^\ast|^{(s+u)/2u}=\delta^{\nu/2+2\eta}  |\mathcal{F}^\ast|^{s/2u-1/2} \cdot \delta^{-\eta-s}|\mathcal{F}^\ast|\\
				&\gtrapprox \delta^{\nu/2+2\eta}  \delta^{(u-s)/2} \cdot \Big(\frac{\delta}{\Delta}\Big)^{2-u}\sum_{f\in \mathcal{F}}|\mathcal{P}(f)|,
			\end{split}
		\end{equation}
		where we use $|\mathcal{F}^\ast|\lessapprox \delta^{-u}$ and $|\mathcal{F}^\ast|\gtrapprox (\tfrac{\delta}{\Delta})^{2-u}|\mathcal{F}|$. Since $|\mathcal{F}(p)|\lessapprox \sum_{f\in \mathcal{F}}|\mathcal{P}(f)|/|\mathbf{E}_{\mathcal{F},\mathcal{P}}|$, we infer
		\[|\mathcal{F}(p)|\lessapprox \delta^{-\nu/2-2\eta} \delta^{f(s,t)} \cdot \Big(\frac{\Delta}{\delta}\Big)^{2-u},\]
		which proves property (i) once we choose $8\eta<\nu$ and $\delta$ small enough.

		Property (ii) will be deduced from Theorem \ref{thm.furstenberg}. From now on we assume $\Delta\geq \delta^{1-\sqrt{\eta}}$. Up to a pigeonholing argument, we may assume that each $\cP(f)$ is uniform. By using Lemma \ref{lem:uniform branching},  we can find a uniform refinement $\mathcal{F}'$ of $\cF$ such that there is a uniform branching function for $\{\cP(f)\}_{f\in \mathcal{F}'}$. 
		
		Next, for each $\mathbf{F}\in \mathcal{D}_\Delta(\mathcal{F}')$, we pigeonhole a cube family $\mathcal{Q}_\mathbf{F}\in \mathcal{D}_\Delta$ such that
		\begin{itemize}
			\item [(H1)] \phantomsection \label{H1} $|\mathbf{E}_{\mathcal{F}'[\mathbf{F}],\mathcal{P}}\cap Q|$ are constant for all $Q\in \mathcal{Q}_\mathbf{F}$;
			\item [(H2)] \phantomsection \label{H1} if $\mathcal{P}'(f):=\mathcal{P}(f)\cap (\cup \mathcal{Q}_\mathbf{F})$, $(\mathcal{F}'[\mathbf{F}], \mathcal{P}')$ is a refinement of $(\mathcal{F}'[\mathbf{F}], \mathcal{P})$.
		\end{itemize}
		Let $d_\mathbf{F}=|\mathcal{Q}_\mathbf{F}|$. By another pigeonholing, we find a sub-family $\mathcal{G}_\Delta\subset \mathcal{D}_\Delta(\mathcal{F}')$ and a uniform $d\geq1$ such that $d_\mathbf{F}\sim d$ for all $\mathbf{F} \in \mathcal{G}_\Delta$. Now we define $\mathcal{F}'':=\bigsqcup_{\mathbf{F}\in \mathcal{G}_\Delta} \mathcal{F}'[\mathbf{F}]$, then $\mathcal{F}''$ is a refinement of $\mathcal{F}$. We may also choose a uniform refinement $\mathcal{F}^\ast \subset \mathcal{F}''$ such that $|\mathcal{P}'(f)|\gtrapprox |\mathcal{P}(f)|$ for all $f\in \mathcal{F}^\ast$. In addition, we take a uniform refinement $\mathcal{P}^\ast(f)$ of $\mathcal{P}'(f)$ for each $f\in \mathcal{F}^\ast$.

		For each $\bF \in \mathcal{D}_\Delta(\mathcal{F}^\ast)$, since $\cF^\ast[\bF]:=\mathcal{F}^\ast\cap\mathbf{F}$ is a refinement of $\cF[\bF]$, we infer from property \nref{G1} that $T_\mathbf{F}(\cF^\ast[\bF])$ is a $(\de/\Delta,u, C)$-set for some $C\lessapprox1$. Let $M\in 2^{\N}$ such that 
		\[|{\cP^\ast(f)}|_{\delta/\Delta}\approx|{\cP(f)}|_{\delta/\Delta} \approx M, \quad f\in \mathcal{F}^\ast.\]
		We need to use the same construction as in the proof of Theorem \ref{thm-twoends}. For each $Q\in\mathcal{D}_{\delta/\Delta}(\mathbf{E}_{\cF^\ast[\bF],\cP^\ast})$, let \[\Gamma(Q)=\{\Gamma_{f|_{I_Q}}: \Gamma\cap Q\neq \emptyset, f\in\cF^\ast[\bF]\}.\] 
		Here $I_Q=\pi_x(Q)$ and $\Gamma_{f|_{I_Q}}$ means the graph of $f$ above $I_Q$. Two curve segments $\Gamma_{f|_{I_{Q}}},\Gamma_{g|_{I_{Q}}}$ are \emph{comparable} if $|f(x) - g(x)| \leq 3\delta$ for all $x \in I_{Q}$. Let $\mathbb{S}_{Q}$ be a maximal set of \emph{incomparable} curve segments in $\Gamma(Q)$ and write $\mathbb{S}:=\cup_Q \mathbb{S}_{Q}$. Recall the following properties for incomparable segments. 
		\begin{itemize}
			\item  Each $u\in \mathbb{S}$ belongs to at most two different $\mathbb{S}_{Q}$.
			
			\item For each $Q\in \mathcal{D}_{\delta/\Delta}(\mathbf{E}_{\cF^\ast[\bF],\cP^\ast})$, each segment in $\Gamma(Q)$ is contained in the vertical $3\delta$-neighborhood of one and at most $O(\mathfrak{T}^{4})$ segments in $\mathbb{S}_{Q}$.
			
			\item $\{u(6\delta):u\in \mathbb{S}\}$ form a curved tube cover for $\mathbf{E}_{\cF^\ast[\bF],\cP^\ast}$.
		\end{itemize}
		In the following, for $u\in \mathbb{S}_Q$, we use $f_u$ to denote the function such that $u=\Gamma_{f_u|_{I_Q}}$. Now for each $f\in \cF^\ast[\bF]$ and $Q\in \mathcal{D}_{\delta/\Delta}(\mathcal{P}^\ast(f))$, choose one $u_{Q,f}\in \mathbb{S}_Q$ (at most $O(1)$ choices) such that $\mathcal{P}^\ast(f)\cap Q\subset u_{Q,f}(6\delta)$ and define
		\[\widetilde{\mathcal{P}}(f):=\{u_{Q,f}(6\delta):Q\in \mathcal{D}_{\delta/\Delta}(\mathcal{P}^\ast(f))\}.\]
		Then $\widetilde{\mathcal{P}}(f)$ is a $(\de/\Delta,s, \de^{-O(\eta)})$-set since $\mathcal{P}^\ast(f)$ is a uniform $(\delta,s,\delta^{-O(\eta)})$-set. We abuse notation to define 
		\[T_\mathbf{F}(x,y)=\Big(x,\frac{y-f_\mathbf{F}(x)}{\Delta}\Big), \quad (x,y)\in \R^2.\]
		Our plan is to apply Theorem \ref{thm.furstenberg} at scale $\delta/\Delta$ after rescaling the configuration $(\cF^\ast[\bF], \widetilde{\mathcal{P}})$ by $T_\mathbf{F}$. Since $\mathcal{F}\subset B_{C^2}(1)$, for each $J\in \widetilde{\mathcal{P}}(f)$, $T_\mathbf{F}(J)$ is roughly a cube of side length $\sim \delta/\Delta$. We may need to choose one cube in $\mathcal{D}_{\delta/\Delta}(T_\mathbf{F}(J))$ so that $T_\mathbf{F}(\widetilde{\mathcal{P}}(f))$ becomes a $\delta/\Delta$-cube family. Here the details are skipped since one can check the same process in \cite[Appendix A]{OPY2025}.
		Let $\eta_1, \delta_1$ be the constants determined by Theorem \ref{thm.furstenberg}. Since $\Delta\geq \delta^{1-\sqrt{\eta}}$, we take $\delta<\delta_1^{1/\sqrt{\eta}}$ (hence $\delta/\Delta<\delta_1$) and $O(\eta)<\eta_1$. Recall $u\in [s,2-s]$, then we are able to apply Theorem \ref{thm.furstenberg} with $\nu$ replaced by $\nu/2$ to obtain
		\begin{equation}\label{eq-lbofP}
			|\mathbf{E}_{\mathcal{F}^\ast[\mathbf{F}],\widetilde{\mathcal{P}}}| \gtrapprox (\delta/\Delta)^{\nu/2} M (\Delta/\de)^{(s+u)/2},
		\end{equation}
		where $|\mathbf{E}_{\mathcal{F}^\ast[\mathbf{F}],\widetilde{\mathcal{P}}}|=|\cup_{f\in \mathcal{F}^\ast[\mathbf{F}]}\widetilde{\mathcal{P}}(f)|$ denotes the number of $\sim \delta\times \delta/\Delta$ curved tubes.
		
		We claim that each $\delta$-cube $p\in \mathbf{E}_{\mathcal{F}^\ast[\mathbf{F}],\mathcal{P}^\ast}$ belongs to one and at most $\lesssim_\mathfrak{T}1$ curved tubes in $\mathbf{E}_{\mathcal{F}^\ast[\mathbf{F}],\widetilde{\mathcal{P}}}$. To see this, let $p\subset Q\in \mathcal{D}_{\delta/\Delta}(\mathcal{P}^\ast(f))$ and for $u,v\in \mathbb{S}_Q$ such that $\|f_u-f_v\|_{C^2([-2,2])}\leq 2\Delta$, we infer
		\begin{displaymath} d_{V}(u,v) := \min_{x\in I_{Q}}|f_u(x)-f_v(x)|>\delta. \end{displaymath}
		Indeed, if $|f_u(x_{0}) - f_v(x_{0})| \leq \delta$ for some $x_{0} \in I_{Q}$, then $|f_u(x) - f_v(x)| \leq 3\delta$ for all $x \in I_{Q}$ by using the mean value theorem, contradicting the hypothesis that $u,v \in \mathbb{S}_{Q}$ are incomparable. This means for any $u\in\mathbb{S}_Q$ such that $p\subset u(6\delta)$, we have
		\begin{equation}\label{eq-bdednum}
			|\{f_v\in B_{C^2}(f_u,\Delta): p\subset v(6\delta) \text{ with } v\in\mathbb{S}_Q\}| \lesssim1.
		\end{equation}
		Moreover, since $\mathcal{F}^\ast[\mathbf{F}]$ can be covered by $O_\mathfrak{T}(1)$ many $\Delta$-balls, we have finished the proof of the claim.

		Since $|\mathcal{P}^\ast(f)\cap J| \gtrapprox \de^{O(\eta)} \de^{-s}/M$ for each $J\in \widetilde{\mathcal{P}}(f)$, we infer by using \eqref{eq-lbofP} and the claim above that
		\begin{equation}\label{equ-lbd}
			\begin{split}
				|\mathbf{E}_{\mathcal{F}^\ast[\mathbf{F}],\mathcal{P}^\ast}|&\gtrsim\sum_{p\in \mathbf{E}_{\mathcal{F}^\ast[\mathbf{F}],\mathcal{P}^\ast}} |\{J\in \mathbf{E}_{\mathcal{F}^\ast[\mathbf{F}],\widetilde{\mathcal{P}}}: p\subset J\}|=\sum_{J\in \mathbf{E}_{\mathcal{F}^\ast[\mathbf{F}],\widetilde{\mathcal{P}}}}|\mathbf{E}_{\mathcal{F}^\ast[\mathbf{F}],\mathcal{P}^\ast}\cap J|\\
				&\gtrapprox (\delta/\Delta)^{\nu/2} \de^{O(\eta)-s} (\Delta/\de)^{(s+u)/2}.
			\end{split}
		\end{equation}
		For any $f\in \mathcal{F}^\ast$, let $\mathbf{F}\in\mathcal{D}_{\Delta}(\mathcal{F})$ such that $f\in\mathcal{F}^\ast[\mathbf{F}]$. We take $\mathcal{Q}(f)=\mathcal{Q}_\mathbf{F}$. By \eqref{equ-lbd} and property \nref{H1}, for any $Q\in \mathcal{Q}(f)$ we have 
		\[|\mathbf{E}_{\mathcal{F},\mathcal{P}} \cap Q|\geq |\mathbf{E}_{\mathcal{F}'[\mathbf{F}],\mathcal{P}} \cap Q| \gtrapprox (\delta/\Delta)^{\nu/2} \de^{O(\eta)-s} (\Delta/\de)^{(s+u)/2}d^{-1}.\]
		Finally, note $f(s,t)=(s-u)/2$, then by taking $O(\eta)<\nu^4$ and $\delta>0$ small enough, we conclude \eqref{equ-largecardi} and finish the proof.
		
	\end{proof}
	
	\begin{remark}\label{re-portion}
		Our proof shows that for any $f\in \mathcal{F}^\ast$, the $\Delta$-cube family $\mathcal{Q}(f)$ actually contains a proportion $\gtrapprox1$ of the shading $\mathcal{P}(f)$, in the sense that
		\[|\mathcal{P}(f)\cap (\cup \mathcal{Q}(f))|\gtrapprox |\mathcal{P}(f)|.\]
	\end{remark}

	\subsection{Proof of Theorem \ref{thm-reduction2}} We restate the theorem:
	\begin{thm}\label{thm-reduction3}
		Let $t\in (0,2)$, $\mathfrak{T}\geq1$, $C\geq1$ and $\lambda\in [0,1]$. For any $\epsilon\in (0,1)$, there exist $\eta=\eta(\epsilon)>0$ and $C_{\mathfrak{T},t,\epsilon}>0$ such that the following holds for all $\delta\in (0,1)$.
		
		Let $(\mathcal{F},\mathcal{P})_\delta$ be a given configuration. Here $\mathcal{F}$ is $\mathfrak{T}$-transversal on $[-2,2]$. Assume that
		\begin{itemize}
			\item $\mathcal{F}$ is $\delta$-separated and a $(\delta,t)$-KT set;
			\item for each $f\in \mathcal{F}$, $\mathcal{P}(f)$ is uniform, $\lambda$-dense and a $(\delta,\epsilon^2,C;\rho^\ast)$-set for some $\rho^\ast\in [\delta,\delta^\eta]$.
		\end{itemize}
		Write $t^\ast=\min\{t, 2-t\}$. Then
		\begin{equation}\label{equu-15}
			|\mathbf{E}_{\mathcal{F},\mathcal{P}}|\geq  C_{\mathfrak{T},t,\epsilon}\delta^{\epsilon}  C^{-\eta^{-2}} \delta^{(t-1)/2}\gamma_{\mathcal{P},t^\ast}^{-1/2} \lambda^{1/2}\sum_{f\in\mathcal{F}}|\mathcal{P}(f)|.
		\end{equation}
	\end{thm}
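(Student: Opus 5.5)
We argue by induction on scales, following the sketch in the introduction: assume the estimate \eqref{equu-15} holds at all scales $\delta'\geq \delta^{1-\sqrt{\eta}}$ (with the same constants), and prove it at scale $\delta$. Large scales are absorbed into $C_{\mathfrak{T},t,\epsilon}$.

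\textbf{Reduction.} First apply Lemma \ref{refine1} to pass to a refinement with constant multiplicity $|\mathcal{F}(p)|\sim M$. This makes hypothesis \eqref{eq.average} of Lemma \ref{lem-main} hold. Then use Lemma \ref{lem:uniform branching} so that all shadings share a uniform branching function.

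\textbf{Multi-scale decomposition.} Apply Lemma \ref{multi-scale-lem} with a small $\eta'$. This produces $r\in[\delta^{1-\eta_0/\eta'},1]$ and $s\in[0,1]$ such that each rescaled piece $S_J(\mathcal{P}(f)\cap J)$, $J\in[\mathcal{P}(f)]_r$, is a $(\delta/r,s)$-set of cardinality $\approx (r/\delta)^{s}$. The $(\delta,\epsilon^2,C;\rho^\ast)$ hypothesis, together with $\rho^\ast\le\delta^\eta$, should force $s\gtrsim\epsilon^2$ at the chosen $r$. This is where the factor $C^{-\eta^{-2}}$ is paid, and it lets Lemma \ref{lem-main} be used with $s$ bounded away from $0$.

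\textbf{Localisation to an $r$-cube.} Inside each $\mathbf{Q}\in\mathcal{D}_r(\mathbf{E})$, form incomparable segments $\mathbb{S}_{r,\mathbf{Q}}$ exactly as in the proof of Theorem \ref{thm-twoends}, and rescale by $r^{-1}$ via Lemma \ref{lem2}. If needed, extract a uniform subfamily (Lemma \ref{lem-uniformsubset}) so that the rescaled configuration satisfies the hypotheses of Lemma \ref{lem-main} at scale $\delta/r$, with rescaled family KT-exponent $t$ (after a random KT-subset as in Claim \ref{claimm1}). Apply Lemma \ref{lem-main}, with $\nu\ll\epsilon$, to obtain a scale $\Delta$. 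Two cases follow.

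\textbf{Case (i): $\Delta\le (\delta/r)^{1-\sqrt\eta}$.} Here \eqref{equ-multiplicity} gives multiplicity $\le (\delta/r)^{f(s,t)-O(\sqrt\eta)-\nu}$ of tube segments through each $p\in\mathbf{Q}$. Double counting then bounds $|\mathbf{E}\cap\mathbf{Q}|$ from below by $\sum|\mathcal{P}(f)\cap\mathbf{Q}|$ divided by this multiplicity, after undoing the KT sampling. Summing over $\mathbf{Q}$ gives $|\mathbf{E}|$. It remains to check that the resulting exponent beats $\delta^{(t-1)/2}\gamma^{-1/2}_{\mathcal{P},t^\ast}\lambda^{1/2}$. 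For this, use that $\lambda\delta^{-1}\approx|\mathcal{P}(f)|_r (r/\delta)^s$, and bound $\gamma_{\mathcal{P},t^\ast}$ from below by testing at $B(x,r)$ and at scale $\delta$, as in \eqref{equ-lbg}. This is a case analysis over the three ranges of $t$ in $f(s,t)$.

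\textbf{Case (ii): $\Delta$ coarse.} Build the coarser configuration $(\widetilde{\mathcal{F}},\widetilde{\mathcal{P}})_{r\Delta}$ (scale $\Delta$ relative to $\mathbf{Q}$, absolute scale $\delta'=r\Delta\cdot$ rescaling), as follows:
\begin{itemize}
\item take $\widetilde{\mathcal{F}}$ to be a $(\delta',t)$-KT subset of $\mathcal{F}$ via Lemma \ref{lemma-KT};
\item set $\widetilde{\mathcal{P}}(f)=\bigcup_{\mathbf{Q}}\mathcal{Q}_{\mathbf{Q}}(f)$.
\end{itemize}
By Remark \ref{re-portion}, each $\widetilde{\mathcal{P}}(f)$ retains a $\gtrapprox1$ proportion of $\mathcal{P}(f)$, so it is dense with the appropriate $\tilde\lambda$. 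It remains uniform, and it inherits the $(\delta',\epsilon^2,C';\rho^\ast)$ property, possibly with $\rho^\ast$ replaced by $\max(\rho^\ast,\delta')$. Lemma \ref{lem-rdelta} controls $\gamma_{\widetilde{\mathcal{P}},t^\ast}$ in terms of $\gamma_{\mathcal{P},t^\ast}$. Apply the induction hypothesis at scale $\delta'\ge\delta^{1-\sqrt\eta}$, then multiply by the lower bound \eqref{equ-largecardi} for $|\mathbf{E}\cap Q|$. This gives $|\mathbf{E}|\gtrsim|\mathbf{E}_{\widetilde{\mathcal{F}},\widetilde{\mathcal{P}}}|\min_Q|\mathbf{E}\cap Q|$.

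\textbf{Main obstacle.} The hard step is the exponent bookkeeping in case (ii). We must verify that the factors from the KT-subselection $(\delta/\delta')^{t}$, from $\tilde\lambda$, from $\gamma$ via Lemma \ref{lem-rdelta}, and from $d^{-1}$ in \eqref{equ-largecardi} combine to give at least $\delta^{(t-1)/2}\gamma^{-1/2}\lambda^{1/2}\sum|\mathcal{P}(f)|$. Induction gains $(\delta/\delta')^{\epsilon}$ over the loss $(\delta/\Delta)^{\nu}\delta^{-O(\eta)}$, since $\delta/\delta'\le\delta^{\sqrt\eta}$. I would first check the endpoint $t=1$, $s=1/2$ by hand, then choose $\nu,\eta$ so that $\nu+O(\eta)<\epsilon\sqrt\eta/2$; this is consistent because $\eta$ from Lemma \ref{lem-main} depends only on $\nu,s$ and $s\gtrsim\epsilon^2$. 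Iterating the induction $O(\eta^{-1/2})$ times costs a factor $C^{O(\eta^{-1})}\le C^{\eta^{-2}}$.
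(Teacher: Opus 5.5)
\textbf{There is a genuine gap in your Case (i): nothing controls how many functions share one $\delta\times r$ tube segment.} The multiplicity of a $\delta$-cube $p\in\mathbf Q$ is the product of two factors. The first is the number of incomparable segments $u\in\mathbb S_{\mathbf Q}$ whose shading contains $p$. The second is the number $m_{\mathbf Q}$ of $f\in\mathcal F$ with $\Gamma_{f|_{I_{\mathbf Q}}}\subset u(3\delta)$. Lemma \ref{lem-main} bounds only the first factor, and it does not even need a KT hypothesis. For the second factor, and for ``undoing the KT sampling'', your argument has only the $(\delta,t)$-KT hypothesis, i.e.\ $\sigma m_{\mathbf Q}\lessapprox r^{-t}$ as in \eqref{eqqqq}.

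That loss is harmless in Theorem \ref{thm-twoends}, because there $r=\rho\geq\delta^{\epsilon_1}$ and the statement already concedes $\delta^{t\epsilon_1/2}$. Here, however, Lemma \ref{multi-scale-lem} may return $r$ as small as $\delta^{1-\eta_0/\eta'}$. Concretely, take $t\in[s,2-s]$. Your double counting gives multiplicity $\lessapprox r^{-t}(\delta/r)^{(s-t)/2}$. Using $\lambda\delta^{-1}\approx|\mathcal P(f)|_r(r/\delta)^s$, this exceeds the needed $\lambda^{-1/2}\delta^{(1-t)/2}\gamma_{\mathcal P,t^\ast}^{1/2}$ by the factor $r^{-t/2}\bigl(|\mathcal P(f)|_r/\gamma_{\mathcal P,t^\ast}\bigr)^{1/2}$. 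Since $\gamma_{\mathcal P,t^\ast}\lessapprox\max\{1,(\delta/r)^{t^\ast-s}|\mathcal P(f)|_r\}$ and $s\leq t^\ast$, this factor is $\gtrapprox r^{-t/2}$. That is not absorbable into $\delta^{-\epsilon}$ once $r<\delta^{2\epsilon/t}$, whatever lower bound for $\gamma$ or choice of $\nu,\eta$ you use. The sampling also thins the shadings you reuse in Case (ii): a given $f$ survives in only about a $\sigma^{-1}$-fraction of its cubes $\mathbf Q$.

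\textbf{The missing idea is a second application of the induction hypothesis, at the intermediate scale $r$.} This is Step 2 of the paper, Claim \ref{claimrefine1}. For each $\mathbf F\in\mathcal D_{\delta/r}(\mathcal F)$, rescale by $f\mapsto(f-f_{\mathbf F})/(\delta/r)$. After this rescaling:
\begin{itemize}
\item the family becomes $r$-separated and $(r,t)$-KT;
\item the $\delta\times r$ tube segments become $\sim r$-cubes;
\item the tube shading $\tilde{\mathcal P}'(f)$ is $\tilde\lambda$-dense with $\tilde\lambda=\lambda(r/\delta)^{1-s-9\eta_1}$;
\item $\tilde{\mathcal P}'(f)$ is an $(r,\epsilon^2,C\bar C;\max\{\rho^\ast,r\})$-set. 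This is one place where the $\rho^\ast$ formulation is indispensable.
\end{itemize}
Induction at scale $r$ then bounds the number $M$ of functions per segment by $r^{-\epsilon}\tilde\lambda^{-1/2}r^{(1-t)/2}\gamma_{\tilde{\mathcal P},t^\ast}^{1/2}$, up to the constants $C_{\mathfrak T,t,\epsilon}^{-1}C^{\eta^{-2}}$. Since $\gamma_{\tilde{\mathcal P},t^\ast}\leq|\tilde{\mathcal P}(f)|\approx\tilde\lambda r^{-1}$, this is $\lessapprox r^{-\epsilon-t/2}$, which supplies exactly the missing factor.

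The paper then splits $\mathcal F_1(\mathbf Q)$ into $\sim M$ layers, each with one function per segment. It applies Lemma \ref{lem-main} to every layer, with no KT sampling, so every function keeps its shading. In Case I this gives $|\mathcal F_2(p)|\lessapprox M\cdot N_{\mathbf Q,k}$, and the exponents cancel: $\tilde\lambda^{-1/2}r^{(1-t)/2}(\delta/r)^{(s-t)/2}\approx\lambda^{-1/2}\delta^{(1-t)/2}$. The $r^{-\epsilon}$ cost of the scale-$r$ induction, measured against the target $\delta^{-\epsilon}$, leaves the gain $(\delta/r)^{-\epsilon+\epsilon^2+O(\sqrt\eta)}$.

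A secondary issue is your bookkeeping. You assume the coarse scale is $\geq\delta^{1-\sqrt\eta}$ and that losses are $\delta^{-O(\eta)}$. In fact one only has $\delta/\Delta\leq(\delta/r)^{\sqrt\eta}$, with $\delta/r$ possibly $\delta^{\eta_0/\eta'}$. So every loss must be a power of $\delta/r$, which the paper arranges by running Lemma \ref{multi-scale-lem} with $\eta_1=\eta\epsilon^3$ and Lemma \ref{lem-main} at scale $\delta/r$.
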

	
	\begin{proof}
		We use a backward induction on $\delta$. The proof is organized in five steps.
		
		\subsection*{Step 1: initial reductions.} It suffices to prove Theorem \ref{thm-reduction3} for $\epsilon \in (0,1/100)$. Given $\epsilon\in (0,1/100)$, $\eta=\eta(\epsilon)>0$ will be determined later. Let $\delta_0=\delta_0(\epsilon,t,\mathfrak{T})>0$ be an initial scale to be chosen later. For the base case $\delta\geq \delta_0$, we can choose $C_{\mathfrak{T},t,\epsilon}>0$ small enough such that \eqref{equu-15} is true. Moreover, when $C\geq \delta^{-2\eta^2}$, we have $\delta^2\geq C^{-\eta^{-2}}$, thus \eqref{equu-15} follows directly. 
		
		In the following, we assume $0<\delta<\delta_0$ and $1\leq C< \delta^{-2\eta^2}$. 
		
		After dyadic pigeonholing and applying Lemma \ref{lem:uniform branching}, we may assume that $|\mathcal{P}(f)|$ are constant for all $f\in\mathcal{F}$, and there is a uniform branching function for $\{\mathcal{P}(f)\}_{f\in\mathcal{F}}$. Without loss of generality, we assume $|\mathcal{P}(f)|\sim \lambda \delta^{-1}$.
		
		Let $\eta_1=\eta\epsilon^3$ and recall $t^\ast=\min\{t,2-t\}$. Let $\delta_1=\delta_1(\eta_1)=\delta_1(\epsilon)$ be the constant given by Lemma \ref{multi-scale-lem}, then take $\delta<\delta_0<\delta_1$. Applying Lemma \ref{multi-scale-lem} to $\{\mathcal{P}(f)\}_{f\in\mathcal{F}}$ with $(t,\eta)=(t^\ast,\eta_1)$, there exist an $r\in[\delta^{1-\eta_0\eta_1^{-1}},1]$ (here $\eta_0:=\eta_1^{2\eta_1^{-1}}$) and an $s\in (0,1]$ such that the following is true: 
		
		For each $f\in \cF$, let $\tilde{\cP}(f) := [\cP(f)]_r$, then we have
		\begin{enumerate}
			\item [\textup{(B1)}] \phantomsection \label{B1} $\ga_{\tilde \cP, t^\ast}(f)\lesssim \ga_{\cP,t^\ast}(f)$; 
			\item [\textup{(B2)}] \phantomsection \label{B2} for each $J=u_Q(6\delta)\in [\cP(f)]_r$, $T_Q(\cP(f)\cap J)$ is a $(\delta/r, s, (\delta/r)^{-9\eta_1})$-set, where $T_Q$ is the map taking $Q$ to $[-1,1]^2$. Moreover,
			$$\log_{1/\de}\Big(\frac{|\cP(f)|_{\de}}{|\cP(f)|_{r}}\Big)\leq (s+9\eta_1) \log_{1/\de}\Big(\frac{r}{\delta}\Big).$$
		\end{enumerate}
		We note that $s>0$ can be seen from the proof of \cite[Lemma 2.23]{WW2} by using $C< \delta^{-2\eta^2}$ and choosing $\eta<\epsilon^4$. 
		
		Now if $r\geq \delta^\eta$, we infer from property \nref{B2} that $\mathcal{P}(f)$ is a $(\delta,s,\delta^{-2\eta})$-set. Let $\eta'$ and $\delta_2$ be the small constants determined by Corollary \ref{cor2} with $\nu=\epsilon$. Choose 
		\[\delta<\delta_0<\min\{\delta_1,\delta_2\}, \quad \eta<\min\{\eta'/2,\epsilon^4\},\]
		then applying Corollary \ref{cor2} to $(\mathcal{F},\mathcal{P})_\delta$ gives \eqref{equu-15}. Therefore, in the sequel we assume \[r\in[\delta^{1-\eta_0\eta_1^{-1}},\delta^\eta].\]
		We note that $r\leq \delta^\eta$ will be used for only one time, see {\bf Case II} in {\bf Step 5}.

		\subsection*{Step 2: refining $(\mathcal{F},\mathcal{P})_\delta$ at scale $r$.} First,  let us recall the construction below.
		
		For each $Q\in\mathcal{D}_r(\mathbf{E}_{\mathcal{F},\mathcal{P}})$, let $\Gamma(Q)=\{\Gamma_{f|_{I_Q}}: \Gamma\cap Q\neq \emptyset, f\in\mathcal{F}\}$. Two curve segments $\Gamma_{f|_{I_{Q}}},\Gamma_{g|_{I_{Q}}}$ are \emph{comparable} if $|f(x) - g(x)| \leq 3\delta$ for all $x \in I_{Q}$. Let $\mathbb{S}_Q$ be a maximal set of \emph{incomparable} curve segments in $\Gamma(Q)$ and write $\mathbb{S}:=\cup_Q \mathbb{S}_Q$. We have two properties for incomparable segments. 
		\begin{itemize}
			\item Each $u\in \mathbb{S}$ belongs to at most two different $\mathbb{S}_Q$.
			
			\item For each $Q\in\mathcal{D}_r(\mathbf{E}_{\mathcal{F},\mathcal{P}})$, each segment in $\Gamma(Q)$ is contained in the vertical $3\delta$-neighborhood (recall Definition \ref{def:vertical}) of one and at most $O(\mathfrak{T}^{4})$ segments in $\mathbb{S}_Q$. 
		\end{itemize}
		For the proofs of these facts, see \cite[Appendix A]{OPY2025}. In the sequel, when $u\in \mathbb{S}_Q$, we use $f_u\in \mathcal{F}$ to denote the function such that $u=\Gamma_{f_u|_{I_Q}}$.
		
		Now, for each $Q\in \mathcal{D}_r(\mathcal{P}(f))$, choose one $u_Q\in\mathbb{S}_Q$ (at most $O(\mathfrak{T}^4)$ choices) such that $\Gamma_{f|_{I_Q}}\subset u_Q(3\delta)$ and thus $\mathcal{P}(f)\cap Q\subset u_Q(6\delta)$ since $f\in\mathcal{F}\subset B_{C^2}(1)$. For each $f\in \mathcal{F}$, define
		\begin{equation}\label{equ-curvedrectangle}
			\tilde{\mathcal{P}}(f):=\{u_Q(6\delta): Q\in \mathcal{D}_r(\mathcal{P}(f))\}.
		\end{equation}
		In the following, we will use $J$ to denote the elements in $\tilde{\mathcal{P}}(f)$. Note that each $J\in \tilde{\mathcal{P}}(f)$ is a curved rectangle of dimensions $\sim (\delta\times r)$ and each $\delta$-cube in $\mathcal{P}(f)$ is contained in a unique element of $\tilde{\mathcal{P}}(f)$. 
		
		The goal of this step is to prove the following claim.
		
		\begin{claim}\label{claimrefine1}
			There exist a refinement $(\mathcal F_1,\mathcal P_1)_\delta$ of $(\mathcal F,\mathcal P)_\delta$ satisfying the following properties.
			
			\begin{itemize}
				\item [(D1)] \phantomsection \label{D1} $\mathcal{P}_1(f):=\cup_{J\in \tilde{\mathcal{P}}'(f)} (\mathcal{P}(f)\cap J)$ for each $f\in \mathcal{F}_1$, where $\tilde{\mathcal{P}}'(f)$ is a refinement of $\tilde{\mathcal{P}}(f)$ and
				\begin{equation}
					|\tilde{\mathcal{P}}'(f)|\gtrapprox |\tilde{\mathcal{P}}(f)|\gtrapprox \tilde{\lambda} r^{-1}, \quad \tilde{\lambda}:=\lambda (r/\delta)^{1-s-9\eta_1}.
				\end{equation}
				
				\item [(D2)] \phantomsection \label{D2} For each $f\in \mathcal F_1$ and each $J=u_Q(6\delta)\in \widetilde{\mathcal P}'(f)$,
				\begin{equation}\label{eqqq-upper}
					|\mathcal P_1(f)\cap J|\lesssim ( \delta/r)^{-(s+10\eta_1)}.
				\end{equation}
				Moreover, $T_Q(\mathcal P_1(f)\cap J)$ is a $(\delta/r,s,(\delta/r)^{-10\eta_1})$-set, where $T_Q$ is the map taking $Q$ to $[-1,1]^2$.
				
				\item [(D3)] \phantomsection \label{D3} There is an integer $M\geq1$ such that for every curved rectangle $J\in \tilde{\mathcal P}'(f)$,
				\begin{equation}
					|\{f\in \mathcal F_1: J\in \tilde{\mathcal P}'(f)\}|\sim_\mathfrak{T} M \lessapprox C_{\mathfrak{T},t,\epsilon}^{-1} C^{\eta^{-2}}r^{-\epsilon} \tilde{\lambda}^{-1/2} r^{(1-t)/2}\gamma_{\tilde{\mathcal{P}},t^\ast}^{1/2}.
				\end{equation}
				
				\item [(D4)] \phantomsection \label{D4} $|\mathcal F_1|\approx |\mathcal F|$.
			\end{itemize}
		\end{claim}
		
		\begin{proof}
			For $f\in\mathcal{F}$ and $Q\in \mathcal{D}_r(\mathcal{P}(f))$, it follows from \nref{B2} that
			\begin{equation}\label{equ-21}
				|\mathcal{P}(f)\cap Q|\approx \frac{|\mathcal{P}(f)|}{|\mathcal{D}_r(\mathcal{P}(f))|}\leq \Big(\frac{\delta}{r}\Big)^{-(s+9\eta_1)}.
			\end{equation}
			Since $|\mathcal{P}(f)|\sim \lambda \delta^{-1}$, we use \eqref{equ-21} to deduce
			\[|\mathcal{D}_r(\mathcal{P}(f))|\approx \frac{|\mathcal{P}(f)|}{|\mathcal{P}(f)\cap Q|}\gtrapprox \lambda\delta^{-1}\cdot\Big(\frac{\delta}{r}\Big)^{(s+9\eta_1)}=\lambda\Big(\frac{r}{\delta}\Big)^{(1-s-9\eta_1)}r^{-1},\]
			which implies that $|\tilde{\mathcal{P}}(f)|=|\mathcal{D}_r(\mathcal{P}(f))|\gtrapprox \tilde{\lambda} r^{-1}$, where $\tilde{\lambda}:=\lambda (r/\delta)^{1-s-9\eta_1}$. 
			
			For $\mathbf{F}\in \mathcal{D}_{\delta/r}(\mathcal{F})$, write $\mathcal{F}[\mathbf{F}]:=\mathcal{F}\cap \mathbf{F}$. Then we apply Lemma \ref{refine1} to $(\mathcal{F}[\mathbf{F}],\tilde{\mathcal{P}}(f))$ at scale $r$, which gives $M_\mathbf{F}\geq1$, a sub-family $\mathbf{E}_{\mathbf{F}}\subset \mathbf{E}_{\mathcal{F}[\mathbf{F}],\tilde{\mathcal{P}}}$ and a refinement $(\mathcal{F}[\mathbf{F}]',\tilde{\mathcal{P}}'(f))$ of $(\mathcal{F}[\mathbf{F}],\tilde{\mathcal{P}}(f))$ so that 
			\begin{enumerate}
				\item [(i)] $\tilde{\mathcal{P}}'(f)= \mathbf{E}_{\mathbf{F}}\cap \tilde{\mathcal{P}}(f)$ for all $f\in \mathcal{F}[\mathbf{F}]$ and $\tilde{\mathcal{P}}'(f)$ is a refinement of $\tilde{\mathcal{P}}(f)$ for all $f\in \mathcal{F}[\mathbf{F}]'$. Here we have $\mathbf{E}_{\mathbf{F}}=\cup_{f\in \mathcal{F}[\mathbf{F}]}\tilde{\mathcal{P}}'(f)$.
				\item [(ii)] $|\{f\in\mathcal{F}[\mathbf{F}]: J\in \tilde{\mathcal{P}}'(f)\}|\sim M_\mathbf{F}$ for all $J\in \mathbf{E}_{\mathbf{F}}$.
				\item [(iii)] $M_\mathbf{F}\sim \sum_{f\in \mathcal{F}[\mathbf{F}]}|\tilde{\mathcal{P}}'(f)|/|\mathbf{E}_{\mathbf{F}}|\approx \sum_{f\in \mathcal{F}[\mathbf{F}]'}|\tilde{\mathcal{P}}'(f)|/|\mathbf{E}_{\mathcal{F}[\mathbf{F}]',\tilde{\mathcal{P}}'}|$.
			\end{enumerate}

			For each $f\in \mathcal{F}$, we define a new shading 
			\begin{equation}\label{equp1}
				\mathcal{P}_1(f):=\cup_{J\in \tilde{\mathcal{P}}'(f)} (\mathcal{P}(f)\cap J).
			\end{equation}
			Since $|\mathcal{D}_r(\mathcal{P}(f))|\gtrapprox \tilde{\lambda} r^{-1}$, we have $|\tilde{\mathcal{P}}'(f)|\gtrapprox |\tilde{\mathcal{P}}(f)|\gtrapprox \tilde{\lambda} r^{-1}$, hence property \nref{D1} holds. To see property \nref{D2}, note that 
			\[|\mathcal P_1(f)\cap J|=|\mathcal P(f)\cap J|, \quad J\in \tilde{\mathcal{P}}'(f).\]
			Since $J=u_Q(6\delta)$ for some $Q\in \mathcal{D}_r(\mathcal{P}(f))$ was chosen such that $\mathcal{P}(f)\cap Q\subset J$, by \eqref{equ-21} we get \eqref{eqqq-upper}. The $(\delta/r,s)$-set condition of $T_Q(\mathcal P_1(f)\cap J)$ follows from property \nref{B2}.

			It remains to show \nref{D3} and \nref{D4}. Since $\mathcal{P}(f)$ is a uniform $(\delta,\epsilon^2,C;\rho)$-set, $\tilde{\mathcal{P}}'(f)$ is a $(r,\epsilon^2,C\bar{C};\max\{\rho,r\})$-set for some $\bar{C}\lessapprox 1$. Note that $\max\{\rho,r\}\in [r,r^\eta]$ since $\rho\in[\delta,\delta^\eta]$. We aim to apply induction hypothesis at scale $r$. To proceed, fix any $f_\mathbf{F}\in \mathcal{F}[\mathbf{F}]'$ and define the rescaling map
			\begin{equation}\label{eq-TF}
				T_\mathbf{F}(f)=\frac{f-f_\mathbf{F}}{\delta/r}, \quad f\in \mathcal{F}. 
			\end{equation}
			We also abuse notation to define \[T_\mathbf{F}(x,y)=\Big(x, \frac{y-f_\mathbf{F}(x)}{\delta/r}\Big), \quad (x,y)\in \R^2.\] Then $T_\mathbf{F}(\mathcal{F}[\mathbf{F}]')\subset B(0,O(\mathfrak{T}))$ is a transversal family by Lemma \ref{lem1} and $T_\mathbf{F}(J)$ is roughly a cube of side length $\sim r$ for each $J\in \tilde{\mathcal{P}}'(f)$ as $f\in \mathcal{F}\subset B_{C^2}(1)$. Moreover, $T_\mathbf{F}(\tilde{\mathcal{P}}'(f))$ is a $(r,\epsilon^2,CC_1;\max\{\rho,r\})$-set for some $C_1\lessapprox 1$. By applying induction hypothesis to the rescaled configuration $(T_\mathbf{F}(\mathcal{F}[\mathbf{F}]'),T_\mathbf{F}(\tilde{\mathcal{P}}'(f)))_r$, we get
			\begin{equation}\label{equ-richness-J}
				\begin{split}
					M_\mathbf{F} & \approx\sum_{f\in \mathcal{F}[\mathbf{F}]'}|\tilde{\mathcal{P}}'(f)|/|\mathbf{E}_{\mathcal{F}[\mathbf{F}]',\tilde{\mathcal{P}}'}|\lessapprox  C_{\mathfrak{T},t,\epsilon}^{-1} C^{\eta^{-2}}r^{-\epsilon}\cdot \tilde{\lambda}^{-1/2} r^{(1-t)/2}\gamma_{\tilde{\mathcal{P}}',t^\ast}^{1/2}\\
					&\leq C_{\mathfrak{T},t,\epsilon}^{-1} C^{\eta^{-2}}r^{-\epsilon}\cdot \tilde{\lambda}^{-1/2} r^{(1-t)/2}\gamma_{\tilde{\mathcal{P}},t^\ast}^{1/2},
				\end{split}
			\end{equation}
			where we use $\gamma_{\tilde{\mathcal{P}}',t^\ast} \leq \gamma_{\tilde{\mathcal{P}},t^\ast}$. Recalling property (ii) and (iii) above, this means
			\begin{equation}\label{eq-richness2}
				|\{f\in\mathcal{F}[\mathbf{F}]: J\in \tilde{\mathcal{P}}'(f)\}|\lessapprox C_{\mathfrak{T},t,\epsilon}^{-1} C^{\eta^{-2}}r^{-\epsilon}\cdot \tilde{\lambda}^{-1/2} r^{(1-t)/2}\gamma_{\tilde{\mathcal{P}},t^\ast}^{1/2},\quad J\in \mathbf{E}_{\mathbf{F}}.
			\end{equation}

			Since $\mathcal{P}(f)$ is uniform and $|\tilde{\mathcal{P}}'(f)|\gtrapprox |\tilde{\mathcal{P}}(f)|$, $(\mathcal{F},\mathcal{P}_1)_\delta$ is a refinement of $(\mathcal{F},\mathcal{P})_\delta$. By dyadic pigeonholing, there exist a subset $\mathcal{F}_1[\mathbf{F}]\subset \mathcal{F}[\mathbf{F}]$ for each $\mathbf{F}\in\mathcal{D}_{\delta/r}(\mathcal{F})$ and a constant
			\begin{equation}\label{equ-constantM}
				M\lessapprox C_{\mathfrak{T},t,\epsilon}^{-1} C^{\eta^{-2}}r^{-\epsilon}\cdot \tilde{\lambda}^{-1/2} r^{(1-t)/2}\gamma_{\tilde{\mathcal{P}},t^\ast}^{1/2}
			\end{equation}
			such that
			\begin{itemize}
				\item [(i)] if $M_\mathbf{F}\sim M$, then $\mathcal{F}_1[\mathbf{F}]=\mathcal{F}[\mathbf{F}]$, otherwise $\mathcal{F}_1[\mathbf{F}]=\emptyset$;
				
				\item [(ii)] let $\mathcal{F}_1:=\cup_{\mathbf{F}\in \mathcal{D}_{\delta/r}(\mathcal{F})}\mathcal{F}_1[\mathbf{F}]$, then $(\mathcal{F}_1,\mathcal{P}_1)_\delta$ is a refinement of $(\mathcal{F},\mathcal{P})_\delta$.
			\end{itemize}
			Since $|\mathcal{P}(f)|\sim \lambda \delta^{-1}$ for all $f\in\mathcal{F}$, we have $|\mathcal{F}_1|\gtrapprox |\mathcal{F}|$ which proves \nref{D4}. Moreover, for each $J=u_Q(6\delta)\in \cup_{f\in \mathcal{F}_1[\mathbf{F}]} \tilde{\mathcal{P}}'(f)$ with $\mathcal{F}_1[\mathbf{F}]\neq\emptyset$, it follows from \eqref{eq-richness2} that
			\begin{equation}\label{equ-24}
				|\{f\in\mathcal{F}_1[\mathbf{F}]: J\in \tilde{\mathcal{P}}'(f)\}|\sim M.
			\end{equation}
			Since \[\{f\in \mathcal{F}_1: J\in \tilde{\mathcal{P}}'(f)\}\subset \{f\in \mathcal{F}_1: \Gamma_{f|_{I_Q}}\subset u_Q(3\delta)\}\]
			while $\{f\in \mathcal{F}_1: \Gamma_{f|_{I_Q}}\subset u_Q(3\delta)\}$ can intersect at most $\lesssim_\mathfrak{T}1$ dyadic $\delta/r$-cubes $\mathbf{F}$, see \cite[Claim A.15]{OPY2025} for the proof, combining \eqref{equ-constantM} and \eqref{equ-24} gives \nref{D3} and finishes the proof of Claim \ref{claimrefine1}.
		\end{proof}
		
		\subsection*{Step 3: constructing local configurations.}
		
		In this step, we localize the refined configuration $(\mathcal{F}_1,\mathcal{P}_1)_\delta$ from {\bf Step 2} to a fixed $r$-cube $Q\in\mathcal{D}_r(\mathbf{E}_{\mathcal{F}_1,\mathcal{P}_1})$. The goal is to decompose the portion of the configuration inside $Q$ into a controlled collection of smaller configurations, and after some refining process each has two useful properties: first, the associated shadings remain quantitatively $s$-dimensional after rescaling; second, the relevant incidence multiplicities--namely, the number of curve segments in a suitable refinement of $\mathbb{S}_Q$ that intersect each $p\in \mathcal{P}_1$--are essentially constant. This will allow us, in the next step, to  apply Lemma \ref{lem-main} to some refinement of each local configuration.

		For each $Q\in\mathcal{D}_r(\mathbf{E}_{\mathcal{F}_1,\mathcal{P}_1})$, recall $\mathbb S_Q$ constructed at the beginning of {\bf Step 2}, then define
		\begin{equation}\label{def:SQ}
			\mathbb{S}_Q':=\{u\in\mathbb{S}_Q: u(6\delta) \in \tilde{\mathcal{P}}'(f) \text{ for some } f\in\mathcal{F}_1\}
		\end{equation}
		and
		\[\mathcal{F}_Q':=\{f_u:u\in\mathbb{S}_Q'\},\]
		where $f_u\in \mathcal{F}$ satisfies $u=\Gamma_{f_u|_{I_Q}}$. For each $u\in \mathbb S_Q'$, write
		\begin{equation}\label{eq-f1}
			\mathcal{F}_1(u):=\{f\in \mathcal{F}_1: u(6\delta) \in \tilde{\mathcal{P}}'(f) \}.
		\end{equation}
		Let $\mathcal{F}_1(Q):=\{f\in\mathcal{F}_1: u(6\delta) \in \tilde{\mathcal{P}}'(f) \text{ for some } u\in\mathbb{S}_Q'\}$, then we claim that \[\mathcal{F}_1(Q)=\sqcup_{u\in \mathbb S_Q'}\mathcal{F}_1(u).\] 
		It is clear that $\mathcal{F}_1(Q)=\cup_{u\in \mathbb S_Q'}\mathcal{F}_1(u)$, then we prove the disjointness. Indeed, by our construction in \eqref{equ-curvedrectangle}, for each $Q\in \mathcal{D}_r(\mathcal{P}(f))$, we take only one $u_Q$ such that 
		\[u_Q(6\delta)\in \tilde{\mathcal{P}}(f).\]
		This means for fixed $Q\in\mathcal{D}_r(\mathcal{P}_1(f))$, there is only one $u\in \mathbb{S}'_Q$ such that $f\in\mathcal{F}_1(u)$, i.e., $\{\mathcal{F}_1(u):u\in \mathbb{S}'_Q\}$ are disjoint. Write $K(u):=|\mathcal{F}_1(u)|$. By property \nref{D3} we know
		\begin{equation}\label{eq-k(u)}
			K(u)\sim_\mathfrak{T} M.
		\end{equation}
		For each $u\in \mathbb S_Q'$, we enumerate \[\mathcal{F}_1(u)=\{f_1(u), f_2(u), \cdots, f_{K(u)}(u)\}.\] Write $\mathcal{F}_{k,Q}:=\{f_k(u): u\in \mathbb S_Q'\}$, where $\{f_k(u)\}=\emptyset$ if $k> K(u)$. Then we have \[\mathcal{F}_1(Q)=\sqcup_{k}\mathcal{F}_{k,Q}. \]
		
		Now, for each $1\leq k \leq \max_{u\in \mathbb{S}_Q'} K(u)$ and $u\in \mathbb{S}_Q'$, we define
		\begin{equation}\label{equ-41}
			\mathcal{P}_{Q,k}(u):=\left\{
			\begin{array}{lll}
				\mathcal{P}_1(f_k(u))\cap Q, \quad &\text{if}~~k\leq K(u),\\
				\emptyset, \quad &\text{otherwise}.
			\end{array}\right.
		\end{equation}
		Write $K(Q):=\min_{u\in \mathbb{S}_Q'} K(u)$. We have $K(Q)\sim_\mathfrak{T} M$ and $\mathcal{P}_{Q,k}(u)\neq\emptyset$ if $k\leq K(Q)$. Then, 
		\begin{equation}\label{equ-70}
			\begin{split}
				\sum_{p\in Q \cap \mathbf{E}_{\mathcal{F}_1,\mathcal{P}_1}}|\mathcal{F}_1(p)|&\leq\sum_{f\in \mathcal{F}_1(Q)} |\mathcal{P}_1(f)\cap Q|\sim\sum_{k}\sum_{f\in \mathcal{F}_{k,Q}}|\mathcal{P}_1(f)\cap Q|\\
				&=\sum_{k}\sum_{u\in \mathbb{S}_{Q}'}|\mathcal{P}_{Q,k}(u)|\lessapprox \sum_{k\leq K(Q)}\sum_{p\in \mathbf{E}_{\mathbb{S}_{Q}',\mathcal{P}_{Q,k}}}|\mathbb{S}_Q'(p)|,
			\end{split}
		\end{equation}
		where $\mathbb{S}_Q'(p):=\{u\in \mathbb{S}_Q': p\in \mathcal{P}_{Q,k}(u)\}$. Recall property \nref{D2}, then for all $Q\in \mathcal{D}_r(\mathbf{E}_{\mathcal{F}_1,\mathcal{P}_1})$, $u\in \mathbb{S}_Q'$ and $k\leq K(Q)$, we have
		\begin{itemize}
			\item [(i)] \phantomsection \label{B1} $|\mathcal{P}_{Q,k}(u)|\leq (\delta/r)^{-(s+10\eta_1)}$,
			
			\item [(ii)] \phantomsection \label{B2} $T_Q(\mathcal{P}_{Q,k}(u))$ is a $(\delta/r,s, (\delta/r)^{-10\eta_1})$-set. 
		\end{itemize}
		Then, for each $k\leq K(Q)$, we apply Lemma \ref{refine1} and use the properties (i), (ii) above to get a refinement $(\mathbb{S}_Q'',\mathcal{P}_{Q,k}'')_\delta$ of $(\mathbb{S}_Q',\mathcal{P}_{Q,k})_\delta$ such that
		\begin{itemize}
			\item [(J1)] \phantomsection \label{J1} For each $u\in \mathbb{S}_Q''$, $T_Q(\mathcal{P}_{Q,k}''(u))$ is a $(\delta/r,s, (\delta/r)^{-10\eta_1})$-set and
			\[|\mathcal{P}''_{Q,k}(u)|\leq (\delta/r)^{-(s+10\eta_1)}.\]
			
			\item [(J2)] \phantomsection \label{J2} For all $p\in \mathbf{E}_{\mathbb{S}_Q'',\mathcal{P}_{Q,k}''}:=\cup_{u\in \mathbb{S}_Q''}\mathcal{P}_{Q,k}''(u)$, we have
			\begin{equation}\label{equ-density11}
				|\mathbb{S}_Q''(p)|=|\{u\in \mathbb{S}_Q'': p\in \mathcal{P}_{Q,k}''(u)\}|\lessapprox \frac{\sum_{u\in \mathbb{S}_{Q,k}''}|\mathcal{P}_{Q,k}''(u)|}{|\mathbf{E}_{\mathbb{S}_Q'',\mathcal{P}_{Q,k}''}|}.
			\end{equation}
		\end{itemize}
		Since $|\mathcal{P}_{Q,k}(u)|$ are approximately constant for all $u\in \mathbb S_Q'$, we infer $|\mathbb{S}_Q''|\gtrapprox |\mathbb{S}_Q'|$. Moreover, after a further refinement, we may assume that $\mathcal{P}_{Q,k}''(u)$ is uniform for each $u\in \mathbb{S}_Q''$.

		\subsection*{Step 4: applying Lemma \ref{lem-main}.}
		Our plan is to find a refinement of $(\mathbb{S}_Q'',\mathcal{P}_{Q,k}'')_\delta$ and then apply Lemma \ref{lem-main} to this refinement after rescaling. Let $(x_0,y_0)$ denote the lower-left corner of $Q$ and define 
		\[\widetilde{\mathcal{F}}_Q'':=\{(f_u)_{(x_0,y_0),r}: u\in\mathbb S_Q''\}.\]
		By Lemma \ref{lem2}, $\widetilde{\mathcal{F}}_Q''$ is $(4\mathfrak{T}+1)$-transversal on $[-2,2]$. By the same argument as in \cite[Claim A.24]{OPY2025}, one checks that $\widetilde{\mathcal{F}}_Q''$ is $\sim_\mathfrak{T} \tfrac{\delta}{r}$-separated. Now for each $\bar{f}=(f_u)_{(x_0,y_0),r}\in \widetilde{\mathcal{F}}_Q''$, we define the shading
		\[\widetilde{\mathcal{P}}_{Q,k}''(\bar{f}):=T_Q(\mathcal{P}_{Q,k}''(u)) \subset \mathcal{D}_{\delta/r}.\]
		By \nref{J1}, \nref{J2}, the rescaled configuration $(\widetilde{\mathcal{F}}_Q'',\widetilde{\mathcal{P}}_{Q,k}'')_{\delta/r}$ satisfies:
		\begin{itemize}
			\item [(i)] for each $\bar{f}\in \widetilde{\mathcal{F}}_Q''$, $\widetilde{\mathcal{P}}_{Q,k}''(\bar{f})$ is a $(\delta/r,s, (\delta/r)^{-10\eta_1})$-set;
			
			\item [(ii)] for all $\bar{p}\in T_Q(\mathbf{E}_{\mathbb{S}_Q'',\mathcal{P}_{Q,k}''})=\{T_Q(p): p\in \mathbf{E}_{\mathbb{S}_Q'',\mathcal{P}_{Q,k}''}\}$, we have
			\begin{equation}\label{equ-density11}
				|\widetilde{\mathcal{F}}_Q''(\bar{p})|=|\{\bar{f}\in \widetilde{\mathcal{F}}_Q'': \bar{p}\in \widetilde{\mathcal{P}}_{Q,k}''(\bar{f})\}|\lessapprox \frac{\sum_{\bar{f}\in \widetilde{\mathcal{F}}_Q''}|\widetilde{\mathcal{P}}_{Q,k}''(\bar{f})|}{|T_Q(\mathbf{E}_{\mathbb{S}_Q'',\mathcal{P}_{Q,k}''})|}.
			\end{equation}
		\end{itemize}
		Next, we use Lemma \ref{lem-uniformsubset} to get a uniform refinement $\mathcal{F}_Q^\ast\subset \widetilde{\mathcal{F}}_Q''$ and define 
		\[\mathbb{S}_Q^\ast:=\{u: (f_u)_{(x_0,y_0),r}\in \mathcal{F}_Q^\ast\}\subset \mathbb{S}_Q''.\]
		For each $\bar{f}\in \widetilde{\mathcal{F}}_Q''$, define the shading
		\begin{equation}\label{equ-72}
			\mathcal{P}_{Q,k}^\ast(\bar{f}):=\left\{
			\begin{array}{lll}
				\widetilde{\mathcal{P}}_{Q,k}''(\bar{f}), \quad &\text{if}~~\bar{f}\in \mathcal{F}_Q^\ast,\\
				\emptyset, \quad &\text{otherwise}.
			\end{array}\right.
		\end{equation}
		In the sequel, we will abuse notation to use (after rescaling back)
		\begin{equation}\label{equ-73}
			\mathcal{P}_{Q,k}^\ast(u):=\left\{
			\begin{array}{lll}
				\mathcal{P}_{Q,k}''(u), \quad &\text{if}~~u\in \mathbb{S}_Q^\ast,\\
				\emptyset, \quad &\text{otherwise}.
			\end{array}\right.
		\end{equation}
		Since $|\widetilde{\mathcal{P}}_{Q,k}''(\bar{f})|=|\mathcal{P}_{Q,k}''(u)|$ is approximately constant, $(\mathcal{F}_Q^\ast,\mathcal{P}_{Q,k}^\ast)_{\delta/r}$ is a refinement of $(\widetilde{\mathcal{F}}_Q'',\widetilde{\mathcal{P}}_{Q,k}'')_{\delta/r}$. Moreover, the following properties are inherited from $(\widetilde{\mathcal{F}}_Q'',\widetilde{\mathcal{P}}_{Q,k}'')_{\delta/r}$.
		\begin{itemize}
			\item [(i)] For each $\bar{f}\in \mathcal{F}_Q^\ast$, $\mathcal{P}_{Q,k}^\ast(\bar{f})$ is a $(\delta/r,s, (\delta/r)^{-10\eta_1})$-set.
			
			\item [(ii)] For all $\bar{p}\in \mathbf{E}_{\mathcal{F}_Q^\ast,\mathcal{P}_{Q,k}^\ast}=\cup_{\bar{f}\in \mathcal{F}_Q^\ast}\mathcal{P}_{Q,k}^\ast(\bar{f})$, we have (note $|\mathcal{F}_Q^\ast|\gtrapprox |\widetilde{\mathcal{F}}_Q''|$)
			\begin{equation}\label{equ-density12}
				|\mathcal{F}_Q^\ast(\bar{p})|=|\{\bar{f}\in \mathcal{F}_Q^\ast: \bar{p}\in \mathcal{P}_{Q,k}^\ast(\bar{f})\}|\lessapprox \frac{\sum_{\bar{f}\in \mathcal{F}_{Q}^\ast}|\mathcal{P}_{Q,k}^\ast(\bar{f})|}{|\mathbf{E}_{\mathcal{F}_Q^\ast,\mathcal{P}_{Q,k}^\ast}|}.
			\end{equation}
		\end{itemize}
		Having obtained the suitable refinement $(\mathcal{F}_Q^\ast,\mathcal{P}_{Q,k}^\ast)_{\delta/r}$, we are now in a position to apply Lemma \ref{lem-main} at scale $\delta/r$.

		Let $\bar{\eta}, \bar{\delta}>0$ be the constants given by Lemma \ref{lem-main} with $\nu=\epsilon^2$. Recall $\eta_0=\eta_1^{2\eta_1^{-1}}$ and $\eta_1=\eta\epsilon^3$, then write
		\[h(\eta):=\eta_0\eta_1^{-1}=(\eta\epsilon^3)^{2/(\eta\epsilon^3)-1}.\]
		Since $r\geq \delta^{1-h(\eta)}$ and $\eta<\epsilon^4$ as selected in {\bf Step 1}, we take
		\[\delta/r\leq \delta_0^{h(\eta)}< \delta_0^{h(\epsilon^4)}<\bar{\delta}.\]
		Consequently, we choose $\delta<\delta_0<\bar{\delta}^{1/h(\epsilon^4)}$, $\eta<\bar{\eta}$ and note $\epsilon<1/100$, which ensures
		\[\delta/r<\bar{\delta}, \quad 10\eta_1<\bar{\eta}.\]
		Then, by applying Lemma \ref{lem-main} to $(\mathcal{F}_Q^\ast,\mathcal{P}_{Q,k}^\ast)_{\delta/r}$, we obtain a scale $\Delta=\Delta_{Q,k}\in [\delta,r]$ such that the following is true (after rescaling back).
		\begin{itemize}
			\item [(P1)] \phantomsection \label{P1} Let $N_{Q,k}:=\max_{p\in \mathbf{E}_{\mathbb{S}_Q^\ast,\mathcal{P}_{Q,k}^\ast}} |\mathbb{S}_Q^\ast(p)|$, where $\mathbb{S}_Q^\ast(p)=\{u\in \mathbb{S}_Q^\ast: p\in \mathcal{P}_{Q,k}^\ast(u)\}$. Recall 
			\[f(s,t)=\left\{
			\begin{array}{lll}
				0, \quad &\text{if}~~t\in(0,s],\\
				(s-t)/2, &~\text{if}~t\in[s,2-s],\\
				s-1, &~\text{if}~t\in [2-s,2].
			\end{array}\right.\]
			Then we have
			\begin{equation}\label{equ-multiplicity}
				N_{Q,k}\leq \Big(\frac{\delta}{r}\Big)^{-\epsilon^2} \Big(\frac{\delta}{\Delta}\Big)^{-O(1)}\Big(\frac{\delta}{r}\Big)^{f(s,t)}.
			\end{equation}
			
			\item [(P2)] \phantomsection \label{P2} If $\Delta/r \geq (\delta/r)^{1-\sqrt{\eta}}$, then there exist a refinement $\mathcal{S}_Q\subset \mathbb{S}_Q^\ast$ and an integer $d_{Q}\geq1$ such that for all $u\in\mathcal{S}_Q$ there is a cube family $\mathcal{Q}(u)\subset \mathcal{D}_\Delta\cap u(O(\Delta))$ satisfying
			\begin{itemize}
				\item [(a)] $|\mathcal{Q}(u)|\sim d_{Q}$.
				\item [(b)] For each $Q_\Delta\in\mathcal{Q}(u)$, we have
				\begin{equation}\label{equ-largecardinality}
					|\mathbf{E}_{\mathcal{S}_Q^\ast,\mathcal{P}_{Q,k}^\ast}\cap Q_\Delta|\geq  \Big(\frac{\delta}{\Delta}\Big)^{\epsilon^2} \Big(\frac{\delta}{r}\Big)^{-2s+f(s,t)}\Big(\frac{\Delta}{r}\Big)^{s-f(s,t)}\cdot d_{Q}^{-1}.
				\end{equation}
			\end{itemize}
		\end{itemize}
		For each $u\in \mathbb S_Q'$, we define
		\begin{equation}\label{equ-77}
			\mathbb{P}_{Q,k}(u):=\left\{
			\begin{array}{lll}
				\mathcal{P}_{Q,k}^\ast(u), \quad &\text{if}~~u\in \mathcal{S}_Q,\\
				\emptyset, \quad &\text{otherwise}.
			\end{array}\right.
		\end{equation}
		Since $|\mathcal{S}_Q|\gtrapprox |\mathbb S_Q^\ast|$ and $\mathcal{P}_{Q,k}^\ast(u)$ is approximately constant,  $(\mathcal{S}_Q,\mathbb{P}_{Q,k})_{\delta}$ is a refinement of $(\mathbb S_Q^\ast,\mathcal{P}_{Q,k}^\ast)_{\delta}$. Note also
		\[(\mathbb S_Q^\ast,\mathcal{P}_{Q,k}^\ast)_{\delta} \xrightarrow{\text{refinement of}} (\mathbb{S}_Q'',\mathcal{P}_{Q,k}'')_\delta \xrightarrow{\text{refinement of}} (\mathbb{S}_Q',\mathcal{P}_{Q,k})_\delta.\]
		Now recall \eqref{equ-70}, then we infer
		\begin{equation}\label{equ-74}
			\begin{split}
				\sum_{p\in Q \cap \mathbf{E}_{\mathcal{F}_1,\mathcal{P}_1}}|\mathcal{F}_1(p)|&\lessapprox \sum_{k\leq K(Q)}\sum_{p\in \mathbf{E}_{\mathbb{S}_{Q}',\mathcal{P}_{Q,k}}}|\mathbb{S}_Q'(p)|\lessapprox \sum_{k\leq K(Q)}\sum_{p\in \mathbf{E}_{\mathbb{S}_{Q}^\ast,\mathcal{P}_{Q,k}^\ast}}|\mathbb{S}_Q^\ast(p)|\\
				&\lessapprox \sum_{k\leq K(Q)}\sum_{p\in \mathbf{E}_{\mathcal{S}_Q,\mathbb{P}_{Q,k}}}|\mathcal{S}_Q(p)|.
			\end{split}
		\end{equation}
		By dyadic pigeonholing, we can find a sub-family $\mathcal{B}_r\subset \mathcal{D}_r(\mathbf{E}_{\mathcal{F}_1,\mathcal{P}_1})$, a uniform scale $\Delta$, a uniform integer $d\geq1$ and an index set $\mathcal{K}_Q\subset [1,K(Q)]\cap \mathbb Z$ for each $Q\in\mathcal{B}_r$ such that
		\begin{itemize}
			\item [(R1)] \phantomsection \label{R1} for all $Q\in\mathcal{B}_r$ and $k\in \mathcal{K}_Q$, we have $\Delta_{Q,k}\sim\Delta$ and $d_{Q}\sim d$;
			
			\item [(R2)] \phantomsection \label{R2} we have
			\begin{equation}\label{equ-75}
				\sum_{p\in \mathbf{E}_{\mathcal{F}_1,\mathcal{P}_1}}|\mathcal{F}_1(p)|\lessapprox \sum_{Q\in\mathcal{B}_r}\sum_{k\in \mathcal{K}_Q}\sum_{p\in \mathbf{E}_{\mathcal{S}_Q,\mathbb{P}_{Q,k}}}|\mathcal{S}_Q(p)|=\sum_{Q\in\mathcal{B}_r}\sum_{k\in \mathcal{K}_Q}\sum_{u\in \mathcal{S}_Q}|\mathbb{P}_{Q,k}(u)|.
			\end{equation}
		\end{itemize}
		
		To proceed, we use $\{(\mathcal{S}_Q,\mathbb{P}_{Q,k}): Q\in \mathcal{B}_r, k\in \mathcal{K}_Q\}$ to construct a new configuration. For each $f\in\mathcal{F}_1$, let $\mathcal{B}_r(f)$ be those $r$-cubes in $\mathcal{B}_r$ intersecting $\mathcal{P}_1(f)$. By construction in {\bf Step 2}, for each $f\in \mathcal{F}_1$ and each $Q\in \mathcal B_r(f)\neq\emptyset$, there is a unique segment $u\in S'_Q$ such that $u(6\delta)\in \tilde{\mathcal P}'(f)$, i.e., $f\in \mathcal{F}_1(u)$ (recall \eqref{eq-f1}). Since $\mathcal F_1(u)$ has been enumerated as
		\[\mathcal F_1(u)=\{f_1(u),\dots,f_{K(u)}(u)\},\]
		this determines a unique index $k\leq K(u)=|\mathcal{F}_1(u)|$ such that
		\[f=f_k(u).\]
		However, we do not know if $(u,k)\in \mathcal{S}_Q\times \mathcal{K}_Q$ while we will need to apply properties \nref{R1} and \nref{R2}. Therefore, a sub-family of $\mathcal{F}_1$ is introduced precisely to retain those $f$ for which such a choice is possible:
		\begin{equation}\label{equ-79}
			\mathcal{F}_2:=\{f\in\mathcal{F}_1: \text {there is } Q\in \mathcal{B}_r(f)\neq \emptyset \text { such that }f=f_k(u) \text { with } (u,k)\in  \mathcal{S}_Q\times \mathcal{K}_Q \},
		\end{equation}
		and for each $f\in\mathcal{F}_2$, we define a new shading
		\begin{equation}\label{equ-78}
			\mathcal{P}_2(f):=
			\bigcup_{Q\in \mathcal{B}_r(f)} \bigcup_{\substack{(k,u)\in  \mathcal{K}_Q\times\mathcal{S}_Q\\ f=f_k(u)}}\mathbb{P}_{Q,k}(u).
		\end{equation}
		Note that if $Q\in \mathcal{B}_r(f)\neq \emptyset$ and $(k,u)\in  \mathcal{K}_Q\times\mathcal{S}_Q$ such that $f=f_k(u)$, we have
		\[\mathcal{P}_2(f)\cap Q=\mathbb{P}_{Q,k}(u)=\mathcal{P}_{Q,k}^\ast(u)=\mathcal{P}_{Q,k}''(u)\subset \mathcal{P}_{Q,k}(u)=\mathcal{P}_1(f_k(u))\cap Q.\]
		By \eqref{equ-75}, we infer that $(\mathcal{F}_2,\mathcal{P}_2)_\delta$ is a refinement of $(\mathcal{F}_1,\mathcal{P}_1)_\delta$. Up to a further pigeonholing, we may assume that $|\mathcal{P}_2(f)|\gtrapprox |\mathcal{P}_1(f)|$ for all $f\in\mathcal{F}_2$. Since $|\mathcal{P}_1(f)\cap Q|$ are approximately constant, we also have \[|B_r(f)|\geq|\mathcal{D}_r(\mathcal{P}_2(f))|\gtrapprox |\mathcal{D}_r(\mathcal{P}_1(f))|, \quad f\in\mathcal{F}_2.\]
		
		We close {\bf Step 4} with the following Claim.
		
		\begin{claim}\label{claimkeyl}
			The refinement $(\mathcal{F}_2,\mathcal{P}_2)_\delta$ of $(\mathcal{F}_1,\mathcal{P}_1)_\delta$ satisfy the following properties.
			
			\begin{itemize}
				\item [(T1)] \phantomsection \label{T1} For each $p\in\mathbf{E}_{\mathcal{F}_2,\mathcal{P}_2}$, let $\mathcal{F}_2(p):=\{f\in \mathcal{F}_2: p\in \mathcal{P}_2(f)\}$, then we have 
				\begin{equation}\label{equ-76}
					|\mathcal{F}_2(p)|\lessapprox M \Big(\frac{\delta}{r}\Big)^{-\epsilon^2} \Big(\frac{\delta}{\Delta}\Big)^{-O(1)}\Big(\frac{\delta}{r}\Big)^{f(s,t)}.
				\end{equation}
				
				\item [(T2)] \phantomsection \label{T2} If $\Delta/r \geq (\delta/r)^{1-\sqrt{\eta}}$, then for all $f\in\mathcal{F}_2$ and $Q\in\mathcal{D}_r(\mathcal{P}_2(f))$, there is a cube family $\mathcal{Q}(f,Q)\subset \mathcal{D}_\Delta\cap \Gamma_f(O(\Delta))$ with $|\mathcal{Q}(f,Q)|\sim d$ such that
				\begin{itemize}
					\item [(a)] For each $Q_\Delta\in\mathcal{Q}(f,Q)$, 
					\begin{equation}\label{equ-largecardinality1}
						|\mathbf{E}_{\mathcal{F},\mathcal{P}}\cap Q_\Delta|\gtrapprox \Big(\frac{\delta}{\Delta}\Big)^{\epsilon^2} \Big(\frac{\delta}{r}\Big)^{-2s+f(s,t)}\Big(\frac{\Delta}{r}\Big)^{s-f(s,t)}\cdot d^{-1}.
					\end{equation}
					\item[(b)] Write $\widetilde{\mathcal{P}}_2(f):=\cup_{Q\in \mathcal{D}_r(\mathcal{P}_2(f))}\mathcal{Q}(f,Q)$ for each $f\in\mathcal{F}_2$. Then 
					\begin{equation}\label{equ-tilde}
						|\widetilde{\mathcal{P}}_2(f)|\gtrapprox \tilde{\lambda}d (\Delta/r)\cdot \Delta^{-1}.
					\end{equation}
					Moreover, we have
					\begin{equation}\label{equ-density}
						\gamma_{\widetilde{\mathcal{P}}_2,t^\ast}\lesssim \Big(\frac{\delta}{\Delta}\Big)^{-t^\ast}\Big(\frac{\delta}{r}\Big)^{s-O(\eta_1)}\cdot d\cdot \gamma_{\mathcal{P}_2,t^\ast}.
					\end{equation}
				\end{itemize}
			\end{itemize}
		\end{claim}
		
		\begin{proof}
			To see \nref{T1}, take $Q\in \mathcal{B}_r$ such that $p\in Q$, then 
			\begin{align}
				&f \in \mathcal{F}_2(p)   \iff p\in \mathcal{P}_2(f)\\
				&\iff p\in \mathbb{P}_{Q,k}(u), \text{ where } (k,u)\in  \mathcal{K}_Q\times\mathcal{S}_Q \text{ such that } f=f_k(u)\\
				&\iff u\in \mathbb{S}_Q^\ast(p)=\{u\in \mathbb{S}_Q^\ast: p\in \mathcal{P}_{Q,k}^\ast(u)\}, \text{ where } (k,u)\in  \mathcal{K}_Q\times\mathcal{S}_Q \text{ such that } f=f_k(u).
			\end{align}
			This means each $f\in \mathcal{F}_2(p)$ corresponds to a pair $(k,u)\in  \mathcal{K}_Q\times\mathcal{S}_Q$. By \eqref{eq-k(u)} and \eqref{equ-multiplicity}, we infer
			\[|\mathcal{F}_2(p)|\lesssim \max_{u\in \mathbb S_Q'}K(u)\cdot |\mathbb S_Q^\ast(p)|\lessapprox M \Big(\frac{\delta}{r}\Big)^{-\epsilon^2} \Big(\frac{\delta}{\Delta}\Big)^{-O(1)}\Big(\frac{\delta}{r}\Big)^{f(s,t)}.\]
			
			Property \nref{T2} essentially follows from property \nref{P2}. For each $f\in \mathcal{F}_2$ and each $Q\in\mathcal{D}_r(\mathcal{P}_2(f))$, there is a unique pair $(u,k)\in \mathcal{S}_Q\times \mathcal{K}_Q$ such that $f=f_k(u)$. By \nref{P2} and \nref{R1}, for this $u\in \mathcal{S}_Q$, there is a cube family 
			\[\mathcal{Q}(u)\subset \mathcal{D}_\Delta\cap u(O(\Delta))\]
			satisfying $|\mathcal{Q}(u)|\sim d$ and \eqref{equ-largecardinality}. Since $f\in \mathcal{F}_1(u)$, we have $\mathcal{Q}(u)\subset \Gamma_f(O(\Delta))$. We take $\mathcal{Q}(f,Q):=\mathcal{Q}(u)$ and conclude (a) in \nref{T2}. It remains to show (b) in \nref{T2}. By Remark \ref{re-portion}, we know that $\cup\mathcal{Q}(f,Q)$ contains $\gtrapprox1$ portion of \[\mathcal{P}_2(f)\cap Q=\mathbb{P}_{Q,k}(u)=\mathcal{P}^\ast_{Q,k}(u)=\mathcal{P}''_{Q,k}(u).\] Since $\mathcal{P}''_{Q,k}(u)$ is uniform, we infer \[d\cdot |\mathcal{P}_2(f)\cap Q_\Delta|\gtrapprox |\mathbb{P}_{Q,k}(u)|\geq (\delta/r)^{-s+O(\eta_1)},\quad Q_\Delta\in \mathcal{Q}(f,Q).\] Then we deduce from Lemma \ref{lem-rdelta} that
			\begin{equation}
				\gamma_{\widetilde{\mathcal{P}}_2,t^\ast}\lesssim \Big(\frac{\delta}{\Delta}\Big)^{-t^\ast}\Big(\frac{\delta}{r}\Big)^{s-O(\eta_1)}\cdot d\cdot \gamma_{\mathcal{P}_2,t^\ast}.
			\end{equation}
			Moreover, since $\mathcal{D}_r(\mathcal{P}_2(f))|\gtrapprox|\mathcal{D}_r(\mathcal{P}_1(f))|\gtrapprox \tilde{\lambda} r^{-1}$ for each $f\in \mathcal{F}_2$, we obtain
			\begin{equation}
				|\widetilde{\mathcal{P}}_2(f)|\gtrapprox \tilde{\lambda}d (\Delta/r)\cdot \Delta^{-1}.
			\end{equation}
		\end{proof}

		\subsection*{Step 5: estimating $|\mathbf{E}_{\mathcal{F},\mathcal{P}}|$}  In this step, we will use Claim \ref{claimkeyl} to estimate $|\mathbf{E}_{\mathcal{F},\mathcal{P}}|$. We consider the following two cases.
		
		{\bf{Case I: $\Delta/r\leq (\delta/r)^{1-\sqrt{\eta}}$}.} Recall that $\eta_1=\eta\epsilon^3$ and $\tilde{\lambda}:=\lambda (r/\delta)^{1-s-9\eta_1}$. 
		
		When $t\geq s$, we use \eqref{equ-76}, \eqref{equ-constantM} and \nref{B1} to deduce
		\begin{equation}\label{equ-81}
			\begin{split}
				|\mathcal{F}_2(p)|& \lessapprox C_{\mathfrak{T},t,\epsilon}^{-1} C^{\eta^{-2}}r^{-\epsilon}\cdot \tilde{\lambda}^{-1/2} r^{(1-t)/2}\gamma_{\tilde{\mathcal{P}},t^\ast}^{1/2}\cdot  \Big(\frac{\delta}{r}\Big)^{-\epsilon^2} \Big(\frac{\delta}{\Delta}\Big)^{-O(1)}\Big(\frac{\delta}{r}\Big)^{f(s,t)}\\
				&\lessapprox C_{\mathfrak{T},t,\epsilon}^{-1} C^{\eta^{-2}}r^{-\epsilon}\cdot \lambda^{-1/2} \cdot \Big(\frac{\delta}{r}\Big)^{-\epsilon^2-O(\sqrt{\eta})} \delta^{(1-t)/2}\gamma_{\mathcal{P},t^\ast}^{1/2},
			\end{split}
		\end{equation}
		where we use $(\delta/\Delta)^{-O(1)}\leq (\delta/r)^{-O(\sqrt{\eta})}$ since $\Delta/r\leq (\delta/r)^{1-\sqrt{\eta}}$.
		
		When $t\leq s$, since $|\mathcal{P}_1(f)\cap Q|\geq (\delta/r)^{-s+O(\eta_1)}$ for each $Q\in \mathcal{D}_r(\mathcal{P}_1(f))$, we infer from Lemma \ref{lem-rdelta} that
		\begin{equation}\label{equ-80}
			\gamma_{\widetilde{\mathcal{P}},t^\ast} \leq \Big(\frac{\delta}{r}\Big)^{-O(\eta_1)} \Big(\frac{\delta}{r}\Big)^{-t^\ast}\Big(\frac{\delta}{r}\Big)^{s} \gamma_{\mathcal{P}_1,t^\ast}= \Big(\frac{\delta}{r}\Big)^{-O(\eta_1)} \Big(\frac{\delta}{r}\Big)^{s-t}\gamma_{\mathcal{P},t^\ast}.
		\end{equation} 
		Using \eqref{equ-80} in \eqref{equ-81}, we get the same estimate for $|\mathcal{F}_2(p)|$.
		
		Since $(\mathcal{F}_2,\mathcal{P}_2)$ is a refinement of $(\mathcal{F},\mathcal{P})$, \eqref{equ-81} implies
		\begin{equation}\label{equ-82}
			\begin{split}
				|\mathbf{E}_{\mathcal{F},\mathcal{P}}|&\geq |\mathbf{E}_{\mathcal{F}_2,\mathcal{P}_2}| \gtrapprox r^{\epsilon}\Big(\frac{\delta}{r}\Big)^{\epsilon^2+O(\sqrt{\eta})} \cdot C_{\mathfrak{T},t,\epsilon} C^{-\eta^{-2}}\lambda^{1/2} \delta^{(t-1)/2}\gamma_{\mathcal{P},t^\ast}^{-1/2}\sum_{f\in\mathcal{F}}|\mathcal{P}(f)|\\
				&=\Big(\frac{\delta}{r}\Big)^{\epsilon^2+O(\sqrt{\eta})-\epsilon} \cdot C_{\mathfrak{T},t,\epsilon} \delta^{\epsilon} C^{-\eta^{-2}}\lambda^{1/2} \delta^{(t-1)/2}\gamma_{\mathcal{P},t^\ast}^{-1/2}\sum_{f\in\mathcal{F}}|\mathcal{P}(f)|.
			\end{split}
		\end{equation}
		Since $r\geq \delta^{1-\eta_0\eta_1^{-1}}$, this establishes \eqref{equu-15} by choosing $O(\sqrt{\eta})<\epsilon^2$ so that
		\[\Big(\frac{\delta}{r}\Big)^{\epsilon^2+O(\sqrt{\eta})-\epsilon}\geq1\]
		noting that $\epsilon<1/100$.

		{\bf{Case II: $\Delta/r> (\delta/r)^{1-\sqrt{\eta}}$}.} Recall that $\mathcal{F}$ is a $(\delta,t)$-KT set. By Lemma \ref{lemma-KT}, we can extract a $(\Delta,t)$-KT
		set $\widetilde{\mathcal{F}}_2\subset \mathcal{F}_2$ such that $\Delta^t |\widetilde{\mathcal{F}}_2|\gtrapprox \delta^t|\mathcal{F}_2|$. For all $f\in\widetilde{\mathcal{F}}_2$, recall that $\mathcal{P}_2(f)$ is a refinement of $\mathcal{P}(f)$. Since $\mathcal{P}(f)$ is a $(\delta,\epsilon^2,C;\rho)$-set, $\mathcal{P}_2(f)$ is a $(\delta,\epsilon^2,CC_4;\rho)$-set for some $C_4\lessapprox1$. Since $|\mathcal{P}_2(f)\cap Q|$ are approximately constant for all $Q\in \mathcal{D}_r(\mathcal{P}_2(f))$, we infer that or $\mathcal{D}_r(\mathcal{P}_2(f))$ is a $(r,\epsilon^2,CC_5;\max\{\rho,r\})$-set for some $C_5\lessapprox1$. Recall the definition of $\widetilde{\mathcal{P}}_2(f)$ from (b) in Claim \ref{claimkeyl}. Since 
		\[|\widetilde{\mathcal{P}}_2(f) \cap Q|=|\mathcal{Q}(f,Q)|\sim d, \quad f\in \widetilde{\mathcal{F}}_2,\]
		$\widetilde{\mathcal{P}}_2(f)$ is a $(\Delta,\epsilon^2,CC_6;\max\{\rho,r\})$-set for some $C_6\lessapprox1$.
		
		Moreover, since $\rho\in [\delta,\delta^\eta]$ and $r\leq \delta^\eta$, we have $\max\{\rho,r\}\in [\Delta, \Delta^{\eta}]$. From \eqref{equ-tilde}, we know $\widetilde{\mathcal{P}}_2(f)$ is $\gtrapprox \tilde{\lambda}d (\Delta/r)$-dense. Now applying induction hypothesis at scale $\Delta$ to $(\widetilde{\mathcal{F}}_2,\widetilde{\mathcal{P}}_2)_\Delta$ gives
		\begin{equation}\label{equ-83}
			\begin{split}
				|\mathbf{E}_{\widetilde{\mathcal{F}}_2,\widetilde{\mathcal{P}}_2}| & \geq C_{\mathfrak{T},t,\epsilon} \Delta^{\epsilon} (CC_6)^{-\eta^{-2}} (\tilde{\lambda} d\Delta/r)^{1/2} \Delta^{(t-1)/2}\gamma_{\widetilde{\mathcal{P}}_2,t^\ast}^{-1/2} \cdot \sum_{f\in \widetilde{\mathcal{F}}_2}|\widetilde{\mathcal{P}}_2(f)|\\
				& \gtrapprox C_{\mathfrak{T},t,\epsilon} \Delta^{\epsilon} (CC_6)^{-\eta^{-2}} (\tilde{\lambda} d r^{-1})^{3/2} \Delta^{t/2}\gamma_{\widetilde{\mathcal{P}}_2,t^\ast}^{-1/2} |\widetilde{\mathcal{F}}_2|.
			\end{split}
		\end{equation}
		By using $\tilde{\lambda}=\lambda (r/\delta)^{1-s-9\eta_1}$, $\Delta^t |\widetilde{\mathcal{F}}_2|\gtrapprox \delta^t|\mathcal{F}_2|$ and $|\mathcal{P}_2(f)|\approx \lambda \delta^{-1}$, we get
		\begin{equation}\label{equ-84}
			\begin{split}
				|\mathbf{E}_{\widetilde{\mathcal{F}}_2,\widetilde{\mathcal{P}}_2}| & \gtrapprox C_{\mathfrak{T},t,\epsilon} \Delta^{\epsilon} (CC_6)^{-\eta^{-2}} \lambda^{3/2} (r/\delta)^{3(1-s)/2-O(\eta_1)}(d r^{-1})^{3/2} \Delta^{-t/2}\gamma_{\widetilde{\mathcal{P}}_2,t^\ast}^{-1/2} \cdot \Delta^t|\widetilde{\mathcal{F}}_2|\\
				&\gtrapprox C_{\mathfrak{T},t,\epsilon} \Delta^{\epsilon} (CC_6)^{-\eta^{-2}} \Big(\frac{\delta}{r}\Big)^{3s/2+O(\eta_1)}
				\Big(\frac{\delta}{\Delta}\Big)^{t/2}d^{3/2}\cdot \lambda^{1/2} \delta^{(t-1)/2}\gamma_{\widetilde{\mathcal{P}}_2,t^\ast}^{-1/2}\sum_{f\in\mathcal{F}_2}|\mathcal{P}_2(f)|.
			\end{split}
		\end{equation}
		By using \eqref{equ-largecardinality1}, we have
		\begin{equation}\label{equ-85}
			\begin{split}
				|\mathbf{E}_{\mathcal{F},\mathcal{P}}|\gtrapprox & C_{\mathfrak{T},t,\epsilon} \Delta^{\epsilon} (CC_6)^{-\eta^{-2}} \Big(\frac{\delta}{r}\Big)^{3s/2+O(\eta_1)}
				\Big(\frac{\delta}{\Delta}\Big)^{t/2}d^{3/2}\cdot \lambda^{1/2} \delta^{(t-1)/2}\gamma_{\widetilde{\mathcal{P}}_2,t^\ast}^{-1/2}\sum_{f\in\mathcal{F}_2}|\mathcal{P}_2(f)|\\
				& \cdot \Big(\frac{\delta}{\Delta}\Big)^{\epsilon^2} \Big(\frac{\delta}{r}\Big)^{-2s+f(s,t)}\Big(\frac{\Delta}{r}\Big)^{s-f(s,t)}\cdot d^{-1}\\
				\gtrapprox & \Delta^{\epsilon} C_6^{-\eta^{-2}}\Big(\frac{\delta}{r}\Big)^{O(\eta_1)}
				\Big(\frac{\delta}{\Delta}\Big)^{\epsilon^2} \Big(\frac{\delta}{\Delta}\Big)^{t/2} \Big(\frac{\delta}{r}\Big)^{-s/2+f(s,t)} \Big(\frac{\Delta}{r}\Big)^{s-f(s,t)}d^{1/2}\\
				& \cdot C_{\mathfrak{T},t,\epsilon}C^{-\eta^{-2}} \lambda^{1/2} \delta^{(t-1)/2}\gamma_{\widetilde{\mathcal{P}}_2,t^\ast}^{-1/2}\sum_{f\in\mathcal{F}}|\mathcal{P}(f)|.
			\end{split}
		\end{equation}
		
		In the sequel, we will apply \eqref{equ-85} and treat the cases $t^\ast> s$ and $t^\ast\leq s$ separately.
		
		{\bf Case $t^\ast> s$.} Let $w\in [\Delta,1]$, $f\in\widetilde{\mathcal{F}}_2$ and $Q_1\subset \mathcal{D}_w(\widetilde{\mathcal{P}}_2(f))$ such that
		\[\gamma_{\widetilde{\mathcal{P}}_2,t^\ast}\sim |\widetilde{\mathcal{P}}_2(f)\cap Q_1|_\Delta\cdot \Big(\frac{\Delta}{w}\Big)^{t^\ast}.\]
		If $w>r$, we use $|\mathcal{Q}(f,Q)|\sim d$ (here $Q\in \mathcal{D}_r(\mathcal{P}_2(f))$), $t^\ast>s$ and $\ga_{\tilde \cP, t^\ast}(f)\lesssim \ga_{\cP,t^\ast}(f)$ to obtain
		\begin{equation}\label{equ-88}
			\begin{split}
				\gamma_{\widetilde{\mathcal{P}}_2,t^\ast}&\lesssim |\mathcal{P}_2(f)\cap Q_1|_r\Big(\frac{r}{w}\Big)^{t^\ast}\cdot \Big(\frac{\Delta}{r}\Big)^{t^\ast}d\\
				&\lesssim \ga_{\tilde \cP, t^\ast}(f) \cdot \Big(\frac{\Delta}{r}\Big)^{s}d \lesssim \Big(\frac{\Delta}{r}\Big)^{s}d \cdot\gamma_{\mathcal{P},t^\ast}.
			\end{split}
		\end{equation}
		If $w\leq r$, take $Q\in \mathcal{D}_r(\mathcal{P}_2(f))$ such that $Q_1\subset Q$, then by property \nref{D2} and 
		\[|\mathcal{P}_2(f)\cap Q|\approx |\mathcal{P}_1(f)\cap Q|,\]
		we infer that $T_Q(\mathcal{P}_2(f)\cap Q)$ is a $(\delta/r,s,(\delta/r)^{-O(\eta_1)})$-set. This implies
		\begin{equation}\label{equ-91}
			|\mathcal{P}_2(f)\cap Q|_w\geq \Big(\frac{\delta}{r}\Big)^{O(\eta_1)}\Big(\frac{w}{r}\Big)^{-s}.
		\end{equation}
		Since $Q\in \mathcal{D}_r(\mathcal{P}_2(f))$, there are $(u.k)\in \mathcal{S}_Q\times\mathcal{K}_Q$ such that $f=f_k(u)$ and
		\[\mathcal{P}_2(f)\cap Q=\mathbb{P}_{Q,k}(u)=\mathcal{P}''_{Q,k}(u).\]
		Since $\mathcal{P}''_{Q,k}(u)$ is uniform and $|\cup\mathcal{Q}(f,Q)\cap \mathcal{P}_2(f)|\gtrapprox |\mathcal{P}_2(f)\cap Q|$, we have
		\[d|\mathcal{P}''_{Q,k}(u)\cap Q_\Delta|\gtrapprox |\mathcal{P}_2(f)\cap Q|_w|\mathcal{P}''_{Q,k}(u)\cap Q_1|, \quad Q_\Delta\in \mathcal{Q}(f,Q),\]
		which implies
		\begin{equation}\label{eq-uniQ}
			d\gtrapprox |\mathcal{P}_2(f)\cap Q|_w|\mathcal{P}_2(f)\cap Q_1|_\Delta, \quad Q_1\subset Q.
		\end{equation}
		Using \eqref{eq-uniQ}, \eqref{equ-91} and $\gamma_{\mathcal{P},t^\ast}\geq1$, we get
		\begin{equation}\label{equ-92}
			\gamma_{\widetilde{\mathcal{P}}_2,t^\ast} \lessapprox \Big(\frac{\delta}{r}\Big)^{-O(\eta_1)} \Big(\frac{w}{r}\Big)^{s}d\cdot \Big(\frac{\Delta}{w}\Big)^{t^\ast}\leq \Big(\frac{\delta}{r}\Big)^{-O(\eta_1)} \Big(\frac{\Delta}{r}\Big)^{s}d\cdot \gamma_{\mathcal{P},t^\ast}.
		\end{equation}
		Hence \eqref{equ-92} holds for both cases. By substituting \eqref{equ-92} into \eqref{equ-85}, we infer
		\begin{equation}\label{equ-93}
			\begin{split}
				|\mathbf{E}_{\mathcal{F},\mathcal{P}}|\gtrapprox & \Delta^{\epsilon} C_6^{-\eta^{-2}}\Big(\frac{\delta}{r}\Big)^{O(\eta_1)}
				\Big(\frac{\delta}{\Delta}\Big)^{\epsilon^2} \Big(\frac{\delta}{\Delta}\Big)^{t/2} \Big(\frac{\delta}{r}\Big)^{-s/2+f(s,t)} \Big(\frac{\Delta}{r}\Big)^{s/2-f(s,t)}\\
				& \cdot C_{\mathfrak{T},t,\epsilon}C^{-\eta^{-2}} \lambda^{1/2} \delta^{(t-1)/2}\gamma_{\mathcal{P},t^\ast}^{-1/2}\sum_{f\in\mathcal{F}}|\mathcal{P}(f)|.
			\end{split}
		\end{equation}
		Since $t^\ast>s$, we have $f(s,t)=(s-t)/2$. Using this in \eqref{equ-93} gives
		\begin{equation}\label{equ-94}
			\begin{split}
				|\mathbf{E}_{\mathcal{F},\mathcal{P}}|\gtrapprox C_6^{-\eta^{-2}}\Big(\frac{\delta}{\Delta}\Big)^{-\epsilon} \Big(\frac{\delta}{r}\Big)^{O(\eta_1)}
				\Big(\frac{\delta}{\Delta}\Big)^{\epsilon^2} \cdot C_{\mathfrak{T},t,\epsilon}\delta^\epsilon C^{-\eta^{-2}} \lambda^{1/2} \delta^{(t-1)/2}\gamma_{\mathcal{P}_2,t^\ast}^{-1/2}\sum_{f\in\mathcal{F}}|\mathcal{P}(f)|.
			\end{split}
		\end{equation}
		Recall that $\Delta/r> (\delta/r)^{1-\sqrt{\eta}}$ and $\eta_1=\eta\epsilon^3$, then we deduce
		\begin{equation}\label{equ-95}
			\begin{split}
				|\mathbf{E}_{\mathcal{F},\mathcal{P}}|\gtrapprox C_6^{-\eta^{-2}}\Big(\frac{\delta}{\Delta}\Big)^{-\epsilon+\epsilon^2+O(\sqrt{\eta})}\cdot C_{\mathfrak{T},t,\epsilon}\delta^\epsilon C^{-\eta^{-2}} \lambda^{1/2} \delta^{(t-1)/2}\gamma_{\mathcal{P}_2,t^\ast}^{-1/2}\sum_{f\in\mathcal{F}}|\mathcal{P}(f)|.
			\end{split}
		\end{equation}
		Since $r\geq \delta^{1-\eta_0\eta_1^{-1}}$ and $\Delta/r> (\delta/r)^{1-\sqrt{\eta}}$,
		\[\Big(\frac{\delta}{\Delta}\Big)^{-\epsilon+\epsilon^2+O(\sqrt{\eta})}\geq \Big(\frac{\delta}{r}\Big)^{(-\epsilon+\epsilon^2+O(\sqrt{\eta}))\sqrt{\eta}}\geq \delta^{(-\epsilon+\epsilon^2+O(\sqrt{\eta}))\sqrt{\eta}\eta_0\eta_1^{-1}}.\]
		Recall our choices for $\eta,\delta_0$ from {\bf Step 1} and {\bf Step 4}. Finally, by taking \[\eta<\min\{\eta'/2,\epsilon^4, \bar{\eta}\}\]
		so that $-\epsilon+\epsilon^2+O(\sqrt{\eta})<\epsilon/3$ and taking 
		\[\delta<\delta_0<\min\{\delta_1,\delta_2,\bar{\delta}^{1/h(\epsilon^4)}\}\]
		sufficiently small, we obtain \eqref{equu-15}.

		{\bf Case $t^\ast \leq s$.} In this case we have
		\begin{equation}\label{equ-96}
			f(s,t)=\left\{
			\begin{array}{lll}
				0, \quad &\text{if}~~t^\ast=t,\\
				s-1, &~\text{if}~t^\ast=2-t.
			\end{array}\right.
		\end{equation}
		By substituting \eqref{equ-density} into \eqref{equ-85} and using \eqref{equ-96}, we also get \eqref{equ-95} and complete the whole proof.
		
	\end{proof}

	
	\section{Fourier decay}\label{sec-Fourierdecay}\label{sec4}
	
	Before the statement of the main result, let us recall the following definition.
	
	\begin{definition}\label{def:frostman}
		Let $u\in[0,2]$ and $C\geq 1$. A Borel measure $\mu$ on $\mathbb R^2$ is called a $(u,C)$\textit{-Frostman measure} if $$\mu(B(x,r))\leq C r^u$$ for all $x\in \mathbb R^2$ and $r>0$. We simply say $\mu$ is a $u$-Frostman measure when $C>0$ is irrelevant.
	\end{definition}

	In this part, we let $\mathbf{c}\geq \mathbf{d}>0$ and consider any fixed convex function satisfying
	\begin{equation}\label{eq-abc}
		\|f\|_{C^3([-6,6])}\leq \mathbf{c}\quad \text{and} \quad \min_{x\in [-6,6]}|f''(x)|\geq \mathbf{d}.
	\end{equation}
	Moreover, we will always use the notation \[\Gamma:=\{(x,f(x)):x\in [-1,1]\}.\]

	We aim to prove Theorem \ref{thm-L6decay} which we recall below.
	
	\begin{thm}\label{thm-L7decay}
		Let $s\in (0, 2/3]$. Let $\mu$ be an $s$-Frostman measure supported on $\Gamma$. Then for any $\epsilon \in (0,1)$ and $R\geq1$ we have
		\begin{equation}\label{equ-decay}
			\|\hat{\mu}\|_{L^6(B_R)}^6 \leq C(\mathbf{c},\mathbf{d},s,\epsilon) R^{2-5s/2+\epsilon}.
		\end{equation}
	\end{thm}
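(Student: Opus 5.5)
The plan is to follow the Demeter--O'Regan strategy for parabolas, with Theorem \ref{thm-twoends} as the incidence input in place of the line version. Write $\|\hat\mu\|_{L^6(B_R)}^6 \lesssim R^{2}\cdot R^{-6}\,\|\widehat{\mu_{R^{-1}}}\|^6$-type expressions; concretely, use $|\hat\mu|^2=\widehat{\mu*\tilde\mu}$ and $|\hat\mu|^6=\widehat{\mu*\mu*\mu*\tilde\mu*\tilde\mu*\tilde\mu}$ after mollifying at scale $\delta=R^{-1}$. This reduces the $L^6$ norm over $B_R$ to counting, at scale $\delta$, the weighted number of approximate solutions of $p_1+p_2+p_3=p_4+p_5+p_6$ with $p_i\in\Gamma$. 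Discretize $\mu$ into $\delta$-arcs $\theta$ with weights $\mu(\theta)\lesssim\delta^s$. After dyadic pigeonholing in the weights, we may assume we are dealing with a set $A$ of $\approx\delta^{-s}$ arcs, a discrete $(\delta,s)$-KT set on $\Gamma$.

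Next, regroup the sixfold energy as $\sum_{x}|\{(p_1,p_2,p_3)\in A^3: p_1+p_2+p_3\approx x\}|^2$. For fixed $p_3$, the set of $p_1+p_2$ with $p_1,p_2\in A$ lies in the sumset $\Gamma+\Gamma$. For each fixed $q\in A$, the translate $\Gamma+q$ is the graph of $g_q(x)=f(x-q_1)+q_2$, and such a family of translates of a convex curve is $\mathfrak{T}$-transversal with $\mathfrak{T}=\mathfrak{T}(\mathbf{c},\mathbf{d})$ (\cite[Example 1.5]{OPY2025}). Thus the incidences between points $y=x-p_3$ and curves $\Gamma+q$, $q\in A$, shaded by $\mathcal{P}(q)=\{\delta\text{-squares near }p+q: p\in A\}$, are exactly a $\delta$-configuration. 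Each shading is a rescaled copy of $A$: it is an $s$-dimensional set of $\approx\delta^{-s}$ squares along a curve, so it is $(\epsilon_1,\epsilon_2)$-two-ends with $\epsilon_2\sim s\epsilon_1$, it is $\lambda$-dense with $\lambda=\delta^{1-s}$, and $\gamma_{\mathcal P,t^*}\lesssim 1$ whenever $t^*\le s$. The curve family $\{g_q\}$ is a $(\delta,s)$-KT set by Lemma \ref{lem.indentify}, since $q\mapsto g_q$ is bi-Lipschitz. Applying Theorem \ref{thm-twoends} with $t=s$ gives $|\mathbf{E}|\gtrsim \delta^{\epsilon}\delta^{(s-1)/2}\delta^{(1-s)/2}\delta^{-2s}=\delta^{\epsilon-2s}$, up to the $\gamma$ factor. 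Combined with the pigeonholed multiplicity from Lemma \ref{refine1}, this bounds the typical number of representations $y=p+q$, which is the additive energy $E(A)\lessapprox \delta^{-3s+s/2}$-type bound.

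The final step converts this to the sixfold energy. I will bound the sixfold energy by $\sup_x r_3(x)\cdot\|r_3\|_1$ or by Cauchy--Schwarz through the fourfold energy restricted to rich points, and balance against the trivial bound. The exponent $2-5s/2$ should come from counting the rich points, $N\gtrsim\delta^{-2s}\cdot(\text{gain }\delta^{-s/2})$. This is the step where $s\le 2/3$ is needed: the two-ends bound beats the Szemerédi--Trotter-type count only in that range.

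I expect the main obstacle to be verifying the hypotheses of Theorem \ref{thm-twoends} after pigeonholing, and in particular the two-ends condition. A Frostman measure can concentrate near a single point at intermediate scales, which would violate it. I would handle this by a standard two-ends reduction. Decompose into the scale $\rho$ at which $A$ becomes two-ends, as in Lemma \ref{lem-rho}. Then rescale each $\rho$-arc by the parabolic-type map adapted to $f$ (Lemma \ref{lem2}), which keeps $f$ within the class \eqref{eq-abc} with controlled constants, and induct on $R$. The induction closes because the loss $\delta^{t\epsilon_1/2}$ in \eqref{main--twoends} can be made smaller than $R^{\epsilon}$.
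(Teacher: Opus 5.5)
There is a genuine gap, and it sits at the incidence step. The configuration you feed into Theorem \ref{thm-twoends} (curves $\Gamma+q$, $q\in A$, shadings $\mathcal{P}(q)=A+q$) has $\mathbf{E}_{\mathcal{F},\mathcal{P}}=\mathcal{D}_\delta(A+A)$. With $t=s$ and $\lambda\approx\delta^{1-s}$, the factors $\delta^{(t-1)/2}\lambda^{1/2}$ cancel, so the output is only $|A+A|_\delta\gtrapprox|A|^2$. In other words, the two-fold sum map has multiplicity $\lessapprox 1$.

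That is fourfold-energy ($L^4$-level) information, and it is essentially free. Over separated intervals, $\Phi(z_1,z_2)=(z_1+z_2,f(z_1)+f(z_2))$ is bi-Lipschitz by convexity, so $|A_1+A_2|_\delta\sim|A_1||A_2|$ without any incidence theory. From this input, neither $\sup_x r_3(x)\cdot\|r_3\|_1$ nor Cauchy--Schwarz/Young through the fourfold energy gives anything better than $\mathcal{E}_{3,\delta}\lesssim|A|^4=\delta^{-4s}$. That corresponds to $R^{2-2s}$, not $R^{2-5s/2}$.

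Two further problems in the same step. The bound $E(A)\lessapprox\delta^{-3s+s/2}$ you state is weaker than the trivial $|A|^2$. The ``balancing'' in your final step is never actually carried out. The $\delta^{s/2}$ gain requires showing that points $x$ with large $r_3(x)$ are rare, which is a genuinely three-summand statement. Your configuration is one summand short.

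What is needed is the incidence problem between the points $\mathcal{P}^\ast=\mathcal{D}_\delta(S_1+S_2)$ and the curves $(a,b)-\Gamma$, with $(a,b)$ ranging over a maximal incomparable subset of $S_1+S_2+S_3$. For this problem, $\mathcal{E}_{3,\delta}\sim\sum_{g\in\mathcal{F}^\ast}|\mathcal{P}^\ast(g)|^2$. Two structural facts drive it:
\begin{itemize}
\item the curves through $p=s_1+s_2$ form a translate of $S_3$, hence a $(\delta,s)$-KT set (Lemma \ref{lem-p2});
\item $S_1+S_2$ is an $f$-rectangular $(\delta,2s)$-KT set (Lemma \ref{lem-p3}).
\end{itemize}

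After point--curve duality, the two-ends inequality is applied with $t=2s$, not $t=s$. The curves are $\Gamma+p$ for $p\in S_1+S_2$, with shadings $\approx p+S_3$. The hypothesis $s\le 2/3$ enters exactly because $\sigma=(2s)^\ast=\min\{2s,2-2s\}\ge s$ is needed for $\gamma_{\mathcal{P},\sigma}\lesssim 1$. It does not come from a comparison with Szemer\'edi--Trotter.

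Two-endedness is handled inside this incidence theorem by localizing each shading to an interval of length $L$ (Lemma \ref{lem-fixtwoends}), not by inducting on $R$. The rectangular KT condition is what makes this localization lossless. The rescaled curve family in an $L$-cube has KT constant $O(C_1/(ML^s))$, and the powers of $L$ cancel (Claim \ref{clll2}). An isotropic $(\delta,2s)$-KT hypothesis would instead cost a factor $L^{\min\{s,1-s\}}$.

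This gives Theorem \ref{thm-recincidence} and its dual Corollary \ref{cor-recincidence}, hence $M^3|\mathcal{F}_1^\ast|\lessapprox\delta^{-2s}|\mathcal{P}^\ast|$. Combined with the trivial bound $M|\mathcal{F}_1^\ast|\le|\mathcal{P}^\ast|\max_p|\mathcal{F}^\ast(p)|$, this yields $\mathcal{E}_{3,\delta}\lessapprox\delta^{-s}|S_1||S_2||S_3|^{1/2}$. Symmetrizing over the three sets gives the $5/6$ exponent.

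Finally, a single set $A$ does not supply the separated arcs that all of this needs, since $\Phi$ degenerates near the diagonal. The paper obtains them from the broad--narrow decomposition \eqref{equ-110}. It is the narrow term that is handled by rescaling $1/K$-arcs to curves in the class $\mathbb{H}$ and inducting on $R$.
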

	The key tool for proving Theorem \ref{thm-L7decay} is an incidence estimate we will establish in Subsection \ref{subs4.1}.

	\subsection{An incidence estimate}\label{subs4.1}
	We start with a definition for transversal families.
	\begin{definition}[Rectangular KT-condition for transversal families]\label{def:rectangleofF}
		Let $\tau\in [0,2]$, $C\geq1$ and $\mathfrak{T}\geq1$. Let $\mathcal{F}\in B_{C^2}(1)$ be a $\mathfrak{T}$-transversal family on $[-2,2]$. We say that $\mathcal{F}$ is a rectangular $(\delta,\tau,C)$-KT set, if for any $x\in [-2,2]$, $I_r\in \mathcal{D}_r([-1,1])$ and $I_{r'}\in\mathcal{D}_{r'}([-1,1])$, we have
		\[|A_x(\mathcal{F})\cap (I_{r'}\times I_r)|_\delta \leq C \big(\frac{\sqrt{rr'}}{\delta}\big)^{\tau}, \quad \delta\leq r', r.\]
		Here $A_x(f)=(f(x),f'(x))$ is defined as in Lemma \ref{lem.indentify}. We simply say $\mathcal{F}$ is a rectangular $(\delta,\tau)$-KT set if $C$ is irrelevant.
	\end{definition}
	
	We aim to prove the following incidence estimate between a cube family and a transversal family.
	
	\begin{thm}\label{thm-recincidence}
		Let $s\in (0,1]$, $C_1,C_2\geq1$ and $\mathfrak{T}\geq1$. For any $\epsilon>0$, there exists $\delta_0=\delta_0(\mathfrak{T},s,\epsilon)>0$ such that the following holds for all $\delta\in (0,\delta_0]$.
		
		Let $\mathbb{F}\subset B_{C^2}(1)$ be a $\delta$-separated $\mathfrak{T}$-transversal family over $[-2,2]$. Assume that 
		\begin{itemize}
			\item $\mathbb{F}$ is a rectangular $(\delta,2s,C_1)$-KT set;
			\item for each $g\in \mathbb{F}$, assume that there exists a $(\delta,\sigma,C_2)$-KT cube family 
			\[\mathbb{P}(g)\subset\{p\in \mathcal{D}_\delta:p\cap \Gamma_g\neq \emptyset\}.\]
			Here $\sigma=\min\{2s,2-2s\}$.
		\end{itemize}
		Write $\mathbb{P}=\cup_{g\in \mathbb{F}}\mathbb{P}(g)$, then
		\begin{equation}\label{equ-recincidd}
			\mathcal{I}(\mathbb{F},\mathbb{P}):=\sum_{g\in \mathbb{F}}|\mathbb{P}(g)|\leq C_1^{2/3}C_2^{1/3}\delta^{-2s/3-\epsilon}|\mathbb{P}|^{2/3} |\mathbb{F}|^{1/3}.
		\end{equation}
	\end{thm}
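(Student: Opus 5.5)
The approach is to derive Theorem \ref{thm-recincidence} from the two-ends Furstenberg inequality, Theorem \ref{main-twoends}, applied with $t=2s$ (more precisely with $t^\ast=\sigma=\min\{2s,2-2s\}$), after the usual pigeonholing and two-ends reductions. The target \eqref{equ-recincidd} is equivalent to
\[
|\mathbb{P}|\gtrsim \delta^{s+\epsilon}\,\mathcal{I}^{3/2}|\mathbb{F}|^{-1/2}C_1^{-1}C_2^{-1/2}.
\]
Write $\mathcal{I}\approx |\mathbb{F}|\lambda\delta^{-1}$, where $\lambda\delta^{-1}$ is the typical value of $|\mathbb{P}(g)|$. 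With this normalisation, \eqref{equ-twoendsss} gives $|\mathbb{P}|\gtrapprox \delta^{(t-1)/2}\gamma^{-1/2}\lambda^{1/2}\mathcal{I}$. This is exactly what is needed, provided that $\gamma_{\mathbb{P},\sigma}\lesssim C_2$ (which is the KT hypothesis on $\mathbb{P}(g)$) and that $t=2s$ is compatible with $|\mathbb{F}|$. Indeed,
\[
\delta^{s-1/2}\lambda^{1/2}\mathcal{I}=\delta^{s}\mathcal{I}^{3/2}|\mathbb{F}|^{-1/2},
\]
which matches the target up to the constants.

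\emph{Reductions.} I first dyadically pigeonhole $g\mapsto|\mathbb{P}(g)|$ to a common value $\lambda\delta^{-1}$, losing only a $\log$ factor. The square (ordinary) KT condition follows from the rectangular one by taking $r=r'$: this gives $|A_x(\mathbb{F})\cap B(\cdot,r)|\lesssim C_1(r/\delta)^{2s}$, so $\mathbb{F}$ is a $(\delta,2s,C_1)$-KT set. The constant $C_1$ is then tracked through Theorem \ref{main-twoends}; for instance, random sampling with probability $C_1^{-1}$ makes $\mathbb{F}$ genuinely KT and gives a factor $C_1^{-1}$ in $\mathcal{I}$, hence $C_1^{-3/2}$ in the right-hand side. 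I would aim to recover the stated exponents $C_1^{2/3}$ after inverting, or simply accept the worse power and absorb it, since such constants are usually harmless in applications.

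\emph{Two-ends.} Next I enforce the two-ends condition. If $\mathbb{P}(g)$ is not $(\epsilon_1,\epsilon_2)$-two-ends, I localise to a ball $B$ of radius $\delta^{\epsilon_1}$ containing a $\delta^{\epsilon_2}$-proportion of it, and iterate: either the two-ends condition holds at some scale $\rho$, or the set concentrates in a $\rho$-ball. I then rescale that $\rho$-ball to unit size using Lemma \ref{lem2}. This is where the rectangular KT condition is essential. The curves passing through a $\rho$-ball form, in $A_x$ coordinates, a $\rho\times\rho^2$-type region: the values of $f$ are fixed to precision $\rho$, and the slopes vary over $O(1)$, but after rescaling by $f\mapsto(f(\rho x+x_0)-y_0)/\rho$ the relevant region becomes the product $I_{r'}\times I_r$. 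The rectangular KT hypothesis with $\sqrt{rr'}$ is exactly what keeps the rescaled family a $(\delta/\rho,2s)$-KT set with the same constant. The KT property of $\mathbb{P}(g)$ with exponent $\sigma$ also survives the rescaling, because $\gamma$ only improves under restriction.

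\emph{Summing.} On each two-ends piece I apply Theorem \ref{main-twoends}, and then sum over the $\rho$-balls using the convexity of $x\mapsto x^{3/2}$ together with the relation
\[
\mathcal{I}\le\mathcal{I}^{3/2}\cdots .
\]
The losses $\delta^{t\epsilon_1/2}$ are absorbed into $\delta^{-\epsilon}$ by choosing $\epsilon_1\ll\epsilon$.

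I expect the main obstacle to be this rescaling step: checking that the rectangular KT condition is preserved under the parabolic-type rescaling of Lemma \ref{lem2}, with the correct anisotropic dimensions, and that the exponents combine correctly so that $\rho$-dependent factors cancel. The dichotomy between $2s$ and $2-2s$ in $\sigma$ (that is, $s\le1/2$ versus $s>1/2$) will also need a case split in the $\gamma$ bound.
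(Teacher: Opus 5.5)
Your architecture matches the paper's. You isolate a two-ends piece of each $\mathbb{P}(g)$ over an interval of length $L$ (as in Lemma \ref{2endsredu}), localize to $L$-squares $R$, rescale by $1/L$, apply Theorem \ref{thm-trackKTC} with $t=2s$, $t^\ast=\sigma$ and $\gamma_{\mathbb{P},\sigma}\lesssim C_2$, and sum. However, the step that carries the argument, the Katz--Tao property of the rescaled family, is stated incorrectly, and the idea needed to make it work is missing.

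After rescaling by $1/L$, the curves through $R$ are \emph{not} $\delta/L$-separated. Two $\delta$-separated curves whose $A_{x_0}$-images lie in a common $\delta\times\delta/L$ rectangle become indistinguishable on $I_R$ at scale $\delta/L$. The rectangular hypothesis allows up to $C_1(\sqrt{\delta\cdot\delta/L}/\delta)^{2s}=C_1L^{-s}$ such curves. So you must pass to a maximal set $\mathbb{S}_R$ of incomparable segments and pigeonhole the multiplicity $M\sim|\{g\in\mathbb{F}_R:\Gamma_{g|_{I_R}}\subset u(3\delta)\}|$.

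The rescaled segment family is then Katz--Tao with constant $\sim C_1/(ML^s)$, not $C_1$. Indeed, segments in $u(10r)$, $r\in[\delta,L]$, come from functions whose $A_{x_0}$-images lie in an $O(r\times r/L)$ rectangle; it is not a ``$\rho\times\rho^2$'' region, and your own description (values to precision $\rho$, slopes over $O(1)$) does not give one either. Rectangular KT therefore bounds the number of such segments by $M^{-1}C_1(\sqrt{r^2/L}/\delta)^{2s}=C_1M^{-1}L^{-s}(r/\delta)^{2s}$.

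Both corrections are what make the exponents close. At scale $\delta/L$ with $|\mathbb{P}(u)|\sim N$ one has $\lambda=N\delta/L$, so Theorem \ref{thm-trackKTC} produces the factor $(\delta/L)^{(2s-1)/2}(N\delta/L)^{1/2}=N^{1/2}\delta^sL^{-s}$.
\begin{itemize}
\item The $L^{s}$ hidden in the KT constant is exactly what cancels this $L^{-s}$. If the constant really stayed $C_1$, a spurious gain $L^{-s}$ would survive, so your ``same constant'' and ``$\rho$-factors cancel'' claims contradict each other.
\item The factor $M$ gives $|\mathbb{P}_R^\ast|\gtrapprox C_1^{-1}C_2^{-1/2}\delta^sN^{3/2}M|\mathbb{S}_R^\ast|$. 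This sums linearly using $\sum_R|\mathbb{P}_R^\ast|\lesssim|\mathbb{P}|$ and $M|\mathbb{S}^\ast|\approx|\mathbb{F}_1|$, so no convexity argument is needed.
\item Without the $M^{-1}$ gain you lose a factor of $M$, which can be as large as $C_1L^{-s}$, polynomial in $\delta$.
\end{itemize}

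Two smaller points. First, you need a separate argument when $L$ is too close to $\delta$ for the two-ends theorem to apply at scale $\delta/L$. The paper first disposes of $\epsilon\ge\sqrt{\sigma/2}$ and of $|\mathbb{P}(g)|\le\delta^{-\epsilon}$ by interpolating the trivial bounds $\mathcal{I}\le C_2\delta^{-\sigma}|\mathbb{F}|$ and $\mathcal{I}\lesssim C_1\delta^{-s}|\mathbb{P}|$, which forces $L\ge\delta^{1-O(\epsilon^2)}$. Second, there is no need to accept a worse power of $C_1$. A KT constant $C$ costs exactly $C^{-1}$ in Theorem \ref{thm-trackKTC} (via Lemma \ref{lemma-KT}), and this inverts to the stated $C_1^{2/3}$. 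Your $C_1^{-3/2}$ forgets that $|\mathbb{F}|$ also shrinks by $C_1^{-1}$ under the sampling.
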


	To prove Theorem \ref{thm-recincidence}, we need to apply Theorem \ref{main-twoends} which we restate below to track the dependence of the Katz-Tao constant of $\mathcal{F}$.
	
	\begin{thm}\label{thm-trackKTC}
		Let $t\in (0,2)$, $\mathfrak{T}\geq1$ and $\lambda\in [0,1]$. For any $\epsilon\in (0,1)$, there exists a small constant $\delta_0=\delta_0(t,\epsilon,\mathfrak{T})>0$ such that the following holds for any $\delta\in (0,\delta_0]$.
		
		Let $(\mathcal{F},\mathcal{P})_\delta$ be a given configuration. Here $\mathcal{F}\subset B_{C^2}(1)$ is $\mathfrak{T}$-transversal over $[-2,2]$. Assume that
		\begin{itemize}
			\item $\mathcal{F}$ is $\delta$-separated and a $(\delta,t,C)$-KT set with $C\geq1$;
			\item for each $g\in \mathcal{F}$, $\mathcal{P}(g)$ is $(\epsilon_1, \epsilon_2)$-two-ends and $\lambda$-dense.
		\end{itemize}
		Write $t^\ast=\{t,2-t\}$. Then,
		\begin{equation}\label{eq-main-twoends}
			|\mathbf{E}_{\mathcal{F},\mathcal{P}}|\geq \delta^{\epsilon+t\epsilon_1/2}  C^{-1}\delta^{(t-1)/2} \ga_{\cP,t^\ast}^{-1/2}\cdot \lambda^{1/2}\sum_{g\in\mathcal{F}} |\mathcal{P}(g)|.
		\end{equation}
	\end{thm}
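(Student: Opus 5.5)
We reduce Theorem \ref{thm-trackKTC} to Theorem \ref{main-twoends} (i.e. the case $C\lesssim 1$) by random sampling. The idea is to extract from $\mathcal{F}$ a genuine $(\delta,t)$-KT subfamily whose incidences and Katz--Tao data are controlled, and then pay only a factor $C^{-1}$.

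\emph{Step 1: sampling.} If $C\ge \delta^{-2}$, the claim is trivial. Indeed, the right-hand side of \eqref{eq-main-twoends} is then at most $\delta^{2+(t-1)/2}\lambda^{1/2}\sum_g|\mathcal{P}(g)|$. Since $\gamma_{\mathcal{P},t^\ast}\ge 1$ and each $p$ lies in at most $|\mathcal{F}|\lesssim\delta^{-2}$ shadings, this is at most $|\mathbf{E}_{\mathcal{F},\mathcal{P}}|$.

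Otherwise, keep each $g\in\mathcal{F}$ independently with probability $C^{-1}$, producing $\mathcal{F}'$. By Chernoff bounds and a union bound over the $\lesssim\delta^{-O(1)}$ relevant dyadic balls in the parameter space (via the embedding of Lemma \ref{lem.indentify}), with high probability every ball $B$ of radius $r\in[\delta,1]$ satisfies
\[|\mathcal{F}'\cap B|\lesssim \log(1/\delta)\,(r/\delta)^t.\]
This holds because the expected count is $\le (r/\delta)^t$, and in the case where the expectation is $<1$ the Chernoff tail still gives $O(\log(1/\delta))$. Thus $\mathcal{F}'$ is a $(\delta,t,O(\log 1/\delta))$-KT set.

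Simultaneously, with positive probability,
\[\sum_{g\in\mathcal{F}'}|\mathcal{P}(g)|\gtrsim C^{-1}\sum_{g\in\mathcal{F}}|\mathcal{P}(g)|.\]
One way to see this is to first pigeonhole so that $|\mathcal{P}(g)|\sim\lambda\delta^{-1}$, and then use that $|\mathcal{F}'|$ concentrates around $C^{-1}|\mathcal{F}|$ whenever $C^{-1}|\mathcal{F}|\gtrsim\log(1/\delta)$. If instead $C^{-1}|\mathcal{F}|$ is tiny, pick a single $g$ and use $|\mathbf{E}|\ge|\mathcal{P}(g)|$ together with $\delta^{(t-1)/2}\le\delta^{-1/2}$, $\lambda\le 1$, $|\mathcal{P}(g)|\lesssim\delta^{-1}$ and $|\mathcal{F}|\lesssim C\log(1/\delta)$ to conclude directly.

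\emph{Step 2: apply the main theorem.} Restricting the shading to $\mathcal{F}'$ preserves the two-ends and $\lambda$-dense properties, and gives $\gamma_{\mathcal{P}|_{\mathcal{F}'},t^\ast}\le\gamma_{\mathcal{P},t^\ast}$. Theorem \ref{main-twoends} is stated for $(\delta,t)$-KT sets with unspecified constant. Tracking its proof (Theorem \ref{thm-reduction2} and Lemma \ref{lem-main}), a KT constant $O(\log 1/\delta)$ only costs a factor $\lessapprox 1$, which is absorbed into $\delta^{\epsilon}$. Alternatively, one can split $\mathcal{F}'$ into $O(\log 1/\delta)$ families with KT constant $1$, deterministically by a greedy selection, and keep the largest. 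Either way we obtain
\[|\mathbf{E}_{\mathcal{F},\mathcal{P}}|\ge|\mathbf{E}_{\mathcal{F}',\mathcal{P}}|\ge\delta^{\epsilon/2+t\epsilon_1/2}\delta^{(t-1)/2}\gamma_{\mathcal{P},t^\ast}^{-1/2}\lambda^{1/2}\sum_{g\in\mathcal{F}'}|\mathcal{P}(g)|.\]
Inserting the bound from Step 1 yields \eqref{eq-main-twoends} with the factor $C^{-1}$, after shrinking $\delta_0$ to absorb the logarithms.

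\emph{Main obstacle.} The delicate point is making the sampling simultaneously give the uniform KT bound at all scales and preserve the incidence mass. The union bound over scales and positions is standard. The case where $C^{-1}|\mathcal{F}|$ is small, so that concentration fails, must be handled by the trivial single-curve bound described in Step 1, and I would check that exponent bookkeeping carefully.
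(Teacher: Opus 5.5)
Your reduction has the same skeleton as the paper's proof. You pigeonhole so that $|\mathcal P(g)|$ is roughly constant, pass to a subfamily of relative size $\gtrsim C^{-1}$ whose Katz--Tao constant is $\lessapprox 1$, apply Theorem \ref{main-twoends}, and use that $\gamma_{\mathcal P,t^\ast}$ can only decrease under restriction. What differs is how you build the subfamily. The paper does it in one deterministic step: Lemma \ref{lemma-KT} with $\rho=\delta$, transported to $\mathcal F$ via the bi-Lipschitz map of Lemma \ref{lem.indentify}, directly yields a $(\delta,t,O(1))$-KT subfamily $\mathcal F_1$ with $|\mathcal F_1|\gtrsim |\mathcal F|/C$.

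Your random sampling at rate $C^{-1}$ only reaches KT constant $O(\log 1/\delta)$. It also requires a concentration argument, and hence a separate regime where $C^{-1}|\mathcal F|\lesssim \log(1/\delta)$. You then still need a second reduction from $O(\log 1/\delta)$ to $O(1)$, and your greedy splitting there is just Lemma \ref{lemma-KT} again. That lemma could have been applied to $\mathcal F$ itself, with constant $C$, from the start. So the sampling is sound but buys nothing: the paper's route loses no logarithms and has no exceptional cases. Your other option, tracking a $\lessapprox 1$ KT constant through Theorem \ref{main-twoends}, matches how the paper treats such constants (e.g.\ for $\mathcal F_Q''$ in the proof of Theorem \ref{thm-twoends}), but it is an extra thing to check.

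The bookkeeping in your degenerate cases does not close as written. Take the ingredients you list: $\gamma_{\mathcal P,t^\ast}\ge 1$, $\delta^{(t-1)/2}\le\delta^{-1/2}$, $\lambda\le 1$, $|\mathcal P(g)|\lesssim\delta^{-1}$. They leave an uncompensated factor $\delta^{(t-1)/2}$, which is large when $t<1$. The same happens in your case $C\ge\delta^{-2}$: the bound $\sum_g|\mathcal P(g)|\le|\mathcal F|\,|\mathbf E_{\mathcal F,\mathcal P}|$ leaves $\delta^{(t-1)/2}\lambda^{1/2}$, which need not be $\le 1$.

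The missing input is the lower bound on the Katz--Tao constant at scale $r\sim 1$, namely $\gamma_{\mathcal P,t^\ast}\gtrsim \delta^{t^\ast}|\mathcal P(g)|$, together with $\lambda\le \delta|\mathcal P(g)|$ from $\lambda$-density. These give $\delta^{(t-1)/2}\gamma_{\mathcal P,t^\ast}^{-1/2}\lambda^{1/2}\lesssim \delta^{(t-t^\ast)/2}\le 1$. Hence the right-hand side of \eqref{eq-main-twoends} is $\lesssim \delta^{\epsilon}C^{-1}|\mathcal F|\,|\mathbf E_{\mathcal F,\mathcal P}|$, which settles both degenerate regimes at once. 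With that fix your argument is complete, but applying Lemma \ref{lemma-KT} directly makes the whole detour unnecessary.
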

	
	\begin{proof}
		First, after dyadic pigeonholing, we may assume that $|\mathcal{P}(g)|$ are roughly constant for all $g\in \mathcal{F}$. By Lemma \ref{lemma-KT}, there exists a $(\delta,t,1)$-KT subset $\mathcal{F}_1\subset\mathcal{F}$ with $|\mathcal{F}_1|\gtrsim |\mathcal{F}|/C$, where $C\geq1$ is the Katz-Tao constant of $\mathcal{F}$.
		
		Consequently, we apply Theorem \ref{main--twoends} to $(\mathcal{F}_1,\mathcal{P})_\delta$ to obtain what we desire, after observing that
		\[\sum_{g\in\mathcal{F}_1} |\mathcal{P}(g)| \gtrsim \sum_{g\in\mathcal{F}} |\mathcal{P}(g)|/C.\]
	\end{proof}

	To apply the two-ends estimate \eqref{eq-main-twoends}, we will need the following lemma. It says that although $\mathbb{P}(g)$ (in Theorem \ref{thm-recincidence}) generally does not satisfy the two-ends condition, we can always find a two-ends sub-family of $\mathbb{P}(g)$. The statement (and proof) are essentially \cite[Lemma 4.7]{doprod} so we omit to prove it here.
	
	\begin{lemma}\label{lem-fixtwoends}\label{2endsredu}
		Let $(\mathbb{F},\mathbb{P})_\delta$ be a given configuration. Assume that for each $g\in\mathbb{F}$, the shading $\mathbb{P}(g)$ is a $(\delta,s)$-KT set. Then for any $\epsilon\in (0,\sqrt{s})$ and $g\in \mathbb{F}$, there exists an interval $J_g\in \mathcal{D}_{L_g}([-1,1])$ such that
		\begin{itemize}
			\item [(1)] the length $L_g=|J_g|\gtrsim (\delta^s|\mathbb{P}(g)|)^{\tfrac{1}{s-\epsilon^2}}$;
			\item [(2)] write $\mathbb{P}(J_g):=\{p\in\mathbb{P}(g): \pi_x(p)\subset J_g\}$, then $|\mathbb{P}(J_g)|\geq L_g^{\epsilon^2}|\mathbb{P}(g)|$ and
			\begin{equation}\label{equ-IP}
				|\mathbb{P}(J_g)\cap B(x,L_g(\delta/L_g)^\epsilon)|_\delta\leq (\delta/L_g)^{\epsilon^3}|\mathbb{P}(J_g)|, \quad \forall x\in \R.
			\end{equation}
		\end{itemize}
	\end{lemma}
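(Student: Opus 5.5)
The plan is to run the standard iterative zooming argument of \cite[Lemma 4.7]{doprod} separately for each $g$. Since the graph $\Gamma_g$ is $O(1)$-Lipschitz, the projection $p\mapsto \pi_x(p)$ is $O(1)$-to-one on $\mathbb{P}(g)$. Moreover, the set of $p\in\mathbb{P}(g)$ inside a ball $B(x,\rho)$ is contained in the set of $p$ whose projection lies in an interval of length $\sim\rho$. So I will work entirely with intervals in $[-1,1]$.

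\textbf{Iteration.} Set $J_0=[-1,1]$ and $L_0=|J_0|$. Given a dyadic interval $J_k$ of length $L_k$, check whether $\mathbb{P}(J_k)$ satisfies \eqref{equ-IP}.
\begin{itemize}
\item If it does, stop and set $J_g:=J_k$.
\item If it does not, there is a ball of radius $L_k(\delta/L_k)^\epsilon$ carrying more than a $(\delta/L_k)^{\epsilon^3}$-fraction of $\mathbb{P}(J_k)$. Its $x$-projection meets at most three dyadic intervals of length $L_{k+1}\sim L_k(\delta/L_k)^\epsilon$. Pigeonhole one of them as $J_{k+1}\subset J_k$, which gives
\[|\mathbb{P}(J_{k+1})|\gtrsim (\delta/L_k)^{\epsilon^3}|\mathbb{P}(J_k)| = (L_{k+1}/L_k)^{\epsilon^2}|\mathbb{P}(J_k)|.\]
\end{itemize}

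\textbf{Telescoping.} Multiplying these inequalities gives $|\mathbb{P}(J_k)|\gtrsim_k L_k^{\epsilon^2}|\mathbb{P}(g)|$.

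\textbf{Termination and (1).} The $(\delta,s)$-KT hypothesis gives $|\mathbb{P}(J_k)|\lesssim (L_k/\delta)^s$, because $\mathbb{P}(J_k)$ lies in an $O(L_k)$-ball. Combining this with the telescoped lower bound yields
\[L_k^{s-\epsilon^2}\gtrsim \delta^s|\mathbb{P}(g)|\]
at every stage, which is property (1); it uses $\epsilon^2<s$. Since $L_{k+1}\le L_k(\delta/L_k)^\epsilon$, the lengths decrease strictly as long as $L_k\gg\delta$. When $L_k\sim\delta$, condition \eqref{equ-IP} holds trivially, after adjusting constants so that a ball of radius $\sim L_k$ is compared against the fraction $(\delta/L_k)^{\epsilon^3}\sim 1$. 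Hence the process stops at some $J_g$ satisfying (2).

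\textbf{Main obstacle.} The delicate point is the multiplicative constant lost at each step: the factor $3$ from choosing one dyadic interval, plus the $O(1)$ from the projection. These must not accumulate over the possibly many iterations, so that the clean bound $|\mathbb{P}(J_g)|\geq L_g^{\epsilon^2}|\mathbb{P}(g)|$ survives. I would handle this by running the iteration with a slightly weaker threshold in the stopping test, namely a fraction $(\delta/L_k)^{\epsilon^3}/C$ with a suitable constant $C$, or equivalently by using a slightly perturbed exponent. The slack between $\epsilon^3$ and $\epsilon^2$, together with $L_{k+1}/L_k\le(\delta/L_k)^\epsilon$, then absorbs the per-step constants into the power of $L_{k+1}/L_k$ whenever $L_k\ge\delta^{1-c}$. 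In the final steps, where $L_k$ is within a constant power of $\delta$, I expect that only $O_\epsilon(1)$ further steps can occur, so the remaining constants can be absorbed into the implicit constant of (1) and into the inequality in (2).
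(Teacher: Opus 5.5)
Your zooming scheme is a workable route, and its ingredients are fine: reducing to $x$-projections, using the Katz--Tao bound $|\mathbb{P}(J_k)|\lesssim (L_k/\delta)^s$ to get (1), and stopping at the latest when $L_k=\delta$. The gap is in the step you flag as the main obstacle. Your proposed resolution does not work, and as written the argument only gives the lemma up to a loss of $(\log 1/\delta)^{O_\epsilon(1)}$.

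\emph{No slack between $\epsilon^3$ and $\epsilon^2$.} With $L_{k+1}\approx L_k(\delta/L_k)^\epsilon$ one has $(\delta/L_k)^{\epsilon^3}=(L_{k+1}/L_k)^{\epsilon^2}$, up to a factor $2^{\pm\epsilon^2}$ from dyadic rounding. So a bad step gains exactly what the telescoping consumes. The $O(1)$ factor lost when you pigeonhole one of the few dyadic intervals covering the projection of the ball therefore cannot be absorbed. Lowering the threshold to $(\delta/L_k)^{\epsilon^3}/C$ goes the wrong way, since a bad step then only certifies a fraction $(\delta/L_k)^{\epsilon^3}/C$.

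\emph{The step count is reversed.} With $m_k:=\log(L_k/\delta)$ the recursion reads $m_{k+1}\approx(1-\epsilon)m_k$. Hence only $O(\epsilon^{-1}\log(1/c))$ steps occur while $L_k\geq\delta^{1-c}$, whereas the range $\delta\leq L_k\leq\delta^{1-c}$ can take $\sim\epsilon^{-1}\log\log(1/\delta)$ steps. The per-step constants therefore multiply to $(\log 1/\delta)^{O_\epsilon(1)}$. This damages the first inequality in (2) and, through it, (1). It would be harmless in the proof of Theorem \ref{thm-recincidence}, where $\delta^{-O(\epsilon)}$ losses are hidden, but it is not the lemma as stated.

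The repair is to put the constant on the other side of the stopping test. Call $J_k$ good if $|\mathbb{P}(J_k)\cap B(x,L_k(\delta/L_k)^\epsilon)|_\delta\leq C_0(\delta/L_k)^{\epsilon^3}|\mathbb{P}(J_k)|$ for all $x$, with $C_0$ a suitable absolute constant. Then:
\begin{itemize}
\item a bad step yields $|\mathbb{P}(J_{k+1})|\geq(L_{k+1}/L_k)^{\epsilon^2}|\mathbb{P}(J_k)|$ with no loss, so the telescoping is exact;
\item badness forces $C_0(\delta/L_k)^{\epsilon^3}<1$, so $L_{k+1}<L_k$ and termination is automatic;
\item $C_0$ enters only once, in \eqref{equ-IP}. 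Any proof incurs this absolute-constant loss, and the two-ends condition used downstream carries a constant anyway.
\end{itemize}

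Alternatively, you can skip the iteration. The paper does not reprove the lemma; it imports it from \cite[Lemma 4.7]{doprod}. A standard way to prove such two-ends reductions is a one-shot extremal choice: take $J_g$ maximizing $|\mathbb{P}(J)|\,|J|^{-\epsilon^2}$ over dyadic intervals $J$. Comparing with a top-level interval gives $|\mathbb{P}(J_g)|\gtrsim L_g^{\epsilon^2}|\mathbb{P}(g)|$. Maximality on the $O(1)$ dyadic subintervals of length $\sim L_g(\delta/L_g)^\epsilon$ covering the projection of a ball gives \eqref{equ-IP}, because $((\delta/L_g)^{\epsilon})^{\epsilon^2}=(\delta/L_g)^{\epsilon^3}$. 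Then (1) follows from the Katz--Tao bound exactly as in your argument. Your greedy iteration is in effect a sequential search for such a near-maximizer; the extremal choice removes the need to track constants along the way.
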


	\begin{proof}[Proof of Theorem \ref{thm-recincidence}]
		The proof is organized in three steps.
		
		{\bf Step 1: initial reduction.} It suffices to show \eqref{equ-recincidd} when $\epsilon <\sqrt{\sigma/2}$. Indeed, by the $(\delta,\sigma, C_2)$-KT condition of $\mathbb{P}(g)$, we have the trivial estimate
		\begin{equation}\label{esti1}
			\mathcal{I}(\mathbb{F},\mathbb{P})\leq C_2\delta^{-\sigma}|\mathbb{F}|.
		\end{equation}
		On the other hand, for each $p\in\mathbb{P}$, note that
		\[\mathbb{F}(p):=\{g\in\mathbb{F}:p\in\mathbb{P}(g)\}\subset \{g\in\mathbb{F}:p\cap \Gamma_g\neq \emptyset\}\]
		and $A_{x_p}(\mathbb{F}(p))$ is contained in a $O(\delta\times 1)$-rectangle, where $x_p$ is the center of $\pi_x(p)$. By the rectangular $(\delta,2s,C_1)$-KT condition of $\mathbb{F}$, we get another trivial estimate
		\begin{equation}\label{esti2}
			\mathcal{I}(\mathbb{F},\mathbb{P})=\sum_{p\in\mathbb{P}}|\mathbb{F}(p)|\lesssim C_1\delta^{-s}|\mathbb{P}|.
		\end{equation}
		Interpolating the two bounds \eqref{esti1} and \eqref{esti2}, we obtain
		\[\mathcal{I}(\mathbb{F},\mathbb{P})\lesssim C_1^{2/3}C_2^{1/3}\delta^{-2s/3-\sigma/3}|\mathbb{P}|^{2/3} |\mathbb{F}|^{1/3}.\]
		If $\epsilon\geq \sqrt{\sigma/2}$, this easily implies \eqref{equ-recincidd}.
		
		Therefore, in the sequel we fix $\epsilon\in (0,\sqrt{\sigma/2})$. There will be various losses of the form $(1/\delta)^{O(\epsilon)}$ that will be hidden in the notation $\les$ and $\approx$. The constants in $\sim$, $\gtrsim$ and $\lesssim$ may also depend on $\mathfrak{T}$.
		
		To proceed, we pigeonhole a subset $\mathbb{F}_1\subset\mathbb{F}$ and a number $N_0\in 2^\N$ such that $|\PP(g)|\sim N_0$ for all $g\in \mathbb{F}_1$ and
		\[N_0|\mathbb{F}_1|\sim\mathcal{I}(\mathbb{F}_1,\PP)\approx \mathcal{I}(\mathbb{F},\PP),\]
		where we recall $\mathbb{P}=\cup_{p\in\mathbb{P}}\mathbb{P}(g)$. Moreover, we may assume that $|\mathbb{P}(g)|>\delta^{-\epsilon}$ for all $g\in\mathbb{F}_1$. Otherwise, we deduce from $|\mathbb{P}(g)|\leq\delta^{-\epsilon}$ that
		\begin{equation}\label{esti3}
			\mathcal{I}(\mathbb{F},\mathbb{P})\lessapprox \delta^{-\epsilon}|\mathbb{F}|\leq C_2\delta^{-\epsilon}|\mathbb{F}|.
		\end{equation}
		Interpolating the two bounds \eqref{esti2} and \eqref{esti3}, we obtain
		\[\mathcal{I}(\mathbb{F},\mathbb{P})\lessapprox C_1^{2/3}C_2^{1/3}\delta^{-2s/3-\epsilon/3}|\mathbb{P}|^{2/3} |\mathbb{F}|^{1/3}.\]
		By choosing $\delta$ small enough, this implies \eqref{equ-recincidd}.
		
		Next, we apply Lemma \ref{2endsredu} with $\epsilon$ replaced by $\epsilon/2$ to each $g\in \mathbb{F}_1$ to produce an interval $J_g\in \mathcal{D}_{L_g}([-1,1])$ with length (use $\epsilon\geq \sqrt{2}\epsilon^2$ and $\sigma-\epsilon^2\leq 1$)
		\[L_g\gtrsim (\delta^\sigma|\mathbb{P}(g)|)^{\tfrac{1}{\sigma-\epsilon^2}}\geq \delta^{\tfrac{\sigma-\epsilon}{\sigma-\epsilon^2}}\geq \delta^{1-O(\epsilon^2)}\]
		such that the collection $\PP(J_g)$ is $(\epsilon/2,\epsilon^3/8)$-two-ends and $|\mathbb{P}(J_g)|\geq L_g^{\epsilon^2}|\mathbb{P}(g)|$. This means that the sub-family $\mathbb{P}(J_g)$
		after being rescaled by $1/L_g$, forms an $(\epsilon/2,\epsilon^3/8)$-two-ends subset at scale $\delta/L_g,$ that is \eqref{equ-IP} holds. We note that $|\mathbb{P}(g)|\geq \delta^{-\epsilon}$ is mainly used to get the lower bound for $L_g$.

		We may also assume that $L_g\sim L$, $|\mathbb{P}(J_g)|\sim N$ for all $g\in \mathbb{F}_1$, for some fixed dyadic parameters $L>\delta^{1-O(\epsilon^2)}$ and $N\approx N_0$.
		The parameter $N_0$ will not be mentioned again, as it is replaced by $N$. Note that
		\begin{equation}\label{ jutgugtg9 yh =390h yioh0yi0h-}
			\mathcal{I}(\mathbb{F},\PP)\approx N|\mathbb{F}_1|.
		\end{equation}
		We rename $\PP(g)$ to refer to the smaller collection $\PP(J_g)$, so in the sequel we assume that $\PP(g)$ is $(\epsilon/2,\epsilon^3/8)$ two-ends with
		\begin{equation}
			\label{;lckjfvjvjiobiytpio}
			\pi_x(\mathbb{P}(g))\subset J_g \quad \text{ and } \quad |\PP(g)|\sim N,\;\;\forall g\in\mathbb{F}_1.
		\end{equation}

		{\bf Step 2: local decomposition.}
		We decompose $\mathbb{F}_1$ according to $J_g$ determined by Lemma \ref{lem-fixtwoends}. For each $R\in \mathcal{D}_L(\mathbb{P})$, define
		\[\mathbb{F}_R:=\{g\in \mathbb{F}_1: R\in \mathcal{D}_L(\mathbb{P}(g))\}.\]
		Note that we assume $\pi_x(\mathbb{P}(g))\subset J_g$. Since $g\in \mathbb{F}\subset B_{C^2}(1)$, the graph of $g$ above $J_g$ intersects one and at most two $R\in \mathcal{D}_L$, i.e., $g$ belongs to at most two $\mathbb{F}_R$, thus
		\[|\mathbb{F}_1|\sim \sum_{R\in \mathcal{D}_L(\mathbb{P})}|\mathbb{F}_R|.\]

		Now we do the same construction as in Section \ref{sec3}. For each $R\in\mathcal{D}_L(\mathbb{P})$, let \[\Gamma_R=\{\Gamma_{g|_{I_R}}: g\in \mathbb{F}_R\}.\] 
		Here $I_R=\pi_x(R)$ and $\Gamma_{g|_{I_R}}$ means the graph of $g$ above $I_R$. Two curve segments $\Gamma_{g|_{I_{R}}},\Gamma_{h|_{I_{R}}}$ are \emph{comparable} if $|g(x) - h(x)| \leq 3\delta$ for all $x \in I_{R}$. Let $\mathbb{S}_{R}$ be a maximal set of \emph{incomparable} curve segments in $\Gamma_R$. 
		Since each $u\in\mathbb{S}_R$ is a truncated graph of one function in $\mathbb{F}_R$, we use $g_u$ to denote this function. Recall that for each $R\in \mathcal{D}_L(\mathbb{P})$ and $g\in\mathbb{F}_R$, we have 
		\[|\{u\in \mathbb{S}_R:\Ga_{g|_{I_R}}\subset u(3\delta)\}|\sim_{\mathfrak{T}} 1.\]
		This is property (ii) for incomparable segments stated on page 11. By this approximate identity and the fact that $|\mathbb{P}(g)|\sim N$ for all $g\in\mathbb{F}_1$, we get the following claim.
		
		\begin{claim}
			We have
			\begin{equation}
				\mathcal{I}(\mathbb{F}_1,\PP)=\sum_{g\in \mathbb{F}_1}|\mathbb{P}(g)|\sim \sum_{R \in \mathcal{D}_L(\mathbb{P})}\sum_{u\in \mathbb{S}_R}\sum_{\substack{g\in u(3\delta)\cap \mathbb{F}_R}}|\PP(g)|,
			\end{equation}
			where $u(3\delta)\cap \mathbb{F}_R:=\{g\in \mathbb{F}_R: \Ga_{g|_{I_R}}\subset u(3\delta)\}$.
		\end{claim}
		We pick $M\in 2^\N$ such that
		\begin{equation}\label{equ-in11}
			\mathcal{I}(\mathbb{F}_1,\PP)\approx \sum_{R \in \mathcal{R}^\ast}\sum_{u\in \mathbb{S}^\ast_R}\sum_{\substack{g\in u(3\delta)\cap \mathbb{F}_R}}|\PP(g)|,
		\end{equation}
		where $\mathcal{R}^\ast\subset \mathcal{D}_L(\mathbb{P})$ is pigeonholed such that \[\mathbb{S}_R^\ast=\{u\in \mathbb{S}_R:\;|u(3\delta)\cap \mathbb{F}_R|\sim M\}, \quad R\in \mathcal{R}^\ast.\]
		Write
		$\mathbb{S}^\ast=\cup_{R\in\mathcal{R}^\ast}\mathbb{S}_R^\ast$. Note that due to \eqref{;lckjfvjvjiobiytpio}, \eqref{equ-in11} and the choice of $M$ we have
		\begin{equation}
			\label{cjuuig8 rtui90 yti h9yi 09683}
			|\mathbb{S}^\ast|\sim \sum_{R\in\mathcal{R}^\ast}|\mathbb{S}_R^\ast|\approx \frac{|\mathbb{F}_1|}{M}.
		\end{equation}
		Now for $R\in\mathcal{R}^\ast$ and $u\in \mathbb{S}_R^\ast$, recall $u$ is a part of the graph of $g_u\in \mathbb{F}_R\cap u(3\delta)$, then define
		\[\mathbb{P}(u):=\mathbb{P}(g_u).\]
		By using \eqref{equ-in11}, \eqref{cjuuig8 rtui90 yti h9yi 09683} and noting $|\mathbb{F}_R\cap u(3\delta)|\sim M$, we infer
		\begin{equation}\label{equ-icd}
			\mathcal{I}(\mathbb{F},\mathbb{P}_1)\approx M\cdot \mathcal{I}(\mathbb{P},\mathbb{S}^\ast):=M \cdot\sum_{u \in \mathbb{S}^\ast}|\mathbb{P}(u)|.
		\end{equation}
		Moreover, for each $u \in \mathbb{S}_R^\ast$ $(R \in \mathcal{R}^\ast)$, $\mathbb{P}(u)$ is a $(\delta,\sigma,C_2)$-KT set and an $(\epsilon/2,\epsilon^3/8)$-two-ends set, since the same properties hold for each $\mathbb{P}(g_u)$.

		{\bf Step 3: applying two-ends Furstenberg inequality.} We will first estimate the incidence in each $R\in \mathcal{R}^\ast$: \[\mathcal{I}(\mathbb{P}^\ast_R,\mathbb{S}^\ast_R):=\sum_{u\in \mathbb{S}^\ast_R}|\mathbb{P}(u)|,\]
		where $\mathbb{P}^\ast_R=\cup_{u\in\mathbb{S}_R^\ast}\mathbb{P}(u)$. Recall that for any $u\in\mathbb{S}_R^\ast$, $\mathbb{P}(u)=\mathbb{P}(g_u)$ can intersect at most two $R\in\mathcal{D}_L$. Therefore, $\mathbb{P}^\ast_R$ lies in the union of three $L$-cubes which have the same $x$-projection as $R$ and satisfies
		\begin{equation}\label{ewq-pro1}
			\sum_{R\in \mathcal{R}^\ast}|\mathbb{P}^\ast_R|\lesssim \sum_{R\in \mathcal{D}_L(\mathbb{P})}|\mathbb{P}\cap R|=|\mathbb{P}|.
		\end{equation}

		Our plan is to apply Theorem \ref{thm-trackKTC} to $(\mathbb{P}^\ast_R,\mathbb{S}^\ast_R)$ after rescaling. Now let $(x_0,y_0)\in R$ be the center of $R\in\mathcal{R}^\ast$, then define 
		\[T(u):=(g_u)_{(x_0,y_0),L}=\frac{g_u(L\cdot+x_0)-y_0}{L}, \quad u\in\mathbb{S}^\ast_R,\]
		and
		\[T(\mathbb{S}_R^\ast):=\{T(u):u\in\mathbb{S}_R^\ast\}.\]
		We also abuse notation to define
		\[T(x,y)=\frac{(x,y)-(x_0,y_0)}{L}, \quad (x,y)\in\R^2.\]
		By Lemma \ref{lem2} and since segments in $S_R^\ast$ are incomparable, we know that $T(\mathbb{S}_R^\ast)$ is a $\sim\delta/L$-separated transversal family, see also \cite[Claim A.24]{OPY2025}. Moreover, $T(\mathbb{P}_R^\ast)$ is a set of $\delta/L$-cubes. Before applying Theorem \ref{thm-trackKTC}, let us verify the following properties.
		
		\begin{claim}\label{clll2}
			The rescaled configuration $(T(\mathbb{S}_R^\ast),T(\mathbb{P}_R^\ast))_{\delta/L}$ satisfies the following properties.
			\begin{itemize}
				\item [(i)] $|\mathbb{F}_1|\cdot\sum_{R\in \mathcal{R}^\ast} |T(\mathbb{P}_R^\ast)|\lessapprox M |\mathbb{P}|\sum_{R\in \mathcal{R}^\ast} |T(\mathbb{S}_R^\ast)|$.
				
				\item [(ii)] $T(\mathbb{S}_R^\ast)$ is a $(\delta/L, 2s, O(\tfrac{C_1}{ML^s}))$-KT set.
				
				\item [(iii)] For each $\bar{g}=T(u)\in T(\mathbb{S}_R^\ast)$, write \[T(\mathbb{P}_R^\ast)(\bar{g}):=\{T(p)\in T(\mathbb{P}_R^\ast): p\in \mathbb{P}(g_u)\}.\] Then $T(\mathbb{P}_R^\ast)(\bar{g})$ is $\gtrapprox N(\delta/L)$-dense, $(\delta/L,\sigma,C_2)$-KT and $(\epsilon/2,\epsilon^3/8)$-two-ends.
			\end{itemize}
		\end{claim}
		
		\begin{proof}
			The first property follows directly from \eqref{ewq-pro1} and \eqref{cjuuig8 rtui90 yti h9yi 09683}.
			
			Property (ii) will be derived from the rectangular Katz-Tao condition of $\mathbb{F}$. Similar to Claim \ref{claimm1}, it suffices to show
			\[|\{v\in\mathbb{S}_R^\ast: v\subset u(10r)\}|\lesssim \tfrac{C_1}{ML^s} \big(\frac{r}{\delta}\big)^{2s}, \quad \forall u\in \mathbb{S}_R^\ast, \quad \forall r\in [\delta,L].\]
			Since $|\mathbb{F}_R\cap v(3\delta)|\sim M$ for each $v\in \mathbb{S}_R^\ast$, we have
			\[|\{v\in\mathbb{S}_R^\ast: v\subset u(10r)\}| \lesssim M^{-1} |\mathbb{F}_1\cap  u(20r)|_\delta, \]
			where $\mathbb{F}_1\cap  u(10r)=\{g\in\mathbb{F}_1:\Ga_{g|_{\pi_x(u)}}\subset u(20r)\}$. Moreover, $\mathbb{F}_1\cap  u(20r)$ is contained in a $C^2$-ball of radius $\sim_\mathfrak{T} r/L$ around $g_u$, see \cite[Claim A.15]{OPY2025} for the proof. Take any $z_0\in \pi_x(u)$, then we notice that 
			\[A_{z_0}(\mathbb{F}_1\cap  u(20r))=\{A_{z_0}(g):g\in \mathbb{F}_1\cap  u(20r)\}\]
			can be covered by an $O(r\times r/L)$-rectangle, thus by the rectangular KT-condition  of $\mathbb{F}_1$ we infer
			\begin{equation}
				|\{v\in\mathbb{S}_R^\ast: v\subset u(10r)\}| \lesssim \frac{C_1}{M} \big(\frac{\sqrt{r^2/L}}{\delta}\big)^{2s}=\frac{C_1}{ML^s} \big(\frac{r}{\delta}\big)^{2s},
			\end{equation}
			as desired.
			
			Property (iii) is automatic as the same holds for $\mathbb{P}(g_u)$, noting that $|\mathbb{P}(g)|\approx N$ for all $g\in\mathbb{F}_1$ and the KT-condition of $\mathbb
			P(g_u)$ is preserved under rescaling.   
		\end{proof}

		Given Claim \ref{clll2}, we are able to apply Theorem \ref{thm-trackKTC} to the pair $(T(\mathbb{S}_R^\ast),T(\mathbb{P}_R^\ast))_{\delta/L}$ with $\epsilon$ replaced by $\epsilon/10$ and after rescaling back we get
		\[|\mathbb{P}_R^\ast|\gtrapprox \Big(\frac{\delta}{L}\Big)^{\epsilon/10+s\epsilon/2} C_2^{-1/2}\Big(\frac{N\delta}{L}\Big)^{1/2} \Big(\frac{\delta}{L}\Big)^{(2s-1)/2}(ML^s/C_1) \sum_{u\in \mathbb{S}_R^\ast}|\mathbb{P}(u)|.\]
		Here we use that $(2s)^\ast=\sigma$ and $\gamma_{\mathbb{P}, \sigma}\lesssim C_2$ since $\mathbb{P}(g)$ is $(\delta,\sigma,C_2)$-KT. Note that we choose $\delta$ small such that
		\[\frac{\delta}{L}\leq \delta^{O(\epsilon)}\leq \delta_1,\]
		where $\delta_1$ is the constant determined by Theorem \ref{thm-trackKTC}. Then we infer
		\[N^{3/2} M |\mathbb{S}_R^\ast|\lessapprox \Big(\frac{\delta}{L}\Big)^{-3\epsilon/5} C_1C_2^{1/2}\delta^{-s} |\mathbb{P}_R^\ast|.\]
		With summation in $R$ and use \eqref{ewq-pro1}, \eqref{cjuuig8 rtui90 yti h9yi 09683} we obtain
		\[N^{3/2} |\mathbb{F}_1| \lessapprox \Big(\frac{\delta}{L}\Big)^{-3\epsilon/5}C_1C_2^{1/2}\delta^{-s} |\mathbb{P}|.\]
		Finally, by \eqref{ jutgugtg9 yh =390h yioh0yi0h-} we get the desired inequality
		\[\mathcal{I}(\mathbb{F},\PP)\approx N|\mathbb{F}_1|\lessapprox \delta^{-2\epsilon/5} \Big(C_1C_2^{1/2}\delta^{-s}\frac{|\mathbb{P}|}{|\mathbb{F}_1|}\Big)^{2/3}|\mathbb{F}_1|=\delta^{-2s/3-2\epsilon/5}(C_1|\mathbb{P}|)^{2/3}(|C_2\mathbb{F}_1|)^{1/3}.\]
		Taking $\delta>0$ small enough, we conclude \eqref{equ-recincidd} and complete the proof.
		
	\end{proof}

	Next, recall Definition \ref{def:vertical}, then we introduce the `rectangular' KT-condition for subsets in the plane. This should be compared with \cite[Definition 1.4]{doprod}.
	
	\begin{definition}\label{def:recKT}
		Let $\delta\in (0,1]$ and $s\in [0,2]$. Let $f$ be any $C^3$-function satisfying
		\[\|f\|_{C^3([-6,6])}\leq \mathbf{c}\quad \text{and} \quad \min_{x\in [-6,6]}|f''(x)|\geq \mathbf{d}.\]
		A bounded subset $E\subset \R^2$ is called an \emph{$f$-rectangular $(\delta,s,C)$-KT set} if the following non-concentration condition holds. 
		
		For any $\de \leq r',r$ and $(a,b)\in [-2,2]\times\R$, let $\gamma$ be a curve segment of the graph of 
		\[x\mapsto f(x-a)+b\]
		with arc length $r$ and let $\mathbf{R}=\gamma(r')$ be the vertical $r$-neighbourhood of $\gamma$, then we have
		\[|E\cap\mathbf{R}|_\delta\leq C \Big(\frac{\sqrt{r'r}}{\delta}\Big)^{s}.\]
		We simply say $E$ is an \emph{$f$-rectangular $(\delta,s)$-KT set} if the constant $C$ is irrelevant.
	\end{definition}
	
	To prove Theorem \ref{thm-L7decay}, we need a dual form of Theorem \ref{thm-recincidence} for translations of a fixed convex function. Recall that $f\in C^3([-6,6])$ satisfies
	\[\|f\|_{C^3([-6,6])}\leq \mathbf{c}\quad \text{and} \quad \min_{x\in [-6,6]}|f''(x)|\geq \mathbf{d}.\]
	Let $P\subset [-2,2]\times \R$ be a $\delta$-separated subset and
	\begin{equation}\label{eq-defofFtrans}
		\mathcal{F}=\{f_{a,b}:(a,b)\in P\},
	\end{equation}
	where $f_{a,b}=b-f(a-\cdot)$. By the same argument as in \cite[Example 1.5]{OPY2025}, $\mathcal{F}$ is a $C_{\mathbf{c},\mathbf{d}}$-transversal family on $[-2,2]$. We also call $P$ the parameter set of $\mathcal{F}$.  
	
	The following lemma shows that the $f$-rectangular KT-condition of parameter set implies the rectangular KT-condition of $\mathcal{F}$ which is defined as \eqref{eq-defofFtrans}. 
	
	\begin{lemma}\label{lem:dual-rectKT-oneway}
		Let $\tau\in (0,1]$. Let $\mathcal{F}$ be defined as in \eqref{eq-defofFtrans} with parameter set $P\subset [-2,2]\times \R$. Assume that $P$ is an $f$-rectangular $(\delta,\tau,C)$-KT set in the sense of Definition \ref{def:recKT}. Then $\mathcal F$ is a rectangular $(\delta,\tau,C_{\mathbf{c},\mathbf{d}}C)$-KT set in the sense of Definition \ref{def:rectangleofF}.
	\end{lemma}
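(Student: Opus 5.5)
The plan is to unwind the definitions. We show that the preimage in parameter space of a rectangle $I_{r'}\times I_r$ under $(a,b)\mapsto A_x(f_{a,b})$ is covered by $O_{\mathbf{c},\mathbf{d}}(1)$ curved rectangles $\mathbf{R}=\gamma(r')$ of the type in Definition \ref{def:recKT}. We also check that covering numbers at scale $\delta$ are comparable on the two sides.

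Fix $x\in[-2,2]$, $I_r\in\mathcal{D}_r([-1,1])$ and $I_{r'}\in\mathcal{D}_{r'}([-1,1])$ with $\delta\le r,r'$. For $f_{a,b}=b-f(a-\cdot)$ we compute
\[
A_x(f_{a,b})=\bigl(b-f(a-x),\,f'(a-x)\bigr).
\]

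\textbf{Step 1: the $a$-coordinate.} The condition $f'(a-x)\in I_r$, together with $|f''|\ge\mathbf{d}$ on $[-6,6]$, forces $a-x$ to lie in an interval of length $\le r/\mathbf{d}$. Hence $a$ lies in an interval $I^\ast$ of length $\lesssim_{\mathbf{d}} r$.

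\textbf{Step 2: the $b$-coordinate.} Write $c$ for the left endpoint of $I_{r'}$. The condition $b-f(a-x)\in I_{r'}$ says exactly that $|b-(f(a-x)+c)|\le r'$. This is the vertical $r'$-neighbourhood, over $a\in I^\ast$, of the graph of $a\mapsto f(a-x)+c$. That graph is a translate of the graph of $f$ by a horizontal shift $x\in[-2,2]$ and a vertical shift $c$, which is precisely the family of curves allowed in Definition \ref{def:recKT}. Since $|f'|\le\mathbf{c}$, the arc length over $I^\ast$ is $\lesssim_{\mathbf{c},\mathbf{d}} r$. We split $I^\ast$ into $O_{\mathbf{c},\mathbf{d}}(1)$ subintervals whose graph pieces have arc length at most $r$. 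This covers the preimage of $I_{r'}\times I_r$ in $P$ by $O_{\mathbf{c},\mathbf{d}}(1)$ sets of the form $P\cap\gamma(r')$.

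\textbf{Step 3: comparing covering numbers.} The map $\Phi_x(a,b)=(b-f(a-x),f'(a-x))$ has Jacobian determinant $-f''(a-x)$, whose modulus is at least $\mathbf{d}$, and its derivatives are bounded by $\mathbf{c}$. It is also injective, because $f'$ is strictly monotone. So $\Phi_x$ is bi-Lipschitz with constants depending on $\mathbf{c},\mathbf{d}$; injectivity plus the derivative bounds give the global lower Lipschitz bound on the relevant bounded region. Moreover $A_x\circ(\text{parametrisation})$ is bi-Lipschitz from $\mathcal{F}$ by Lemma \ref{lem-Ahlforsregularity}. Therefore $|A_x(\mathcal{F})\cap(I_{r'}\times I_r)|_\delta\lesssim_{\mathbf{c},\mathbf{d}}|\Phi_x^{-1}(I_{r'}\times I_r)\cap P|_\delta$. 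Applying the $f$-rectangular KT hypothesis to each of the $O(1)$ curved rectangles from Step 2 then gives
\[
|A_x(\mathcal{F})\cap(I_{r'}\times I_r)|_\delta\le C_{\mathbf{c},\mathbf{d}}\,C\,\Bigl(\tfrac{\sqrt{rr'}}{\delta}\Bigr)^{\tau}.
\]

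The only mildly delicate point is this bookkeeping: making sure the shifts and arc lengths fall within the ranges permitted by Definition \ref{def:recKT}, namely $a$-shift in $[-2,2]$ and arc length $r$ after subdividing, and that $a-x$ stays in $[-6,6]$ so the bounds on $f$ apply. The latter holds since $a,x\in[-2,2]$. If the definition requires $r$ bounded, large $r$ can be handled by subdividing into $O(1)$ pieces.
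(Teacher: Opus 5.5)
Your proposal is correct and follows the same route as the paper. You fix $x$, use $|f''|\ge\mathbf{d}$ to confine $a$ to an interval of length $\lesssim_{\mathbf{d}} r$, note that the $b$-condition places $(a,b)$ in the vertical $r'$-neighbourhood of the translated graph $a\mapsto f(a-x)+c$, and then apply the $f$-rectangular KT hypothesis. Your extra bookkeeping (subdividing into pieces of arc length at most $r$, and using the Lipschitz bound on $(a,b)\mapsto A_x(f_{a,b})$ to compare $\delta$-covering numbers) only makes explicit what the paper absorbs into its $\lesssim_{\mathbf{c},\mathbf{d}}$ constant.
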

	
	\begin{proof}
		Fix $x\in[-2,2]$, and write
		\[A_x(a,b):=A_x(f_{a,b})=(f_{a,b}(x),f_{a,b}'(x))=(b-f(a-x),\,f'(a-x)).\]
		To prove that $\mathcal F$ is a rectangular $(\delta,\tau,C_{\mathbf{c},\mathbf{d}}C)$-KT set, it suffices to show that for every pair of intervals $I_{r'}$, $I_r$ with $|I_{r'}|\le r'$, $|I_r|\le r$,
		\[|A_x(P)\cap (I_{r'}\times I_r)|_\delta\lesssim_{\mathbf{c},\mathbf{d}} C\Bigl(\frac{\sqrt{rr'}}{\delta}\Bigr)^\tau.\]
		Let $c$ be the center of $I_{r'}$. If $(a,b)\in P$ satisfies
		\[A_x(a,b)\in I_{r'}\times I_r,\]
		then
		\[b-f(a-x)\in I_{r'},\qquad f'(a-x)\in I_r.\]
		The first condition implies
		\[|b-(f(a-x)+c)|\le r'.\]
		Since $|f''| \in [\mathbf{d},\mathbf{c}]$, the second condition implies that $a$ belongs to an interval $J$ with $|J|\lesssim_{\mathbf{c},\mathbf{d}} r$. Hence $A_x^{-1}(I_{r'}\times I_r)$ is contained in a curved rectangle
		\[\bar{R}=\{(a,b): a\in J,\ |b-(f(a-x)+c)|\le r'\}.\]
		Consequently, by using the rectangular KT-condition of $P$,
		\[|A_x(P)\cap (I_{r'}\times I_r)|_\delta\lesssim |P\cap \bar{R}|_\delta\le_{\mathbf{c},\mathbf{d}} C\Bigl(\frac{\sqrt{rr'}}{\delta}\Bigr)^\tau,\]
		as required.
	\end{proof}

	Now we state the dual incidence estimate of Theorem \ref{thm-recincidence}.
	
	\begin{cor}\label{cor-recincidence}
		Let $s\in (0,1]$, $C_1,C_2\geq1$ and $\mathbf{c},\mathbf{d}>0$. For any $\epsilon>0$, there exists $\delta_0=\delta_0(\mathbf{c},\mathbf{d},s,\epsilon)>0$ such that the following holds for all $\delta\in (0,\delta_0]$.
		
		Let $\mathcal{F}$ be defined as in \eqref{eq-defofFtrans}. Let $\mathbb{P}\subset \mathcal{D}_\delta([-1,1]^2)$. Assume that
		\begin{itemize}
			\item $\mathbb{P}$ is an $f$-rectangular $(\delta,2s,C_1)$-KT set in the sense of Definition \ref{def:recKT}.
			\item For each $p\in \mathbb{P}$, assume that there exists a $(\delta,\sigma,C_2)$-KT sub-family 
			\[\mathbb{F}(p)\subset\{f_{a,b}\in \mathcal{F}:p\cap \Gamma_{a,b}\neq \emptyset\},\]
			where $\Gamma_{a,b}$ is the graph of $f_{a,b}$ and $\sigma=\min\{2s,2-2s\}$. 
		\end{itemize}
		
		Write $\mathbb{F}=\cup_{p\in \mathbb{P}}\mathbb{F}(p)$, then
		\begin{equation}\label{equ-correcincid111}
			\mathcal{I}(\mathbb{F},\mathbb{P}):=\sum_{p\in \mathbb{P}}|\mathbb{F}(p)|\leq C_1^{2/3}C_2^{1/3}\delta^{-2s/3-\epsilon}|\mathbb{F}|^{2/3} |\mathbb{P}|^{1/3}.
		\end{equation}
	\end{cor}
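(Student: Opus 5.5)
Corollary \ref{cor-recincidence} will be deduced from Theorem \ref{thm-recincidence} by point--curve duality for translates of the fixed convex function $f$. The point $(x,y)$ lies on (or within $\delta$ of) the graph $\Gamma_{a,b}$ of $f_{a,b}=b-f(a-\cdot)$ iff $y=b-f(a-x)$, i.e.\ iff $b=y+f(a-x)$. Equivalently, the parameter $(a,b)$ lies on the graph of the translate $\tilde f_{x,y}(a):=y+f(a-x)$, which is again a translate of a fixed convex function with the same $C^3$ and curvature bounds. So I will swap roles. The new ``curves'' are $\{\tilde f_{p}:p\in\mathbb{P}\}$, indexed by the centers $(x_p,y_p)$ of the cubes $p$. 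The new ``cubes'' are the $\delta$-cubes containing the parameters $(a,b)$ with $f_{a,b}\in\mathbb{F}$. The incidence $f_{a,b}\in \mathbb{F}(p)$ with $p\cap\Gamma_{a,b}\neq\emptyset$ becomes $(a,b)$ being within $O_{\mathbf{c}}(\delta)$ of the graph of $\tilde f_{p}$.

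First, I check that the dual curve family $\widetilde{\mathbb{F}}:=\{\tilde f_p: p\in\mathbb{P}\}$ is $C_{\mathbf c,\mathbf d}$-transversal on $[-2,2]$ (as in \cite[Example 1.5]{OPY2025}) and $\gtrsim\delta$-separated. After normalising by a harmless affine change, I also place it in $B_{C^2}(1)$; because of the $C_{\mathbf c,\mathbf d}$ losses, it may need to be split into $O_{\mathbf c,\mathbf d}(1)$ pieces.

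Second, I verify the hypotheses of Theorem \ref{thm-recincidence} for this dual configuration.
\begin{itemize}
\item[(a)] The rectangular $(\delta,2s)$-KT condition of $\widetilde{\mathbb F}$ follows from Lemma \ref{lem:dual-rectKT-oneway}, applied with the roles of $(a,b)$ and $(x,y)$ exchanged. The $f$-rectangular KT hypothesis on $\mathbb P$ is exactly the hypothesis needed, with $f$ replaced by $x\mapsto f(-x)$ up to reflection; this is harmless, since Definition \ref{def:recKT} only uses bounds on $f$ and the curvature bound. The constant becomes $C_{\mathbf c,\mathbf d}C_1$.
\item[(b)] For each dual curve $\tilde f_p$, the shading is $\widetilde{\mathbb P}(\tilde f_p):=\{\delta\text{-cubes containing }(a,b): f_{a,b}\in\mathbb F(p)\}$. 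Each such cube meets an $O(\delta)$-neighbourhood of the graph of $\tilde f_p$; I fix this by enlarging or translating the cubes, or by using vertical neighbourhoods, at the cost of $O(1)$ constants. The $(\delta,\sigma,C_2)$-KT condition of this shading is inherited directly from that of $\mathbb F(p)$, since the map $f_{a,b}\mapsto(a,b)$ is bi-Lipschitz in the relevant metrics by Lemma \ref{lem.indentify}.
\end{itemize}

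Third, I apply Theorem \ref{thm-recincidence}. Its conclusion $\mathcal I\le C_1^{2/3}C_2^{1/3}\delta^{-2s/3-\epsilon}|\widetilde{\mathbb P}|^{2/3}|\widetilde{\mathbb F}|^{1/3}$ becomes \eqref{equ-correcincid111}, because $|\widetilde{\mathbb P}|\sim|\mathbb F|$, $|\widetilde{\mathbb F}|=|\mathbb P|$, and the incidence counts agree up to constants. The $C_{\mathbf c,\mathbf d}$ factors are absorbed by shrinking $\epsilon$ and taking $\delta_0$ small.

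The main obstacle is bookkeeping rather than ideas. The dual curves must be normalised into the setting of Theorem \ref{thm-recincidence}: graphs in $B_{C^2}(1)$ over $[-2,2]$, with the points in $[-1,1]^2$. The parameters $(a,b)$ range over $[-2,2]\times\R$, so I will first localise. I partition the parameter space into $O_{\mathbf c}(1)$ unit boxes, and note that each $p\in[-1,1]^2$ interacts only with $(a,b)$ in a bounded region. I then rescale each piece by a constant factor, using Lemma \ref{lem2}. Summing over the $O(1)$ pieces, with Hölder applied to the sums of $|\cdot|^{2/3}|\cdot|^{1/3}$, recovers the bound up to constants.
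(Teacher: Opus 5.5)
Your route is the paper's route: your $\tilde f_{x_p,y_p}$ are the paper's $f^\ast_{x_p,y_p}$, and your dual shading is its $\mathcal{Q}(f^\ast_{x_p,y_p})$. The normalisation issues you flag are real but harmless.

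\textbf{The gap is step (a), and it is not bookkeeping.} Since $A_z(\tilde f_{x,y})=(y+f(z-x),\,f'(z-x))$, the condition $A_z(\tilde f_{x,y})\in I_{r'}\times I_r$ says two things: $x$ lies in an interval of length $\lesssim r$, and $|y-(c-f(z-x))|\le r'$. So $(x,y)$ lies in the vertical $r'$-neighbourhood of an arc of the graph of $f_{z,c}$. The rectangular KT property of the dual family is therefore non-concentration of $\mathbb{P}$ around the curves $\Gamma_{a,b}$ themselves, i.e.\ around translates of $h(x):=-f(-x)$. Lemma \ref{lem:dual-rectKT-oneway} applies with $h$ in place of $f$, not with $f$ or $f(-\cdot)$. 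Translates of $h$ bend the opposite way from the translates of $f$ in Definition \ref{def:recKT}. When $r'\ll r^2$, an $r\times r'$ curved rectangle of one type is not efficiently covered by rectangles of the other type. That $h$ obeys the same $\mathbf{c},\mathbf{d}$ bounds only means the $h$-condition makes sense, not that the $f$-condition implies it.

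\textbf{The implication, and the corollary as stated, actually fail.} Take $f(x)=x^2$. Let $\mathbb{P}$ consist of the $\delta$-cubes containing $(x_j,-x_j^2)$ with $x_j\in\delta^{(1+s)/2}\mathbb{Z}\cap[-\tfrac12,\tfrac12]$, so all of $\mathbb{P}$ lies on $\Gamma_{0,0}$, and let $\mathbb{F}(p)=\{f_{0,0}\}$. The vertical $r'$-neighbourhood of an arc of length $r$ of $y=(x-a)^2+b$ meets $y=-x^2$ in at most two arcs of length $\ell\lesssim\min\{r,\sqrt{r'}\}$. These contain $\lesssim 1+\ell\delta^{-(1+s)/2}$ cubes. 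Since $rr'\gtrsim\ell\max\{\ell^2,\delta\}$, this count is $\lesssim(rr'/\delta^2)^s$ whenever $s\in[\tfrac13,1]$. So $\mathbb{P}$ is $f$-rectangular $(\delta,2s,O(1))$-KT. Yet $\mathcal{I}(\mathbb{F},\mathbb{P})=|\mathbb{P}|\approx\delta^{-(1+s)/2}$, while the right side of \eqref{equ-correcincid111} is $\lesssim\delta^{-2s/3-\epsilon}|\mathbb{P}|^{1/3}$. The ratio is $\gtrsim\delta^{-(1-s)/3+\epsilon}$, which is unbounded for $s<1$ and $\epsilon<(1-s)/3$. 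Dually, every $\tilde f_{x_j,-x_j^2}(a)=a^2-2ax_j$ passes through the origin with $A_0=(0,-2x_j)$, so the rectangular KT bound $\delta^{-s}$ fails at $z=0$.

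The paper's proof makes the same move (``Property (1) follows from Lemma \ref{lem:dual-rectKT-oneway}''), so the repair belongs in the statement: assume $\mathbb{P}$ is $h$-rectangular KT with $h(x)=-f(-x)$. With that hypothesis your argument goes through as written. In the application, Lemma \ref{lem-p3} would then have to be re-proved for these curves, i.e.\ with $F(x,y)=f(x)+f(y)+f(z-x-y)-c$.
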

	\begin{proof}
		First, we use point-curve duality to swap the roles of $\mathbb{F}$ and $\mathbb{P}$. More explicitly, for any $p \in \mathbb{P}$ with lower-left corner $(x_p,y_p)$, we have 
		\begin{align}
			p \cap \Ga_{a,b} \neq \emptyset & \Longrightarrow \exists~ (x,y)\in p \quad \text{such that}\quad |y - (b - f(a-x))| \leq \de \\
			&\Longrightarrow |b - f^\ast_{x_p,y_p}(a)| \lesssim_{\mathbf{c}} \de \label{align-bfstar},
		\end{align}
		where $f^\ast_{x_p,y_p}(\cdot)=f(\cdot-x_p)+y_p$. Write
		\[\pi(\mathbb{F}(p)):=\{(a,b): f_{a,b}\in \mathbb{F}(p)\}.\] 
		For each $(a,b)\in \pi(\mathbb{F}(p))$, let $q(a,b,p)\in\mathcal{D}_\delta$ be the dyadic $\delta$-cube containing $(a,f^\ast_{x_p,y_p}(a))$.
		Then
		\[q(a,b,p)\cap \Gamma_{f^\ast_{x_p,y_p}}\neq\varnothing.\]
		Define
		\[\mathcal{G}:=\{f^\ast_{x_p,y_p}:p\in\mathbb{P}\},\]
		and for each $f^\ast_{x_p,y_p}\in\mathcal{G}$, define
		\[\mathcal{Q}(f^\ast_{x_p,y_p}):=\{q(a,b,p):(a,b)\in \pi(\mathbb{F}(p))\}, \quad \mathcal{Q}:=\bigcup_{p\in\mathbb{P}}\mathcal{Q}(f^\ast_{x_p,y_p}).\]
		
		The new configuration $(\mathcal{G},\mathcal{Q})_\delta$ satisfies the following properties.
		\begin{enumerate}
			\item $\mathcal{G}$ is a rectangular $(\delta,2s,O_{\mathbf{c},\mathbf{d}}(C_1))$-KT set with $|\mathcal{G}|=|\mathbb{P}|$.
			\item For each $f^\ast_{x_p,y_p}\in\mathcal{G}$, $\mathcal{Q}(f^\ast_{x_p,y_p})$ is a $(\delta,\sigma,O_{\mathbf{c},\mathbf{d}}(C_2))$-KT set and $|\mathcal{Q}|\lesssim_{\mathbf{c}}|\mathbb{F}|$.
			\item For each $p\in\mathbb{P}$, \[|\mathbb{F}(p)|=|\pi(\mathbb{F}(p))|\lesssim_{\mathbf{c}}|\mathcal{Q}(f^\ast_{x_p,y_p})|.\]
		\end{enumerate}
		Property (1) follows from Lemma \ref{lem:dual-rectKT-oneway}. Next we prove property (2). To show the KT-condition for each $\mathcal{Q}(f^\ast_{x_p,y_p})$, it suffices to show
		\[|\mathcal{Q}(f^\ast_{x_p,y_p})\cap B(z,r)|_\delta \lesssim_{\mathbf{c},\mathbf{d}} C_2 \bigg(\frac{r}{\delta}\bigg)^\sigma, \quad z\in\R^2, r\in [\delta,1].\]
		For each cube $q\in\mathcal{Q}(f^\ast_{x_p,y_p})\cap B(z,r)$, there exists $(a,b)\in \pi(\mathbb{F}(p))$ such that $q=q(a,b,p)$. Since $q$ intersects $B(z,r)$ and contains $(a,f^\ast_{x_p,y_p}(a))$, we have
		\[(a,f^\ast_{x_p,y_p}(a))\in B(z,r+O(\delta)).\]
		Using \eqref{align-bfstar}, we get
		\[(a,b)\in B(z,r+O_{\mathbf{c}}(\delta))\subset B(z, C_{\mathbf{c}}r).\]
		Since $\pi(\mathbb{F}(p))$ is a $(\delta,\sigma,O_{\mathbf{c},\mathbf{d}}(C_2))$-KT set, 
		\[|\mathcal{Q}(f^\ast_{x_p,y_p})\cap B(z,r)|_\delta\leq|\pi(\mathbb{F}(p))\cap B(z,C_{\mathbf{c}}r)|_\delta\lesssim_{\mathbf{c},\mathbf{d}}C_2\left(\frac r\delta\right)^\sigma.\]
		To show $|\mathcal{Q}|\lesssim_{\mathbf{c}}|\mathbb{F}|$, take $q\in\mathcal{Q}$ with $q=q(a,b,p)$ for some $(a,b)\in \pi(\mathbb{F}(p))$. Let $q(a,b)\in\mathcal{D}_\delta$ be the dyadic $\delta$-cube containing $(a,b)$. Since $q(a,b,p)$ contains $(a,f^\ast_{x_p,y_p}(a))$ and
		\[|b-f^\ast_{x_p,y_p}(a)|\lesssim_{\mathbf{c}}\delta,\]
		$q(a,b,p)$ lies in the $O_{\mathbf{c}}(\delta)$-neighborhood of $q(a,b)$. Consequently,
		\[\mathcal{Q}\subset\bigcup_{f_{a,b}\in\mathbb{F}}\mathcal{N}_{\mathbf{c}}(q(a,b)),\]
		where $\mathcal{N}_{\mathbf{c}}(q(a,b))$ denotes the collection of dyadic $\delta$-cubes whose distance from $q(a,b)$ is at most
		$O_{\mathbf{c}}(\delta)$. Since each such neighborhood contains only $O_{\mathbf{c}}(1)$ dyadic $\delta$-cubes, 
		\[|\mathcal{Q}|\leq\sum_{f_{a,b}\in\mathbb{F}} |\mathcal{N}_{\mathbf{c}}(q(a,b))|\lesssim_{\mathbf{c}}|\mathbb{F}|.\]
		To prove property (3), we only need to show that each $q\in \mathcal{Q}(f^\ast_{x_p,y_p})$ corresponds to at most $O_{\mathbf{c}}(1)$ many $(a,b)\in\pi(\mathbb{F}(p))$. Indeed, if several points $(a,b)\in \pi(\mathbb{F}(p))$ give the same cube $q$, then the points
		$(a,f^*_{x_p,y_p}(a))$ all lie in the same $\delta$-cube $q$. Together with
		\[|b-f^\ast_{x_p,y_p}(a)|\lesssim_{\mathbf{c}}\delta,\]
		this implies that all such $(a,b)$ lie in an $O_{\mathbf{c}}(\delta)$-ball. Since $\pi(\mathbb{F}(p))$ is $\delta$-separated, 
		\[|\{(a,b)\in\pi(\mathbb{F}(p)):q(a,b,p)=q\}|\lesssim_{\mathbf{c}}1.\]
		
		By propoerty (3),
		\[\mathcal{I}(\mathbb{F},\mathbb{P})=\sum_{p\in\mathbb{P}}|\mathbb{F}(p)|\lesssim_{\mathbf{c}}\sum_{p\in\mathbb{P}}|\mathcal{Q}(f^\ast_{x_p,y_p})|.\]
		Consequently, we apply Theorem \ref{thm-recincidence} to $(\mathcal{G},\mathcal{Q})$ with $\epsilon$ replaced by $\epsilon/2$, then obtain
		\[\mathcal{I}(\mathbb{F},\mathbb{P})\lesssim_{\mathbf{c}}\sum_{g\in\mathcal{G}}|\mathcal{Q}(g)| \lesssim_{\mathbf{c},\mathbf{d}} C_1^{2/3}C_2^{1/3}\delta^{-2s/3-\epsilon/2}|\mathcal{Q}|^{2/3} |\mathcal{G}|^{1/3}.\]
		Using that $|\mathcal{Q}|\lesssim_{\mathbf{c}}|\mathbb{F}|$ and $|\mathcal{G}|=|\mathbb{P}|$, we conclude \eqref{equ-correcincid111} by choosing $\delta$ small enough.
		
	\end{proof}

	\subsection{Proof of Theorem \ref{thm-L7decay}} 
	Given finite subsets $S_1, S_2, S_3\subset \R^2$ and $\delta>0$, define 
	\[\mathcal{E}_{3,\delta}(S_1,S_2,S_3):=|\{(s_1,\cdots,t_3)\in (S_1\times S_2\times S_3)^2:|s_1+s_2+s_3-t_1-t_2-t_3|\leq 2\delta\}|.\]

	The main idea is the same as in \cite[Section 4]{doprod}. We first apply the incidence estimate in Corollary \ref{cor-recincidence} to establish the following energy estimate, then Theorem \ref{thm-L7decay} will be derived from Theorem \ref{thm-section5main}. 
	
	\begin{thm}\label{thm-section5main}
		Let $s\in (0,2/3]$. For any $\epsilon\in(0,1)$, there exists $\delta_0=\delta_0(\mathbf{c},\mathbf{d},s,\epsilon)>0$ such that the following holds for all $\de\in (0,\delta_0]$. 
		
		Let $S_1, S_2, S_3\subset \Gamma$ be three $\delta$-separated subsets lying above three intervals $I_1, I_2, I_3\subset [-1,1]$ respectively. Assume additionally that 
		\begin{itemize}
			\item $S_i$ is a $(\delta,s)$-KT set for all $i=1,2,3$;
			\item $\dist(I_i,I_j)\gtrsim 1$ for any $i\neq j$.
		\end{itemize}
		Then
		\begin{equation}\label{equ-energy}
			\mathcal{E}_{3,\delta}(S_1,S_2,S_3)\leq\delta^{-s-\epsilon} (|S_1|\cdot|S_2|\cdot |S_3|)^{5/6}.
		\end{equation}
	\end{thm}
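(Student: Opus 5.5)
We will follow the scheme of \cite[Section 4]{doprod}: rewrite the energy as an incidence count between points of $S_1+S_2$ and translated copies of $\Gamma$, then feed this into Corollary \ref{cor-recincidence}. Write $\mathcal{E}:=\mathcal{E}_{3,\delta}(S_1,S_2,S_3)$. After dyadic pigeonholing over $\delta$-cubes we have
\[
\mathcal{E}\lessapprox \sum_{p\in\mathcal{D}_\delta} N_{12}(p)\,N_3(p),
\qquad
N_{12}(p)=|\{(s_1,s_2): s_1+s_2-s_3'\in p\}|,
\]
where $N_3$ is defined analogously. It is more convenient to use the Cauchy--Schwarz/duality form
\[
\mathcal{E}\lesssim |S_3|\cdot\#\{(s_1,s_2,t_1,t_2,t_3):\ |s_1+s_2-t_1-t_2-(t_3-s_3)|\le 2\delta\}.
\]
The concrete approach is as follows. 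We fix the pair $(t_1,t_2)$ and consider the translated curve $\Gamma+t_3-s_3$. Since $s_3$ and $t_3$ both lie on $\Gamma$ over $I_3$, the condition becomes an incidence between the point $s_1+s_2-t_1-t_2$ and a curve in the family $\{b-f(a-\cdot)\}$, where the parameter $(a,b)$ is determined by $t_1+t_2$ and the point is determined by $s_1+s_2$. Equivalently: the points $x=s_1+s_2$ lie in $S_1+S_2$, and for each $t_2\in S_2$ the set $\{t_1+t_2+\Gamma\text{-piece}\}$ gives curves $f_{a,b}$. We therefore set $\mathbb{P}$ to be the $\delta$-cubes of $S_1+S_2$, weighted by multiplicity and pigeonholed to a uniform multiplicity $m$, and set $\mathcal{F}=\{f_{a,b}:(a,b)\in S_1+S_3\}$ (or the appropriate reflected version). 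This realises $\mathcal{E}$ as roughly $m\cdot|S_3|\cdot \mathcal{I}(\mathbb{F},\mathbb{P})$, up to pigeonholing.

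The key steps, in order, are the following.

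First, we verify the hypotheses of Corollary \ref{cor-recincidence}. The transversality $\operatorname{dist}(I_i,I_j)\gtrsim1$ together with $f''\ge\mathbf{d}$ makes the map $(s_1,s_2)\mapsto s_1+s_2$ bi-Lipschitz. A curved $r\times r'$ rectangle along a translate of $\Gamma$ then pulls back to a set contained in an $O(\sqrt{rr'})$-neighbourhood product of intervals in $I_1\times I_2$. Indeed, the tangency of $\Gamma+s_2$ and the translated curve forces $s_1$ into an interval of length $\sim r'/r$ or $\sim r$, and balancing gives the $\sqrt{rr'}$ scaling. Since each $S_i$ is $(\delta,s)$-KT, the product is then $(\sqrt{rr'}/\delta)^{2s}$-controlled. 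Hence $\mathbb{P}$ is an $f$-rectangular $(\delta,2s,\delta^{-O(\epsilon)})$-KT set, after dividing by the multiplicity $m$ (possibly using random sampling, as in Corollary \ref{cor2}). Similarly, for fixed $p$, the incident curves $\mathbb{F}(p)$ are parametrised by one free point of $S_3$ (or $S_1$) along a curve, so they form a $(\delta,s)$-KT set. Because $s\le 2/3$ we have $s\le\sigma=\min\{2s,2-2s\}$, so they are also $(\delta,\sigma)$-KT, with constant $\lesssim1$ after the trivial $(r/\delta)^s\le(r/\delta)^\sigma$ comparison. The restriction $s\le2/3$ is used exactly here.

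Second, we apply \eqref{equ-correcincid111}:
\[
\mathcal{I}\lessapprox \delta^{-2s/3}|\mathbb{F}|^{2/3}|\mathbb{P}|^{1/3},
\qquad |\mathbb{F}|\le|S_1||S_3|,\quad |\mathbb{P}|\le|S_1||S_2|/m.
\]
We multiply back by the multiplicity factors and combine with the trivial bound $\mathcal{E}\lesssim |S_1||S_2||S_3|\min_i|S_i|$. Optimising the pigeonholed parameter $m$ and symmetrising over the roles of $i=1,2,3$ (by taking a geometric mean of the three resulting bounds) should produce $\delta^{-s}(|S_1||S_2||S_3|)^{5/6}$. As a consistency check: with $|S_i|=\delta^{-s}$ the target is $\delta^{-7s/2}$, and the incidence bound gives $|S|\cdot\delta^{-2s/3}(\delta^{-2s})^{2/3}(\delta^{-2s})^{1/3}=\delta^{-s-2s/3-2s}$, which is better than needed. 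There is therefore room to absorb the multiplicity losses.

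The main obstacle is the first step: correctly setting up the duality so that the multiplicities of $S_1+S_2$ interact well with the rectangular KT constant $C_1$, which must scale like $1/m$ so that the $C_1^{2/3}$ factor compensates. We also need the curved-rectangle pullback estimate to hold uniformly, which requires the $C^3$ bound on $f$ to control the second-order Taylor error of the translated curves on rectangles of length $r$.
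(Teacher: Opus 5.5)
There is a genuine gap, and your own consistency check exposes it. With $|S_i|=\delta^{-s}$ the target is $\delta^{-s}(\delta^{-3s})^{5/6}=\delta^{-7s/2}$, while your chain gives $\delta^{-s-2s/3-2s}=\delta^{-11s/3}$. Since $11/3>7/2$, this is \emph{worse} than needed by a factor $\delta^{-s/6}$, not better, and $m\ge 1$ only makes it worse.

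The underlying problem is the reduction $\mathcal{E}\approx m\,|S_3|\,\mathcal{I}(\mathbb{F},\mathbb{P})$. It is linear in incidences with a crude external factor. Even with $m=1$ and after symmetrising, plugging $|\mathbb{F}|,|\mathbb{P}|\lesssim |S_i||S_j|$ into \eqref{equ-correcincid111} yields only $\delta^{-2s/3}|S_1||S_2||S_3|$. This exceeds $\delta^{-s}(|S_1||S_2||S_3|)^{5/6}$ as soon as $|S_1||S_2||S_3|>\delta^{-2s}$. Optimising $m$ or adding the trivial bound cannot repair this.

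The encoding is also incorrect. If $s_3$ and $t_3$ are both free, then $t_3-s_3$ ranges over the two-dimensional set $\Gamma-\Gamma$, not over a curve. An incidence between $s_1+s_2$ and $f_{a,b}$ with $(a,b)=t_1+t_3\in S_1+S_3$ only says $s_1+s_2+\gamma\approx t_1+t_3$ for some arbitrary $\gamma\in\Gamma$, which is not an energy solution.

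The multiplicity $m$ is also a phantom. Since $\operatorname{dist}(I_1,I_2)\gtrsim 1$ and $|f''|\geq\mathbf{d}$, the map $(z_1,z_2)\mapsto(z_1+z_2,f(z_1)+f(z_2))$ is bi-Lipschitz on $I_1\times I_2$. So each $\delta$-cube contains $O(1)$ points of $S_1+S_2$, and there is no $1/m$ gain in $C_1$ to exploit.

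The missing idea is that $\mathcal{E}$ is a \emph{second moment} of incidences, with curves indexed by \emph{triple} sums.

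\begin{enumerate}
\item For $(a,b)=t_1+t_2+t_3$, the condition $|s_1+s_2+s_3-(a,b)|\le 2\delta$ places $s_1+s_2$ essentially on the graph of $f_{a,b}(x)=b-f(a-x)$.
\item The number of triples $(t_1,t_2,t_3)$ with sum within $2\delta$ of $(a,b)$ is comparable to the number of cubes $p\in\mathcal{P}^\ast=\mathcal{D}_\delta(S_1+S_2)$ for which $(a,b)$ is $p$-representable. Grouping comparable triples into a maximal incomparable family $\mathcal{F}^\ast$ therefore gives $\mathcal{E}\sim\sum_{g\in\mathcal{F}^\ast}|\mathcal{P}^\ast(g)|^2$.
\item Pigeonhole $|\mathcal{P}^\ast(g)|\sim M$ on $\mathcal{F}_1^\ast$, so that $\mathcal{E}\lessapprox M^2|\mathcal{F}_1^\ast|$.
\item Read \eqref{equ-correcincid111} as a bound on $M$-rich curves: $M^3|\mathcal{F}_1^\ast|\lessapprox\delta^{-2s}|\mathcal{P}^\ast|$.
\item Take the geometric mean with the trivial bound $M|\mathcal{F}_1^\ast|\le|\mathcal{P}^\ast|\max_p|\mathcal{F}^\ast(p)|$. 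This gives $\mathcal{E}\lessapprox\delta^{-s}|\mathcal{P}^\ast|\max_p|\mathcal{F}^\ast(p)|^{1/2}\lesssim\delta^{-s}|S_1||S_2||S_3|^{1/2}$.
\item Averaging over the three permutations of $(S_1,S_2,S_3)$ produces the exponent $5/6$.
\end{enumerate}

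Note that $|\mathcal{F}^\ast|$, which can be as large as $|S_1||S_2||S_3|$, is never bounded directly.

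Your hypothesis checks are in line with Lemmas \ref{lem-p2} and \ref{lem-p3}: the pullback argument for the $f$-rectangular $(\delta,2s)$-KT property of $S_1+S_2$, the $(\delta,s)$-KT property of the curves through a fixed $p$ via one free point of $S_3$, and $s\le\sigma$ from $s\le 2/3$. However, they must be carried out for this triple-sum curve family, not for $S_1+S_3$.
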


	To prove Theorem \ref{thm-section5main}, we first reduce the energy estimate to an incidence estimate between a cube family and a transversal family. The following reduction is performed under the same assumptions as in Theorem \ref{thm-section5main}. Similar reductions for parabolas can be found in \cite{O1} and \cite{doprod}.

	Given $(t_1,t_2,t_3)\in S_1\times S_2\times S_3$, let
	\[\mathcal{N}(t_1,t_2,t_3):=\big|\big\{(s_1,s_2,s_3)\in S_1\times S_2\times S_3:\bigg|\sum_{i=1}^3s_i-\sum_{j=1}^3t_j\bigg|\leq 2\delta\big\}\big|.\]
	Then we have
	\begin{equation}\label{equ-energ}
		\mathcal{E}_{3,\delta}(S_1,S_2,S_3)=\sum_{(t_1,t_2,t_3)\in S_1\times S_2\times S_3}\mathcal{N}(t_1,t_2,t_3).
	\end{equation}
	Fix $(t_1,t_2,t_3)\in S_1\times S_2\times S_3$ and suppose that $t_1+t_2+t_3=(a,b)$. Then for any $s_i=(z_i,f(z_i))\in S_i$ with $|\sum_{i=1}^3s_i-\sum_{j=1}^3t_j|\leq 2\delta$, we have
	\begin{equation}\label{equ-relation}
		\left\{
		\begin{array}{lll}
			|z_1+z_2+z_3-a|\leq 2\delta,\\
			|f(z_1)+f(z_2)+f(z_3)-b|\leq2\delta.
		\end{array}
		\right.
	\end{equation}
	Write \[z_3=a-z_1-z_2+e, \quad |e|\leq 2\delta.\] 
	Then \[f(z_1)+f(z_2)=b-f(a-z_1-z_2+e)+e', \quad |e|, ~|e'|\leq 2\delta.\]
	By using the mean value theorem, we infer
	\[|f(a-z_1-z_2+e)-f(a-z_1-z_2)|\leq \|f'\|_{L^\infty([-6,6])}\cdot 2\delta\leq 2\mathbf{c}\delta,\]
	which implies
	\begin{equation}\label{equ-taylor}
		|f(z_1)+f(z_2)-(b-f(a-z_1-z_2))|\leq (2+2\mathbf{c})\delta.
	\end{equation}
	Here we recall $\|f\|_{C^3([-6,6])}\leq \mathbf{c}$. Define 
	\[f_{a,b}(t)=b-f(a-t) \quad \text{and} \quad \Phi(z_1,z_2)=(z_1+z_2,f(z_1)+f(z_2))=s_1+s_2,\]
	then \eqref{equ-taylor} means that $\Phi(z_1,z_2)$ lies in the vertical $C_\mathbf{c}\delta$-neighborhood $\Gamma_{a,b}(C_\mathbf{c}\delta)$ of the graph of $f_{a,b}$ (recall Definition \ref{def:vertical}). Here $C_\mathbf{c}\geq1$ is a constant depending on $\mathbf{c}$, which may vary from line to line in the following.
	
	Quickly note that since $\dist(I_1,I_1)\gtrsim1$, and by the convexity of $f,$ we have
	\begin{equation}\label{eq.derivative}
		|D\Phi(z_1z_2)| = |f'(z_1) - f'(z_2)| \sim_{\mathbf{c},\mathbf{d}} 1, \quad (z_1,z_2)\in I_1\times I_2
	\end{equation}
	where we recall $\min_{x\in [-6,6]}|f''(x)|\geq \mathbf{d}$. Therefore $\Phi$ is bi-Lipschitz on $I_1\times I_2$.

	Let $\mathcal{P}^\ast\subset\mathcal{D}_\delta$ be the set of dyadic $\delta$-cubes intersecting 
	\[\begin{split}
		\Phi(\pi_x(S_1),\pi_x(S_2)):&=\{\Phi(z_1,z_2):(z_1,z_2)\in \pi_x(S_1)\times \pi_x(S_2)\}\\
		&=S_1+S_2
	\end{split}\]
	Here $\pi_x$ is the $x$-projection, hence $\pi_x(S_i)\subset I_i$. For convenience, we introduce the following definition.
	
	\begin{definition}\label{def:reptable}
		Let $(a,b)\in\R^2$ and $p\in\mathcal{P}^\ast=\mathcal{D}_\delta(S_1+S_2)$. We say $(a,b)$ is \emph{$p$-representable} if there exists a triple $(s_1,s_2,s_3)\in S_1\times S_2\times S_3$ such that $s_1+s_2\in p$ and \[|s_1+s_2+s_3-(a,b)|\leq 2\delta.\]
	\end{definition}

	We aim to prove the following lemma.
	\begin{lemma}
		For fixed $(t_1,t_2,t_3)\in S_1\times S_2\times S_3$, we have
		\begin{equation}\label{equ-66}
			\mathcal{N}(t_1,t_2,t_3)\sim_{\mathbf{c},\mathbf{d}} |\{p\in\mathcal{P}^\ast: (t_1+t_2+t_3) \text{ is } \text{$p$-representable}\}|.
		\end{equation}
	\end{lemma}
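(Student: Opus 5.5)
We compare the two sides through the map sending a triple $(s_1,s_2,s_3)$ counted by $\mathcal{N}(t_1,t_2,t_3)$ to the dyadic cube $p\in\mathcal{P}^\ast$ containing $s_1+s_2$. Write $(a,b)=t_1+t_2+t_3$. By Definition \ref{def:reptable}, the image of every counted triple is a cube $p$ for which $(a,b)$ is $p$-representable, and every such $p$ arises this way. So the map is a surjection from the set of triples counted in $\mathcal{N}(t_1,t_2,t_3)$ onto the set of cubes on the right-hand side of \eqref{equ-66}. This gives the lower bound $\mathcal{N}\geq|\{p:\ (a,b)\text{ is }p\text{-representable}\}|$ immediately. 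It remains to show that each fibre has cardinality $O_{\mathbf{c},\mathbf{d}}(1)$.

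Fix $p$ and suppose two counted triples $(s_1,s_2,s_3)$ and $(s_1',s_2',s_3')$ both have $s_1+s_2,\ s_1'+s_2'\in p$.

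\textbf{Controlling $(s_1,s_2)$.} Write $s_i=(z_i,f(z_i))$. Then $\Phi(z_1,z_2)$ and $\Phi(z_1',z_2')$ lie in the same $\delta$-cube. By \eqref{eq.derivative}, $\Phi$ is bi-Lipschitz on $I_1\times I_2$ with constants depending on $\mathbf{c},\mathbf{d}$; this uses $\dist(I_1,I_2)\gtrsim1$ and $|f''|\geq\mathbf{d}$, so that $|f'(z_1)-f'(z_2)|\gtrsim_{\mathbf{c},\mathbf{d}}1$. Some care is needed here, since the Jacobian being nondegenerate only gives local invertibility. Global injectivity with a uniform lower Lipschitz bound follows because $(z_1,z_2)\mapsto(z_1+z_2,f(z_1)+f(z_2))$ restricted to the separated product $I_1\times I_2$ is injective: if $z_1+z_2=w_1+w_2$ and the $f$-sums agree, strict convexity with ordered intervals forces equality. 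A quantitative version of this argument, using the mean value theorem on $f'$, gives $|(z_1,z_2)-(z_1',z_2')|\lesssim_{\mathbf{c},\mathbf{d}}\delta$. Since $S_1,S_2$ are $\delta$-separated, there are only $O_{\mathbf{c},\mathbf{d}}(1)$ choices of the pair $(s_1,s_2)$ for a given $p$.

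\textbf{Controlling $s_3$.} Given $(s_1,s_2)$, the point $s_3$ satisfies $|s_3-((a,b)-s_1-s_2)|\leq2\delta$. Thus $s_3$ lies in a $2\delta$-ball, and by $\delta$-separation of $S_3$ there are $O(1)$ choices.

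Combining these, each fibre has size $O_{\mathbf{c},\mathbf{d}}(1)$, which yields \eqref{equ-66}. The only nontrivial step is the quantitative injectivity of $\Phi$ on $I_1\times I_2$, that is, passing from the Jacobian bound \eqref{eq.derivative} to a global bi-Lipschitz bound. If the paper's assertion that $\Phi$ is bi-Lipschitz is taken as given, the proof reduces to the counting above.
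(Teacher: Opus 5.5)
Your proof is correct and follows the same route as the paper. You map each counted triple to the dyadic cube containing $s_1+s_2$, obtain the lower bound directly from the definition of $p$-representability, and bound each fibre by $O_{\mathbf{c},\mathbf{d}}(1)$ using the bi-Lipschitz property of $\Phi$ on $I_1\times I_2$ together with $\delta$-separation. You also make explicit, correctly, two points the paper leaves implicit: the $O(1)$ count for $s_3$ once $(s_1,s_2)$ is fixed, and the passage from the Jacobian bound \eqref{eq.derivative} to a global quantitative injectivity estimate on the convex product $I_1\times I_2$.
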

	
	\begin{proof}
		Recall that $\mathcal{N}(t_1,t_2,t_3)$ is the number of $(s_1,s_2,s_3)\in S_1\times S_2\times S_3$ such that 
		\begin{equation}\label{eq-tsdef}
			\bigg|\sum_{i=1}^3s_i-\sum_{j=1}^3t_j\bigg|\leq 2\delta,
		\end{equation}
		where $s_i=(z_i,f(z_i))$. Fix one such triple $(s_1,s_2,s_3)$. First, there is a unique cube $p\in\mathcal{P}^\ast$ such that $\Phi(z_1,z_2)\in p$. Since we also have \eqref{eq-tsdef}, this means that $t_1+t_2+t_3$ is $p$-representable. Now each triple $(s_1,s_2,s_3)$ corresponds to one $p\in\mathcal{P}^\ast$ and each $p\in \mathcal{P}^\ast$ contains $O_{\mathbf{c},\mathbf{d}}(1)$ many $\Phi(z_1,z_2)$ since $S_i$ are $\delta$-separated and $\Phi$ is bi-Lipschitz, thus
		\[\begin{split}
			\mathcal{N}(t_1,t_2,t_3)&\lesssim |\{(z_1,z_2)\in\pi_x(S_1)\times\pi_x(S_2): \\&~~~~~~(t_1+t_2+t_3) \text{ is } \text{$p$-representable with }\Phi(z_1,z_2)\in p\in\mathcal{P}^\ast\}|\\
			&\lesssim_{\mathbf{c},\mathbf{d}} |\{p\in\mathcal{P}^\ast: (t_1+t_2+t_3) \text{ is } \text{$p$-representable}\}|.
		\end{split}\]
		The opposite inequality $\gtrsim_{\mathbf{c},\mathbf{d}}$ follows by the definition of $p$-representable.
	\end{proof}
	
	\begin{remark}\label{remk11}
		From the argument above, we actually have \[\{p\in\mathcal{P}^\ast: (a,b) \text{ is } \text{$p$-representable}\}\subset\{p\in\mathcal{P}^\ast:p\subset \Gamma_{a,b}(C_\mathbf{c}\delta)\}.\]
	\end{remark}

	In the following, when $t_1+t_2+t_3=(a,b)$, we define 
	\[\Gamma_{t_1,t_2,t_3}:=\Gamma_{t_1+t_2+t_3}=\Gamma_{a,b} \quad \text{and} \quad f_{t_1,t_2,t_3}:=f_{t_1+t_2+t_3}=f_{a,b}.\]
	Here $\Gamma_{a,b}$ is the graph of $f_{a,b}$. Two curves $\Gamma_{t_1,t_2,t_3}$ and $\Gamma_{s_1,s_2,s_3}$ are said to be \emph{comparable}, written
	\[\Gamma_{t_1,t_2,t_3}\sim \Gamma_{s_1,s_2,s_3},\]
	if
	\[|t_1+t_2+t_3-s_1-s_2-s_3|\leq 2\delta.\]
	Let $\mathcal{G}$ be a maximal set of incomparable curves in \[\{\Gamma_{t_1,t_2,t_3}: (t_1,t_2,t_3)\in S_1\times S_2\times S_3\}\] and let $\mathcal{F}^\ast=\{f_{a,b}: \Gamma_{a,b}\in\mathcal{G}\}$ be the corresponding function family. Now combining \eqref{equ-energ} and \eqref{equ-66}, we infer
	\[\mathcal{E}_{3,\delta}(S_1,S_2,S_3)\sim_{\mathbf{c},\mathbf{d}}\sum_{\Gamma_{a,b}\in \mathcal{G}}\sum_{\substack{(t_1,t_2,t_3)\in S_1\times S_2\times S_3\\\Gamma_{t_1,t_2,t_3}\sim \Gamma_{a,b}}}|\{p\in\mathcal{P}^\ast: (a,b) \text{ is } \text{$p$-representable}\}|.\]
	To proceed, we need to estimate the number of $(t_1,t_2,t_3)\in S_1\times S_2\times S_3$ such that \[\Gamma_{t_1,t_2,t_3}\sim\Gamma_{a,b}\] (equivalently $|t_1+t_2+t_3-(a,b)|\leq 2\delta$). This is the same as $\mathcal{N}(t_1,t_2,t_3)$, hence 
	\begin{equation}\label{equ-reductionincidence}
		\mathcal{E}_{3,\delta}(S_1,S_2,S_3)\sim_{\mathbf{c},\mathbf{d}}\sum_{\Gamma_{a,b}\in \mathcal{G}}|\{p\in\mathcal{P}^\ast: (a,b) \text{ is } \text{$p$-representable}\}|^2.
	\end{equation}
	In the sequel, we give some basic properties of $(\mathcal{F}^\ast,\mathcal{P}^\ast)_\delta$. The proofs of Lemma \ref{lem-p1} and Lemma \ref{lem-p0} can be found in \cite[Example 1.5]{OPY2025}.

	\begin{lemma}\label{lem-p1}
		$\mathcal{F}^\ast$ is a transversal family on $[-2,2]$ with constant depending only on $\mathbf{c},\mathbf{d}$.
	\end{lemma}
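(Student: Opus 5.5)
The plan is to reduce Lemma \ref{lem-p1} to the translation example \cite[Example 1.5]{OPY2025}: every element of $\mathcal{F}^\ast$ has the form $f_{a,b}(t)=b-f(a-t)$, where $(a,b)=t_1+t_2+t_3$ lies in a bounded region because $S_i\subset\Gamma$. Concretely, $a\in[-3,3]$ and $|b|\le 3\mathbf{c}$. For $t\in[-2,2]$ the argument $a-t$ then lies in $[-5,5]\subset[-6,6]$, so each $f_{a,b}$ is a well-defined $C^2$ function on $[-2,2]$, and $\|f_{a,b}\|_{C^2([-2,2])}\lesssim_{\mathbf{c}}1$. If one insists on $\mathcal{F}^\ast\subset B_{C^2}(1)$, a harmless rescaling (depending only on $\mathbf{c}$) arranges this; I would first record it as a remark.

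The core step is the transversality inequality \eqref{eq-trans} for two members $f_{a,b},f_{a',b'}$. First I would bound the right-hand side. By the mean value theorem on $[-6,6]$,
\[
|f_{a,b}^{(k)}(\theta)-f_{a',b'}^{(k)}(\theta)|\lesssim_{\mathbf{c}} |a-a'|+|b-b'|\qquad (k=0,1,2),
\]
using $\|f\|_{C^3}\le\mathbf{c}$ for $k=2$. Hence $\|f_{a,b}-f_{a',b'}\|_{C^2}\lesssim_{\mathbf{c}}|(a,b)-(a',b')|$.

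For the left-hand side, fix $\theta\in[-2,2]$. The map
\[
\Psi_\theta:(a,b)\mapsto (f_{a,b}(\theta),f_{a,b}'(\theta))=(b-f(a-\theta),\,f'(a-\theta))
\]
is what we need to control. Its second coordinate gives $|f'(a-\theta)-f'(a'-\theta)|\ge\mathbf{d}|a-a'|$, by $|f''|\ge\mathbf{d}$ and the fact that $f''$ has constant sign on $[-6,6]$. Then
\[
|b-b'|\le |f_{a,b}(\theta)-f_{a',b'}(\theta)|+\mathbf{c}|a-a'|.
\]
Combining these gives $|(a,b)-(a',b')|\lesssim_{\mathbf{c},\mathbf{d}}|f_{a,b}(\theta)-f_{a',b'}(\theta)|+|f'_{a,b}(\theta)-f'_{a',b'}(\theta)|$, uniformly in $\theta$. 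Chaining with the upper bound yields \eqref{eq-trans} with $\mathfrak{T}=C_{\mathbf{c},\mathbf{d}}$.

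I do not expect a genuine obstacle; the argument is routine. The only points needing care are:
\begin{itemize}
\item checking that the domains stay inside $[-6,6]$, which is where the bound $a\in[-3,3]$ is used;
\item noting that $f''$ does not change sign, which holds because it is continuous and bounded away from $0$;
\item noting that the constant is independent of $\theta$.
\end{itemize}
The incomparability used to define $\mathcal{G}$ is irrelevant for transversality, since any subfamily of a transversal family is transversal.
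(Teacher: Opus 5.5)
Your argument is correct and is essentially the verification the paper defers to \cite[Example 1.5]{OPY2025}: for each $\theta\in[-2,2]$ the map $(a,b)\mapsto(f_{a,b}(\theta),f_{a,b}'(\theta))$ is bi-Lipschitz with constants depending only on $\mathbf{c},\mathbf{d}$, and $\|f_{a,b}-f_{a',b'}\|_{C^2([-2,2])}\lesssim_{\mathbf{c}}|(a,b)-(a',b')|$, which also gives Lemma \ref{lem-p0}. The only superfluous point is the constant sign of $f''$: the mean value theorem already produces a single intermediate point where $|f''|\geq\mathbf{d}$.
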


	\begin{lemma}\label{lem-p0}
		For any $f_{a_1,b_1}, f_{a_2,b_2}\in \mathcal{F}^\ast$, we have
		\[\|f_{a_1,b_1}-f_{a_2,b_2}\|_{C^2([-2,2])}\sim_{\mathbf{c},\mathbf{d}} |(a_1,b_1)-(a_2,b_2)|.\]
	\end{lemma}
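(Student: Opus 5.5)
The plan is a direct computation from the explicit form $f_{a,b}(t)=b-f(a-t)$ and the bounds \eqref{eq-abc}; no incidence machinery is needed. First I will check that all arguments stay in the domain of $f$. Every $f_{a,b}\in\mathcal{F}^\ast$ has $(a,b)=t_1+t_2+t_3$ with $t_i\in\Gamma$. Hence $a\in[-3,3]$ and $|b|\leq 3\mathbf{c}$. For $t\in[-2,2]$ this gives $a-t\in[-5,5]\subset[-6,6]$, so all derivatives of $f$ up to order three are controlled there. Fix $(a_1,b_1),(a_2,b_2)$ and set
\[
g(t):=f_{a_1,b_1}(t)-f_{a_2,b_2}(t)=(b_1-b_2)-\big(f(a_1-t)-f(a_2-t)\big).
\]
Then $g'(t)=f'(a_1-t)-f'(a_2-t)$ and $g''(t)=-\big(f''(a_1-t)-f''(a_2-t)\big)$.

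\emph{Upper bound.} I will apply the mean value theorem to $f$, $f'$ and $f''$ on the segment between $a_1-t$ and $a_2-t$, which lies in $[-6,6]$. Using $\|f\|_{C^3([-6,6])}\leq\mathbf{c}$, each difference $|f^{(k)}(a_1-t)-f^{(k)}(a_2-t)|$ with $k=0,1,2$ is at most $\mathbf{c}|a_1-a_2|$. Therefore $\|g\|_{C^2([-2,2])}\leq|b_1-b_2|+3\mathbf{c}|a_1-a_2|\lesssim_{\mathbf{c}}|(a_1,b_1)-(a_2,b_2)|$.

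\emph{Lower bound.} It suffices to bound $|(a_1,b_1)-(a_2,b_2)|$ by $|g(t)|+|g'(t)|$ at a single point $t$, for instance $t=0$, because $\|g\|_{C^2}$ dominates this quantity. By the mean value theorem and $\min_{[-6,6]}|f''|\geq\mathbf{d}$, we get $|g'(t)|\geq\mathbf{d}|a_1-a_2|$; this uses that $f''$ is continuous and nonvanishing, hence of constant sign. Moreover
\[
|b_1-b_2|\leq|g(t)|+\mathbf{c}|a_1-a_2|\leq|g(t)|+(\mathbf{c}/\mathbf{d})|g'(t)|.
\]
Combining these two bounds gives $|(a_1,b_1)-(a_2,b_2)|\lesssim_{\mathbf{c},\mathbf{d}}|g(0)|+|g'(0)|\leq\|g\|_{C^2([-2,2])}$.

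The only point that needs any care is the domain bookkeeping: checking that $a-t$ stays inside $[-6,6]$ so that \eqref{eq-abc} applies. Once that is verified, both directions are one-line consequences of the mean value theorem.
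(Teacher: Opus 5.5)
Your argument is correct and is essentially the approach the paper relies on: the paper gives no proof of its own but defers to \cite[Example 1.5]{OPY2025}, and your direct mean-value-theorem computation, including the check that $a-t\in[-5,5]\subset[-6,6]$, supplies exactly that. One small remark: your comment that $f''$ has constant sign is superfluous, since the mean value theorem gives $f'(a_1-t)-f'(a_2-t)=f''(\xi)(a_1-a_2)$ with $|f''(\xi)|\geq\mathbf{d}$ directly.
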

	
	\begin{lemma}\label{lem-p2}
		For all $p\in \mathcal{P}^\ast=\mathcal{D}_\delta(S_1+S_2)$, \[\mathcal{F}^\ast(p):=\{f_{a,b}\in\mathcal{F}^\ast: (a,b)\text{ is } \text{$p$-representable}\}\] is a $(\delta,s,O_{\mathbf{c},\mathbf{d}}(1))$-KT set in $C^2([-2,2])$.
	\end{lemma}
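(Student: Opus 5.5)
The plan is to identify $\mathcal{F}^\ast(p)$ with its parameter set and transfer the Katz--Tao condition from $S_3$. By Lemma \ref{lem-p0}, the map $(a,b)\mapsto f_{a,b}$ is bi-Lipschitz from parameter space into $C^2([-2,2])$, with constants depending only on $\mathbf{c},\mathbf{d}$. So it suffices to show that
\[
A(p):=\{(a,b): f_{a,b}\in\mathcal{F}^\ast(p)\}
\]
satisfies $|A(p)\cap B(z,r)|_\delta\lesssim_{\mathbf{c},\mathbf{d}}(r/\delta)^s$ for every $z\in\R^2$ and $r\in[\delta,1]$. Balls in $C^2$ of radius $r$ then pull back to parameter balls of radius $\sim r$.

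Next I rewrite $p$-representability. Fix $p$ and any $(a,b)\in A(p)$. By definition there are $s_1\in S_1$, $s_2\in S_2$ with $s_1+s_2\in p$, and $s_3\in S_3$ with $|s_1+s_2+s_3-(a,b)|\le 2\delta$. Let $c_p$ be the center of $p$. Then
\[
|(a,b)-(c_p+s_3)|\le 2\delta+\sqrt2\,\delta\lesssim\delta,
\]
so every point of $A(p)$ lies within $O(\delta)$ of the translate $c_p+S_3$. Moreover $(a,b)\in\mathcal{G}$-parameters are $2\delta$-separated by the maximality/incomparability construction. Hence each $s_3$ accounts for $O(1)$ points of $A(p)$.

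Finally I count. For $z\in\R^2$ and $r\in[\delta,1]$, every $(a,b)\in A(p)\cap B(z,r)$ gives some $s_3\in S_3\cap B(z-c_p,r+O(\delta))$, and each such $s_3$ is used $O(1)$ times. Therefore
\[
|A(p)\cap B(z,r)|_\delta\lesssim |S_3\cap B(z-c_p,2r)|_\delta\lesssim (r/\delta)^s,
\]
using the $(\delta,s)$-KT hypothesis on $S_3$ (with $r\le1/2$ handled by enlarging constants, and $r\sim1$ trivially). Combining this with the bi-Lipschitz identification gives the $(\delta,s,O_{\mathbf{c},\mathbf{d}}(1))$-KT property of $\mathcal{F}^\ast(p)$.

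The only point needing care is the multiplicity bound when passing from $A(p)$ to $S_3$, that is, that distinct incomparable curves correspond to $2\delta$-separated parameters. This is exactly the definition of comparability, so I expect no real obstacle beyond bookkeeping of constants.
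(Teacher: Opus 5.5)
Your argument is correct and follows the paper's proof. Like the paper, you reduce via Lemma \ref{lem-p0} to the parameter set, show that it lies in an $O(\delta)$-neighbourhood of a translate of $S_3$, and conclude from the $2\delta$-separation of the parameters of $\mathcal{G}$ together with the Katz--Tao property of $S_3$.

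The one difference is a small simplification on your side. The paper covers $P^\ast(p)$ by the $O_{\mathbf{c},\mathbf{d}}(1)$ translates $s_1+s_2+S_3$ with $(s_1,s_2)\in Z(p)$, and it needs $\Phi$ to be bi-Lipschitz to bound $|Z(p)|$. You use the single translate $c_p+S_3$, which makes that step unnecessary.
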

	\begin{proof}
		Let $P^\ast:=\{(a,b):f_{a,b}\in \mathcal{F}^\ast\}$. By Lemma \ref{lem-p0}, it suffices to prove that 
		\[P^\ast(p):=\{(a,b): f_{a,b}\in \mathcal F^\ast(p)\}\]
		is a $(\delta,s, O_{\mathbf{c},\mathbf{d}}(1))$-KT set. Fix $p\in \mathcal{P}^\ast$, and let
		\[Z(p):=\{(s_1,s_2)\in S_1\times S_2: s_1+s_2\in p\}.\]
		Since $\Phi$ is bi-Lipschitz and $S_i$ are $\delta$-separated, we have $|Z(p)|\lesssim_{\mathbf{c},\mathbf{d}} 1$.
		
		For each $(s_1,s_2)\in Z(p)$, define
		\[C_{s_1,s_2}:=s_1+s_2+S_3=\Big\{s_1+s_2+s_3: s_3\in S_3\Big\}.\]
		Since $S_3$ is a $(\delta,s)$-KT set, so is $C_{s_1,s_2}$. Let $(a,b)\in P^\ast(p)$, then $(a,b)$ is $p$-representable. Thus there is a triple $(s_1,s_2,s_3)$ such that $s_1+s_2\in p$ and \[|s_1+s_2+s_3-(a,b)|\leq 2\delta.\]
		This means $(a,b)\in C_{s_1,s_2}(2\delta)$, hence
		\[P^\ast(p)\subset \bigcup_{(s_1,s_2)\in Z(p)} C_{s_1,s_2}(2\delta).\]
		Since $|Z(p)|\lesssim_{\mathbf{c},\mathbf{d}} 1$ and $P^\ast$ is $2\delta$-separated, it follows from the $(\delta,s)$-KT condition of $C_{s_1,s_2}$ that $P^\ast(p)$ is a $(\delta,s,O_{\mathbf{c},\mathbf{d}}(1))$-KT set.
	\end{proof}

	\begin{lemma}\label{lem-p3}
		The cube family $\mathcal{P}^\ast$ is an $f$-rectangular $(\delta,2s,O_{\mathbf{c},\mathbf{d}}(1))$-KT set with 
		\[|\mathcal{P}^\ast|\leq |S_1||S_2|.\]
	\end{lemma}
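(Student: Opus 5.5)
The cardinality bound is immediate from the definitions. Every $p\in\mathcal{P}^\ast$ intersects $S_1+S_2=\Phi(\pi_x(S_1),\pi_x(S_2))$, so it contains some point $s_1+s_2$. Each sum $s_1+s_2$ lies in exactly one dyadic $\delta$-cube. This gives a surjection from a subset of $S_1\times S_2$ onto $\mathcal{P}^\ast$, hence $|\mathcal{P}^\ast|\le |S_1||S_2|$.

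For the $f$-rectangular KT condition, I fix $\delta\le r',r$, a shift $(a,b)\in[-2,2]\times\R$, and a segment $\gamma$ of the graph of $x\mapsto f(x-a)+b$ of arc length $r$. Let $\mathbf{R}=\gamma(r')$. The goal is
\[|\mathcal{P}^\ast\cap\mathbf{R}|_\delta\lesssim_{\mathbf{c},\mathbf{d}}\Big(\frac{\sqrt{rr'}}{\delta}\Big)^{2s}=\Big(\frac{rr'}{\delta^2}\Big)^{s}.\]
Each $p\in\mathcal{P}^\ast\cap\mathbf{R}$ contains some $s_1+s_2=\Phi(z_1,z_2)$, and there are $O(1)$ cubes per point. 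So it suffices to count the pairs $(z_1,z_2)\in\pi_x(S_1)\times\pi_x(S_2)$ with $\Phi(z_1,z_2)\in\mathbf{R}'$, where $\mathbf{R}'$ is $\mathbf{R}$ enlarged by $O(\delta)$.

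I will pull $\mathbf{R}'$ back under $\Phi$, which is bi-Lipschitz on $I_1\times I_2$ by \eqref{eq.derivative}, and show that $\Phi^{-1}(\mathbf{R}')$ lies in a set of the form
\[\{(z_1,z_2): z_1\in J_1,\ z_2\in J_2\},\quad |J_1|\lesssim \max(r',\ldots),\]
and more precisely in a product of intervals $J_1\times J_2$ with $|J_1|\,|J_2|$-type control. The key observation is the one behind the parabolic case in \cite{doprod}. Parametrise $\mathbf{R}'$ by the first coordinate $X=z_1+z_2\in\pi_x(\gamma)$, an interval of length $\lesssim r$. The constraint is then
\[F_X(z_1):=f(z_1)+f(X-z_1)-f(X-a)-b\in[-O(r'),O(r')].\]
Since $\partial_{z_1}F_X=f'(z_1)-f'(X-z_1)$ has size $\sim_{\mathbf{c},\mathbf{d}}1$ by separation of $I_1,I_2$, for each $X$ the allowed $z_1$ form an interval of length $\lesssim r'$. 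The map $X\mapsto$ (centre of this interval) is Lipschitz, with derivative $\partial_X$ of the solution $z_1(X)$ given by
\[-\frac{f'(X-z_1)-f'(X-a)}{f'(z_1)-f'(X-z_1)}.\]
This derivative is $O(1)$ in general, which is useless.

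This is the main obstacle. A naive argument only places the $z_1$ coordinate in an interval of length $r+r'$ and gives $(r/\delta)^s(r/\delta)^s$ rather than $(rr'/\delta^2)^s$. I expect the fix to be the following. The pair $(z_1,z_2)$ ranges over a curve segment in $I_1\times I_2$ thickened by $r'$. I want to show that \emph{both} $z_1$ and $z_2$ are confined to intervals of length $\lesssim\sqrt{rr'}$ each, or else that one lies in an $r$-interval while the other lies in an $r'$-interval. Either outcome gives the bound via the $(\delta,s)$-KT property of $S_1$ and $S_2$ on the $x$-coordinate, using that $f$ is bi-Lipschitz so that $S_i$ KT implies $\pi_x(S_i)$ KT:
\[\#\lesssim \Big(\frac{\sqrt{rr'}}{\delta}\Big)^{s}\Big(\frac{\sqrt{rr'}}{\delta}\Big)^{s}.\]

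To obtain this I would reparametrise by $u=z_1-z_2$ and $X=z_1+z_2$. Then $f(z_1)+f(z_2)$ is a convex function of $u$ at fixed $X$, with second derivative $\sim\mathbf{d}$ and vanishing derivative only where $z_1=z_2$, which lies outside the domain. I would compare with the curve $f(X-a)+b$ along $\gamma$ and use the $C^3$ bound to Taylor-expand about a point of $\gamma$. If this comparison fails to give $\sqrt{rr'}$ confinement, I would fall back on the paper's transversality machinery, namely Lemma \ref{lem-p0} and Lemma \ref{lem:dual-rectKT-oneway}, to convert the rectangle into a $C^2$-ball computation. The constants throughout depend only on $\mathbf{c},\mathbf{d}$.
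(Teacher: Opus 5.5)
\textbf{There is a genuine gap: the KT estimate is not proved.} Your cardinality bound is fine, and so is the reduction to counting pairs $(z_1,z_2)\in\pi_x(S_1)\times\pi_x(S_2)$ with $\Phi(z_1,z_2)$ in a slightly enlarged $\mathbf{R}$. The problem is that both confinement outcomes you aim for are false in general.

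The preimage $\Phi^{-1}(\mathbf{R})$ is an $r'$-neighbourhood of a curve of length $\sim r$ whose slope is typically of size $\sim 1$. For instance, with $f(x)=x^2$ the constraint $f(z_1)+f(z_2)=f(z_1+z_2-a)+b$ reads $(z_1-a)(z_2-a)=(a^2-b)/2$. This is a hyperbola with slope $-(z_2-a)/(z_1-a)$. When $|z_1-a|\sim|z_2-a|\sim 1$, an arc of length $r$ projects onto intervals of length $\sim r$ in \emph{both} coordinates. So for $r'\ll r$, $\Phi^{-1}(\mathbf{R})$ lies neither in a $\sqrt{rr'}\times\sqrt{rr'}$ box nor in an $r\times r'$ box. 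Any single product box containing it only yields $(r/\delta)^{2s}$.

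The fallback to Lemma~\ref{lem-p0} and Lemma~\ref{lem:dual-rectKT-oneway} does not help either. Those concern the family $\{f_{a,b}\}$ and its parameter set, not points of $S_1+S_2$.

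\textbf{The missing idea is a fibred count.} The Lipschitz bound you called useless is half of it: it already shows that $z_1$ ranges over an interval of length $O(r)$ (assuming $r'\le r$, as the paper does). The other half is a fibre bound in the \emph{other} direction, which requires choosing the right coordinate. Since $\dist(I_1,I_2)\gtrsim 1$, the shift $a$ is at distance $\gtrsim 1$ from one of $I_1,I_2$, say $I_1$. Then for fixed $z_1$,
\[\partial_{z_2}\big(f(z_1)+f(z_2)-f(z_1+z_2-a)-b\big)=f'(z_2)-f'(z_1+z_2-a),\]
which has fixed sign and size $\ge\mathbf{d}|z_1-a|\gtrsim 1$. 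Combined with $|\partial_{z_1}(\cdot)|\lesssim\mathbf{c}$, this shows that as $z_1$ ranges over any $r'$-interval $\mathbf{I}$, the admissible $z_2$ stay in an interval of length $O(r')$. Equivalently, in your own notation, $dz_1/dX=(f'(X-a)-f'(z_2))/(f'(z_1)-f'(z_2))$ has size $\gtrsim |z_1-a|$, so the curve is a bounded-slope graph over $z_1$.

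Summing over the $r'$-intervals $\mathbf{I}$ that cover the $O(r)$-range of $z_1$ then gives
\[\sum_{\mathbf{I}}|\pi_x(S_1)\cap\mathbf{I}|\cdot O\Big(\Big(\frac{r'}{\delta}\Big)^{s}\Big)\lesssim\Big(\frac{r}{\delta}\Big)^{s}\Big(\frac{r'}{\delta}\Big)^{s}.\]
The gain comes from summing $|\pi_x(S_1)\cap\mathbf{I}|$ over the $\mathbf{I}$, which totals $\lesssim (r/\delta)^s$, rather than enclosing the region in a product box. If instead $a$ is far from $I_2$, swap the roles of $z_1$ and $z_2$. This is exactly the paper's argument.
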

	\begin{proof}
		Take any curved rectangle $\mathbf R=\gamma(r')$ as in Definition \ref{def:recKT}, where 
		\[\gamma=\{(t,f_{a,b}^\ast(t)):t\in I_\gamma\}, \quad |I_\gamma|\lesssim r\]
		with $f_{a,b}^\ast(t)=f(t-a)+b$ for some $(a,b)\in [-2,2]\times \R$. Without loss of generality, we assume $r\geq r'$. Let $I_1, I_2$ be the intervals in Theorem \ref{thm-section5main}. Since each $p\in\mathcal{P}^\ast$ contains one and at most $O_{\mathbf{c},\mathbf{d}}(1)$ points in $\Phi(\pi_x(S_1),\pi_x(S_2))$, it suffices to count the number of points $\Phi(z_1,z_2)$ contained in $\mathbf{R}$. Let
		\[U:=I_1\times I_2,\quad E:=\pi_x(S_1)\times \pi_x(S_2) \quad \text{and} \quad\Phi_U:=\Phi|_U .\]
		Since $E\subset U$ and $\Phi_U$ is injective, we have
		\[|\Phi(E)\cap \mathbf R|=|E\cap \Phi_U^{-1}(\mathbf R\cap \Phi(U))|.\]
		Set
		\[\mathbf W:=\Phi_U^{-1}(\mathbf R\cap \Phi(U)).\]
		It remains to show that
		\[|E\cap \mathbf W|\lesssim_{\mathbf{c},\mathbf{d}} \bigg(\frac{\sqrt{rr'}}{\delta}\bigg)^{2s}.\]

		To proceed, we take $(x,y)\in \mathbf W$, then 
		\[x+y\in I_\gamma \quad \text{and} \quad |f(x)+f(y)-f(x+y-a)-b|\leq r'.\]
		Since $\dist(I_1,I_2)\gtrsim1$, We may assume that
		\[\dist(a,I_1)\gtrsim 1.\]
		Otherwise, we have $\dist(a,I_2)\gtrsim 1$ which can be treated by the same argument as below. Now we claim that for each dyadic $r'$-cube $\mathbf{I}\subset I_1$ intersecting $\pi_x(\mathbf W)$, the diameter
		\begin{equation}\label{eq-clami1}
			\diam\big(\{y\in I_2: (x,y)\in\mathbf{W} \text{ for some } x\in\mathbf{I}\}\big)\lesssim_{\mathbf{c},\mathbf{d}} r'.
		\end{equation}
		Indeed, write $F(x,y)=f(x)+f(y)-f(x+y-a)-b$, then
		\[|\partial_yF(x,y)|=|f'(y)-f'(x+y-a)|\gtrsim_{\mathbf{d}}1, \quad x\in I_1,\]
		where we use $f''\geq \mathbf{d}$ and $\dist(a,I_1)\gtrsim1$. If $(x_1,y_1),(x_2,y_2)\in \mathbf W\cap (\mathbf{I}\times I_2)$, then
		\[|F(x_1,y_1)|\leq r'\quad \text{and} \quad |F(x_2,y_2)|\leq r'.\]
		Using the triangle inequality, the mean value theorem, $|\partial_yF|\gtrsim_{\mathbf{d}}1$ and $|\partial_xF|\lesssim_{\mathbf c}1$,
		\[|y_1-y_2|\lesssim_{\mathbf d}|F(x_1,y_1)-F(x_1,y_2)|\lesssim_{\mathbf{c}} r',\]
		which implies \eqref{eq-clami1}.

		Next we show that the $x$-projection of $\mathbf W$ has length
		$O_{\mathbf c,\mathbf d}(r)$. Take $(x_i,y_i)\in \mathbf W$ and write $t_i:=x_i+y_i$ for $i=1,2$, thus $t_i\in I_\gamma$.
		Write \[\tilde F(t,u)=f(u)+f(t-u)-f(t-a)-b,\] then $F(x,y)=\tilde F(x+y,x)$,
		hence
		\[|\tilde F(t_i,x_i)|\leq r',\quad i=1,2.\]
		Connect $(t_1,x_1)$ and $(t_2,x_2)$ by the line segment
		\[(t_\theta,u_\theta)=\big((1-\theta)t_1+\theta t_2,(1-\theta)x_1+\theta x_2 \big) \in I_\gamma \times I_1,\quad 0\leq \theta\leq 1.\]
		Then we have $t_\theta-u_\theta\in I_2$ and 
		\[|\partial_u\tilde F(t_\theta,u_\theta)|=|f'(u_\theta)-f'(t_\theta-u_\theta)|\gtrsim_{\mathbf d}1.\]
		Since $(u_\theta,t_\theta-u_\theta)\in I_1\times I_2$ and $|f''|\geq \mathbf{d}$, $\partial_u\tilde F(t_\theta,u_\theta)$ has a fixed sign for $\theta\in [0,1]$. Hence
		\[\left|\int_0^1 \partial_u\tilde F(t_\theta,u_\theta)\,d\theta\right|\gtrsim_{\mathbf d}1.\]
		On the other hand, we have $|\partial_t\tilde F|\lesssim_{\mathbf c}1$. By the fundamental theorem of calculus,
		\[\tilde F(t_2,x_2)-\tilde F(t_1,x_1)=(x_2-x_1)\int_0^1 \partial_u\tilde F(t_\theta,u_\theta)\,d\theta +(t_2-t_1)\int_0^1 \partial_t\tilde F(t_\theta,u_\theta)\,d\theta .\]
		Consequently,
		\[|\tilde F(t_2,x_2)-\tilde F(t_1,x_1)|\geq C_{\mathbf d}|x_2-x_1|-C_{\mathbf c}|t_2-t_1|.\]
		Since $|\tilde F(t_i,x_i)|\leq r'$ for $i=1,2$, we infer
		\[|\tilde F(t_2,x_2)-\tilde F(t_1,x_1)|\leq 2r'.\]
		Therefore
		\[|x_2-x_1|\lesssim_{\mathbf c,\mathbf d}|t_2-t_1|+r'\lesssim_{\mathbf c,\mathbf d} r,\]
		using $r'\leq r$. Hence the $x$-projection of $\mathbf W$ is contained in an interval of length $O_{\mathbf c,\mathbf d}(r)$.
		
		Now cover $\mathbf W$ by $r'$-dyadic squares. Let $\mathcal I$ be the family of $r'$-dyadic intervals $\mathbf{I}$ intersecting the $x$-projection of $\mathbf W$. Then the union of the intervals in $\mathcal I$ is contained in an interval of length $O_{\mathbf c,\mathbf d}(r)$. For each $\mathbf{I}\in\mathcal I$, let
		\[\mathcal{K}_\mathbf{I}=\{\mathbf{J}\in\mathcal{D}_{r'}: (\mathbf{I}\times \mathbf{J})\cap \mathbf W\neq \emptyset\}.\]
		By \eqref{eq-clami1}, we have $|\mathcal{K}_\mathbf{I}|\lesssim_{\mathbf{c},\mathbf{d}}1$. Therefore, by the KT-conditions of $S_1$ and $S_2$, we infer
		\[\begin{split}
			|E\cap \mathbf W|&\leq\sum_{\mathbf{I}\in \mathcal{I}}\sum_{\mathbf{J}\in \mathcal{K}_\mathbf{I}} |\pi_x(S_1)\cap \mathbf{I}|\,|\pi_x(S_2)\cap \mathbf{J}|\\
			&\lesssim_{\mathbf{c},\mathbf{d}} \bigg(\frac{r'}{\delta}\bigg)^s\sum_{\mathbf{I}\in \mathcal{I}}|\pi_x(S_1)\cap \mathbf{I}|\lesssim_{\mathbf{c},\mathbf{d}} \bigg(\frac{r'}{\delta}\bigg)^s\bigg(\frac{r}{\delta}\bigg)^s,
		\end{split}\]
		which proves the rectangular KT-condition for $\mathcal{P}^\ast$. Finally, $|\mathcal P^*|\leq |S_1||S_2|$ follows directly from the definition
		of $\mathcal P^*$.
	\end{proof}

	After those reductions, Theorem \ref{thm-section5main} now follows easily from Corollary \ref{cor-recincidence}.
	
	\begin{proof}[Proof of Theorem \ref{thm-section5main}]
		Recall that $\mathcal{P}^\ast$ is the set of dyadic $\delta$-cubes intersecting $S_1+S_2$
		and $\mathcal{G}$ is the set of incomparable curves in
		\[\{\Gamma_{t_1,t_2,t_3}: (t_1,t_2,t_3)\in S_1\times S_2\times S_3\}.\]
		By \eqref{equ-reductionincidence}, we have
		\begin{equation}\label{equ-98}
			\mathcal{E}_{3,\delta}(S_1,S_2,S_3)\sim_{\mathbf{c},\mathbf{d}}\sum_{\Gamma_{a,b}\in \mathcal{G}}|\{p\in\mathcal{P}^\ast: (a,b) \text{ is } \text{$p$-representable}\}|^2=:\sum_{g\in \mathcal{F}^\ast}|\mathcal{P}^\ast(g)|^2.
		\end{equation}
		From Remark \ref{remk11} we also know
		\[\{p\in\mathcal{P}^\ast: (a,b) \text{ is } \text{$p$-representable}\}\subset\{p\in\mathcal{P}^\ast:p\subset \Gamma_{a,b}(C_\mathbf{c}\delta)\}.\]
		After replacing $\mathcal{P}^\ast$ by $C_{\mathbf{c}}\delta$-cubes, we may assume that 
		\[\{p\in\mathcal{P}^\ast: (a,b) \text{ is } \text{$p$-representable}\}\subset\{p\in\mathcal{P}^\ast:p\cap \Gamma_{a,b}\neq\emptyset\}.\]
		Recalling Lemma \ref{lem-p2} and Lemma \ref{lem-p3}, we have:
		\begin{itemize}
			\item For each $p\in \mathcal{P}^\ast$, $$\mathcal{F}^\ast(p):=\{f_{a,b}\in\mathcal{F}^\ast: (a,b)\text{ is } \text{$p$-representable}\}$$ is a $(\delta,s,O_{\mathbf{c},\mathbf{d}}(1))$-KT set in $C^2([-2,2])$.
			\item $\mathcal{P}^\ast$ is an $f$-rectangular $(\delta,2s,O_{\mathbf{c},\mathbf{d}}(1))$-KT set with $|\mathcal{P}^\ast|\lesssim |S_1||S_2|$.
		\end{itemize}

		To proceed, we pigeonhole a sub-family $\mathcal{F}_1^\ast\subset \mathcal{F}^\ast$ and $M\in 2^\N$ such that
		\[|\mathcal{P}^\ast(g)|\sim M \quad \text{for all}\quad g\in \mathcal{F}_1^\ast\]
		and
		\begin{equation}\label{equ-99}
			\mathcal{E}_{3,\delta}(S_1,S_2,S_3)\lessapprox |\mathcal{F}_1^\ast|M^2.
		\end{equation}
		Note $\sigma\geq s$ since $s\in (0,2/3]$. Hence $\mathcal{F}_1^\ast(p):=\mathcal{F}^\ast(p)\cap\mathcal{F}_1$ is also a $(\delta,\sigma,O_{\mathbf{c},\mathbf{d}}(1))$-KT set. We apply Corollary \ref{cor-recincidence} to $(\mathcal{F}_1^\ast,\mathcal{P}^\ast)$ to get
		\begin{equation}\label{equ-99}
			\mathcal{I}(\mathcal{F}_1^\ast,\mathcal{P}^\ast)=M|\mathcal{F}_1^\ast|\lessapprox \delta^{-2s/3}|\mathcal{F}_1^\ast|^{2/3}\cdot |\mathcal{P}^\ast|^{1/3},
		\end{equation}
		which implies
		\begin{equation}\label{equ-100}
			M^3|\mathcal{F}_1^\ast|\lessapprox \delta^{-2s}|\mathcal{P}^\ast|.
		\end{equation}
		Moreover, we have the trivial estimate
		\begin{equation}\label{equ-101}
			M|\mathcal{F}_1^\ast|=\mathcal{I}(\mathcal{F}_1^\ast,\mathcal{P}^\ast)\leq |\mathcal{P}^\ast|\cdot \max_p|\mathcal{F}^\ast(p)|.
		\end{equation}
		Taking the geometric average of \eqref{equ-100} and \eqref{equ-101}, we obtain
		\begin{equation}\label{equ-102}
			\mathcal{E}_{3,\delta}(S_1,S_2,S_3)\lessapprox |\mathcal{F}^\ast|M^2 \lessapprox \delta^{-s}|\mathcal{P}^\ast|\cdot (\max_p|\mathcal{F}^\ast(p)|)^{1/2}.
		\end{equation}
		Since $|\mathcal{P}^\ast|\lesssim |S_1||S_2|$ and $|\mathcal{F}^\ast(p)|\leq |S_3|$, we infer \[\mathcal{E}_{3,\delta}(S_1,S_2,S_3)\lessapprox \delta^{-s}|S_1||S_2||S_3|^{1/2}.\]
		Since all the argument works for permutations of $(S_1,S_2,S_3)$, we take the geometric average of these three inequalities and conclude that
		\[\mathcal{E}_{3,\delta}(S_1,S_2,S_3)\lessapprox \delta^{-s} (|S_1|\cdot|S_2|\cdot |S_3|)^{5/6}.\]
		We conclude Theorem \ref{thm-section5main} by choosing $\delta<\delta_0(\mathbf{c},\mathbf{d},s,\epsilon)$ small enough.
	\end{proof}
	
	\begin{remark}
		The argument that Theorem \ref{thm-section5main} implies Theorem \ref{thm-L6decay} is almost the same as \cite[Proposition 4.14]{doprod}, with one minor difference concerning rescaling. In \cite[Proposition 4.14]{doprod}, one uses that Frostman measures on pieces of the parabola can be rescaled to Frostman measures supported on the parabola. In our setting, we instead use that Frostman measures supported on convex curves can be rescaled to Frostman measures supported on different convex curves, while preserving the key constants $\mathbf{c}\geq \mathbf{d}>0$.
	\end{remark}
	
	Write $\phi(x)=(x,f(x))$ for each $x\in [-6,6]$. Let $\mu$ be an $s$-Frostman measure on $\Gamma$ defined as in Theorem \ref{thm-L6decay}, then $\mu=\phi_{\#}(\mu_x)$ with $\mu_x$ an $s$-Frostman measure on $[-1,1]$. In the sequel, we identify $\mu_x$ with its pushforward $\mu$ for simplicity and thus write
	\[\hat{\mu}(\xi_1,\xi_2)=\int_{-1}^1 e^{-2\pi i (\xi_1 x+\xi_2 f(x))} d\mu(x).\]
	
	\begin{proof}[Proof of Theorem \ref{thm-L6decay}]
		We only sketch the difference. Define the function family
		\[\mathbb{H}=\big\{h\in C^3([-6,6]): \|h\|_{C^3([-6,6])}\leq 50\mathbf{c} \text{ and } \min_{x\in[-6,6]}|h''(x)| \geq\mathbf{d}\big\}.\]
		Let $C_R$ be the smallest constant such that 
		\begin{equation}\label{eq-smallestc}
			\|\hat{\nu}\|_{L^6(B_R)}^6 \leq C_R \|\nu\|
		\end{equation}
		holds for any $s$-Frostman measure supported on $\Gamma_h$, where $h\in \mathbb{H}$ is arbitrary. It suffices to prove for sufficiently small $\epsilon>0$ and large $R\geq K^2\geq1$, where $K=K(\epsilon)$ will be chosen later. 
		
		Partition $[-1,1]$ into intervals $I\in\cI_k$ of length $1/K$. For $I,I'\in\cI_K$ we write $I\not\sim I'$ if $I$ is not adjacent to $I'$. Let $\mu_{I_i}$ be the restriction of $\mu$ to $I_i$. As \cite[Proposition 4.14]{doprod}, we have
		\begin{equation}\label{equ-110}
			\int_{B_R}|\widehat{\mu}|^6\lesssim \int_{B_R}\sum_{I\in\cI_K}|\widehat{\mu_I}|^6+K^{200}\max_{I_1\not\sim I_2\not\sim I_3}\int_{B_R}|\widehat{\mu_{I_1}}|^2|\widehat{\mu_{I_2}}|^2|\widehat{\mu_{I_3}}|^2.
		\end{equation}
		Using Theorem \ref{thm-section5main} (see \cite[Proposition 4.13]{doprod} where the calculation does not depend on the parabola), the second term can be estimated as
		\begin{equation}\label{equ-secondterm}
			\max_{I_1\not\sim I_2\not\sim I_3}\int_{B_R}|\widehat{\mu_{I_1}}|^2|\widehat{\mu_{I_2}}|^2|\widehat{\mu_{I_3}}|^2 \lesssim  T_{\mathbf{c},\mathbf{d},s,K,\epsilon}R^{2-\frac{5s}2+\epsilon}(\|\mu_{I_1}\|\|\mu_{I_2}\|\|\mu_{I_3}\|)^{5/6},
		\end{equation}
		where $T_{\mathbf{c},\mathbf{d},s,K,\epsilon}>0$ is a constant depending on $\mathbf{c},\mathbf{d},s,K,\epsilon$. Note that estimate \eqref{equ-secondterm} holds for any $s$-Frostman $\mu$ supported on the graph of an function in $\mathbb{H}$. 
		
		The difference appears when analysing the first term. Given $a\in [-1,1]$, let \[I=[a-1/(2K),a+1/(2K)].\] Then
		\[\begin{split}
			|\widehat{\mu_I}(x,y)|&=\left|\int_I e^{-2\pi i (x \xi+yf(\xi))} d\mu(\xi)\right|\\&=\left|\int_{-1}^1 e^{-2\pi i (\tfrac{x+f'(a)y}{2K}\xi+\tfrac{y}{4K^2}f_{a,K}(\xi))}d \mu_I(a+\tfrac{\xi}{2K})\right|=:K^{-s}|\widehat{\nu_I}(\tfrac{x+f'(a)y}{2K},\tfrac{y}{4K^2})|.
		\end{split}\]
		Here $f_{a,K}(\xi)=4K^2(f(a+\tfrac{\xi}{2K})-f(a)-f'(a)\tfrac{\xi}{2K})$ and $\nu_I$ is defined by
		\[\nu_I(A):=K^{s} (\Phi_{a,K})_{\#}\mu_I(a+\tfrac{A}{2K}), \quad \text{where~~}\Phi_{a,K}(x)=(x,f_{a,K}(x)).\]
		Now $f_{a,K}$ is a new convex function, but one can check that $f_{a,K}\in \mathbb{H}$ and $\nu_I$ is an $s$-Frostman measure supported on $\Gamma_{f_{a,K}}$. This means \eqref{eq-smallestc} is still applicable to $\nu_I$.
		
		The remaining is totally the same as \cite[Proposition 4.14]{doprod}. One uses
		\[\|\widehat{\nu_I}\|_{L^6(B_{R/K^2})} \leq C_{R/K^2} \|\nu_I\|\lesssim C_{R/K^2} K^s \mu(I)\]
		to estimate the first term in \eqref{equ-110} and eventually get 
		\[C_{R}\le AC_{R/K^2}K^{4-5s}+B_{\mathbf{c},\mathbf{d},s,K,\epsilon}R^{2-\frac{5s}2+\frac{\epsilon}2}, \]
		where $A\ge 1$ is an absolute constant. Writing $D_R=C_{R}/R^{2-\frac{5s}2+\frac{\epsilon}2}$, we have 
		\[D_R\le AK^{-2\epsilon}D_{R/K^2}+B_{\mathbf{c},\mathbf{d},s,K,\epsilon}.\]
		Take $\epsilon$ sufficiently small and choose $K=A^{\tfrac{1}{2\epsilon}}$ and iterate this inequality $n=[\log R/2\log K]$ many times, then we use $R/K^{2n}\sim 1$ to obtain $D_R\lesssim_{\mathbf{c},\mathbf{d},s,\epsilon} \log R\leq R^{\epsilon/2}$ and thus 
		\[C_R\lesssim_{\mathbf{c},\mathbf{d},s,\epsilon} R^{2-\frac{5s}{2}+\epsilon}. \]
		This completes the whole proof.
		
	\end{proof}

	\bibliographystyle{plain}
	\bibliography{references}
	
\end{document}